\newcommand{\XX}{\mathbb{X}}
\newcommand{\Prob}{\mathbb{P}}
\renewcommand{\H}{{\cal H}}
\newcommand{\X}{{\cal X}}
\newcommand{\0}{{\bf 0}}
    \newtheorem{theo}{Theorem}\numberwithin{theo}{section}
    \newtheorem{coro}[theo]{Corollary}
    \newtheorem{lemm}[theo]{Lemma}
        \def\N{\mathbb{N}}
\numberwithin{equation}{section}
    \def\E{\mathbb{E}}
    \def\0{{\bf 0}}
    \def\R{\mathbb{R}}
    \renewcommand{\E}{\mathbb E \,}
    \newcommand{\MM}{\mathbb{M}}
\newcommand{\M}{{\cal M}}
    \newcommand{\Q}{{\cal Q}}
    \renewcommand{\P}{{{\cal P}}}
    \newcommand{\A}{{\cal A}}
    \newcommand{\Var}{\operatorname{Var}}
    \newcommand{\conv}{{\rm conv}}
    \newcommand{\Vol}{{\rm Vol}}
    \newcommand{\diam}{{\rm diam}}
\newcommand{\equlaw}{\stackrel{(d)}{=}}
    \def\bdm{\begin{displaymath}}
    \newcommand{\edm}{\end{displaymath}}
    \def\benu{\begin{enumerate}}
    \def\eenu{\end{enumerate}}
    \def\beqn{\begin{equation}}
    \def\eeqn{\end{equation}}
    \def\be{\begin{equation}}
    \def\ee{\end{equation}}
    \def\bea{\begin{eqnarray}}
    \def\eea{\end{eqnarray}}
    \newcommand{\bean}{\begin{eqnarray*}}
    \newcommand{\eean}{\end{eqnarray*}}
    \newcommand{\bear}{\begin{eqnarray}}
    \newcommand{\eear}{\end{eqnarray}}
    \def\R{\mathbb{R}}
    \def\Q{\mathbb{Q}}
    \def\D{{\mathbf{D}}}
    \def\d{\operatorname{d}}
    \def\ka{{\kappa}}
    \def\qed{\hfill\hbox{${\vcenter{\vbox{
        \hrule height 0.4pt\hbox{\vrule width 0.4pt height 6pt
        \kern5pt\vrule width 0.4pt}\hrule height 0.4pt}}}$}}
\def\dint{\textup{d}}
\newcommand{\at}{\makeatletter @\makeatother}
\begin{document}

\title{\bf {Normal approximation for stabilizing functionals}}

    \author{Raphael Lachi\`eze-Rey\footnotemark[1]{}, Matthias Schulte\footnotemark[2]{}, J. E. Yukich\footnotemark[3]}

    \date{\today}
    \maketitle
		
\footnotetext[1]{Universit\'e Paris Descartes, Sorbonne Paris Cit\'e, raphael.lachieze-rey{\at}parisdescartes.fr}
\footnotetext[2]{Universit\"at Bern, matthias.schulte{\at}stat.unibe.ch}
\footnotetext[3]{Lehigh University, jey0{\at}lehigh.edu}

\footnotetext{Research supported in part by  NSF grant  DMS-1406410 (JY). This research was initiated during the workshop `New Directions in Stein's Method' at the Institute for Mathematical Sciences in Singapore held during May 2015. The authors are very grateful to the IMS for their hospitality and support.}

 \begin{abstract}

We establish presumably optimal rates of normal convergence with respect to the Kolmogorov distance for a large class of geometric functionals of marked Poisson and binomial point processes on general metric spaces. The rates  are valid whenever the geometric functional is expressible as a sum of  exponentially stabilizing score functions satisfying a moment condition.  By incorporating stabilization methods into the Malliavin-Stein theory,  we obtain rates of normal approximation for sums of stabilizing score functions which either improve upon existing rates or are the first of their kind.

Our general rates hold for functionals of marked input on spaces more general than full-dimensional subsets of $\R^d$, including $m$-dimensional Riemannian manifolds, $m\leq d$. We use the general results to deduce improved and new rates of normal convergence for several functionals in stochastic geometry, including those whose variances re-scale as the volume or the surface area of an underlying set. In particular, we improve upon rates of normal convergence for the $k$-face and $i$th intrinsic volume functionals of the convex hull of Poisson and binomial random samples in a smooth convex body in  dimension $d\geq 2$.
We also provide improved rates of normal convergence for statistics of nearest neighbors graphs and high-dimensional data sets, the number of maximal points in a random sample, estimators of surface area and volume arising in set approximation via Voronoi tessellations, and clique counts in generalized random geometric graphs.

\vskip6pt
\noindent\textit {Key words and phrases.} Stein's method, Malliavin calculus, stabilization, random Euclidean graphs, statistics of data sets,
statistics of convex hulls, Voronoi set approximation, maximal points

\vskip6pt
\noindent\textit{AMS 2010 Subject Classification:}  Primary 60F05: Central limit and other weak theorems;
Secondary 60D05: Geometric probability and stochastic geometry

\end{abstract}

\section{Introduction}

Let $(\XX,\mathcal{F})$ be a measurable space equipped with a $\sigma$-finite measure $\Q$ and a measurable semi-metric $\d: \XX\times\XX\to [0,\infty)$.  For all $s\geq 1$ let $\P_s$ be a Poisson point process with intensity measure $s  \Q$.  When $\Q$ is
a probability measure, we let $\X_n$ be a binomial point process of $n$ points which are i.i.d. according to $\Q$. Consider the statistics
\be \label{Poissonstat}
H_s:= h_s(\P_s):=\sum_{x\in\P_s} \xi_s(x,\P_s), \quad s\geq 1,
\ee
and
\be \label{Binomialstat}
H'_n := h_n(\X_n) :=\sum_{x\in\X_n} \xi_n(x,\X_n), \quad n\in\N,
\ee
where, roughly speaking,  the scores $ \xi_s(x,\P_s)$  and  $\xi_n(x,\X_n)$ represent the local
contributions to the global statistics $H_s $ and $H_n'$,  respectively. Functionals such as $H_s$ and $H_n'$, which are in some sense locally defined, are called stabilizing functionals. The concept of stabilization and the systematic investigation of stabilizing functionals go back to the papers \cite{PY1,PY4}. In the following we are interested in quantitative central limit theorems for stabilizing functionals, whereas laws of large numbers are shown in \cite{Pe,PY4} and moderate deviations are considered in \cite{ERS}. For a survey on limit theorems in stochastic geometry with a particular focus on stabilization we refer to \cite{SchreiberSurvey}.
 Statistics $H_s$ and $H_n'$ typically describe a global property of a random geometric structure on
$\XX$ in terms of local contributions exhibiting spatial interaction and dependence. Functionals in stochastic geometry
which may be cast in the form of \eqref{Poissonstat} and \eqref{Binomialstat} include total edge length and clique counts in random
graphs, statistics of Voronoi set approximation, the $k$-face and volume functional of convex hulls of random point samples, as well as statistics of RSA packing models and spatial birth growth models.

In the following we allow that the underlying point processes $\P_s$ and $\X_n$ are marked, i.e., that an i.i.d.\ random mark is attached to each of their points.

Throughout this paper we denote by $N$ a standard Gaussian random variable and by
\begin{equation}\label{eqn:KolmogorovDistance}
d_K(Y,Z) := \sup_{t \in \R} |\Prob(Y \leq t) - \Prob(Z \leq t)|
\end{equation}
the Kolmogorov distance of two random variables $Y$ and $Z$. For a sum $S_n=\sum_{i=1}^n Y_i$ of $n$ i.i.d.\ random variables $Y_1,\hdots,Y_n$ such that $\E |Y_1|^3<\infty$ it is known from the classical Berry-Esseen theorem that
\begin{equation}\label{eqn:ClaissicalCLT}
d_K\bigg(\frac{S_n-\E S_n}{\sqrt{\Var S_n}},N\bigg) \leq \frac{C \E|Y_1-\E Y_1|^3}{\Var Y_1}\frac{1}{\sqrt{\Var S_n}}, \quad n\in\N,
\end{equation}
with $C\in(0,\infty)$ a universal constant. By considering special choices for $Y_1,\hdots,Y_n$, one can show that the rate $1/\sqrt{\Var S_n}$ in \eqref{eqn:ClaissicalCLT} is optimal. The main contribution of this paper is to show that exponentially stabilizing functionals $H_s$ and $H_n'$ satisfy bounds resembling that at \eqref{eqn:ClaissicalCLT}, with rates $1/\sqrt{\Var H_s}$ and $1/\sqrt{\Var H_n'}$, respectively. Here the scores $(\xi_s)_{s \geq 1}$ and $(\xi_n)_{n  \geq 1}$ have uniformly bounded $(4 + p)$th moments for some $p>0$, similar to the assumption   $\E |Y_1|^3<\infty$ at \eqref{eqn:ClaissicalCLT}.
In contrast to the summands of $S_n$, the summands of $H_s$ and $H_n'$ are dependent in general, but nevertheless by comparison with the classical Berry-Esseen theorem, one can expect the rates  $1/\sqrt{\Var H_s}$ and $1/\sqrt{\Var H_n'}$ to be optimal.

In stochastic geometry, it is frequently the case that $(H_s - \E H_s)/\sqrt{ \Var H_s }$ converges to the
standard normal, and likewise for $(H_n'-\E H_n')/\sqrt{\Var H_n'}$.  However up to now there has been no systematic treatment which establishes presumably optimal rates of convergence to the normal. For example, in \cite{BB} a central limit theorem for functionals of nearest neighbor graphs is derived, but no rate of convergence is given. Dependency graph methods are used in \cite{AB} to show asymptotic normality of the total edge length of the nearest neighbor graph as well as of the Voronoi and Delaunay tessellations, but lead to suboptimal rates of convergence. Anticipating stabilization methods, the authors of \cite{KL} proved asymptotic normality for the total edge length of the Euclidean minimal spanning tree, though they did not obtain a rate of convergence. In the papers \cite{BY05,Pe07,PY1} abstract central limit theorems for stabilizing functionals are derived and applied to several problems from stochastic geometry. Quantitative bounds for the normal approximation of stabilizing functionals of an underlying Poisson point process are given in \cite{BX1,PenroseRosoman,PY5,PY6,Yu}.
 These results yield rates of convergence for the Kolmogorov distance  of the order
  $1/\sqrt{\Var H_s}$ times some extraneous logarithmic factors. For stabilizing functionals of an underlying binomial point process we are unaware of analogous results. The paper \cite{Chat} uses Stein's method to provide rates of normal convergence for functionals on binomial input satisfying a type of local dependence, though these rates are in the Wasserstein distance.

Recent work  \cite{LPS} shows that the Malliavin calculus, combined with Stein's method of normal approximation, yields rates of
normal approximation for general Poisson functionals.  The rates are in the Kolmogorov distance, they are presumably optimal, and the authors
use their general results to deduce rates of normal convergence (cf. Proposition 1.4 and Theorem 6.1 of \cite{LPS}) for Poisson functionals satisfying a type of stabilization.
That paper states that `the new connection between the Stein-Malliavin approach and the theory of stabilization has a great potential for further
generalisations and applications',  though it stops short of linking these two fertile research areas.

The first main goal of this paper is to fully
develop this connection, showing that the theory of stabilization neatly dovetails with Malliavin-Stein methods, giving presumably optimal rates of
normal convergence. Malliavin-Stein rates of normal convergence, expressed in terms of moments of first and second order difference operators \cite{LPS}, seemingly
consist of unwieldy terms.  However, if $\xi_s$ is exponentially stabilizing and satisfies a moment condition, then our first main goal is to show that the Malliavin-Stein bounds remarkably simplify, showing that
\be \label{Poissonrate}
d_K\bigg( \frac{H_s-\E H_s}{\sqrt{\Var H_s}},N \bigg)  \leq  \frac{ \tilde{C}}  { \sqrt{ \Var H_s} },  \quad s\geq 1,
\ee
as explained in Corollary \ref{coroLSY}.  These rates, presumed optimal, remove extraneous logarithmic factors appearing in \cite{BX1, PenroseRosoman, PY5,PY6,Yu}.

Our second main goal is to show that \eqref{Poissonrate} holds when $H_s$ is replaced by $H'_n$, thus giving analogous rates of normal convergence
when Poisson input is replaced by binomial input. Recall that the paper \cite{LRP} (see  Theorem 5.1 there) uses Stein's method and difference operators to
establish rates of normal convergence in the Kolmogorov distance for general functionals of binomial point processes.  Though \cite{LRP} deduces
rates of normal convergence for some statistics of binomial input in geometric probability, it too stops short of systematically developing the connection
between stabilization, Stein's method, and difference operators.  Our second goal is to explicitly and fully develop
this connection. As a by-product, we show that the ostensibly unmanageable bounds in the Kolmogorov distance may be re-cast
into bounds which collapse into a single term $1/ \sqrt{\Var H_n'}$.  In other words, when $\xi_n$ has a a radius of stabilization (with respect to
binomial input $\X_n$) which decays exponentially fast, then subject to a moment condition on $\xi_n$, Corollary \ref{coroLSY} shows
\be \label{binomialrate}
d_K\bigg( \frac{H_n'-\E H_n'}{\sqrt{\Var H'_n}},N \bigg)  \leq  \frac{ \tilde{C'}}  { \sqrt{ \Var H'_n} }, \quad n\geq 9.
\ee

The main finding of this paper, culminating much research related to stabilizing score functionals and captured by the rate results  \eqref{Poissonrate} and \eqref{binomialrate}, is this: \  Statistics \eqref{Poissonstat} and \eqref{Binomialstat} enjoy presumably optimal rates of normal convergence once the scores $\xi_s$ and $\xi_n$ satisfy exponential stabilization and a moment condition.  In problems of interest, the verification of these conditions is sometimes a straightforward exercise, as seen in Section 5, the applications section. On the other hand, for statistics involving convex hulls of random point samples in a smooth compact convex set, the verification of these conditions involves a judicious choice of the underlying metric space, one which allows us to express complicated spatial dependencies in a relatively simple fashion.  This is all illustrated in Subsection \ref{conhull}, where it is shown
for both the intrinsic volumes of the convex hull and for the count of its lower dimensional faces, that the convergence rates \eqref{Poissonrate} and \eqref{binomialrate} are either the first of their kind or that they significantly improve upon existing rates of convergence in the literature, for both Poisson and binomial input in all dimensions $d\geq 2$.

Our third and final goal is to broaden the scope of existing central limit theory in such a way that:

(i) The presumably optimal rates \eqref{Poissonrate} and \eqref{binomialrate} are applicable both in the context of volume order and of surface area order
scaling of the variance of the functional. By this we mean that the variance of $H_s$ (resp. $H'_n$) is of order $s$ (resp. $n$) or $s^{1-1/d}$ (resp. $n^{1-1/d}$), after renormalising so that the score of an arbitrary point is of constant order. The notions volume order scaling and surface area order scaling come from a different (but for many problems equivalent) formulation where the intensity of the underlying point process is kept fixed and a set carrying the input is dilated instead. In this set-up the variance may be asymptotically proportional to the volume or  surface area of the carrying set. Surface order scaling of the variance typically arises when the scores are non-vanishing only for points close to a $(d-1)$-dimensional subset of $\R^d$.
As shown in Theorems \ref{maxpts} and \ref{Vor}, this generality yields improved rates of normal convergence for the number of maximal points in a random sample and for statistics arising in Voronoi set approximation, respectively.

(ii)  The methods are sufficiently general so that they bring within their purview score functions of data on spaces $(\XX, \d)$, with $\d$
an {\em arbitrary  semi-metric}. We illustrate the power of our general approach by establishing a self-contained, relatively short proof of the asymptotic normality of statistics of convex hulls of random  point samples as discussed earlier in this introduction.
Our methods also deliver rates of convergence for statistics of $k$-nearest neighbors graphs and clique counts on both
Poisson and binomial input on general metric spaces $(\XX, \d)$, as seen in Theorems \ref{NNG} and \ref{cliquecount}.

We anticipate that the generality of the methods here will lead to further non-trivial applications in the central limit theory for
functionals in stochastic geometry.

This paper is organized as follows. In Section \ref{sec:MainResults} we give abstract bounds for the normal approximation of stabilizing functionals with respect to Poisson or binomial input, which are our main findings. These are proven in Section \ref{section4}, which we prepare by recalling and rewriting some existing Malliavin-Stein bounds in Section \ref{sec:MalliavinStein}. In Section \ref{sec:Applications} we demonstrate the power of our general bounds by applying them to several problems from stochastic geometry.

\section{Main results}\label{sec:MainResults}

In this section we present our main results in detail. We first spell out assumptions on the measurable space $(\XX,\mathcal{F})$, the $\sigma$-finite measure $\Q$ and the measurable semi-metric $\d: \XX\times\XX\to[0,\infty)$. By $B(x,r)$ we denote the ball of radius $r>0$ around $x\in\XX$, i.e.\ $B(x,r):=\{y\in\XX: \d(x,y)\leq r\}$. In the standard set-up for stabilizing functionals, $\XX$ is a subset of $\R^d$ and $\Q$ has a bounded density with respect to the Lebesgue measure (see, for example, \cite{Pe07, PY5, Yu}). To handle more general $\XX$ and $\Q$, we replace this standard assumption by the following growth condition on the $\Q$-surface area of spheres: \ There are constants $\gamma,\kappa>0$ such that
\begin{equation}\label{eqn:SurfaceBall}
\limsup_{\varepsilon\to 0} \frac{\Q(B(x,r+\varepsilon))-\Q(B(x,r))}{\varepsilon}\leq \kappa\gamma r^{\gamma -1}, \quad r\geq 0, x\in\XX.
\end{equation}

Two examples for measure spaces $(\XX,\mathcal{F},\Q)$ and semi-metrics $\d$ satisfying the assumption \eqref{eqn:SurfaceBall} are the following:
\begin{itemize}
\item Example 1. Let $\XX$ be a full-dimensional subset of $\R^d$ equipped with the induced Borel-$\sigma$-field $\mathcal{F}$ and the usual Euclidean distance $\d$, assume that $\Q$ is a measure on $\XX$ with a density $g$ with respect to the Lebesgue measure, and put $\gamma:=d$. Then condition \eqref{eqn:SurfaceBall} reduces to the standard assumption that $g$ is bounded. Indeed, if $\|g\|_\infty:=\sup_{x\in\XX}|g(x)|<\infty$, then \eqref{eqn:SurfaceBall} is obviously satisfied with $\kappa:=\|g\|_\infty \kappa_d$, where $\kappa_d:= \pi^{d/2}/ \Gamma(d/2 + 1)$ is the volume of the $d$-dimensional unit ball in $\R^d$.  On the other hand, if \eqref{eqn:SurfaceBall} holds, then $\Q(B(x,r)) \leq \ka r^{d}$ as seen by Lemma \ref{Lem4.1}(a) below.
This gives an upper bound of $\ka/\kappa_d$ for $g$ since, by Lebesgue's differentiation theorem,  Lebesgue almost all points $x$ in $\R^d$ are Lebesgue points, that is to say
$$
g(x) = \lim_{r \to 0} (\kappa_d r^{d})^{-1} \int_{y \in B(x,r)} g(y) \, \dint y = \lim_{r \to 0} (\kappa_d r^{d})^{-1} \Q(B(x,r)) \leq \ka/\kappa_d.
$$

\item Example 2. Let $\XX \subset \R^d$ be a smooth $m$-dimensional subset of $\R^d$, $m \leq d$,  equipped with a semi-metric
$\d$, and a measure $\Q$ on $\XX$ with a bounded density $g$ with respect to the uniform surface  measure ${\Vol}_{m}$ on $\XX$.  We assume that the ${\Vol}_{m-1}$ measure of the sphere $\partial (B(x,r))$ is bounded by the surface area of
the Euclidean sphere $\mathbb{S}^{m-1}(0,r)$ of the same radius, that is to say
\be \label{SAbd}
{\Vol}_{m-1}( \partial B(x,r)) \leq m \ka_{m} r^{m-1}, \ x \in \XX, \ r > 0.
\ee
When $\XX$ is an $m$-dimensional affine space and $\d$ is the usual Euclidean metric on $\R^d$, \eqref{SAbd} holds with equality, naturally. However \eqref{SAbd} holds in more general
situations.  For example, by Bishop's comparison theorem (Theorem 1.2 of \cite{SYau}, along with (1.15) there),  \eqref{SAbd}
holds for Riemannian manifolds $\XX$ with non-negative Ricci curvature, with $\d$ the geodesic distance. Given the bound \eqref{SAbd}, one obtains \eqref{eqn:SurfaceBall} with $\kappa=\|g\|_{\infty} \kappa_m$ and $\gamma=m$. This example includes the case $\XX=\mathbb{S}^{m}$, the unit sphere
in $\R^{m+1}$ equipped with the geodesic distance.
\end{itemize}

In order to deal with marked point processes, let $(\MM,\mathcal{F}_\MM,\Q_\MM)$ be a probability space. In the following $\MM$ shall be the space of marks and $\Q_\MM$ the underlying probability measure of the marks. Let $\widehat{\XX}:=\XX\times \MM$, put $\widehat{\mathcal{F}}$ to be the product $\sigma$-field of $\mathcal{F}$ and $\mathcal{F}_\MM$,  and let $\widehat{\Q}$ be the product measure of $\Q$ and $\Q_{\MM}$. When $(\MM,\mathcal{F}_{\MM},\Q_{\MM})$ is a singleton endowed with a Dirac point mass, $\widehat{\XX}$ reduces to $\XX$ and the `hat' superscript can be removed in all occurrences.

Let $\mathbf{N}$ be the set of $\sigma$-finite counting measures on $\widehat{\XX}$, which can be interpreted as point configurations in $\widehat{\XX}$. Thus, we treat the elements from $\mathbf{N}$ as sets in our notation. The set $\mathbf{N}$ is equipped with the smallest $\sigma$-field $\mathcal{N}$ such that the maps $m_A: \mathbf{N}\to \N\cup\{0,\infty\}, \mathcal{M}\mapsto \mathcal{M}(A)$ are measurable for all $A\in\widehat{\mathcal{F}}$. A point process is now a random element in $\mathbf{N}$. In this paper we consider two different classes of point processes, namely Poisson and binomial point processes. For $s\geq 1$, update the notation $\P_s$ to represent a Poisson point process with intensity measure $s\widehat{\Q}$. This means that the numbers of points of $\P_s$ in disjoint sets $A_1,\hdots,A_m\in\widehat{\mathcal{F}}$, $m\in\N$, are independent and that the number of points of $\P_s$ in a set $A\in\widehat{\mathcal{F}}$ follows a Poisson distribution with mean $s\widehat{\Q}(A)$. In case  $\Q$ is a
probability measure, we denote similarly by $\X_n$ a binomial point process of $n\in\N$ points that are independently distributed according to $\widehat{\Q}$. Whenever we state a result involving the binomial point process $\X_n$, we implicitly assume that $\Q$, and hence $\widehat{\Q}$, are  probability measures.

As mentioned in the first section, we seek central limit theorems for $H_s$  and  $H_n'$ defined at \eqref{Poissonstat}  and \eqref{Binomialstat},  respectively.  We assume that the scores $(\xi_s)_{s\geq 1}$ are measurable functions from $\widehat{\XX}\times\mathbf{N}$ to $\R$.  To derive central limit theorems for $H_s$ and $H_n'$, we impose several conditions on the scores. For $s\geq 1$ a measurable map $R_s: \widehat{\XX}\times\mathbf{N}\to\R$ is called a {\em radius of stabilization}
for $\xi_s$ if for all $\hat{x}:=(x,m_x)\in\widehat{\XX}$, $\M\in\mathbf{N}$ and finite $\widehat{\A} \subset \widehat{\XX}$ with $|\widehat{\A}|\leq 7$ we have
\begin{equation}\label{eqn:RadiusOfStabilization}
\xi_s(\hat{x},(\M\cup\{\hat{x}\} \cup \widehat{\A} ) \cap \widehat{B}(x,R_s(\hat{x},\M\cup\{\hat{x}\})) )   = \xi_s(\hat{x}, \M\cup\{\hat{x}\} \cup \widehat{\A} ),
\end{equation}
where $\widehat{B}(y,r):=B(y,r)\times\MM$ for $y\in\XX$ and $r>0$.

For a given point $x\in\XX$ we denote by $M_x$ the corresponding random mark, which is distributed according to $\Q_\MM$ and is independent of everything else. Say that  $(\xi_s)_{s \geq 1}$ (resp.\ $(\xi_n)_{n\in\N}$) are  {\em exponentially stabilizing} if there are radii of stabilization $(R_s)_{s\geq 1}$ (resp.\ $(R_n)_{n\in\N}$) and constants $C_{stab},c_{stab},\alpha_{stab} \in (0,\infty)$ such that, for $x\in \XX$, $r\geq 0$ and $s\geq 1$,
\begin{equation} \label{eqn:expstabPoisson}
\Prob(R_s((x,M_x), \P_s\cup\{(x,M_x)\}) \geq r)  \leq C_{stab} \exp(-c_{stab} (s^{1/\gamma} r)^{\alpha_{stab}}),
\end{equation}
resp.\ for $x\in \XX$, $r\geq 0$ and $n\geq 9$,
\begin{equation} \label{eqn:expstabBinomial}
\Prob(R_n((x,M_x), \X_{n-8}\cup\{(x,M_x)\}) \geq r)  \leq C_{stab} \exp(-c_{stab} (n^{1/\gamma} r)^{\alpha_{stab}}),
\end{equation}
where $\gamma$ is the constant from \eqref{eqn:SurfaceBall}.

For a finite set $\mathcal{A}\subset\XX$ we denote by $(\mathcal{A},M_\mathcal{A})$ the random set obtained by equipping each point of $\mathcal{A}$ with a random mark distributed according to $\Q_\MM$ and independent of everything else. Given $p \in [0, \infty)$, say that $(\xi_s)_{s\geq 1}$ or $(\xi_n)_{n\in\N}$ satisfy a $(4+p)$th moment condition if there is a constant $C_p \in (0, \infty)$ such that
for all $\A \subset \XX$ with $|\A|\leq 7$,
\begin{equation} \label{eqn:momPoisson}
\sup_{s \in [1, \infty)}  \sup_{x \in \XX}  \E |\xi_s((x,M_x), \P_s\cup\{(x,M_x)\}  \cup  (\A,M_\A) ) |^{4+p} \leq C_{p}
\end{equation}
or
\begin{equation} \label{eqn:momBinomial}
\sup_{n\in\N, n\geq 9}  \sup_{x \in \XX}  \E |\xi_n((x,M_x), \X_{n-8}\cup\{(x,M_x)\}  \cup  (\A,M_\A) ) |^{4+p} \leq C_{p}.
\end{equation}

Let $K$ be a measurable subset of $\XX$. By $\d(z,K):=\inf_{y\in K} \d(z,y)$ we denote the distance between a point $z\in\XX$ and $K$. Moreover, we use the abbreviation $\d_s(\cdot,\cdot):=s^{1/\gamma} \d(\cdot,\cdot)$, $s\geq 1$. We introduce another notion relevant for functionals whose variances exhibit surface area order scaling. Say that $(\xi_s)_{s \geq 1}$, resp.\ $(\xi_n)_{n\in\N}$, {\em decay exponentially fast with the distance to $K$} if there are constants $C_{K},c_{K},\alpha_{K}\in(0,\infty)$ such that for all $\A \subset \XX$ with $|\A|\leq 7$ we have
\begin{equation} \label{eqn:expfastPoisson}
\Prob( \xi_s((x,M_x), \P_s\cup\{(x,M_x)\}  \cup (\A,M_\A)) \neq 0) \leq C_{K}\exp( -c_{K} \d_s(x, K)^{\alpha_{K}})
\end{equation}
for $x\in\XX$ and $s\geq 1$ resp.\
\begin{equation} \label{eqn:expfastBinomial}
\Prob( \xi_n((x,M_x), \X_{n-8} \cup\{(x,M_x)\} \cup (\A,M_\A)) \neq 0) \leq C_{K}\exp( - c_{K} \d_n(x, K)^{\alpha_{K}})
\end{equation}
for $x\in\XX$ and $n\geq 9$.
For functionals whose variances have volume order we will make the choice $K = \XX$, in which case \eqref{eqn:expfastPoisson} and \eqref{eqn:expfastBinomial} are obviously satisfied with $C_K=1$ and arbitrary $c_K,\alpha_K\in(0,\infty)$. Later we will have  that $\XX$ is $\R^d$ or a compact convex subset of $\R^d$ such as the unit cube and that $K$ is a $(d-1)$-dimensional subset of $\R^d$. This situation arises, for example, in statistics of convex hulls of random samples and Voronoi set approximation. Moreover, problems with surface order scaling of the variance are typically of this form.

The following general theorem provides rates of normal convergence for $H_s$ and $H_n'$ in terms of the Kolmogorov distance defined at \eqref{eqn:KolmogorovDistance}. This theorem is a consequence of general theorems from \cite{LPS} and \cite{LRP} giving Malliavin-Stein bounds for functionals of Poisson and binomial point processes (see Theorems \ref{thm:LPS} and \ref{thm:general-binomial} below). Let $\alpha:=\min\{\alpha_{stab},\alpha_K\}$ and
\begin{equation}\label{eqn:DefinitionIKn}
I_{K,s}:= s \int_{\XX} \exp\bigg(-\frac{  \min\{c_{stab},c_K\}p\d_s(x,K)^{\alpha}}{36\cdot 4^{\alpha+1}}\bigg) \, \Q(\dint x), \quad s\geq 1.
\end{equation}
Throughout this paper $N$ always denotes a standard Gaussian random variable. The proofs of the following results are postponed to Section \ref{section4}.

\begin{theo}\label{theoLSY}
\begin{itemize}
\item[(a)] Assume that the score functions $(\xi_s)_{s\geq 1}$ are exponentially stabilizing \eqref{eqn:expstabPoisson}, satisfy the moment condition \eqref{eqn:momPoisson} for some $p \in (0, 1]$, and decay exponentially fast with the distance to a measurable set $K\subset\XX$, as
at \eqref{eqn:expfastPoisson}.
Then there is a constant $\tilde{C}\in(0,\infty)$ only depending on the constants in \eqref{eqn:SurfaceBall}, \eqref{eqn:expstabPoisson}, \eqref{eqn:momPoisson} and \eqref{eqn:expfastPoisson} such that
\begin{equation} \label{eqn:KolmogorovPoisson}
d_K\bigg( \frac{H_s-\E H_s}{\sqrt{\Var H_s}},N \bigg)  \leq  \tilde{C}  \bigg(\frac{\sqrt{ I_{K,s}}}{\Var H_s}+\frac{I_{K,s}}{(\Var H_s)^{3/2}}+\frac{I_{K,s}^{5/4}+I_{K,s}^{3/2}}{(\Var H_s)^{2}}\bigg), \quad s\geq 1.
\end{equation}
\item[(b)] Assume that the score functions $(\xi_n)_{n\in\N}$ are exponentially stabilizing \eqref{eqn:expstabBinomial}, satisfy the moment condition \eqref{eqn:momBinomial} for some $p \in (0,1]$, and decay exponentially fast with the distance to a measurable set $K\subset\XX$, as
at \eqref{eqn:expfastBinomial}. Let $(I_{K,n})_{n\in\N}$ be as in \eqref{eqn:DefinitionIKn}. Then there is a constant $\tilde{C}\in(0,\infty)$ only depending on the constants in \eqref{eqn:SurfaceBall}, \eqref{eqn:expstabBinomial}, \eqref{eqn:momBinomial} and \eqref{eqn:expfastBinomial} such that
\begin{equation} \label{eqn:KolmogorovBinomial}
d_K\bigg( \frac{H'_n-\E H'_n}{\sqrt{\Var H'_n}},N \bigg)  \leq \tilde{C} \bigg(\frac{\sqrt{ I_{K,n} }} {\Var H'_n} + \frac{I_{K,n}}{(\Var H'_n)^{3/2}}+\frac{I_{K,n}+ I_{K,n}^{3/2}}{(\Var H'_n)^2}\bigg), \quad n\geq 9.
\end{equation}
\end{itemize}
\end{theo}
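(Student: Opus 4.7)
The plan is to apply the Malliavin--Stein bounds of Theorems \ref{thm:LPS} and \ref{thm:general-binomial} (recalled in Section \ref{sec:MalliavinStein}), which control $d_K$ in terms of certain $L^p$-norms of the first- and second-order difference operators $D_{\hat x} F = F(\mathcal{M}\cup\{\hat x\}) - F(\mathcal{M})$ and $D^2_{\hat x,\hat y} F = D_{\hat y}(D_{\hat x} F)$ applied to $F = H_s$ or $F = H'_n$. My task then reduces to estimating these operators uniformly by exploiting exponential stabilization, the moment condition, and the decay with distance to $K$, and finally collapsing the resulting multiple integrals into the single quantity $I_{K,s}$ (or $I_{K,n}$) defined in \eqref{eqn:DefinitionIKn}.

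First I would show that, for each $\hat x = (x,m_x)\in\widehat{\XX}$, the add-one cost $D_{\hat x} H_s$ equals the score at $\hat x$ plus the perturbations of the scores at the points of $\P_s$ lying within the stabilization radius $R_s(\hat x,\P_s\cup\{\hat x\})$. Thanks to \eqref{eqn:expstabPoisson} this radius has exponentially light tails, and Lemma \ref{Lem4.1} translates this into a tail bound on the number of points of $\P_s$ inside $\widehat{B}(x,R_s)$. Combined with the moment bound \eqref{eqn:momPoisson} and a H\"older argument, this gives $\sup_{s \geq 1,\ \hat x} \E |D_{\hat x} H_s|^{4} \leq C$. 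Next I would sharpen this to bring in the decay with distance to $K$: on $\{D_{\hat x} H_s \neq 0\}$, either the score at $\hat x$ or the score at some point of $\P_s$ inside $\widehat{B}(x,R_s)$ is non-zero, and the triangle inequality in the semi-metric $\d$ together with \eqref{eqn:expfastPoisson} forces this event to have probability exponentially small in $\d_s(x, K)^{\alpha}$, with $\alpha = \min\{\alpha_{stab},\alpha_K\}$. A further H\"older step, trading a fraction of the $(4+p)$-th moment against the tail, yields
\begin{equation*}
\E |D_{\hat x} H_s|^4 \leq C \exp\!\bigl(-c\,p\,\d_s(x,K)^{\alpha}/C'\bigr),
\end{equation*}
so that integration against $s\,\widehat{\Q}(\dint \hat x)$ reproduces $I_{K,s}$ up to constants.

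The main obstacle is the analogous treatment of the second-order operator $D^2_{\hat x,\hat y} H_s$, which appears in Theorem \ref{thm:LPS} as a double integral over pairs $(\hat x,\hat y)$. I would argue that $D^2_{\hat x,\hat y} H_s$ vanishes on the event that the stabilization regions around $\hat x$ and $\hat y$ do not interact, which in turn requires either $R_s(\hat x) \geq \d(x,y)/2$, or $R_s(\hat y) \geq \d(x,y)/2$, or the existence of a third point whose score is disturbed by both of $\hat x$ and $\hat y$ and which therefore also has a large stabilization radius. Each alternative is charged an exponentially small factor in $s^{1/\gamma}\d(x,y)$ via \eqref{eqn:expstabPoisson}. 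Using \eqref{eqn:SurfaceBall} (through Lemma \ref{Lem4.1}) to bound the $\widehat{\Q}$-surface area of spheres around $x$, this factor collapses the double integral to a single integral of the same form as in the first step, to which the decay-with-distance-to-$K$ bound applies. The four terms appearing in Theorem \ref{thm:LPS} involve different combinations of $L^2$- and $L^4$-norms of $D^1 H_s$ and $D^2 H_s$, so after integration and division by the appropriate power of $\sqrt{\Var H_s}$ they yield the mixed powers $\sqrt{I_{K,s}}$, $I_{K,s}$, $I_{K,s}^{5/4}$, and $I_{K,s}^{3/2}$ displayed in \eqref{eqn:KolmogorovPoisson}.

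For part (b), the strategy is identical in spirit: one replaces $\P_s$ by $\X_n$ and invokes Theorem \ref{thm:general-binomial} in place of Theorem \ref{thm:LPS}. The Malliavin-type difference operators for binomial input involve adding or substituting a uniformly bounded number of independent $\widehat{\Q}$-samples, and the hypotheses \eqref{eqn:expstabBinomial} and \eqref{eqn:momBinomial} were deliberately phrased with a buffer of $8$ points so as to absorb these perturbations; this is why $n \geq 9$ is required. The slightly different combination $I_{K,n},\,I_{K,n}^{3/2}$ in place of $I_{K,s}^{5/4},\,I_{K,s}^{3/2}$ in \eqref{eqn:KolmogorovBinomial} reflects the different numerical structure of the Malliavin--Stein bound for binomial input proved in \cite{LRP}. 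Once the difference-operator estimates are transferred via the $n-8$ convention, the localization and integration arguments from (a) go through essentially verbatim.
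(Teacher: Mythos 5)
Your proposal is correct and follows essentially the same route as the paper: verify the moment hypothesis of the Malliavin--Stein bounds via the decomposition of $D_{\hat x}H_s$ into the score at $\hat x$ plus perturbed scores within the stabilization radius, bound the non-vanishing probabilities of $D H_s$ and $D^2 H_s$ by exponentials in $\d_s(\cdot,K)^\alpha$ and $\d_s(x,y)^\alpha$ using Lemmas \ref{lem:DH}--\ref{lem:DXi} and the triangle-inequality argument for third points, and collapse the resulting integrals into $I_{K,s}$ via Lemma \ref{Lem4.1}. The only cosmetic difference is that you phrase the localization in terms of moments $\E|D_{\hat x}H_s|^4$ while the paper works directly with the probabilities $\Prob(D_{\hat x}h_s(\P_s)\neq 0)$ appearing in $\Gamma_s$ and $\psi_s$; these are interchangeable here by H\"older.
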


Notice that if $K=\XX$, we have
\begin{equation}\label{eqn:IXlinear}
I_{\XX,s}=s\Q(\XX), \quad s\geq 1, \quad \text{ and } \quad I_{\XX,n}=n\Q(\XX), \quad n\in\N.
\end{equation}
Assuming growth bounds  on $I_{K,s}/ \Var H_s$ and $I_{K,n}/ \Var H'_n$, the rates \eqref{eqn:KolmogorovPoisson} and \eqref{eqn:KolmogorovBinomial}
nicely simplify into presumably optimal rates, ready for off-the-shelf use in applications.

\begin{coro}\label{coroLSY}
\begin{itemize}
\item [(a)] Let the conditions of Theorem \ref{theoLSY}(a) prevail. Assume further that there is a $C\in(0,\infty)$ such that
$\sup_{s \geq 1} I_{K,s}/ \Var H_s \leq C.$  Then there is a $\tilde{C'} \in (0,\infty)$ only depending on $C$ and the constants in \eqref{eqn:SurfaceBall}, \eqref{eqn:expstabPoisson}, \eqref{eqn:momPoisson} and \eqref{eqn:expfastPoisson} such that
\begin{equation} \label{eqn:KolmogorovPoissoncoro}
d_K\bigg( \frac{H_s-\E H_s}{\sqrt{\Var H_s}},N \bigg)  \leq  \frac{ \tilde{C'}}  { \sqrt{ \Var H_s} },  \quad s\geq 1.
\end{equation}
\item [(b)] Let the conditions of Theorem \ref{theoLSY}(b) prevail. If there is a $C\in(0,\infty)$ such that $\sup_{n \geq 1} I_{K,n}/ \Var H'_n \leq C$, then there is a $\tilde{C'} \in (0,\infty)$ only depending on $C$ and the constants in \eqref{eqn:SurfaceBall}, \eqref{eqn:expstabBinomial}, \eqref{eqn:momBinomial} and \eqref{eqn:expfastBinomial} such that
\begin{equation} \label{eqn:KolmogorovBinomialcoro}
d_K\bigg( \frac{H'_n-\E H'_n}{\sqrt{\Var H'_n}},N \bigg)  \leq  \frac{ \tilde{C'}}  { \sqrt{ \Var H'_n} }, \quad n\geq 9.
\end{equation}
\end{itemize}
\end{coro}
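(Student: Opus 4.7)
The plan is to derive Corollary \ref{coroLSY} directly from Theorem \ref{theoLSY} by elementary algebra, exploiting the uniform bound $I_{K,s}/\Var H_s\leq C$ (resp.\ $I_{K,n}/\Var H'_n\leq C$) together with the trivial observation that the Kolmogorov distance never exceeds $1$.

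For part (a), I would set $V:=\Var H_s$ and $I:=I_{K,s}$, so that the right-hand side of \eqref{eqn:KolmogorovPoisson} is a constant times the sum of the four quantities
\begin{equation*}
\frac{\sqrt{I}}{V}=\sqrt{\frac{I}{V}}\cdot\frac{1}{\sqrt{V}},\quad \frac{I}{V^{3/2}}=\frac{I}{V}\cdot\frac{1}{\sqrt{V}},\quad \frac{I^{5/4}}{V^2}=\bigg(\frac{I}{V}\bigg)^{\!5/4}\frac{1}{V^{3/4}},\quad \frac{I^{3/2}}{V^2}=\bigg(\frac{I}{V}\bigg)^{\!3/2}\frac{1}{\sqrt{V}}.
\end{equation*}
Using $I/V\leq C$, the first, second and fourth of these are immediately bounded by a constant multiple of $1/\sqrt{V}$. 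Only the third summand requires care, because of the factor $1/V^{3/4}$, which exceeds $1/\sqrt{V}$ when $V<1$.

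To handle this I would split into two cases. When $V\geq 1$, we have $1/V^{3/4}\leq 1/\sqrt{V}$, so the third summand is also at most $C^{5/4}/\sqrt{V}$, and the full right-hand side of \eqref{eqn:KolmogorovPoisson} is bounded by a constant multiple of $1/\sqrt{V}$, yielding \eqref{eqn:KolmogorovPoissoncoro}. When $V<1$, we instead invoke the trivial bound $d_K\leq 1\leq \tilde{C'}/\sqrt{V}$, which is valid as soon as the final constant satisfies $\tilde{C'}\geq 1$. Taking $\tilde{C'}$ to be the maximum of $1$ and the constant arising in the case $V\geq 1$ produces \eqref{eqn:KolmogorovPoissoncoro}.

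Part (b) is proved by the same argument applied to \eqref{eqn:KolmogorovBinomial}, with $V:=\Var H'_n$ and $I:=I_{K,n}$. The only difference in the algebra is that the last bracket contains $(I+I^{3/2})/V^2$ rather than $(I^{5/4}+I^{3/2})/V^2$; for the extra term one writes $I/V^2=(I/V)\cdot V^{-1}\leq C/V$, which is dominated by $C/\sqrt{V}$ when $V\geq 1$, and falls under the trivial estimate when $V<1$. I do not expect any genuine obstacle in the derivation; the only mildly delicate point is that the dichotomy on whether $V\geq 1$ cannot be avoided, because some of the powers of $V$ in the denominators of \eqref{eqn:KolmogorovPoisson}--\eqref{eqn:KolmogorovBinomial} are strictly larger than $1/2$, so a lower bound on $V$ is required to absorb every term into $1/\sqrt{V}$, with the trivial bound $d_K\leq 1$ serving as a substitute for small $V$.
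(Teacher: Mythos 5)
Your proposal is correct and is essentially the argument the paper intends: Corollary \ref{coroLSY} is obtained by substituting $I_{K,s}\leq C\,\Var H_s$ (resp.\ $I_{K,n}\leq C\,\Var H'_n$) into the bounds of Theorem \ref{theoLSY}, and the paper gives no separate proof beyond this. Your explicit treatment of the one term ($I^{5/4}/V^2$, resp.\ $I/V^2$) whose power of $V$ in the denominator exceeds $1/2$, via the dichotomy $V\geq 1$ versus $V<1$ together with the trivial bound $d_K\leq 1$, is the natural way to make the implicit step rigorous and is exactly right.
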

This corollary is applied in the context of the convex hull of a random sample of points in a smooth convex set in Subsection \ref{conhull}. In this case, the  variance is of order  $s^{\frac{d-1}{d+1}}$ ($n^{\frac{d-1}{d+1}}$ in the binomial setting), and we obtain rates of normal convergence  of  order $(\Var H_{s})^{-1/2}=\Theta (s^{-(d-1)/(2(d+1)))}$ (resp. $(\Var H_{n}')^{-1/2}=\Theta (n^{-(d-1)/(2(d+1)))}$),  which   improves upon rates obtained via other methods.

In the setting $\XX \subset \R^d$, our results admit further simplification, which goes as follows.
For $K \subset \XX \subset \R^d$ and $r \in (0, \infty)$, let
$K_r := \{ y \in \R^d: \ \ \d(y, K) \leq r \}$ denote the $r$-parallel set of $K$. Recall that the $(d-1)$-dimensional upper Minkowski content
of $K$ is given by
\begin{equation}\label{eqn:UpperMinkowski}
\overline{\cal M}^{d-1}(K) := \limsup_{r \to 0} \frac{ \Vol_d(K_r) } {2 r}.
\end{equation}
If $K$ is a closed $(d-1)$-rectifiable set in $\R^d$ (i.e., the Lipschitz image of a bounded set in $\R^{d-1}$), then $\overline{\cal M}^{d-1}(K)$
exists and coincides with a scalar multiple of $\H^{d-1}(K)$, the $(d-1)$-dimensional Hausdorff measure  of $K$. Given an unbounded set $I\subset (0, \infty)$ and two families of real numbers $(a_{i})_{i\in I},(b_{i})_{i\in I}$, we use the Landau notation $a_{i}=O(b_{i})$
to indicate that $\limsup_{i\in I, i\to\infty}|a_{i}|/|b_{i}|<\infty$. If $b_i=O(a_i)$ we write $a_i=\Omega(b_i)$, whereas if  $a_i=O(b_i)$ and $b_i=O(a_i)$ we write $a_i=\Theta(b_i)$.

\begin{theo}\label{2coroLSY}
Let $\XX\subset\R^d$ be full-dimensional, let $\Q$ have a bounded density with respect to Lebesgue measure and let the conditions of Theorem \ref{theoLSY} prevail with $\gamma:=d$. \vskip.1cm
\noindent (a)  Let $K$ be a full-dimensional  compact subset of $\XX$  with $\overline{\cal M}^{d-1}( \partial K) < \infty$. If $\Var H_s = \Omega(s)$, resp.\ $\Var H'_n = \Omega(n)$, then there is a constant $c \in (0, \infty)$ such that
\be \label{arates}
d_K\bigg( \frac{H_s-\E H_s}{ \sqrt{\Var H_s} }, N \bigg)  \leq  \frac{c} { \sqrt{s}}, \quad s \geq 1, \ \text{ resp. } \
d_K\bigg( \frac{H'_n-\E H'_n} {\sqrt{\Var H'_n}} ,N \bigg)  \leq  \frac{c} { \sqrt{n}},  \quad n\geq 9.
\ee
\noindent (b) Let $K$ be a $(d-1)$-dimensional compact subset of $\XX$ with $\overline{\cal M}^{d-1}(K) < \infty$.
If $\Var H_s = \Omega(s^{(d-1)/d})$, resp.\ $\Var H'_n =\Omega(n^{(d-1)/d})$,
then there is a constant $c \in (0, \infty)$ such that
\be \label{aratessurface}
d_K\bigg( \frac{H_s-\E H_s}{\sqrt{\Var H_s}},N \bigg)  \leq  \frac{c}{s^{ \frac{1}{2} - \frac{1}{2d}}   }, \quad s \geq 1, \ \text{ resp.\ }
\ d_K\bigg( \frac{H'_n-\E H'_n}{\sqrt{\Var H'_n}}, N \bigg)  \leq   \frac{c} {n^{ \frac{1}{2} - \frac{1}{2d}}   } , \quad n\geq 9.
\ee
\end{theo}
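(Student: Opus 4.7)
The plan is to reduce Theorem~\ref{2coroLSY} to Corollary~\ref{coroLSY} by verifying the boundedness hypothesis $\sup_{s\geq 1} I_{K,s}/\Var H_s < \infty$ (and its binomial analogue) under each of the two scaling regimes. Once this is established, Corollary~\ref{coroLSY} produces a bound of the form $\tilde{C}'/\sqrt{\Var H_s}$ (respectively $\tilde{C}'/\sqrt{\Var H'_n}$), and the assumed variance lower bounds translate this directly into the claimed rates~\eqref{arates} and~\eqref{aratessurface}.

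The first key step is to rewrite $I_{K,s}$ as an integral against the parallel-volume function $r \mapsto \Q(K_r)$. Setting $\phi_s(r) := \exp(-c_1 s^{\alpha/d} r^\alpha)$, with $c_1$ the positive constant appearing in the exponent of~\eqref{eqn:DefinitionIKn}, I would apply Fubini together with the inclusion $\{x \in \XX : \d(x,K) < r\} \subset K_r$ to get
\[
\int_\XX \phi_s(\d(x,K))\,\Q(\dint x) \leq \int_0^\infty |\phi_s'(r)|\,\Q(K_r)\,\dint r,
\]
and then rescale via $u = s^{1/d} r$ to obtain
\[
I_{K,s} \leq s \int_0^\infty c_1 \alpha u^{\alpha-1} e^{-c_1 u^\alpha}\,\Q\bigl(K_{u/s^{1/d}}\bigr)\,\dint u.
\]
The task thus reduces to controlling the growth of $\Q(K_r)$ uniformly in $r > 0$.

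The second step is to estimate $\Q(K_r)$ via the Minkowski content hypothesis. Since $\Q$ has a bounded density $g$ with respect to Lebesgue measure, $\Q(K_r) \leq \|g\|_\infty \Vol_d(K_r)$. In part~(a), the finiteness of $\overline{\cal M}^{d-1}(\partial K)$ yields $\Vol_d(K_r) - \Vol_d(K) \leq C_1 r$ for all sufficiently small $r$, while compactness of $K$ in some Euclidean ball $B(0,R)$ gives $\Vol_d(K_r) \leq \kappa_d (R+r)^d$ for all $r$; patching these produces a single bound $\Q(K_r) \leq C(1 + r + r^d)$, which upon substitution yields $I_{K,s} = O(s)$. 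In part~(b), $\Vol_d(K) = 0$ and $\overline{\cal M}^{d-1}(K) < \infty$ strengthens the small-$r$ estimate to $\Vol_d(K_r) \leq C_1 r$, giving $\Q(K_r) \leq C(r + r^d)$; the same rescaling now extracts an additional factor of $s^{-1/d}$, so $I_{K,s} = O(s^{(d-1)/d})$. Each of these combines with the corresponding variance hypothesis to satisfy the requirement of Corollary~\ref{coroLSY}.

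The principal obstacle will be the patching step producing a uniform-in-$r$ bound on $\Q(K_r)$: the Minkowski content hypothesis only provides asymptotic information as $r \to 0$, and it must be supplemented with a moderate- and large-$r$ bound coming from compactness of $K$ together with the surface growth condition~\eqref{eqn:SurfaceBall}. Apart from this, the binomial assertion follows by the same argument applied to Corollary~\ref{coroLSY}(b), since the integral estimates above are indifferent to whether the underlying point process is Poisson or binomial.
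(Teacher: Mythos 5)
Your proposal is correct, and the reduction to Corollary~\ref{coroLSY} via the bounds $I_{K,s}=O(s)$ in case (a) and $I_{K,s}=O(s^{(d-1)/d})$ in case (b) is exactly the paper's strategy; the difference lies in how the integral is estimated. The paper splits $\int_\XX$ into $\int_K$ plus $\int_{\XX\setminus K}$, rewrites the exterior part via the co-area formula as $\int_0^\infty\int_{\partial K_r}\exp(-cs^{\alpha/d}r^\alpha)\,\H^{d-1}(\dint y)\,\dint r$, and then invokes a separate lemma (Lemma~\ref{SWlemma}, resting on results of Rataj and Winter) asserting $\H^{d-1}(\partial K_r)\leq C(1+r^{d-1})$ for all $r>0$. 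You instead use the layer-cake identity $\phi_s(t)=\int_t^\infty|\phi_s'(r)|\,\dint r$ and Fubini to reduce everything to a bound on $\Q(K_r)\leq\|g\|_\infty\Vol_d(K_r)$, which follows directly from the Minkowski-content hypothesis for small $r$ (note $K_r\setminus K\subset(\partial K)_r$ in case (a)) and from compactness for $r$ bounded away from zero; since the two regimes overlap to cover all of $(0,\infty)$, the patching is unproblematic. Your route is arguably more elementary: it bypasses the co-area formula and the surface-area bound on $\partial K_r$ entirely, needing only the volume of parallel sets, which is precisely what upper Minkowski content controls. The price is nothing in this setting; both arguments deliver the same orders, and the substitution $u=s^{1/d}r$ then converts the variance hypotheses into the stated rates exactly as in the paper.
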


\noindent{\em Remarks.} (i) {\em Comparing \eqref{arates} with existing results.}  The results at \eqref{arates} are applicable in the setting
of volume order scaling of the variances, i.e., when the variances of $H_s$ and $H_n'$ exhibit scaling proportional to $s$ and $n$.
The rate for Poisson input in \eqref{arates} significantly improves upon the rate given by Theorem 2.1 of \cite{PY5} (see also Lemma 4.4 of \cite{Pe07}),  Corollary 3.1 of \cite{BX1},  and Theorem 2.3 in \cite{PenroseRosoman},
which all contain extraneous logarithmic factors and which rely on dependency graph methods. The rate in \eqref{arates} for binomial input is new, as up to now there are no explicit general rates of normal convergence for sums of stabilizing score functions $\xi_n$ of binomial input.

\vskip.3cm
\noindent (ii) {\em Comparing \eqref{aratessurface} with existing results.}
The rates at  \eqref{aratessurface} are relevant for statistics with surface area rescaling of the variances, i.e., when the variance of $H_s$ (resp. $H_n'$) exhibits scaling proportional to $s^{1-1/d}$ (resp. $n^{1-1/d}$). These rates  both improve and extend upon the rates given in the main result (Theorem 1.3) in \cite{Yu}.  First, in the case of Poisson input,
the rates remove the logarithmic factors present in Theorem 1.3 of \cite{Yu}. Second,  we obtain rates of normal convergence for binomial input, whereas \cite{Yu} does not treat this situation.
\vskip.3cm
\noindent (iii) {\em Extensions to random measures}.
Up to a constant factor, the rates of normal convergence in Theorem \ref{theoLSY}, Corollary \ref{coroLSY}, and Theorem \ref{2coroLSY} hold for the non-linear statistics
$H_{s}(f)= \sum_{x \in \P_s} f(x)\xi_s(x, \P_s)$ and $H'_{n}(f)= \sum_{x \in \X_n}f(x) \xi_n(x, \X_n)$, obtained by integrating the random measures
$\sum_{x \in \P_s} \xi_s(x, \P_s) \delta_x$ and $\sum_{x \in \X_n} \xi_n(x, \X_n) \delta_x$ with a bounded measurable test function $f$ on $\XX$.
For example, if $K = \XX$, $ \Var( H_{s}(f)) = \Omega(s)$, and $\Var (H'_{n}(f)) = \Omega(n)$, then there is a constant $c\in(0,\infty)$ such that
\be \label{ratesfPoisson}
d_K\bigg( \frac{H_{s}(f)-\E H_{s}(f) }{ \sqrt{\Var H_{s}(f)} }, N \bigg)  \leq  \frac{c} { \sqrt{s}}, \quad s \geq 1,
\ee
and
\be \label{ratesfBinomial}
d_K\bigg( \frac{H_{n}'(f) -\E H_{n}'(f)  }{ \sqrt{\Var H_{n}'(f) } }, N \bigg)  \leq  \frac{c} { \sqrt{n}}, \quad n \geq 9.
\ee
The rate \eqref{ratesfPoisson} improves upon the main result (Theorem 2.1) of \cite{PY5} whereas the rate \eqref{ratesfBinomial} is new.
\vskip.3cm
\noindent (iv) {\em Extensions to the Wasserstein distance}.
All quantitative bounds presented in this section also hold for the Wasserstein distance (see also the discussion at the end of Section \ref{sec:MalliavinStein}). The Wasserstein distance between random variables $Y$ and $Z$ with $\E|Y|,\E|Z|<\infty$ is given by
\begin{equation}\label{eqn:DefinitiondW}
d_W(Y,Z):=\sup_{h\in\operatorname{Lip}(1)} |\E h(Y)-\E h(Z)|,
\end{equation}
where $\operatorname{Lip}(1)$ stands for the set of all functions $h: \R\to\R$ whose Lipschitz constant is at most one. Since we believe that the Kolmogorov distance $d_K$  is  more prominent than the Wasserstein distance, the applications in Section \ref{sec:Applications} are formulated only for $d_K$.
\vskip.3cm
\noindent (v) {\em Subsets without influence}. Assume that there is a measurable set $\tilde{\XX}\subset\XX$ such that the scores satisfy
$$
\xi_s(x,\M) =  \mathbf{1}_{\{x\in\tilde{\XX}\}} \xi_s(x,\M\cap\tilde{\XX}), \quad \M\in \mathbf{N}, x\in\M, s\geq 1,
$$
where $\M\cap\tilde{\XX}$ stands for the restriction of the point configuration $\M$ to $\tilde{X}$. In other words, the sum of scores $\sum_{x\in\M} \xi_s(x,\M)$ only depends on the points of $\M$ which belong to $\tilde{\XX}$. In this case our previous results are still valid if the assumptions \eqref{eqn:SurfaceBall}-\eqref{eqn:expfastBinomial} hold for all $x\in\tilde{\XX}$.
\vskip.3cm
\noindent (vi) {\em Null sets}. In our assumptions \eqref{eqn:SurfaceBall}-\eqref{eqn:expfastBinomial} we require, for simplicity, that some inequalities are satisfied for all $x\in\XX$. In case that these only hold for $\Q$-a.e.\ $x\in\XX$, our results are still true. This also applies to comment (v).

\section{Malliavin-Stein bounds}\label{sec:MalliavinStein}

For any measurable $f: \mathbf{N}\to\R$ and $\mathcal{M}\in\mathbf{N}$ we define
$$
D_{\hat{x}}f(\mathcal{M})=f(\mathcal{M}\cup\{\hat{x}\})-f(\mathcal{M}), \quad \hat{x}\in\widehat{\XX},
$$
and
$$
D^2_{\hat{x}_1,\hat{x}_2}f(\mathcal{M})  =f(\mathcal{M}\cup\{\hat{x}_1,\hat{x}_2\})-f(\mathcal{M}\cup\{\hat{x}_1\}) -f(\mathcal{M}\cup\{\hat{x}_2\})+f(\mathcal{M}), \quad \hat{x}_1,\hat{x}_2\in\widehat{\XX}.
$$
Our key tool for the proof of the bound \eqref{eqn:KolmogorovPoisson} is the following marked version of a result from \cite{LPS} (see Proposition 1.4 and Theorem 6.1 in \cite{LPS}) for square integrable Poisson functionals.

\begin{theo}\label{thm:LPS}
Let $s>0$ and let $f:\mathbf{N}\to\R$ be measurable with $\E f(\P_s)^2<\infty$. Assume there are constants $c,p\in(0,\infty)$ such that
\begin{equation} \label{eqn:ConditionLPS}
\E|D_{(x,M_x)}f(\P_s\cup\{(\A,M_\A)\})|^{4+p} \leq c, \quad \Q\text{-a.e. } x\in\XX, \A\subset\XX, |\A|\leq 1.
\end{equation}
Let $F :=f(\P_s)$. Then there is a constant $C:= C(c,p) \in (0,\infty)$ such that
\begin{align}
\label{eq:bound-Po}
  d_K\bigg(\frac{F-\E F}{\sqrt{\Var F}},N \bigg)
 \leq C ( S_{1}  +S_{2} + S_{3} ),
\end{align}
with
\begin{align*}
\Gamma_s:=&s\int_{\XX} \Prob(D_{(x,M_x)}f(\P_s)\neq 0)^{\frac{p}{8+2p}} \, \Q(\dint x),\\
\psi _{s}(x_{1},x_{2}):=& \Prob(D^2_{(x_1,M_{x_1}),(x_2,M_{x_2})}f(\P_s)\neq 0)^{\frac{p}{16+4p}},\\
S_{1}:=& \frac{s}{\Var F} \sqrt{ \int_{\XX^2}\psi _{s}(x_{1},x_{2})^{2} \, \Q^2(\dint(x_1,x_2)) },\\
S_{2}:=& \frac{s^{3/2} }{\Var F} \sqrt{ \int_{\XX} \bigg( \int_{\XX}\psi_{s} (x_{1},x_{2}) \, \Q(\dint x_2)\bigg)^2  \, \Q(\dint x_1)},\\
S_{3}:=& \frac{\sqrt{\Gamma_s}}{\Var F}+\frac{2 \Gamma_s}{(\Var F)^{3/2}}+\frac{ \Gamma_s^{5/4}+2 \Gamma_s^{3/2}}{(\Var F)^2}.
\end{align*}
\end{theo}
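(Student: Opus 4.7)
The plan is to view the marked Poisson point process $\P_s$ as an (unmarked) Poisson process on the product measurable space $(\widehat{\XX},\widehat{\mathcal{F}})$ with intensity measure $s\widehat{\Q}=s\,\Q\otimes\Q_\MM$, so that the existing Malliavin--Stein bounds for square-integrable Poisson functionals from Proposition~1.4 and Theorem~6.1 of \cite{LPS} apply verbatim to $F=f(\P_s)$. Those bounds express $d_K((F-\E F)/\sqrt{\Var F},N)$ through multi-dimensional integrals against the intensity whose integrands are (half-powers of) $L^4$-moments of $D_{\hat x}f(\P_s)$ and $D^2_{\hat x_1,\hat x_2}f(\P_s)$. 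Since $\widehat{\Q}$ is a product, the integration over the mark coordinates collapses to taking expectations with respect to the independent marks $M_x$, $M_{x_1}$, $M_{x_2}$, reducing every outer integral to one over $\XX$ or $\XX^2$ against $\Q$ and $\Q^{\otimes 2}$.

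The core analytic step is then to replace the raw $L^4$-moments produced by \cite{LPS} with the probability-valued quantities that enter $\Gamma_s$, $\psi_s$ and $S_1,S_2,S_3$. H\"older's inequality with exponents $(4+p)/q$ and $(4+p)/(4+p-q)$ gives, for any random variable $Y$ and $q\in(0,4+p)$,
\[
\E|Y|^q \;=\; \E\bigl(|Y|^q\,\mathbf{1}\{Y\neq 0\}\bigr) \;\leq\; \bigl(\E|Y|^{4+p}\bigr)^{q/(4+p)}\,\Prob(Y\neq 0)^{1-q/(4+p)}.
\]
Applied with $Y=D_{(x,M_x)}f(\P_s)$ and $q=4$, the hypothesis \eqref{eqn:ConditionLPS} bounds the moment factor by a constant depending only on $c$ and $p$, and the probability factor that remains has exponent $p/(4+p)$; after taking the square root one recovers exactly the power $p/(8+2p)$ that defines $\Gamma_s$. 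The analogous statement for $D^2_{\hat x_1,\hat x_2}f(\P_s)$ requires a preliminary uniform $(4+p)$th moment bound on the second difference; this follows from the identity $D^2_{\hat x_1,\hat x_2}f = D_{\hat x_1} f(\,\cdot\cup\{\hat x_2\}) - D_{\hat x_1}f$ combined with the triangle inequality and \eqref{eqn:ConditionLPS}, which is postulated for $|\A|\leq 1$, precisely the size needed to absorb the extra point $\hat x_2$. H\"older then yields pointwise bounds of the form $(\E|D^2 f|^4)^{1/2}\leq C\,\psi_s(x_1,x_2)^2$ and $(\E|D^2 f|^4)^{1/4}\leq C\,\psi_s(x_1,x_2)$, where $C$ depends only on $c$ and $p$.

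Substituting these pointwise inequalities into the \cite{LPS} estimates collapses the resulting integrals into the three groups $S_1$, $S_2$, and $S_3$ by inspection: the double and triple integrals of $L^4$-moments of $D^2 f$ generate the $\psi_s^2$ and $\psi_s$ expressions appearing in $S_1$ and $S_2$ (the latter because a factor $(\E|D^2 f|^4)^{1/4}$ in the integrand of the inner integral becomes $\psi_s$), while the integrals of $L^4$-moments of $D f$ collapse via Fubini to powers of $\Gamma_s$ and produce $S_3$. I expect the principal obstacle to be the bookkeeping of the $p$-dependent H\"older exponents and the verification that the resulting constants depend only on $c$ and $p$; conceptually the theorem is a direct translation of \cite{LPS} to the marked phase space combined with a single application of H\"older's inequality, and no substantially new probabilistic input is required.
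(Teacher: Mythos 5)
Your proposal is correct and follows essentially the same route as the paper: the paper's proof likewise invokes Theorem 6.1 of \cite{LPS} for the unmarked case, obtains the marked version by exploiting the product form of $\widehat{\Q}$ together with H\"older's inequality to bring the mark integration under the expectations in Theorem 1.2 of \cite{LPS}, and then re-runs the moment-to-probability H\"older step from the proof of Theorem 6.1 of \cite{LPS} exactly as you describe. The only point to watch is that for the $S_2$-type term (a squared inner integral) the mark integration cannot simply be pushed inside the final bound by concavity, which is why one must return to the pre-H\"older bound of Theorem 1.2 of \cite{LPS} before collapsing the marks — a step your outline is consistent with.
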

\begin{proof}
In case that there are no marks, this is Theorem 6.1 in \cite{LPS}. The marked version can be obtained in the following way: In Theorem 1.2 in \cite{LPS} one can use the product form of $\widehat{\Q}$ and H\"older's inequality to bring the marks under the expectations. Evaluating this new bound along the lines of the proof of Theorem 6.1 in \cite{LPS} yields \eqref{eq:bound-Po}.
\end{proof}

For the  case of binomial input, we do not have the same ready-made  bounds at our disposal. We fill this lacuna with the following analogous bound, bringing \cite{LRP} and \cite{LPS} into a satisfying alignment.

\begin{theo} \label{thm:general-binomial}
Let $n\geq 3$ and let $f:\mathbf{N}\to\R$ be measurable with $\E f(\X_n)^2<\infty$.   Assume that there are constants $c,p\in(0,\infty)$ such that
  \begin{equation} \label{eqn:Conditiongeneral-binomial}
\E|D_{(x,M_x)} f(\X _{n-1- | \A | }\cup \{(\A,M_\A)\})|^{4+p } \leq c, \quad \Q\text{-a.e. }x\in \XX, \A\subset \XX,| \A | \leq 2.
\end{equation}
Let $F:=f(\X_n)$. Then there is a constant $C:= C(c,p) \in(0,\infty)$  such that
\begin{align}
\label{eq:bound-Bn}
d_{K}\left(
\frac{F-\E F}{\sqrt{\Var F}},N\right)&\leq
C(S_{1}'+S_{2}'+S_{3}'),
\end{align}
with
\begin{align*}
\Gamma'_n:= & n\int_{\XX} \Prob(D_{(x,M_x)}f(\X  _{n-1})\neq 0)^{\frac{p}{8+2p }} \, \Q (\dint x), \\
\psi'_n(x,x'):= & \sup_{\A\subset \XX: | \A | \leq 1} \Prob(D^{2}_{(x,M_x),(x',M_{x'})}f(\X  _{n-2- | \A | }\cup (\A,M_\A))\neq 0)^{\frac{p}{8+2p} },  \\
S_{1}':=&\frac{n}{\Var F}
  \sqrt{\int_{\XX^{2}}\psi'_n(x,x') \, \Q^2(\dint (x,x'))},\\
S_{2}':= &\frac{n^{3/2}}{\Var F}\sqrt{\int_{\XX}\left(
\int_{\XX}\psi'_n(x,x') \, \Q(\dint x')
\right)^{2} \Q(\dint x)},\\
S_{3}':= &\frac{\sqrt{\Gamma'_n}}{\Var F}+\frac{ \Gamma'_n}{\sqrt{\Var F}^{3}}+\frac{ \sqrt{\Gamma '_n}^{ 3}+ \Gamma _n'}{(\Var F)^2} .
\end{align*}
\end{theo}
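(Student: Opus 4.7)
The plan is to mimic the proof of Theorem \ref{thm:LPS}, using the binomial Malliavin--Stein bound from Theorem 5.1 of \cite{LRP} in place of Theorem 1.2 of \cite{LPS}. The \cite{LRP} bound controls $d_K\bigl((F-\E F)/\sqrt{\Var F}, N\bigr)$ for a general square-integrable functional $F=f(\X_n)$ in terms of $L^q$-norms of the first and second difference operators $D_x f$ and $D^2_{x,x'} f$ evaluated on slightly reduced binomial samples $\X_{n-k}$ with $k \in \{1,2,3\}$. This explains the form of hypothesis \eqref{eqn:Conditiongeneral-binomial}, which allows $|\A| \leq 2$ so as to accommodate up to three additional atoms appearing in the worst summand of the \cite{LRP} bound.

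First I would extend Theorem 5.1 of \cite{LRP} to the marked setting. Because $\widehat{\Q} = \Q \otimes \Q_\MM$ is a product measure, every integral over $\widehat{\XX}$ factors into a $\Q$-integral and an independent expectation over the marks $(M_x)$; Fubini together with Hölder's inequality then permits pulling the marks inside the moments of the difference operators. This produces a marked analogue of the \cite{LRP} bound, with difference operators $D_{(x, M_x)} f$ and $D^2_{(x, M_x),(x', M_{x'})} f$ replacing their unmarked counterparts, in full parallel with the mark-handling step in the proof of Theorem \ref{thm:LPS}.

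The decisive step is to convert $L^q$-norms of $Df$ and $D^2 f$ into the probability quantities appearing in $\Gamma'_n$ and $\psi'_n$. For any $q \leq 4+p$ one has, by Hölder's inequality with conjugate exponents $(4+p)/q$ and $(4+p)/(4+p-q)$,
\[
\E|Df|^q = \E\bigl[|Df|^q \mathbf{1}\{Df \neq 0\}\bigr] \leq \bigl(\E|Df|^{4+p}\bigr)^{q/(4+p)} \Prob(Df \neq 0)^{(4+p-q)/(4+p)},
\]
with the moment factor uniformly bounded by a constant depending only on $c$ and $p$ by virtue of \eqref{eqn:Conditiongeneral-binomial}; the same inequality holds for $D^2 f$ with a worst-case supremum over auxiliary points $\A$. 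Substituting these bounds into the marked version of the \cite{LRP} bound and applying one additional Cauchy--Schwarz step, as in the proof of Theorem 6.1 of \cite{LPS}, produces iterated integrals of powers of $\Prob(Df \neq 0)$ and $\Prob(D^2 f \neq 0)$ whose exponents collapse to $p/(8+2p)$, matching the definitions of $\Gamma'_n$ and $\psi'_n$. The pure first-order contributions regroup into $S_3'$ exactly as their Poisson counterparts assemble into $S_3$, and the mixed second-order terms yield $S_1'$ and $S_2'$.

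The principal obstacle is the exponent bookkeeping: the \cite{LRP} bound contains several summands involving different combinations of $L^1$, $L^2$ and $L^4$ norms of $Df$ and $D^2 f$, and one must verify that after Hölder and Cauchy--Schwarz the probability exponents collapse consistently onto the single value $p/(8+2p)$ used throughout \eqref{eq:bound-Bn}. Once this consistency is checked, the remaining steps are purely organisational, the constraint $n \geq 3$ arising simply so that $\X_{n-2-|\A|}$ remains well-defined for $|\A| \leq 1$.
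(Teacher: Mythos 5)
Your overall strategy matches the paper's: start from Theorem 5.1 of \cite{LRP} (the recombination bound, Theorem \ref{thm:LRP} in the paper), lift it to the marked setting via the product structure of $\widehat{\Q}$, and use H\"older's inequality in the form $\E|Df|^q \leq (\E|Df|^{4+p})^{q/(4+p)}\Prob(Df\neq 0)^{(4+p-q)/(4+p)}$ to convert moments of difference operators into the probability quantities defining $\Gamma_n'$ and $\psi_n'$. The reduction of $B_n(f)$ to $S_1'$ and of $B_n'(f)$ to $S_2'$ (with the case distinction between $Y_1\stackrel{a.s.}{=}Y_1'$ and $Y_1,Y_1'$ independent, the latter handled by Cauchy--Schwarz) proceeds as you sketch.

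There is, however, a genuine gap. The \cite{LRP} bound contains the term $\sup_Y n(\Var F)^{-2}\,\E|(f(X)-\E F)(\D_1 f(Y))^3|$, and after separating $f(X)=f(X^1)+\D_1 f(X)$ and applying H\"older, one is left needing a bound on the \emph{centered fourth moment} $\E(F-\E F)^4$ in terms of $\Gamma_n'$ and $\Var F$. This is not supplied by hypothesis \eqref{eqn:Conditiongeneral-binomial}, which only controls difference operators, and it does not follow from the mechanical H\"older/Cauchy--Schwarz bookkeeping you describe. The paper proves a dedicated auxiliary result (Lemma \ref{lem:BoundFourthMoment}), the binomial analogue of Lemma 4.2 of \cite{LPS}, via the Efron--Stein inequality applied to $(f(\X_n)-\E F)^2$ combined with the identity $\D_1(g^2)=2g(\X_n^1)\D_1 g+(\D_1 g)^2$; this yields $\E(F-\E F)^4 \lesssim \max\{(\Gamma_n')^2, \Gamma_n'+(\Var F)^2\}$ after normalization, which is exactly what is needed to fold the offending term into $S_3'$. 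Your proposal treats all "pure first-order contributions" as assembling into $S_3'$ "exactly as their Poisson counterparts," but in the Poisson case the corresponding step also rests on a separate fourth-moment lemma; without stating and proving its binomial counterpart, the argument for the $S_3'$ part of the bound is incomplete.
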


Before proving Theorem \ref{thm:general-binomial} we require two auxiliary results, the first of which involves some additional notation. For a measurable $f:\mathbf{N}\to\R$ extend the notation $f(x_{1},\dots ,x_{q}):=f(\{x_{1},\dots ,x_{q}\})$ for $x_1,\ldots,x_q\in \XX$.

For a fixed $n\geq 1$ let $X:=(X_{1},\dots ,X_{n})$, where $X_1,\hdots,X_n$ are independent random elements in $\widehat{\XX}$ distributed according to $\widehat{\Q}$.
  Let $X',\widetilde{X}$ be independent copies of $X$.  {We write} {  $U\stackrel{a.s.}{=}V$ if two variables $U$ and $V$ satisfy $\Prob(U=V)=1$}. In the vocabulary of \cite{LRP}, a random vector $Y:=(Y_{1},\dots ,Y_{n})$ is a recombination of $\{X,X',\widetilde X\}$ if for each $1\leq i\leq n$, either {  $Y_{i}\stackrel{a.s.}{=}X_{i},Y_{i}\stackrel{a.s.}{=}X'_{i}$ or $Y_{i}\stackrel{a.s.}{=}\widetilde X_{i}$}.
For a vector $x=(x_{1},\dots ,x_{p})\in \widehat{\XX}^p$, and indices $I:=\{i_{1},\dots ,i_{q}\}\subset [p]:= \{1,2,...,p\}$, define $x^{i_{1},\dots ,i_{q}}: =(x_{j},j\notin I)$, the vector $x$ with the components indexed by $I$ removed. For $i,j \in [n]$, introduce the index derivatives
 \begin{align*}
\D_{i}f(X)&:=f(X)-f(X^i) \\
\D_{i,j}^2f(X)&:=f(X)-f(X^i)-f(X^j)+f(X^{i,j})=\D^2_{j,i}f(X).
\end{align*}
We note that the derivatives $D$ and $\D$ obey the relation $ \D_{i}f(X) = D_{X_i}f(\X_n^{i})$.

We introduce, for $n$-dimensional random vectors $Y,Y'$ and $Z$,
\begin{align*}
\gamma _{Y,Z}(f)&:=\E \left[ \mathbf{1}_{\{\D^2_{1,2}f(Y)\neq 0\}}\D_{2}f(Z)^{4} \right] \allowdisplaybreaks\\
\gamma '_{Y,Y',Z}(f)&:=\E\left[ \mathbf{1}_{\{\D^2_{1,2}f(Y)\neq 0,\;\D^2_{1,3}f(Y')\neq 0\}} \D_{2}f(Z)^{4}\right] \allowdisplaybreaks\\
B_{n}(f)&:=\sup\{\gamma _{Y,Z}(f);\;{Y,Z\text{ recombinations of }\{X,X',\widetilde X\}}\} \allowdisplaybreaks\\
B'_{n}(f)&:=\sup\{\gamma'_{Y,Y',Z}(f);\;{Y,Y',Z\text{ recombinations of }\{X,X',\widetilde X\}}\}.
\end{align*}
Theorem 5.1  of \cite{LRP}, simplified by \cite[Remark 5.2]{LRP} and \cite[Proposition 5.3]{LRP}, gives the following:

\begin{theo}\label{thm:LRP}
Let $n\geq 2$,  $f: \mathbf{N}\to\R$ measurable with $\E f(\X_n)^2<\infty$, and  $F:=f(\X_n)$.
Then there is a constant  $c_{0} \in (0, \infty)$, depending neither on $n$ nor $f$, such that
\begin{align}
\label{eq:LRP}
d_{K}\bigg(\frac{F-\E F}{\sqrt{\Var F}},N\bigg)& \leq
c_{0}
\left[\frac{\sqrt{n}}{\Var F} \left( \sqrt{nB_{n}(f)  } +\sqrt{n^{2}B'_{n}(f)}+ \sqrt{\E \D_{1}f(X)^{4}}\right) \right.\\
\notag &\left. + \sup_{Y}{ \frac{n}{(\Var F)^2}}  {\E | (f(X)-\E F) (\D_{1}f(Y))^3  |}+ \frac{n}{ (\Var F)^{\frac{3}{2}}}\E  |   \D_{1}f(X)|^{3} \right] ,
 \end{align}where the $\sup_{Y}$ runs over recombinations $Y$ of $\{X,X',\tilde X\}$.
\end{theo}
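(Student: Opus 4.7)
The plan is to derive Theorem~\ref{thm:general-binomial} from Theorem~\ref{thm:LRP} by using H\"older's inequality together with the moment hypothesis \eqref{eqn:Conditiongeneral-binomial} to convert each of the four ingredients in \eqref{eq:LRP}---namely $B_n(f)$ and $B'_n(f)$, the marginal moment terms $\E|\D_1 f(X)|^k$ for $k\in\{3,4\}$, and the mixed term $\E|(f(X)-\E F)(\D_1 f(Y))^3|$---into expressions matching $S_1'$, $S_2'$, and $S_3'$ respectively.

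I would first handle $B_n(f)$. For fixed recombinations $Y,Z$, conditioning on $Y_1=(x,M_x)$ and $Y_2=(x',M_{x'})$ and applying H\"older's inequality inside the conditional expectation with conjugate exponents $(4+p)/p$ and $(4+p)/4$ gives
\[
\E\bigl[\mathbf{1}_{\{\D^2_{1,2}f(Y)\neq 0\}}(\D_2 f(Z))^{4}\,\big|\,Y_1,Y_2\bigr]\leq \Prob(\D^2_{1,2}f(Y)\neq 0\mid Y_1,Y_2)^{p/(4+p)}\bigl(\E[|\D_2 f(Z)|^{4+p}\mid Y_1,Y_2]\bigr)^{4/(4+p)}.
\]
Under the worst-case recombination ($Z=Y$), conditioning on $(Y_1,Y_2)$ also fixes $Z_1$, so the moment factor is controlled via \eqref{eqn:Conditiongeneral-binomial} with $|\A|\leq 1$ by $c^{4/(4+p)}$, while the conditional probability factor collapses to $\Prob(D^2_{(x,M_x),(x',M_{x'})}f(\X_{n-2})\neq 0)^{p/(4+p)}\leq \psi'_n(x,x')^{2}$. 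Integrating against $\widehat\Q^{2}$, using one further H\"older step on the mark coordinates (as in the proof of Theorem~\ref{thm:LPS}) to pass from $\widehat\Q$ to $\Q$, and absorbing $\psi'_n^{2}\leq \psi'_n$ produces $\sqrt{n B_n(f)}\leq C\sqrt{n\int_{\XX^2}\psi'_n(x,x')\,\Q^{2}(\d(x,x'))}$, which after dividing by $\Var F$ yields the $S_1'$ term. The quantity $B'_n(f)$ is treated analogously: one additionally conditions on $Y'_1$ and $Y'_3$, the worst-case recombination $Y=Y'=Z$ forcing up to two coordinates of $Z$ to match prescribed values (hence the slack $|\A|\leq 2$ in \eqref{eqn:Conditiongeneral-binomial}) and up to one coordinate of $Y$ to match the prescribed $Y'_3$ (hence the sup over $|\A|\leq 1$ inside $\psi'_n$). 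A Cauchy--Schwarz step on the iterated $(Y_2,Y_3')$-integration factorizes the joint indicator probability into the product form that matches the $S_2'$ term.

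For the $S_3'$ contribution, the moments $\E|\D_1 f(X)|^k$ with $k\in\{3,4\}$ are bounded by H\"older with exponents chosen so that the power of $\Prob(\D_1 f(X)\neq 0)$ is $p/(8+2p)$; integration against $\widehat\Q$ (and again a mark-handling H\"older step) converts these into constants times appropriate powers of $\Gamma'_n$, producing the first two summands. The main obstacle is the mixed term $\E|(f(X)-\E F)(\D_1 f(Y))^3|$: I plan to apply Cauchy--Schwarz to split off $\sqrt{\Var F}$, leaving $\sqrt{\E(\D_1 f(Y))^{6}}$, which is not directly controlled by a $(4+p)$th moment when $p\leq 2$. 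The remedy is a multilinear H\"older splitting that distributes one factor of $|\D_1 f(Y)|$ with the indicator of $\{\D_1 f(Y)\neq 0\}$ and retains the remaining three factors to be bounded via the $(4+p)$-moment condition, producing an extra $\Prob(\D_1 f(Y)\neq 0)^{p/(8+2p)}$ at the cost of a marginally weaker exponent than in the Poisson analogue (which accounts for $\Gamma'^{3/2}_n$ in $S_3'$ versus $\Gamma^{5/4}_s$ in $S_3$). After the same mark-handling step and worst-case recombination analysis, this yields the remaining $\Gamma'^{3/2}_n/(\Var F)^{2}$ summand. Collecting all contributions and absorbing the constants into $C=C(c,p)$ then completes the argument.
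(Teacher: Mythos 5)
Your proposal does not prove the statement at hand. Theorem \ref{thm:LRP} is itself the target, yet your first sentence takes it as given and sets out to derive Theorem \ref{thm:general-binomial} from it; everything that follows (the H\"older reductions of $B_n(f)$, $B'_n(f)$, the moment terms and the mixed term to $S_1'$, $S_2'$, $S_3'$ under the hypothesis \eqref{eqn:Conditiongeneral-binomial}) is a sketch of the passage from Theorem \ref{thm:LRP} to Theorem \ref{thm:general-binomial}, i.e.\ of a different result in the paper, not of the bound \eqref{eq:LRP}. Note also that Theorem \ref{thm:LRP} carries no moment hypothesis beyond $\E f(\X_n)^2<\infty$, so an argument organized around \eqref{eqn:Conditiongeneral-binomial} cannot even be reformulated as a proof of it.

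Establishing \eqref{eq:LRP} requires genuinely different machinery: it is a Kolmogorov-distance normal approximation bound for a general square-integrable functional of $n$ i.i.d.\ points, proved via Stein's method combined with difference-operator (discrete Malliavin/second-order Poincar\'e type) techniques, in which the recombinations $Y,Z$ of $\{X,X',\widetilde X\}$ and the quantities $B_n(f)$, $B'_n(f)$ arise from constructing an explicit Stein kernel or covariance representation for $F$ and controlling its fluctuations; nothing in your sketch engages with Stein's equation, with a coupling or exchangeable-pair/interpolation construction, or with how the second-order differences $\D^2_{i,j}f$ enter the error terms. In the paper this statement is not proved either: it is imported verbatim from Theorem 5.1 of \cite{LRP}, simplified by Remark 5.2 and Proposition 5.3 there. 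So the concrete gap is that the entire Stein-method core of the theorem is missing; what you have written instead corresponds (in outline) to the paper's subsequent proof of Theorem \ref{thm:general-binomial}, which presupposes exactly the inequality you were asked to establish.
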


To control the fourth centered moment of $F:=f(\X_n)$, we use the following bound. For a similar bound for Poisson functionals we refer to \cite[Lemma 4.2]{LPS}.

\begin{lemm}\label{lem:BoundFourthMoment}
For a measurable $f: \mathbf{N}\to\R$, $n\in\N$  and $F:=f(\X_n)$ assume that $\Var F=1$. Then
$$
 \E (F-\E F)^4  \leq 9\max\bigg\{\bigg(32 n \int_{\XX} \sqrt{\E (D_{(x,M_x)}f(\X_{n-1}))^4} \, {\Q}(\dint x) \bigg)^2,4n\E (\D_1f(\X_n))^4+1\bigg\}.
$$
\end{lemm}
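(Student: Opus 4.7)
The plan is to apply the Efron-Stein inequality to the squared centered functional. Set $G := F - \E F$; since $\Var F = 1$, we have $\E G^2 = 1$ and hence $\E G^4 = \Var(G^2) + 1$, so it suffices to bound $\Var(G^2)$. Let $X_1', \ldots, X_n'$ be an independent copy of the underlying i.i.d.\ sequence $X_1, \ldots, X_n$, set $F_i' := f(X_1, \ldots, X_{i-1}, X_i', X_{i+1}, \ldots, X_n)$ and $G_i' := F_i' - \E F$. The Efron-Stein inequality gives $\Var(G^2) \leq \tfrac{1}{2} \sum_{i=1}^n \E[(G^2 - (G_i')^2)^2]$. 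Factoring $G^2 - (G_i')^2 = (F - F_i')(F + F_i' - 2\E F)$, bounding $(F + F_i' - 2\E F)^2 \leq 2G^2 + 2(G_i')^2$, and using exchangeability of $F$ and $F_i'$ reduces each summand to $\E[(G^2 - (G_i')^2)^2] \leq 4\, \E[(F - F_i')^2 G^2]$.

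The key decomposition is $F - F_i' = D_{X_i} f(\X_n\setminus\{X_i\}) - D_{X_i'} f(\X_n\setminus\{X_i\})$, which yields $(F - F_i')^2 \leq 2 D_{X_i} f(\X_n\setminus\{X_i\})^2 + 2 D_{X_i'} f(\X_n\setminus\{X_i\})^2$. The term involving the independent copy $X_i'$ is handled by integrating out $X_i'$ first and then applying Cauchy-Schwarz together with the integral form of Minkowski's inequality $\sqrt{\E(\int g\, d\widehat{\Q})^2} \leq \int \sqrt{\E g^2}\, d\widehat{\Q}$, yielding $\E[D_{X_i'} f(\X_n\setminus\{X_i\})^2 G^2] \leq J \sqrt{\E G^4}$ with $J := \int \sqrt{\E D_{(x, M_x)} f(\X_{n-1})^4}\, \Q(dx)$. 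For the ``self'' term $\E[D_{X_i} f(\X_n\setminus\{X_i\})^2 G^2]$, I would decompose $G = D_{X_i} f(\X_n\setminus\{X_i\}) + (f(\X_n\setminus\{X_i\}) - \E F)$ and bound $G^2 \leq 2 D_{X_i} f(\X_n\setminus\{X_i\})^2 + 2(f(\X_n\setminus\{X_i\}) - \E F)^2$. The resulting cross piece is treated by conditioning on $\X_n\setminus\{X_i\}$ and applying the same Minkowski/Cauchy-Schwarz machinery, while $\|f(\X_n\setminus\{X_i\}) - \E F\|_{L^4}$ is controlled by the triangle inequality as $\|\D_i f(\X_n)\|_{L^4} + \|G\|_{L^4}$.

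Summing over $i$ (factor $n$ by exchangeability) and absorbing the mixed cross-term $nJ\sqrt{\E \D_1 f(\X_n)^4}$ via the elementary inequality $2J\sqrt{\E \D_1 f^4} \leq J^2 + \E \D_1 f^4$ produces a bound of the form
\[
\Var(G^2) \leq c_1\, n \E \D_1 f(\X_n)^4 + c_2\, n J^2 + c_3\, n J \sqrt{\E G^4}.
\]
Setting $M := \sqrt{\E G^4}$ and invoking $\E G^4 = \Var(G^2) + 1$ then yields a quadratic inequality $M^2 \leq bM + a$ with $b$ of order $nJ$ and $a$ of order $n\E \D_1 f(\X_n)^4 + nJ^2 + 1$. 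Solving by $M \leq b + \sqrt{a}$ gives $M^2 \leq 2b^2 + 2a \leq 4 \max(b^2, a)$, and the residual $nJ^2$ in $a$ is absorbed against the $n^2 J^2$ contribution in $b^2$ using $n \geq 1$. Careful bookkeeping of the constants in the Cauchy-Schwarz, Minkowski, and Young steps then casts the final bound into the stated $9\max\{\cdot,\cdot\}$ form with coefficients $32$ and $4$. The principal obstacle is exactly this bookkeeping: one must select the decomposition of the self term and the Young-inequality weight so that the residual $nJ^2$ contribution is absorbed entirely into the first entry of the $\max$, rather than polluting the second entry $4n\E\D_1 f(\X_n)^4 + 1$.
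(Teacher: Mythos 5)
Your proposal is correct and follows essentially the same route as the paper: Efron--Stein applied to $(F-\E F)^2$, Cauchy--Schwarz/Minkowski to extract $J=\int_{\XX}\sqrt{\E(D_{(x,M_x)}f(\X_{n-1}))^4}\,\Q(\dint x)$, and a self-bounding quadratic inequality in $\sqrt{\E(F-\E F)^4}$. The only difference is cosmetic: the paper uses the remove-one-point form $\Var g(\X_n)\le 2n\,\E(\D_1 g(\X_n))^2$ together with the identity $\D_1(g^2)=2g(\X_n^1)\D_1 g+(\D_1 g)^2$, which reaches the same quadratic inequality a bit more directly than your resampled exchangeable pair with its extra symmetrization and absorption steps.
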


\begin{proof}
The Efron-Stein inequality implies that for  measurable $g: \mathbf{N}\to\R$ and $n\in\N$ such that $\E g(\X_n)^2 <\infty$,
$$
\Var g(\X_n) \leq 2n \E (\D_1g(\X_n))^2.
$$
Using $\Var F=1$ and the Efron-Stein bound in this order gives
$$
\E (F-\E F)^4 = \Var \big((f(\X_n)-\E F)^2\big) + 1 \leq 2n \E \big(\D_1((f(\X_n)- \E F)^2)\big)^2 + 1.
$$
Combining the identity
\begin{align*}
\D_1(g(\mathcal{X}_n)^2) & = g(\mathcal{X}_n)^2 - g(\mathcal{X}_n^1)^2  = (g(\mathcal{X}_n^1)+\D_1g(\mathcal{X}_n))^2 - g(\mathcal{X}_n^1)^2\\
& =2 g(\mathcal{X}_n^1) \D_1g(\mathcal{X}_n) + (\D_1g(\mathcal{X}_n))^2
\end{align*}
with Jensen's inequality, we obtain
\begin{align*}
\E (F-\E F)^4 & \leq 2n \E\left[
 (2 \D_1f(\X_n)(f(\X^1_n)- \E F)+ (\D_1f(\X_n))^2\big)^2
\right]+1 \\
& \leq 4n \E \left[4(\D_1f(\X_n))^2(f(\X_n^1)-\E F)^2+(\D_1f(\X_n))^4\right]  +1.
\end{align*}
H\"older's inequality and a combination of the triangle inequality and Jensen's inequality imply that
\begin{align*}
& \E (\D_1f(\X_n))^2(f(\X_n^1)-\E F)^2\\
 & \leq  \int_{\widehat{\XX}} \sqrt{\E(f(\X_n^1\cup\{y\})-f(\X_n^1))^4} \, \widehat{\Q}(\dint y)\sqrt{\E (f(\X_n^1)-\E F)^4}\\
& \leq  \int_{\XX} \sqrt{\E(D_{(x,M_x)}f(\X_{n-1}))^4} \, \Q(\dint x) \
2(\sqrt{\E (f(\X_n)-\E F)^4}+\sqrt{\E (\D_1f(\X_n))^4}).
\end{align*}
Combining the above estimates we arrive at
\begin{align*}
 \E (F-\E F)^4
& \leq 32n \int_{\XX} \sqrt{\E(D_{(x,M_x)}f(\X_{n-1}))^4} \, \Q(\dint x)
(\sqrt{\E (F-\E F)^4}+\sqrt{\E (\D_1f(\X_n))^4})\\
& \quad +4n \mathbb{E}(\D_1f(\X_n))^4  +1,
\end{align*}
which implies the asserted inequality.
\end{proof}

Given  Lemma \ref{lem:BoundFourthMoment}, we deduce Theorem \ref{thm:general-binomial} from Theorem \ref{thm:LRP}  as follows.

\vskip.3cm

\begin{proof}[Proof of Theorem \ref{thm:general-binomial}]
It suffices to show that each of the five terms in \eqref{eq:LRP} is bounded by a scalar multiple of $S_{1}'$, $S_{2}'$, or  $S_{3}'$. We first show that the terms in \eqref{eq:LRP} involving $B_{n}(f)$ and $B_{n}'(f)$ are bounded resp.\ by scalar multiples of $S_{1}'$ and $S_{2}'$.
Let us estimate first $B_{n}(f)$. By $\widehat{\Q}^{Y_1,Y_2,Z_1,Z_2}$ we denote the joint probability measure of $Y_1,Y_2,Z_1,Z_2$ and by $\Q^{Y_1,Y_2,Z_1,Z_2}$ the joint probability measure of $Y_1,Y_2,Z_1,Z_2$ without marks. By H\"older's inequality, the fact that $\widehat{\Q}^{Y_1,Y_2,Z_1,Z_2}$ factorizes into $\Q^{Y_1,Y_2,Z_1,Z_2}$ and a part controlling the marks, the independence of $Y_1,Y_2$, and \eqref{eqn:Conditiongeneral-binomial}, we obtain that
\begin{align*}
\gamma_{Y,Z}(f) & = \E[\mathbf{1}_{\{\D^2_{1,2}f(Y)\neq 0\}} (\D_2f(Z))^4 ]\\
& = \int_{\widehat{\XX}^4} \E[\mathbf{1}_{\{D^2_{\hat{y}_1,\hat{y}_2}f(Y^{1,2})\neq 0\}} (D_{\hat{z}_2}f(Z^{1,2}\cup\{\hat{z}_1\}))^4 ] \, \widehat{\Q}^{Y_1,Y_2,Z_1,Z_2}(\dint(\hat{y}_1,\hat{y}_2,\hat{z}_1,\hat{z}_2)) \\
& \leq \int_{\widehat{\XX}^4} \Prob(D^2_{\hat{y}_1,\hat{y}_2}f(Y^{1,2})\neq 0)^{\frac{p}{4+p}} \, \E[|D_{\hat{z}_2}f(Z^{1,2}\cup\{\hat{z}_1\}))|^{4+p} ]^{\frac{4}{4+p}} \\
& \hskip 5cm \widehat{\Q}^{Y_1,Y_2,Z_1,Z_2}(\dint(\hat{y}_1,\hat{y}_2,\hat{z}_1,\hat{z}_2)) \allowdisplaybreaks\\
& \leq \int_{\XX^4} \Prob(D^2_{(y_1,M_{y_1}),(y_2,M_{y_2})}f(Y^{1,2})\neq 0)^{\frac{p}{4+p}} \, \E[|D_{(z_2,M_{z_2})}f(Z^{1,2}\cup\{(z_1,M_{z_1})\}))|^{4+p} ]^{\frac{4}{4+p}}\\
& \hskip 5cm \Q^{Y_1,Y_2,Z_1,Z_2}(\dint(y_1,y_2,z_1,z_2))\\
& \leq c^{\frac{4}{4+p}} \int_{\XX^2} \Prob(D^2_{(y_1,M_{y_1}),(y_2,M_{y_2})}f(\X_{n-2})\neq 0)^{\frac{p}{4+p}} \, \Q^{2}(\dint(y_1,y_2)).
\end{align*}
This implies that
$$
\gamma_{Y,Z}(f) \leq c^{\frac{4}{4+p}}\int_{\XX^2} \psi_{n}'(y_{1},y_{2}) \, \Q^2(\dint (y_1,y_2)),
$$
which gives the desired bound
$$
\frac{\sqrt{n}}{\Var F}\sqrt{nB_{n}(f)} \leq c^{\frac{2}{4+p}}\frac{n}{\Var F}\sqrt{\int_{\XX^{2}}\psi_{n}'(x,x') \, \Q^2(\dint(x,x'))} \leq C(c,p) S'_{1}.
$$

To estimate $B_n'(f)$, let $\widehat{\Q}^{(Y_1,\hdots,Z_3)}$ be the joint probability measure of
$$
(Y_1,\hdots,Y_3,Y_1',\hdots,Y_3',Z_1,\hdots,Z_3)
$$
and let $\Q^{(Y_1,\hdots,Z_3)}$ be the corresponding probability measure without marks. By similar arguments as above, we obtain that
\begin{align*}
\gamma'_{Y,Y',Z}(f) & = \E[\mathbf{1}_{\{\D^2_{1,2}f(Y)\neq 0,\D^2_{1,3}f(Y')\neq 0\}} (\D_2f(Z))^4 ]\\
& = \int_{\widehat{\XX}^9} \E[\mathbf{1}_{\{D^2_{\hat{y}_1,\hat{y}_2}f(Y^{1,2,3}\cup\{\hat{y}_3\})\neq 0\}} \mathbf{1}_{\{D^2_{\hat{y}'_1,\hat{y}'_3}f(Y'^{1,2,3}\cup\{\hat{y}'_2\})\neq 0\}} (D_{\hat{z}_2}f(Z^{1,2,3}\cup\{\hat{z}_1,\hat{z}_3\}))^4 ] \\
& \hskip 2cm \widehat{\Q}^{Y_1,\hdots,Z_3}(\dint(\hat{y}_1,\hdots,\hat{z}_3)) \allowdisplaybreaks\\
& \leq \int_{\widehat{\XX}^9} \Prob(D^2_{\hat{y}_1,\hat{y}_2}f(Y^{1,2,3}\cup\{\hat{y}_3\})\neq 0)^{\frac{p}{8+2p}} \Prob(D^2_{\hat{y}'_1,\hat{y}'_3}f(Y'^{1,2,3}\cup\{\hat{y}'_2\})\neq 0)^{\frac{p}{8+2p}} \\
& \hskip 2cm \E[|D_{\hat{z}_2}f(Z^{1,2,3}\cup\{\hat{z}_1,\hat{z}_3\})|^{4+p} ]^{\frac{4}{4+p}} \, \widehat{\Q}^{Y_1,\hdots,Z_3}(\dint(\hat{y}_1,\hdots,\hat{z}_3)) \allowdisplaybreaks\\
& \leq \int_{\XX^9} \Prob(D^2_{(y_1,M_{y_1}),(y_2,M_{y_2})}f(\X_{n-3}\cup\{(y_3,M_{y_3})\})\neq 0)^{\frac{p}{8+2p}}\\
& \hskip 1.5cm \Prob(D^2_{(y'_1,M_{y'_1}),(y'_3,M_{y'_3})}f(\X_{n-3}\cup\{(y'_2,M_{y'_2})\})\neq 0)^{\frac{p}{8+2p}} \\
& \hskip 1.5cm \E[|D_{(z_2,M_{z_2})}f(\X_{n-3}\cup\{(z_1,M_{z_1}),(z_3,M_{z_3})\})|^{4+p} ]^{\frac{4}{4+p}} \, \Q^{Y_1,\hdots,Z_3}(\dint(y_1,\hdots,z_3))\\
& \leq c^{\frac{4}{4+p}} \int_{\XX^9} \psi_n'(y_1,y_2) \, \psi_n'(y'_1,y'_3) \, \Q^{Y_1,\hdots,Z_3}(\dint(y_1,\hdots,z_3)).
\end{align*}
If $Y_1\stackrel{a.s.}{=}Y_1'$, this simplifies to
$$
\gamma'_{Y,Y',Z}(f) \leq c^{\frac{4}{4+p}} \int_\XX \bigg( \int_\XX \psi_n'(x,x') \, \Q(\dint x')\bigg)^2 \, \Q(\dint x).
$$
If $Y_{1}$ and $Y'_{1}$ are independent, the Cauchy-Schwarz inequality leads to
$$
\gamma '_{Y,Y',Z}(f) \leq c^{\frac{4}{4+p}}\bigg( \int_{\XX^{2}}\psi_n'(x,x') \, \Q^2(\dint (x,x')) \bigg)^{2}
 \leq c^{\frac{4}{4+p}} \int_{\XX}\bigg(\int_{\XX}\psi_{n}'(x,x')\, \Q(\dint x')\bigg)^{2}\, \Q(\dint x).
$$
Thus, we obtain the desired bound
$$
\frac{\sqrt{n}}{\Var F}\sqrt{n^{2}\gamma '_{Y,Y',Z}(f)}\leq c^{\frac{2}{4+p}}\frac{n^{\frac{3}{2}}}{\Var F}\sqrt{\int_{\XX}\left(
\int_{\XX}\psi_{n}'(x,x') \, \Q(\dint x')
\right)^{2} \, \Q(\dint x)} \leq C(c,p) S_{2}'.
$$

We now show that the remaining terms in  \eqref{eq:LRP} are bounded by a scalar multiple of $S_{3}'$.
 For $1\leq m\leq 4$ and $\Q$-a.e.\ $x\in\XX$,  H\"older's inequality and \eqref{eqn:Conditiongeneral-binomial}
 lead to
\begin{equation}\label{eqn:MomentDxFBinomial}
\begin{split}
\E |D_{(x,M_x)} f(\X_{n-1})|^m & \leq \E[|D_{(x,M_x)}f(\X_{n-1})|^{4+p}]^{\frac{m}{4+p}} \, \Prob( D_{(x,M_x)}f(\X_{n-1})\neq 0)^{\frac{4+p-m}{4+p}}\\
& \leq c^{\frac{m}{4+p}} \Prob( D_{(x,M_x)}f(\X_{n-1})\neq 0)^{\frac{p}{4+p}},
\end{split}
\end{equation}
where we have also used that $\frac{4+p-m}{4+p}\geq \frac{p}{4+p}$. For $1\leq m \leq 4$ and $u\in [1/2,1]$ we derive from \eqref{eqn:MomentDxFBinomial} that
\begin{equation}\label{eqn:IntegratedMomentDxFBinomial}
\int_{\XX} \E\big[|D_{(x,M_{x})}f(\X_{n-1})|^{m}\big]^{u} \, \Q(\dint x) \leq c^{\frac{mu}{4+p}} \int_\XX   \Prob( D_{(x,M_x)}f(\X_{n-1})\neq 0)^{\frac{up}{4+p}} \, \Q(\dint x) \leq c^{\frac{mu}{4+p}}  \frac{\Gamma_n'}{n}.
\end{equation}
This implies immediately that, for $1\leq m\leq 4$,
$$
\E | \D_{1}f(X) |^{m} \leq c^{\frac{m}{4+p}} \int_{\XX} \Prob( D_{(x,M_x)}f(\X_{n-1})\neq 0)^{\frac{p}{4+p}} \, \Q(\dint x) \leq c^{\frac{m}{4+p}} \frac{\Gamma_n'}{n}.
$$
This gives for $m=4$ and $m=3$ that the third and fifth terms in \eqref{eq:LRP} are bounded by
\begin{align*}
\frac{\sqrt{n}\sqrt{\E\D_{1}f(X)^{4}}}{\Var F}+\frac{n\E | \D_{1}f(X) | ^{3}}{(\Var F)^{\frac{3}{2}}}
\leq \frac{c^{\frac{2}{4+p}}\sqrt{\Gamma_n'}}{\Var F}+\frac{c^{\frac{3}{4+p}}\Gamma_n'}{(\Var F)^{\frac{3}{2}}} \leq C(c,p) S_{3}'.
\end{align*}

Lastly, we bound the fourth term in \eqref{eq:LRP} by a scalar multiple of $S_{3}'$.
Let $Y$ be a recombination of $\{X,X',\tilde X\}$. Noting that $Y\equlaw X$, let us estimate
\begin{align*}
& \E  | (f(X)-\E F) (\D _{1}f(Y))^{3}|\\
 &=\E \left|\left( f(X^{1})-\E F+\D_{1}f(X)\right) (\D_{1}f(Y))^{3} \right|\\
&\leq\int_{\XX}\E\left[|f(X^{1})-\E F| \, |D_{(y_{1},M_{y_1})}f(Y^{1})|^{3} \right] \, \Q (\dint y_{1})+\E \left[|\D_{1}f(X)| \, |\D_{1}f(Y)|^{3}\right] \allowdisplaybreaks\\
&\leq \E[(f(\X_{n}^{1})-\E F)^4]^{\frac{1}{4}} \int_{\XX} \E\left[(D_{(x,M_x)}f(\X_{n-1}))^{4}\right]^{\frac{3}{4}} \, \Q(\dint x)+\E (\D_{1}f(X))^{4}\\
&\leq \big(\E[(f(\X_n)-\E F)^4]^{\frac{1}{4}}+ \E[(\D_1f(X))^4]^{\frac{1}{4}} \big) \int_{\XX} \E\left[(D_{(x,M_x)}f(\X_{n-1}))^{4}\right]^{\frac{3}{4}} \, \Q(\dint x)
+ c^{\frac{4}{4+p}}\frac{\Gamma_n'}{n}.
\end{align*}
By  \eqref{eqn:IntegratedMomentDxFBinomial} we have
$$
\int_{\XX} \E\left[(D_{(x,M_x)}f(\X_{n-1}))^{4}\right]^{\frac{3}{4}} \, \Q(\dint x) \leq c^{\frac{3}{4+p}}  \frac{\Gamma_n'}{n}.
$$
From Lemma \ref{lem:BoundFourthMoment} and \eqref{eqn:IntegratedMomentDxFBinomial} it follows that
\begin{align*}
\frac{\E (F-\E F)^4}{(\Var F)^2} & \leq 9 \max\bigg\{\bigg(\frac{32 n}{\Var F} \int_\XX \sqrt{\E (D_{(y,M_y)}f(\X_{n-1}))^4} \, \Q(\dint y) \bigg)^2,\\
& \hskip 2cm 4n \frac{\E (\D_1f(\X_n))^4}{(\Var F)^2} +1 \bigg\}\\
& \leq 9\max\bigg\{\frac{1024c^{\frac{4}{4+p}} \, (\Gamma'_n)^2}{(\Var F)^2}, \frac{4c^{\frac{4}{4+p}} \, \Gamma_n'}{(\Var F)^2}+1\bigg\}.
\end{align*}
All together, the fourth term in \eqref{eq:LRP} satisfies the bound
\begin{align*}
& \frac{n \E  | (f(X)-\E F) (\D_{1}f(Y))^{3}|}{(\Var F)^2}\\
& \leq
  \bigg(\sqrt{3}\max\bigg\{ \frac{4\cdot\sqrt{2} c^{\frac{1}{4+p}} \, \sqrt{\Gamma_n'} }{\sqrt{\Var F}}  , \frac{\sqrt{2}c^{\frac{1}{4+p}} \, \Gamma_n'^{\frac{1}{4}}}{\sqrt{\Var F}}+1\bigg\}+\frac{c^{\frac{1}{4+p}} \, \Gamma_n'^{\frac{1}{4}}}{n^{\frac{1}{4}} \sqrt{\Var F}}  \bigg) \frac{ c^{\frac{3}{4+p}} \, \Gamma_n'}{(\Var F)^{\frac{3}{2}}} +\frac{c^{\frac{4}{4+p}}\Gamma_n'}{(\Var F)^2}\\
 & \leq C(c,p) S_{3}',
\end{align*}
which completes the proof.
\end{proof}

\noindent{\em Remark.} The bounds in Theorem \ref{thm:LPS} and Theorem \ref{thm:general-binomial} are still valid for the Wasserstein distance given in \eqref{eqn:DefinitiondW}. This follows from the fact that the underlying bounds in Theorem 6.1 in \cite{LPS} and Theorem \ref{thm:LRP} (see also Remark 4.3 in \cite{LRP}) are true for the Wasserstein distance as well.

\section{Proofs of Theorem \ref{theoLSY}  and  Theorem \ref{2coroLSY}}  \label{section4}

The bounds in Theorems \ref{thm:LPS} and \ref{thm:general-binomial} are admittedly unwieldy. However when $F$ is a sum of stabilizing score functions, as in
\eqref{Poissonstat} and \eqref{Binomialstat}, then the terms on the right-hand side of \eqref{eq:bound-Po} and \eqref{eq:bound-Bn} conveniently collapse into
the more manageable bounds \eqref{eqn:KolmogorovPoisson} and \eqref{eqn:KolmogorovBinomial}, respectively.

We first provide  several lemmas giving moment and probability bounds for the first and second order difference operators.  Throughout we assume that the hypotheses of Theorem \ref{theoLSY} are in force.  We can assume without loss of generality that $C_{stab}=C_{K}=:C$, $c_{stab}=c_{K}=:c$ and $\alpha_{stab}=\alpha_K=:\alpha$.

\begin{lemm} \label{Lem4.1}
\begin{itemize}
\item []
\item [(a)] For any $x\in\XX$ and $r\geq 0$,
\begin{equation}\label{eqn:UpperBoundBall}
\Q(B(x,r))\leq \kappa r^{\gamma}.
\end{equation}
\item [(b)] For any $\nu>0$ there is a constant $C_\nu\in(0,\infty)$ such that
\begin{equation}\label{eqn:IntegrationFormula}
\int_{\XX\setminus B(x,r)} \exp(-(\beta^{1/\gamma} \d(x,y))^\nu) \, \Q(\dint y) \leq \frac{C_\nu}{\beta} \exp(-(\beta^{1/\gamma} r)^{\nu}/2)
\end{equation}
for all $\beta\geq 1$, $x\in\XX$ and $r\geq 0$.

\end{itemize}
\end{lemm}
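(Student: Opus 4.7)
The plan is to prove (a) by recognizing that hypothesis \eqref{eqn:SurfaceBall} is an upper bound on the upper right Dini derivative of the monotone function $F(r) := \Q(B(x,r))$, and then integrating. Fix $x \in \XX$. Since the balls are closed and increase with $r$, $F$ is nondecreasing and right-continuous (the latter by downward monotone convergence, using $B(x,r) = \bigcap_n B(x,r+1/n)$). The hypothesis reads $D^+ F(r) \leq \kappa \gamma r^{\gamma-1}$ for every $r \geq 0$, so the standard integral comparison for right-continuous monotone functions with bounded Dini derivative yields $F(b)-F(a) \leq \int_a^b \kappa \gamma s^{\gamma-1}\,\dint s = \kappa(b^\gamma - a^\gamma)$ for $0 \leq a \leq b$. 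Taking $a = 0$ (with $F(0)=0$ as part of the setup) gives $\Q(B(x,r)) \leq \kappa r^\gamma$.

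For (b), the idea is to reduce the integral to a one-dimensional one via a layer-cake representation, insert the bound from (a), and then collapse all $\beta$-dependence through a single change of variable. Since the integrand takes values in $(0,1]$, Fubini gives
\begin{equation*}
\int_{\XX\setminus B(x,r)} e^{-(\beta^{1/\gamma}\d(x,y))^{\nu}} \, \Q(\dint y) \;=\; \int_0^1 \Q\bigl(B(x,s(t)) \setminus B(x,r)\bigr) \, \dint t,
\end{equation*}
where $s(t) := (-\ln t)^{1/\nu}/\beta^{1/\gamma}$. Substituting $s = s(t)$ and observing that the integrand vanishes for $s < r$, this rewrites as
\begin{equation*}
\int_r^\infty \bigl[\Q(B(x,s)) - \Q(B(x,r))\bigr] \, \nu \beta^{\nu/\gamma} s^{\nu-1} e^{-(\beta^{1/\gamma}s)^{\nu}} \, \dint s.
\end{equation*}
I apply part (a) to bound $\Q(B(x,s)) \leq \kappa s^{\gamma}$ and drop the nonnegative subtracted term.

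The change of variable $u = \beta^{1/\gamma} s$ then collects all $\beta$-dependence into a single factor $1/\beta$, yielding
\begin{equation*}
\int_{\XX\setminus B(x,r)} e^{-(\beta^{1/\gamma}\d(x,y))^{\nu}} \, \Q(\dint y) \;\leq\; \frac{\kappa \nu}{\beta} \int_{\beta^{1/\gamma} r}^\infty u^{\gamma+\nu-1} e^{-u^{\nu}} \, \dint u.
\end{equation*}
Finally, I split $e^{-u^{\nu}} = e^{-u^{\nu}/2} \cdot e^{-u^{\nu}/2}$, extract the tail bound $e^{-u^\nu/2} \leq e^{-(\beta^{1/\gamma}r)^{\nu}/2}$ on the interval of integration, and recognize that the remaining integral $\int_0^\infty u^{\gamma+\nu-1} e^{-u^{\nu}/2} \, \dint u$ is a finite constant (equal, after substituting $v=u^\nu/2$, to a scalar multiple of $\Gamma(\gamma/\nu+1)$). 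This produces the claimed bound with an explicit $C_\nu$ depending only on $\gamma, \nu, \kappa$.

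The only step requiring any care is the passage from the Dini derivative bound to the pointwise bound in part (a); once that comparison theorem is invoked, the rest of the argument is a direct computation. Note that the hypothesis $\beta \geq 1$ is not essential for this bound per se, but ensures the factor $1/\beta$ controls the integral in the applications.
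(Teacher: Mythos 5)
Your part (b) is correct, and it takes a slightly different (arguably cleaner) route than the paper: you feed only the crude ball bound from (a) into a layer-cake/Fubini representation, whereas the paper first proves the increment estimate \eqref{eqn:differenceQ}, namely $\Q(B(x,v))-\Q(B(x,u))\leq \kappa\gamma\max\{u^{\gamma-1},v^{\gamma-1}\}(v-u)$, and then passes to $\int_r^\infty e^{-(\beta^{1/\gamma}u)^{\nu}}\kappa\gamma u^{\gamma-1}\,\dint u$ via Riemann sums. Both routes end with the same change of variable $u=\beta^{1/\gamma}s$ and tail-splitting of the exponential.

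The gap is in part (a), precisely at the step you flag as delicate. The lemma you invoke --- an integral comparison for \emph{right-continuous} monotone functions with bounded upper \emph{right} Dini derivative --- is false: take $F(r)=\mathbf{1}_{\{r\geq 1\}}$, which is nondecreasing and right-continuous with $D^{+}F\equiv 0$, yet $F(2)-F(0)=1$. For a right-continuous monotone $F$ the right Dini derivative is blind to left jumps, i.e., here to $\Q$-mass on spheres $\{y:\d(x,y)=r_0\}$, and a purely one-sided reading of \eqref{eqn:SurfaceBall} does not exclude such mass (if the $\limsup$ were only over $\varepsilon\to 0^{+}$, part (a) itself would fail, e.g.\ for $\Q$ a point mass at distance $1$ from $x$). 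What makes the statement true is the two-sided $\limsup_{\varepsilon\to 0}$ in \eqref{eqn:SurfaceBall}: the $\varepsilon<0$ difference quotients bound $(F(r)-F(r-h))/h$ as well, which forces spheres to be $\Q$-null, hence $F$ continuous, and is exactly what the paper's bisection argument exploits to prove \eqref{eqn:differenceQ}. To close your argument you must use this left-sided information --- either run the bisection argument, or first deduce continuity of $F$ from the two-sided condition and only then apply the comparison theorem in its correct (continuous) form. As written, the inequality $F(b)-F(a)\leq\int_a^b\kappa\gamma s^{\gamma-1}\,\dint s$ is not justified.
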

\begin{proof}
We prove only (b) since (a) can be shown similarly. We first derive the inequality
\begin{equation}\label{eqn:differenceQ}
\Q(B(x,v))-\Q(B(x,u)) \leq \kappa\gamma \max\{u^{\gamma-1},v^{\gamma-1}\} (v-u)
\end{equation}
for $0<u<v<\infty$. Let $g(t):=\Q(B(x,t))$, $t>0$, and assume that there is a $c\in(0,\infty)$ such that $g(v)-g(u)\geq c(v-u)$. Then, one can construct sequences $(u_n)_{n\in\N}$ and $(v_n)_{n\in\N}$ such that $u_1=u$, $v_1=v$, $u_{n}\leq u_{n+1}<v_{n+1}\leq v_n$, $n\in\N$, $\lim_{n\to\infty} u_n =\lim_{n\to\infty} v_n=:w$, and $g(v_n)-g(u_n)\geq c(v_n-u_n)$, $n\in\N$. Consequently,
$$
c \leq \frac{v_n-w}{v_n-u_n} \frac{g(v_n)-g(w)}{v_n-w} + \frac{w-u_n}{v_n-u_n} \frac{g(w)-g(u_n)}{w-u_n} \leq \max\bigg\{\frac{g(v_n)-g(w)}{v_n-w},\frac{g(w)-g(u_n)}{w-u_n}\bigg\}
$$
and $n\to\infty$ and \eqref{eqn:SurfaceBall} lead to $c\leq \kappa \gamma w^{\gamma-1} \leq \kappa\gamma \max\{u^{\gamma-1},v^{\gamma-1}\}$.

It is sufficient to show \eqref{eqn:IntegrationFormula} for $r>0$ since the case $r=0$ then follows from $r\to0$. For any monotone sequence $(r_n)_{n\in\N}$ with $r_1>r=:r_0$ and $\lim_{n\to\infty}r_n=\infty$ we have
$$
\int_{\XX\setminus B(x,r)} \exp(-(\beta^{1/\gamma} \d(x,y))^\nu) \, \Q(\dint y) \leq \sum_{n=1}^\infty \exp(-(\beta^{1/\gamma} r_{n-1})^\nu) \, \Q(B(x,r_n)\setminus B(x,r_{n-1})).
$$
For $\sup_{n\in\N}|r_n-r_{n-1}|\to 0$ the inequality \eqref{eqn:differenceQ} and the properties of the Riemann integral imply that
\begin{align*}
\int_{\XX\setminus B(x,r)} \exp(-(\beta^{1/\gamma} \d(x,y))^\nu) \, \Q(\dint y) & \leq \int_r^\infty \exp(-(\beta^{1/\gamma} u)^\nu) \, \kappa\gamma u^{\gamma -1} \, \dint u\\
& = \frac{1}{\beta} \int_{\beta^{1/\gamma}r}^\infty \exp(-w^\nu) \, \kappa\gamma w^{\gamma -1} \, \dint w.
\end{align*}
Now a straightforward computation completes the proof of (b).
\end{proof}

Throughout our proofs we only make use of \eqref{eqn:UpperBoundBall} and \eqref{eqn:IntegrationFormula} and not of \eqref{eqn:SurfaceBall} so that one could replace the assumption \eqref{eqn:SurfaceBall} by \eqref{eqn:UpperBoundBall} and \eqref{eqn:IntegrationFormula}.

\begin{lemm}\label{lem:DH}
Let $\mathcal{M}\in\mathbf{N}$ and $\hat{y},\hat{y}_1,\hat{y}_2\in\widehat{\XX}$. Then, for $s\geq 1$,
\begin{align*}
D_{\hat{y}} h_s(\mathcal{M}) & = \xi_s(\hat{y},\mathcal{M}\cup\{\hat{y}\})+\sum_{x\in \mathcal{M}} D_{\hat{y}}\xi_s(x,\mathcal{M})\\
D^2_{\hat{y}_1,\hat{y}_2} h_s(\mathcal{M}) & = D_{\hat{y}_1}\xi_s(\hat{y}_2,\mathcal{M}\cup\{\hat{y}_2\}) +  D_{\hat{y}_2}\xi_s(\hat{y}_1,\mathcal{M}\cup\{\hat{y}_1\}) + \sum_{x\in\mathcal{M}} D^2_{\hat{y}_1,\hat{y}_2}\xi_s(x,\mathcal{M}).
\end{align*}

\end{lemm}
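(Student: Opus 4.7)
The statement is essentially a direct product-rule-style computation for the add-one-point operator $D$ applied to a functional of the form $h_s(\mathcal{M})=\sum_{x\in\mathcal{M}}\xi_s(x,\mathcal{M})$. The plan is to unfold the definitions and carefully separate the ``new point'' contribution from the ``updated scores of old points'' contribution.

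First, for the first identity I would write
\[
D_{\hat{y}} h_s(\mathcal{M}) = \sum_{x\in\mathcal{M}\cup\{\hat y\}} \xi_s(x,\mathcal{M}\cup\{\hat y\}) - \sum_{x\in\mathcal{M}} \xi_s(x,\mathcal{M}),
\]
split the first sum into the contribution of $\hat y$ (which gives the term $\xi_s(\hat{y},\mathcal{M}\cup\{\hat y\})$) and the contribution of the points of $\mathcal{M}$, and then pair up the terms indexed by the same $x\in\mathcal{M}$ in the two sums. Each such pair collapses to $\xi_s(x,\mathcal{M}\cup\{\hat y\})-\xi_s(x,\mathcal{M})=D_{\hat y}\xi_s(x,\mathcal{M})$, which yields the claimed formula.

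For the second identity I would exploit the identity $D^2_{\hat y_1,\hat y_2} = D_{\hat y_1}D_{\hat y_2}$, which follows directly from the definition of $D^2$. Starting from the just-proved formula for $D_{\hat y_2} h_s(\mathcal{M})$, I would apply $D_{\hat y_1}$ to each of its two parts. The single term $\xi_s(\hat{y}_2,\mathcal{M}\cup\{\hat y_2\})$ is still a functional of $\mathcal{M}$, and adding $\hat y_1$ gives precisely $D_{\hat y_1}\xi_s(\hat{y}_2,\mathcal{M}\cup\{\hat y_2\})$. For the sum $\sum_{x\in\mathcal{M}} D_{\hat y_2}\xi_s(x,\mathcal{M})$, I apply the same bookkeeping as in the first step: I isolate the contribution of the newly added point $\hat y_1$ (producing $D_{\hat y_2}\xi_s(\hat y_1,\mathcal{M}\cup\{\hat y_1\})$) and pair the remaining terms indexed by $x\in\mathcal{M}$, which produce $D_{\hat y_1}D_{\hat y_2}\xi_s(x,\mathcal{M})=D^2_{\hat y_1,\hat y_2}\xi_s(x,\mathcal{M})$. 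Adding the two resulting expressions gives the claimed formula.

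Very little can go wrong here since the argument is purely combinatorial. The only points requiring some care are the treatment of atoms: if $\hat y\in\mathcal{M}$ then $\mathcal{M}\cup\{\hat y\}=\mathcal{M}$ and both sides of the first identity vanish trivially (similarly for $\hat y_1,\hat y_2$ in the second identity), so the identities hold in all cases. Because $h_s$ is defined pointwise on configurations, no integrability, stabilization or moment hypothesis is required for this lemma; consequently no obstacle from the general machinery developed earlier in the paper enters, and the proof reduces to the two careful splittings outlined above.
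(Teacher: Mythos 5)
Your proposal is correct and follows essentially the same route as the paper: split off the contribution of the newly added point, pair the remaining terms to form first-order differences, and obtain the second identity by applying the first one at $\mathcal{M}\cup\{\hat y_1\}$ and at $\mathcal{M}$ and subtracting (i.e.\ $D^2_{\hat y_1,\hat y_2}=D_{\hat y_1}D_{\hat y_2}$). Nothing is missing.
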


\begin{proof}
In the following let $h:=h_s$ and $\xi:=\xi_s$. By the definition of the difference operator we have that
\begin{align*}
D_{\hat{y}}h(\mathcal{M}) & = \sum_{x\in\mathcal{M}\cup\{\hat{y}\}} \xi(x,\mathcal{M}\cup\{\hat{y}\}) - \sum_{x\in\mathcal{M}} \xi(x,\mathcal{M}) \\
& = \xi(\hat{y},\mathcal{M}\cup\{\hat{y}\}) + \sum_{x\in\mathcal{M}} \big(\xi(x,\mathcal{M}\cup\{\hat{y}\}) - \xi(x,\mathcal{M})\big) \allowdisplaybreaks\\
& = \xi(\hat{y},\mathcal{M}\cup\{\hat{y}\}) + \sum_{x\in\mathcal{M}} D_{\hat{y}}\xi(x,\mathcal{M}).
\end{align*}
For the second-order difference operator this implies that
\begin{align*}
& D^2_{\hat{y}_1,\hat{y}_2} h(\mathcal{M})\\
& = \xi(\hat{y}_2,\mathcal{M}\cup\{\hat{y}_1,\hat{y}_2\}) + \sum_{x\in\mathcal{M}\cup\{\hat{y}_1\}} \!\!\!\!\! D_{\hat{y}_2}\xi(x,\mathcal{M}\cup\{\hat{y}_1\})
- \xi(\hat{y}_2,\mathcal{M}\cup\{\hat{y}_2\}) - \sum_{x\in\mathcal{M}} D_{\hat{y}_2}\xi(x,\mathcal{M}) \allowdisplaybreaks\\
& = D_{\hat{y}_1}\xi(\hat{y}_2,\mathcal{M}\cup\{\hat{y}_2\}) +  D_{\hat{y}_2}\xi(\hat{y}_1,\mathcal{M}\cup\{\hat{y}_1\}) + \sum_{x\in\mathcal{M}} \big(D_{\hat{y}_2}\xi(x,\mathcal{M}\cup\{\hat{y}_1\})-D_{\hat{y}_2}\xi(x,\mathcal{M})\big)\\
& = D_{\hat{y}_1}\xi(\hat{y}_2,\mathcal{M}\cup\{\hat{y}_2\}) +  D_{\hat{y}_2}\xi(\hat{y}_1,\mathcal{M}\cup\{\hat{y}_1\}) + \sum_{x\in\mathcal{M}} D^2_{\hat{y}_1,\hat{y}_2}\xi(x,\mathcal{M}),
\end{align*}
which completes the proof.
\end{proof}

If a point $\hat{y} \in\widehat{\XX}$ is inserted into $\mathcal{M}\in\mathbf{N}$ at a distance exceeding the stabilization radius at $\hat{x}\in\M$, then the difference operator $D_{\hat{y}}$ of the score at $\hat{x}$ vanishes, as seen by the next lemma.

\begin{lemm}\label{lem:DXi}
Let $\mathcal{M}\in\mathbf{N}$, $(x,m_x)\in\M$, $\widehat{\A} \subset \widehat{\XX}$ with $|\widehat{\A}|\leq 6$, $y,y_1,y_2\in\XX$ and $m_y,m_{y_1},m_{y_2}\in\MM$. Then, for $s\geq 1$,
$$
D_{(y,m_y)}\xi_s((x,m_x),\mathcal{M} \cup \widehat{\A}) = 0 \quad \text{ if } \quad R_s((x,m_x),\mathcal{M}) < \d(x,y)
$$
and
$$
D^2_{(y_{1},m_{y_1}),(y_{2},m_{y_2})}\xi_s((x,m_x),\mathcal{M}) = 0 \quad \text{ if } \quad R_s((x,m_x),\mathcal{M}) < \max\{\d(x,y_1),\d(x,y_2)\}.
$$ \end{lemm}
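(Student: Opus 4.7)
The plan is to apply the stabilization identity \eqref{eqn:RadiusOfStabilization} directly to each of the score evaluations appearing in the difference operators. The key observations are (i) since $(x,m_x)\in\M$, we have $\M\cup\{(x,m_x)\}=\M$, so that $R_s((x,m_x),\M\cup\{(x,m_x)\})=R_s((x,m_x),\M)=:R$; and (ii) the cap of $7$ on the auxiliary set size in \eqref{eqn:RadiusOfStabilization} is exactly sized so that adding at most two extra points to a set $\widehat{\A}$ of size at most $6$ still fits under the budget.

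For the first identity, write
\[
D_{(y,m_y)}\xi_s((x,m_x),\M\cup\widehat{\A}) = \xi_s((x,m_x),\M\cup\widehat{\A}\cup\{(y,m_y)\}) - \xi_s((x,m_x),\M\cup\widehat{\A}).
\]
Both $|\widehat{\A}|\leq 6$ and $|\widehat{\A}\cup\{(y,m_y)\}|\leq 7$, so \eqref{eqn:RadiusOfStabilization} gives that each of the two scores equals its restriction to $\widehat{B}(x,R)$. The assumption $\d(x,y)>R$ places $(y,m_y)$ outside $\widehat{B}(x,R)$, so the two localized configurations $(\M\cup\widehat{\A})\cap\widehat{B}(x,R)$ and $(\M\cup\widehat{\A}\cup\{(y,m_y)\})\cap\widehat{B}(x,R)$ coincide, and the difference vanishes.

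For the second identity, expand the second-order difference operator as an alternating sum of four scores $\xi_s((x,m_x),\M\cup\widehat{\A}')$ with $\widehat{\A}'\in\{\emptyset,\{\hat{y}_1\},\{\hat{y}_2\},\{\hat{y}_1,\hat{y}_2\}\}$, each of size at most $2$. Applying the same localization, every score equals its restriction to $\widehat{B}(x,R)$. Without loss of generality assume $\d(x,y_1)>R$, so $\hat{y}_1\notin\widehat{B}(x,R)$; then the $\widehat{\A}'=\emptyset$ and $\widehat{\A}'=\{\hat{y}_1\}$ scores agree (and likewise for $\widehat{\A}'=\{\hat{y}_2\}$ versus $\{\hat{y}_1,\hat{y}_2\}$), so the alternating sum collapses to zero. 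I expect no real obstacle here: the entire argument is bookkeeping around \eqref{eqn:RadiusOfStabilization}, and the only point requiring any care is tracking the cardinalities of the auxiliary sets so they stay within the cap of $7$ allowed by the definition of the radius of stabilization.
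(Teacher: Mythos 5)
Your proposal is correct and follows essentially the same route as the paper: localize each score evaluation to $\widehat{B}(x,R_s((x,m_x),\M))$ via \eqref{eqn:RadiusOfStabilization} (with the same cardinality bookkeeping for the auxiliary sets), and observe that the point(s) outside the ball do not change the localized configuration, so the terms of the (second-order) difference operator cancel in pairs. The paper phrases the without-loss-of-generality step as $\d(x,y_1)\geq\d(x,y_2)$ rather than directly assuming $\d(x,y_1)>R_s((x,m_x),\M)$, but the cancellation pattern is identical.
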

\begin{proof}
Note that $R:=R_s$ and $\xi:=\xi_s$. Moreover, we use the abbreviations $\hat{x}:=(x,m_x)$, $\hat{y}:=(y,m_y)$, $\hat{y}_1:=(y_1,m_{y_1})$ and $\hat{y}_2:=(y_2,m_{y_2})$. Recall that $\widehat{B}(z,r)$ stands for the cylinder $B(z,r)\times\MM$ for $z\in\XX$ and $r>0$. It follows from the definitions of the difference operator and of the radius of stabilization that
\begin{equation}\label{eqn:DyXi}
\begin{split}
D_{\hat{y}}\xi( \hat{x},\mathcal{M}\cup \widehat{\A})
& = \xi(\hat{x},\mathcal{M}\cup \widehat{\A} \cup\{ \hat{y}\}) -\xi( \hat{x},\mathcal{M}\cup \widehat{\A})\\
& = \xi( \hat{x},(\mathcal{M}\cup \widehat{\A} \cup\{ \hat{y}\})\cap \widehat{B}(x,R(\hat{x},\mathcal{M})))\\
& \quad -\xi(\hat{x},(\mathcal{M}\cup \widehat{\A})\cap \widehat{B}(x,R(\hat{x},\mathcal{M}))).
\end{split}
\end{equation}
If $R(\hat{x},\mathcal{M}) < \d(x,y)$, we have
$$
(\mathcal{M}\cup \widehat{\A}   \ \cup\{ \hat{y}\})\cap \widehat{B}(x,R(\hat{x},\mathcal{M}))=(\mathcal{M}\cup \widehat{\A}   \ )\cap \widehat{B}(x,R(\hat{x},\mathcal{M}))
$$
so that the terms on the right-hand side of \eqref{eqn:DyXi} cancel out. For the second order difference operator, we obtain that
\begin{equation}\label{eqn:Dy1y2Xi}
\begin{split}
D^2_{\hat{y}_1,\hat{y}_2}\xi(\hat{x},\mathcal{M})
& = \xi( \hat{x},(\mathcal{M}\cup \{\hat{y}_1,\hat{y}_2\})\cap \widehat{B}(x,R(\hat{x},\mathcal{M})))\\
& \quad - \xi(\hat{x},(\mathcal{M}\cup\{\hat{y}_1\})\cap \widehat{B}(x,R(\hat{x},\mathcal{M})))\\
& \quad - \xi(\hat{x},(\mathcal{M}\cup\{\hat{y}_2\})\cap \widehat{B}(x,R(\hat{x},\mathcal{M})))\\
& \quad + \xi(\hat{x},\mathcal{M}\cap \widehat{B}(x,R(\hat{x},\mathcal{M}))).
\end{split}
\end{equation}
Without loss of generality we can assume that $\d(x,y_1)\geq \d(x,y_2)$. If $R(\hat{x},\mathcal{M}) < \max\{\d(x,y_1),\d(x,y_2)\}=\d(x,y_1)$, we see that
$$
(\mathcal{M}\cup \{ {y}_1, {y}_2\})\cap \widehat{B}(x,R(\hat{x},\mathcal{M})) = (\mathcal{M}\cup \{ {y}_2\})\cap \widehat{B}(x,R(\hat{x},\mathcal{M}))
$$
and
$$
(\mathcal{M}\cup \{\hat{y}_1\})\cap \widehat{B}(x,R(\hat{x},\mathcal{M}))=\mathcal{M}\cap \widehat{B}(x,R(\hat{x},\mathcal{M})),
$$
whence the terms on the right-hand side of \eqref{eqn:Dy1y2Xi} cancel out.
\end{proof}

We recall that $M_x$, $x\in\XX$, always stands for a random mark distributed according to $\Q_\MM$ and associated with the point $x$. Moreover, we tacitly assume that $M_x$ is independent from everything else. For a finite set $\A\subset\XX$, $(\A,M_\A)$ is the shorthand notation for $\{(x,M_x): x\in\A\}$.
The next lemma shows that moments of difference operators of the scores are uniformly bounded. In the following $p\in(0,1]$ and $C_p>0$ come from the moment assumptions \eqref{eqn:momPoisson} and \eqref{eqn:momBinomial}, respectively.

\begin{lemm}\label{lem:BoundsEDXi}
\begin{itemize}
\item []
\item [(a)] For any $\varepsilon\in(0,p]$ and for all $s\geq 1$, $x,y\in\XX$ and $\A \subset \XX$ with $|\A|\leq 6$,
$$
\E |D_{(y,M_y)}\xi_s((x,M_x),\P_s\cup\{(x,M_x)\}\cup (\A,M_{\A})   \ )|^{4+\varepsilon} \leq 2^{4+\varepsilon} C_p^{\frac{4+\varepsilon}{4+p}}.
$$
\item [(b)] For any $\varepsilon\in(0,p]$ and for all $n\geq 9$, $x,y\in\XX$ and $\A \subset \XX$ with $|\A|\leq 6$,
$$
\E |D_{(y,M_y)}\xi_n((x,M_x),\X_{n-8}\cup\{(x,M_x)\}\cup (\A,M_{\A})   \ )|^{4+\varepsilon} \leq 2^{4+\varepsilon} C_p^{\frac{4+\varepsilon}{4+p}}.
$$
\end{itemize}
\end{lemm}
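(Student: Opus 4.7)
The plan is to bound the $(4+\varepsilon)$-th moment of the first-order difference operator by combining the triangle inequality, the convexity of $t\mapsto t^{4+\varepsilon}$, and Lyapunov's inequality, and then invoking the moment hypothesis \eqref{eqn:momPoisson} for part (a) and \eqref{eqn:momBinomial} for part (b).

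Concretely, for part (a) I would first write out the difference operator by definition,
$$
D_{(y,M_y)}\xi_s\big((x,M_x),\P_s\cup\{(x,M_x)\}\cup(\A,M_{\A})\big)
= \xi_s\big((x,M_x),\P_s\cup\{(x,M_x)\}\cup(\A,M_{\A})\cup\{(y,M_y)\}\big) - \xi_s\big((x,M_x),\P_s\cup\{(x,M_x)\}\cup(\A,M_{\A})\big),
$$
and then apply the triangle inequality together with the elementary estimate $(|a|+|b|)^{4+\varepsilon}\leq 2^{3+\varepsilon}(|a|^{4+\varepsilon}+|b|^{4+\varepsilon})$ (a consequence of the convexity of $t\mapsto t^{4+\varepsilon}$ on $[0,\infty)$, since $4+\varepsilon\geq 1$). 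This reduces the task to bounding the $(4+\varepsilon)$-th moment of each of the two score evaluations uniformly in $x$, $s$, and the auxiliary set.

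For each such evaluation, Lyapunov's inequality (using $\varepsilon\leq p$) yields
$$
\E|\xi_s((x,M_x),\P_s\cup\{(x,M_x)\}\cup(\mathcal{B},M_{\mathcal{B}}))|^{4+\varepsilon}
\leq \big(\E|\xi_s((x,M_x),\P_s\cup\{(x,M_x)\}\cup(\mathcal{B},M_{\mathcal{B}}))|^{4+p}\big)^{(4+\varepsilon)/(4+p)},
$$
and the assumption \eqref{eqn:momPoisson} provides the bound $C_p$ on the inner moment as long as $|\mathcal{B}|\leq 7$. Since $|\A|\leq 6$ by hypothesis, both choices $\mathcal{B}=\A$ and $\mathcal{B}=\A\cup\{y\}$ satisfy this cardinality constraint. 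Combining the two contributions produces the constant $2^{3+\varepsilon}\cdot 2\cdot C_p^{(4+\varepsilon)/(4+p)}=2^{4+\varepsilon}C_p^{(4+\varepsilon)/(4+p)}$, as claimed. Part (b) follows from an identical argument, replacing $\P_s$ by $\X_{n-8}$ throughout and invoking \eqref{eqn:momBinomial} in place of \eqref{eqn:momPoisson}.

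This proof is essentially routine bookkeeping rather than genuine analysis; I do not anticipate any real obstacle. The only point requiring mild attention is the matching of cardinalities: the moment conditions \eqref{eqn:momPoisson} and \eqref{eqn:momBinomial} permit auxiliary sets of size up to $7$, while the difference operator enlarges the set $\A$ (of size at most $6$) by exactly one point, so the threshold is precisely met.
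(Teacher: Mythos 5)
Your proposal is correct and follows essentially the same route as the paper: expand the difference operator, apply the convexity estimate $(|a|+|b|)^{4+\varepsilon}\leq 2^{3+\varepsilon}(|a|^{4+\varepsilon}+|b|^{4+\varepsilon})$, then bound each term via Lyapunov/H\"older and the moment conditions \eqref{eqn:momPoisson} resp.\ \eqref{eqn:momBinomial}. Your remark about the cardinality bookkeeping ($|\A\cup\{y\}|\leq 7$) is exactly the point that makes the hypothesis $|\A|\leq 6$ necessary, and the constant works out as in the paper.
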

\begin{proof}
It follows from Jensen's inequality, H\"older's inequality and \eqref{eqn:momPoisson} that
\begin{align*}
& \E|D_{(y,M_y)}\xi_s((x,M_x),\P_s\cup (\A,M_{\A}) \cup \{(x,M_x)\}  ))|^{4+\varepsilon}\\
& \leq   2^{3+\varepsilon} \E(|\xi_s((x,M_x),\P_s\cup (\A,M_{\A})\cup \{(y,M_y) \}   \cup \{(x,M_x)\} )|^{4+\varepsilon} \\
&   \quad \quad \quad \quad +|\xi_s((x,M_x),\P_s\cup (\A,M_{\A}) \cup \{(x,M_x)\}  ))|^{4+\varepsilon} ) \\
& \leq 2^{4+\varepsilon} \ C_{p}^{\frac{4+\varepsilon}{4+p}},
\end{align*}
which proves (a). Part (b) follows in the same way from \eqref{eqn:momBinomial}.
\end{proof}

\begin{lemm}\label{lem:BoundsDPoisson}
For any $\varepsilon\in (0,p)$, there is a constant $C_\varepsilon\in(0,\infty)$ only depending on the constants in \eqref{eqn:SurfaceBall}, \eqref{eqn:expstabPoisson}, and \eqref{eqn:momPoisson} such that
$$
\E |D_{(y,M_y)}h_s(\P_s\cup(\A,M_\A))|^{4+\varepsilon} \leq C_\varepsilon
$$
for $y\in\XX$, $\A\subset\XX$ with $|\A|\leq 1$ and $s\geq 1$.
\end{lemm}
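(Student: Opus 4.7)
The first step is to apply Lemma~\ref{lem:DH} with $\M = \P_s \cup (\A,M_\A)$ to decompose
\begin{equation*}
D_{(y,M_y)} h_s(\P_s\cup(\A,M_\A)) = \xi_s((y,M_y), \P_s\cup(\A,M_\A)\cup\{(y,M_y)\}) + \Sigma_\A + \Sigma_\P,
\end{equation*}
where $\Sigma_\A := \sum_{\hat x \in (\A,M_\A)} D_{(y,M_y)}\xi_s(\hat x, \P_s\cup(\A,M_\A))$ has at most $|\A|\leq 1$ summand and $\Sigma_\P := \sum_{\hat x\in\P_s} D_{(y,M_y)}\xi_s(\hat x, \P_s\cup(\A,M_\A))$ is the Poisson part. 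The $c_r$-inequality then reduces the task to bounding the $(4+\varepsilon)$-th moment of each piece. The first term is controlled by \eqref{eqn:momPoisson} together with Jensen's inequality (using $\varepsilon\leq p$); $\Sigma_\A$ is handled by Lemma~\ref{lem:BoundsEDXi}(a) applied with $\widehat{\A}=(\A,M_\A)$.

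The main task is therefore to bound $\E|\Sigma_\P|^{4+\varepsilon}$. By Lemma~\ref{lem:DXi} applied with $\M=\P_s$ and $\widehat{\A}=(\A,M_\A)$, each summand of $\Sigma_\P$ vanishes unless $R_s(\hat x,\P_s)\geq \d(x,y)$; set $N := \#\{\hat x\in\P_s : R_s(\hat x,\P_s)\geq \d(x,y)\}$. The power-mean inequality then gives
\begin{equation*}
|\Sigma_\P|^{4+\varepsilon} \leq N^{3+\varepsilon}\sum_{\hat x\in\P_s}\bigl|D_{(y,M_y)}\xi_s(\hat x,\P_s\cup(\A,M_\A))\bigr|^{4+\varepsilon}.
\end{equation*}
Taking expectation and applying H\"older's inequality in $\omega$ with conjugate exponents $(4+p)/(4+\varepsilon)$ and $(4+p)/(p-\varepsilon)$ factorises the bound into a high integer moment $\E N^m$ (for some integer $m=m(p,\varepsilon)$) and a $(4+p)/(4+\varepsilon)$-moment of the Poisson sum; a second power-mean step (trading $(4+\varepsilon)$-powers for $(4+p)$-powers at the cost of an extra small power of $N$), combined with the Mecke formula and Lemma~\ref{lem:BoundsEDXi}(a), reduces everything to controlling $\E N^m$.

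To bound $\E N^m$ one expands via the multivariate Mecke formula and groups the terms by the coincidence pattern of indices: each partition $\pi$ of $[m]$ with $k=|\pi|$ blocks contributes
\begin{equation*}
s^k\int_{\widehat{\XX}^k}\Prob\bigl(R_s(\hat x_i,\P_s\cup\{\hat x_1,\ldots,\hat x_k\})\geq \d(x_i,y) \text{ for all } i\bigr)\,\widehat{\Q}^k(\dint(\hat x_1,\ldots,\hat x_k)).
\end{equation*}
Using the exponential stabilization bound \eqref{eqn:expstabPoisson}, the joint probability is dominated by $\prod_{i=1}^k C_{stab}\exp(-c_{stab}\d_s(x_i,y)^{\alpha_{stab}})$, and Lemma~\ref{Lem4.1}(b) (with $r=0$ and $\beta=s$) shows that each factor $s\int_\XX \exp(-c_{stab}\d_s(x,y)^{\alpha_{stab}})\,\Q(\dint x)$ is uniformly $O(1)$; summing over the finitely many partitions of $[m]$ yields $\E N^m\leq C_m$, uniformly in $s$.

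The principal technical obstacle is that \eqref{eqn:expstabPoisson} concerns $R_s(\hat x,\P_s\cup\{\hat x\})$, whereas the Mecke expansion produces the radius at $\hat x_i$ in the augmented configuration $\P_s\cup\{\hat x_1,\ldots,\hat x_k\}$. This is addressed by passing to the monotone envelope $R_s^\sharp(\hat x,\M):=\sup_{|\widehat{\A}|\leq 6} R_s(\hat x,\M\cup\widehat{\A})$, which is still a radius of stabilization in the sense of \eqref{eqn:RadiusOfStabilization} (albeit for smaller $|\widehat{\A}|$) and whose tail inherits an exponential bound from \eqref{eqn:expstabPoisson} by a union-bound integration against $\widehat{\Q}$; equivalently, in all the applications of Section~\ref{sec:Applications} the natural choice of $R_s$ is monotone non-increasing under point insertion, so that the inclusion $\P_s\cup\{\hat x_i\}\subseteq \P_s\cup\{\hat x_1,\ldots,\hat x_k\}$ yields the needed domination directly.
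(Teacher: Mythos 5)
Your overall strategy is the same as the paper's: decompose $D_{(y,M_y)}h_s$ via Lemma \ref{lem:DH}, reduce the Poisson sum by a power-mean inequality to a product of a counting variable and a sum of $(4+\varepsilon)$-th powers, and then control everything through the multivariate Mecke formula, H\"older, exponential stabilization, and Lemma \ref{lem:BoundsEDXi}(a), with Lemma \ref{Lem4.1}(b) supplying the uniform integral bound. (The paper works directly with $\E\, Z^4\sum_x|D\xi_s|^{4+\varepsilon}$ expanded into the five terms $I_1,\dots,I_5$ rather than factoring off $\E N^m$ by a global H\"older step, but that is a cosmetic difference.)

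However, the final paragraph contains a genuine gap, and the "obstacle" you identify there is one you created yourself by defining $N$ through the radius event $\{R_s(\hat x,\P_s)\geq \d(x,y)\}$. Neither of your proposed fixes works in the stated generality. The envelope $R_s^\sharp(\hat x,\M):=\sup_{|\widehat{\A}|\leq 6}R_s(\hat x,\M\cup\widehat{\A})$ is a supremum over an uncountable family, and \eqref{eqn:expstabPoisson} gives no control whatsoever on $R_s(\hat x,\P_s\cup\{\hat x\}\cup\widehat{\A})$ for even a single deterministic $\widehat{\A}$, so there is no union bound or integration against $\widehat{\Q}$ that yields an exponential tail for $R_s^\sharp$. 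The alternative of assuming $R_s$ monotone under point insertion is an extra hypothesis not present in the lemma. The correct resolution is to count non-vanishing difference operators rather than large radii: take $Z:=\sum_{\hat x\in\P_s}\mathbf{1}_{\{D_{(y,M_y)}\xi_s(\hat x,\P_s\cup(\A,M_\A))\neq 0\}}$. After the Mecke expansion one faces events $\{D_{(y,M_y)}\xi_s(\hat x_j,\P_s\cup\{\hat x_1,\dots,\hat x_i\}\cup(\A,M_\A))\neq 0\}$, and Lemma \ref{lem:DXi} applies with $\M:=\P_s\cup\{\hat x_j\}$ and the at most $i-1+|\A|\leq 5$ remaining points playing the role of $\widehat{\A}$; its hypothesis involves only $R_s(\hat x_j,\P_s\cup\{\hat x_j\})$, which is exactly the quantity controlled by \eqref{eqn:expstabPoisson}. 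This insensitivity of the score to up to seven inserted points, at the unaugmented radius, is precisely what the definition \eqref{eqn:RadiusOfStabilization} builds in, and it is why the paper's argument closes without any monotonicity assumption.
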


\begin{proof}
Fix $y\in \XX$. We start with the case $\A=\emptyset$. It follows from Lemma \ref{lem:DH} and Jensen's inequality that
\begin{align*}
& \E |D_{(y,M_y)}h_s(\P_s)|^{4+\varepsilon}\\
& = \E \bigg| \xi_s((y,M_y),\P_s\cup\{(y,M_y)\})+\sum_{x\in \P_s} D_{(y,M_y)}\xi_s(x,\P_s) \bigg|^{4+\varepsilon}\\
& \leq 2^{3+\varepsilon} \E |\xi_s((y,M_y),\P_s\cup\{(y,M_y)\})|^{4+\varepsilon} + 2^{3+\varepsilon} \E \bigg| \sum_{x\in \P_s} D_{(y,M_y)}\xi_s(x,\P_s) \bigg|^{4+\varepsilon}.
\end{align*}
Here, the first summand is bounded by $2^{3+\varepsilon} (C_{p}+1)$ by assumption \eqref{eqn:momPoisson}. The second summand is a sum of $Z:=\sum_{x\in \P_s} \mathbf{1}_{\{D_{(y,M_y)}\xi_s(x,\P_s)\neq 0\}}$ terms distinct from zero. A further application of Jensen's inequality to the function $x\mapsto x^{4+\varepsilon}$
leads to
\begin{align*}
\bigg| \sum_{x\in \P_s} D_{(y,M_y)}\xi_s(x,\P_s) \bigg|^{4+\varepsilon} & \leq Z^{4+\varepsilon }  \bigg| \sum_{x\in \P_s} Z^{-1}D_{(y,M_y)}\xi_s(x,\P_s) \bigg|^{4+\varepsilon}\\
& \leq Z^{4+\varepsilon }\sum_{x\in \P_s} Z^{-1} |D_{(y,M_y)}\xi_s(x,\P_s)|^{4+\varepsilon}\\
& \leq Z^{4} \sum_{x\in \P_s} |D_{(y,M_y)}\xi_s(x,\P_s)|^{4+\varepsilon}.
\end{align*}
By deciding whether points in different sums are identical or distinct, we obtain that
$$
\E Z^{4} \sum_{x\in \P_s} |D_{(y,M_y)}\xi_s(x,\P_s)|^{4+\varepsilon} = I_1+15I_2+25I_3+10I_4+I_5,
$$
where, for $i\in\{1,\hdots,5\}$,
$$
I_i=\E \sum_{(x_1,\hdots,x_i)\in\P_{s,\neq}^i} \mathbf{1}_{\{D_{(y,M_y)}\xi_s(x_j,\P_s)\neq 0, j=1,\hdots,i\}} \, |D_{(y,M_y)}\xi_s(x_1,\P_s)|^{4+\varepsilon}.
$$
By $\P_{s,\neq}^i$ we denote the set of $i$-tuples of distinct points of $\P_s$. It follows from the multivariate Mecke formula and H\"older's inequality that
\begin{align*}
I_i & = s^i\int_{\widehat{\XX}^i} \E \mathbf{1}_{\{D_{(y,M_y)}\xi_s(x_j,\P_s\cup\{x_1,\hdots,x_i\})\neq 0, j=1,\hdots,i\}} \\ & \hskip 3cm|D_{(y,M_y)}\xi_s(x_1,\P_s\cup\{x_1,\hdots,x_i\})|^{4+\varepsilon} \, \widehat{\Q}^i(\dint(x_1,\hdots,x_i))\\
& \leq s^i\int_{\XX^i} \prod_{j=1}^i \left[
\Prob(D_{(y,M_y)}\xi_s(x_j,\P_s\cup\{(x_1,M_{x_1}),\hdots,(x_i,M_{x_i})\})\neq 0)^{\frac{p-\varepsilon}{4i+pi}}
\right]\\
& \hskip 1.55cm (\E |D_{(y,M_y)}\xi_s(x_1,\P_s\cup\{(x_1,M_{x_1}),\hdots,(x_i,M_{x_i})\})|^{4+p})^{\frac{4+\varepsilon}{4+p}} \, \Q^i(\dint(x_1,\hdots,x_i)).
\end{align*}
Combining this with Lemma \ref{lem:DXi}, \eqref{eqn:expstabPoisson} and Lemma \ref{lem:BoundsEDXi}(a) leads to
\begin{align*}
I_i & \leq 2^{4+\varepsilon}C^{\frac{4+\varepsilon }{4+p}}_{p} s^i\int_{\XX^i} C^{\frac{p-\varepsilon}{4+p}} \prod_{j=1}^i \exp\bigg(-\frac{c (p-\varepsilon)}{4i+pi} \d_s(x_j,y)^\alpha\bigg)  \, \Q^i(\dint(x_1,\hdots,x_i))\\
& = 2^{4+\varepsilon} C^{\frac{4+\varepsilon }{4+p}}_{p} \bigg(s C^{\frac{p-\varepsilon}{4i+pi}} \int_{\XX} \exp\bigg(-\frac{c(p-\varepsilon)}{4i+pi} \d_s(x,y)^\alpha\bigg)  \, \Q(\dint x)\bigg)^i.
\end{align*}
Now \eqref{eqn:IntegrationFormula} with $r=0$ yields that the integrals on the right-hand side are uniformly bounded and thus the first
asserted moment bound holds.

Next we assume that $\A=\{z\}$ with $z\in\XX$. Lemma \ref{lem:DH} and a further application of Jensen's inequality show that
\begin{align*}
& \E |D_{(y,M_y)}h_s(\P_s\cup\{(z,M_z)\})|^{4+\varepsilon}\\
& = \E | \xi_s((y,M_y),\P_s\cup\{(y,M_y),(z,M_z)\})+D_{(y,M_y)}\xi_s((z,M_z),\P_s\cup\{(z,M_z)\})\\
& \quad \quad +\sum_{x\in \P_s} D_{(y,M_y)}\xi_s(x,\P_s\cup\{(z,M_z)\})|^{4+\varepsilon}\\
& \leq 3^{3+\varepsilon} \E |\xi_s((y,M_y),\P_s\cup\{(y,M_y),(z,M_z)\})|^{4+\varepsilon}\\
& \quad  + 3^{3+\varepsilon} \E |D_{(y,M_y)}\xi_s((z,M_z),\P_s\cup\{(z,M_z)\})|^{4+\varepsilon}\\
& \quad  + 3^{3+\varepsilon} \E \bigg| \sum_{x\in \P_s} D_{(y,M_y)}\xi_s(x,\P_s\cup\{(z,M_z)\}) \bigg|^{4+\varepsilon}.
\end{align*}
The last term on the right-hand side can be now bounded by exactly the same arguments as above since these still hold true if one adds an additional point. As the other terms are bounded by \eqref{eqn:momPoisson} and Lemma \ref{lem:BoundsEDXi}(a), this completes the proof.
\end{proof}

\begin{lemm}\label{lem:BoundDBinomial}
For any $\varepsilon\in (0,p)$, there is a constant $C_\varepsilon\in(0,\infty)$ only depending on the constants in \eqref{eqn:SurfaceBall}, \eqref{eqn:expstabBinomial} and \eqref{eqn:momBinomial} such that
$$
\E | D_{(y,M_y)}h_n(\X _{n-1- | \A | }\cup (\A,M_\A)) |^{4+\varepsilon} \leq C_\varepsilon
$$
for $y\in \XX$, $\A\subset \XX$ with $| \A | \leq 2$ and $n\geq 9$.
\end{lemm}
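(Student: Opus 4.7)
The strategy mirrors the proof of Lemma \ref{lem:BoundsDPoisson}, with \eqref{eqn:expstabBinomial} and \eqref{eqn:momBinomial} replacing \eqref{eqn:expstabPoisson} and \eqref{eqn:momPoisson}, and with the Mecke formula replaced by the multivariate Campbell identity for the binomial process. First I would apply Lemma \ref{lem:DH} to the random configuration $\mathcal{M}:=\X_{n-1-|\A|}\cup(\A,M_\A)$ to obtain
\[ D_{(y,M_y)}h_n(\mathcal{M})=\xi_n((y,M_y),\mathcal{M}\cup\{(y,M_y)\})+\sum_{z\in(\A,M_\A)}D_{(y,M_y)}\xi_n(z,\mathcal{M})+\sum_{x\in\X_{n-1-|\A|}}D_{(y,M_y)}\xi_n(x,\mathcal{M}). \]
Jensen's inequality then reduces the task to bounding the $(4+\varepsilon)$-th moment of each of these three pieces. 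The first is controlled directly by \eqref{eqn:momBinomial}, and the second, which has at most $|\A|\leq 2$ summands, by Lemma \ref{lem:BoundsEDXi}(b).

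For the principal piece, I would set $Z:=\sum_{x\in\X_{n-1-|\A|}}\mathbf{1}\{D_{(y,M_y)}\xi_n(x,\mathcal{M})\neq 0\}$ and mimic the Poisson argument to obtain the pointwise estimate $\bigl|\sum_{x\in\X_{n-1-|\A|}}D_{(y,M_y)}\xi_n(x,\mathcal{M})\bigr|^{4+\varepsilon}\leq Z^{4}\sum_{x\in\X_{n-1-|\A|}}|D_{(y,M_y)}\xi_n(x,\mathcal{M})|^{4+\varepsilon}$. Expanding $Z^4$ and classifying the resulting tuples by the number $i\in\{1,\dots,5\}$ of distinct indices exhibits the expectation as a finite combination of
\[ I_i:=\E\sum_{(x_1,\dots,x_i)\in\X_{n-1-|\A|,\neq}^i}\mathbf{1}\{D_{(y,M_y)}\xi_n(x_j,\mathcal{M})\neq 0,\ 1\leq j\leq i\}\,|D_{(y,M_y)}\xi_n(x_1,\mathcal{M})|^{4+\varepsilon}. \]
The binomial analogue of the multivariate Mecke formula, together with the elementary bound $(n-1-|\A|)!/(n-1-|\A|-i)!\leq n^{i}$, gives
\[ I_i\leq n^{i}\int_{\widehat{\XX}^i}\E\bigl[\mathbf{1}\{\cdots\}\,|D_{(y,M_y)}\xi_n(z_1,\mathcal{M}')|^{4+\varepsilon}\bigr]\,\widehat{\Q}^i(d(z_1,\dots,z_i)), \]
where $\mathcal{M}':=\X_{n-1-|\A|-i}\cup\{z_1,\dots,z_i\}\cup(\A,M_\A)$ and the $z_j$ are equipped with i.i.d.\ marks.

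To close the argument, I would apply H\"older's inequality to factor each integrand into $\Prob(D_{(y,M_y)}\xi_n(z_j,\mathcal{M}')\neq 0)^{(p-\varepsilon)/((4+p)i)}$ (one factor for each $j$) times a $(4+p)$-th moment that is uniformly bounded by Lemma \ref{lem:BoundsEDXi}(b). Since $|\A|+i\leq 7$, one can write in distribution $\X_{n-1-|\A|-i}\stackrel{d}{=}\X_{n-8}\cup\{(w_1,M_{w_1}),\dots,(w_{7-|\A|-i},M_{w_{7-|\A|-i}})\}$ with i.i.d.\ $w_\ell$, and then collect the $w_\ell$'s together with $z_2,\dots,z_i$, the points of $(\A,M_\A)$, and $(y,M_y)$ into an auxiliary set $\widehat{\B}$ of exactly $7$ marked points. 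The defining property \eqref{eqn:RadiusOfStabilization} of the stabilization radius, applied to $\widehat{\B}$, and Lemma \ref{lem:DXi} together imply that the indicator vanishes unless $R_n((z_1,M_{z_1}),\X_{n-8}\cup\{(z_1,M_{z_1})\})\geq \d(z_1,y)$, and the probability of the latter is at most $C\exp(-c(n^{1/\gamma}\d(z_1,y))^\alpha)$ by \eqref{eqn:expstabBinomial}. The product structure across the $i$ coordinates, followed by Lemma \ref{Lem4.1}(b) with $r=0$, then gives $I_i\leq C_\varepsilon$ uniformly in $n\geq 9$, and summing the five contributions completes the proof. The main subtlety lies precisely in this last bookkeeping step: the bound \eqref{eqn:expstabBinomial} is formulated only for $\X_{n-8}\cup\{(x,M_x)\}$, and one must verify that the constraints $|\A|\leq 2$, $i\leq 5$, and $n\geq 9$ combine exactly so that $\widehat{\B}$ fits within the budget of $7$ extra points that \eqref{eqn:RadiusOfStabilization} permits.
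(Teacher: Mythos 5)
Your proposal is correct and follows essentially the same route as the paper's proof: the Lemma \ref{lem:DH} decomposition, the $Z^4$ Jensen trick, the expansion into $I_1,\dots,I_5$, the factorial bound by $n^i$, H\"older with exponents $\tfrac{p-\varepsilon}{(4+p)i}$, and the reduction to \eqref{eqn:expstabBinomial} and \eqref{eqn:IntegrationFormula}. Your closing bookkeeping check (that $|\A|+i\leq 7$ keeps the auxiliary points within the budget allowed by \eqref{eqn:RadiusOfStabilization} and \eqref{eqn:expstabBinomial}) is exactly the reason for the constants $7$, $n-8$, and $n\geq 9$ in the paper's hypotheses.
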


\begin{proof}
Let $\X_{n,\A}:=\X_{n-1-|\A|}\cup(\A,M_\A)$. It follows from Lemma \ref{lem:DH} and Jensen's inequality that
\begin{align*}
& \E |D_{(y,M_y)} h_n(\X_{n,\A})|^{4+\varepsilon}\\
& = \E \bigg| \xi_n((y,M_y),\X_{n,\A}\cup\{(y,M_y)\})+\sum_{x\in \X_{n-1- | \A | }\cup(\A,M_\A)} D_{(y,M_y)}\xi_n(x,\X_{n,\A})\bigg|^{4+\varepsilon} \allowdisplaybreaks\\
& \leq 4^{3+\varepsilon} \E |\xi_n((y,M_y),\X_{n,\A} \cup\{(y,M_y)\})|^{4+\varepsilon} + 4^{3+\varepsilon} \sum_{x\in \A} \E |D_{(y,M_y)}\xi_n((x,M_x),\X_{n,\A})|^{4+\varepsilon}\\
& \quad + 4^{3+\varepsilon} \E \bigg| \sum_{x\in \X_{n-1-|\A|}} D_{(y,M_y)}\xi_n(x,\X_{n,\A}) \bigg|^{4+\varepsilon}.
\end{align*}
On the right-hand side, the first summand is bounded by $4^{3+\varepsilon} (C_{p}+1)$ by assumption \eqref{eqn:momBinomial} (after conditioning on the points of $\X_{n-1- | \A | }\setminus \X_{n-8}$) and the second summand is bounded by $4^{3+\varepsilon}\cdot2\cdot 2^{4+\varepsilon} C_p^{\frac{4+\varepsilon}{4+p}}$ by Lemma \ref{lem:BoundsEDXi}(b). A further application of Jensen's inequality with $Z:=\sum_{x\in \X_{n-1-|\A|}} \mathbf{1}_{\{D_{(y,M_y)}\xi_n(x,\X_{n,\A})\neq 0\}}$ leads to
\begin{align*}
  \bigg| \sum_{x\in \X_{n-1-|\A|}} D_{(y,M_y)}\xi_n(x,\X_{n,\A}) \bigg|^{4+\varepsilon}
 & \leq Z^{3+\varepsilon} \sum_{x\in \X_{n-1-|\A|}} |D_{(y,M_y)}\xi_n(x,\X_{n,\A})|^{4+\varepsilon}\\
 & \leq Z^{4} \sum_{x\in\X_{n-1-|\A|}} |D_{(y,M_y)}\xi_n(x,\X_{n,\A})|^{4+\varepsilon}.
\end{align*}
By deciding whether points in different sums are identical or distinct, we obtain that
$$
\E Z^{4} \sum_{x\in \X_{n-1-|\A|}} |D_{(y,M_y)}\xi_n(x,\X_{n,\A})|^{4+\varepsilon}= I_1+15I_2+25I_3+10I_4+I_5,
$$
where, for $i\in\{1,\hdots,5\}$,
$$
I_i=\E \sum_{(x_1,\hdots,x_i)\in\X_{n-1-|\A|,\neq}^i} \mathbf{1}_{\{D_{(y,M_y)}\xi_n(x_j,\X_{n,\A})\neq 0, j=1,\hdots,i\}} \, |D_{(y,M_y)}\xi_n(x_1,\X_{n,\A})|^{4+\varepsilon}.
$$
It follows from H\"older's inequality that
\begin{align*}
I_i & = \frac{(n-1-|\A|)!}{(n-1-|\A|-i)!}\int_{\widehat{\XX}^i} \E \mathbf{1}_{\{D_{(y,M_y)}\xi_n(x_j,\X_{n-i,\A}\cup\{x_1,\hdots,x_i\})\neq 0, j=1,\hdots,i\}} \\ & \hskip 4.5cm|D_{(y,M_y)}\xi_n(x_1,\X_{n-i,\A}\cup\{x_1,\hdots,x_i\})|^{4+\varepsilon} \, \widehat{\Q}^i(\dint(x_1,\hdots,x_i))\\
& \leq n^i\int_{\XX^i} \prod_{j=1}^i \Prob(D_{(y,M_y)}\xi_n((x_j,M_{x_j}),\X_{n-i,\A}\cup\{(x_1,M_{x_1}),\hdots,(x_i,M_{x_i)}\})\neq 0)^{\frac{p-\varepsilon}{4i+pi}}\\
& \hskip 1.5cm (\E |D_{(y,M_y)}\xi_n((x_1,M_{x_1}),\X_{n-i,\A}\cup\{(x_1,M_{x_1}),\hdots,(x_i,M_{x_i})\})|^{4+p})^{\frac{4+\varepsilon}{4+p}} \\
& \hskip 1.5cm \Q^i(\dint(x_1,\hdots,x_i)).
\end{align*}
Combining this with Lemma \ref{lem:DXi}, \eqref{eqn:expstabBinomial} and Lemma \ref{lem:BoundsEDXi}(b) leads to
\begin{align*}
I_i & \leq 2^{4+\varepsilon} C_{p}^{\frac{4+\varepsilon}{4+p}} n^i\int_{\XX^i} C^{\frac{p-\varepsilon}{4+p}} \prod_{j=1}^i \exp\bigg(-\frac{c(p-\varepsilon)}{4i+pi} \d_n(x_j,y)^\alpha\bigg)  \, \Q^i(\dint(x_1,\hdots,x_i))\\
& = 2^{4+\varepsilon} C_{p}^{\frac{4+\varepsilon}{4+p}} \bigg(n C^{\frac{p-\varepsilon}{4i+pi}} \int_{\XX} \exp\bigg(-\frac{c(p-\varepsilon)}{4i+pi} \d_n(x,y)^\alpha\bigg)  \, \Q(\dint x)\bigg)^i.
\end{align*}
Now \eqref{eqn:IntegrationFormula} yields that the integrals on the right-hand side are uniformly bounded.
\end{proof}

\begin{lemm} \label{L2.5}
\begin{itemize}
\item []
\item [(a)] For $x,z\in\XX$ and $s\geq 1$,
$$
\Prob(D_{(x,M_x)} \xi_s((z,M_z), \P_s\cup\{(z,M_z)\}) \neq 0) \leq 2C \exp( - c \max \{\d_s(x,z), \d_s(z, K)\}^\alpha  ).
$$
\item [(b)] For $x_{1},x_{2},z\in\XX$ and $s\geq 1$,
\begin{align*}
& \Prob(D^2_{(x_1,M_{x_1}),(x_2,M_{x_2})} \xi_s((z,M_z), \P_s\cup\{(z,M_z)\}) \neq 0)\\
& \leq 4C \exp( - c \max \{\d_s(x_1,z), \d_s(x_2, z), \d_s(z, K)\}^\alpha ).
\end{align*}
\end{itemize}
\end{lemm}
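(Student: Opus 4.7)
\textbf{Proof plan for Lemma \ref{L2.5}.} The plan is to obtain each stated bound as the minimum of two independent estimates: one coming from exponential stabilization, and one coming from the exponential decay of the score with the distance to $K$. Combining the two bounds via the identity $\min\{e^{-ca^\alpha},e^{-cb^\alpha}\}=e^{-c\max\{a,b\}^\alpha}$ will produce the maxima inside the exponentials.

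\textbf{Step 1 (stabilization bound).} For part (a), I will apply Lemma \ref{lem:DXi} with the roles $(x,m_x)\leftrightarrow (z,M_z)$, $(y,m_y)\leftrightarrow (x,M_x)$, $\M = \P_s\cup\{(z,M_z)\}$ and $\widehat{\A}=\emptyset$. It tells me that whenever $R_s((z,M_z),\P_s\cup\{(z,M_z)\})<\d(z,x)$, the relevant difference operator vanishes. Therefore
\[
\Prob\bigl(D_{(x,M_x)} \xi_s((z,M_z),\P_s\cup\{(z,M_z)\}) \neq 0\bigr) \leq \Prob\bigl(R_s((z,M_z),\P_s\cup\{(z,M_z)\}) \geq \d(z,x)\bigr),
\]
which by \eqref{eqn:expstabPoisson} is at most $C\exp(-c\,\d_s(x,z)^\alpha)$. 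For part (b), the second-order statement of Lemma \ref{lem:DXi} yields the analogous bound with $\d(z,x)$ replaced by $\max\{\d(z,x_1),\d(z,x_2)\}$, giving at most $C\exp(-c\max\{\d_s(x_1,z),\d_s(x_2,z)\}^\alpha)$.

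\textbf{Step 2 (decay-to-$K$ bound).} A nonzero first-order difference $D_{(x,M_x)}\xi_s((z,M_z),\P_s\cup\{(z,M_z)\})$ forces at least one of the two scores
\[
\xi_s((z,M_z),\P_s\cup\{(z,M_z)\}), \qquad \xi_s((z,M_z),\P_s\cup\{(z,M_z),(x,M_x)\})
\]
to be nonzero, so a union bound combined with \eqref{eqn:expfastPoisson} applied with $\A=\emptyset$ and $\A=\{x\}$ (both having $|\A|\leq 7$) gives the bound $2C\exp(-c\,\d_s(z,K)^\alpha)$. For part (b), the second-order difference is a signed sum of four $\xi$-values indexed by subsets of $\{(x_1,M_{x_1}),(x_2,M_{x_2})\}$, so the union bound with \eqref{eqn:expfastPoisson} over $\A\subset\{x_1,x_2\}$ produces $4C\exp(-c\,\d_s(z,K)^\alpha)$.

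\textbf{Step 3 (combining).} Taking the smaller of the two bounds in each case and using $\min\{e^{-ca^\alpha},e^{-cb^\alpha}\}=e^{-c\max\{a,b\}^\alpha}$ yields both stated inequalities (the prefactor $2C$ in (a) and $4C$ in (b) absorbs the single-constant bound from Step 1, since $C\le 2C\le 4C$). No step presents a genuine obstacle; the only mild care needed is to make sure, in Step 1 of (b), that the second-order case of Lemma \ref{lem:DXi} is invoked with $\M = \P_s\cup\{(z,M_z)\}$ so that the stabilization radius in play is the same one to which \eqref{eqn:expstabPoisson} applies.
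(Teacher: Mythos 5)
Your proposal is correct and follows essentially the same route as the paper's proof: the stabilization bound via Lemma \ref{lem:DXi} together with \eqref{eqn:expstabPoisson}, the union bound over the two (resp.\ four) score evaluations combined with \eqref{eqn:expfastPoisson} for the distance-to-$K$ bound, and then taking the better of the two estimates to produce the maximum inside the exponential. The bookkeeping of the constants $2C$ and $4C$ is also handled correctly.
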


\begin{proof}
We prove part (b). By Lemma \ref{lem:DXi} the event
$$
D_{(x_1,M_{x_1}), (x_2,M_{x_2})}^2 \xi_s((z,M_z), \P_s\cup\{(z,M_z)\}) \neq 0
$$
is a subset of the event
that the points $x_1, x_2$ belong to the ball centered at $z$ with radius $R_s((z,M_z), \P_s\cup\{(z,M_z)\})$, i.e., $R_s((z,M_z), \P_s\cup\{(z,M_z)\}) \geq \max\{ \d(x_1, z), \d(x_2, z) \}$.
By \eqref{eqn:expstabPoisson} this gives
$$
\Prob( D^2_{(x_1,M_{x_1}),(x_2,M_{x_2})}\xi_s((z,M_z), \P_s\cup\{(z,M_z)\}) \neq 0) \leq C \exp(-c \max\{ \d_s(x_1,z), \d_s(x_2,z)\}^{\alpha}).
$$
By \eqref{eqn:expfastPoisson} we also have
$$
\Prob( D^2_{(x_1,M_{x_1}),(x_2,M_{x_2})}\xi_s((z,M_z), \P_s\cup\{(z,M_z)\}) \neq 0) \leq 4 C \exp( -c\d_s(z,K)^{\alpha}).
$$
This gives the proof of part (b). Part (a) is proven in a similar way.
\end{proof}

\begin{lemm} \label{lem:PDxineq0Binomial}
\begin{itemize}
\item []
\item [(a)] For $x,z\in\XX$ and $n\geq 9$,
\begin{align*}
& \sup_{\A\subset\XX, |\A|\leq 1}\Prob(D_{(x,M_x)} \xi_n((z,M_z), \X_{n-2-|\A|}\cup\{(z,M_z)\}\cup(\A,M_\A)) \neq 0) \\
& \leq 2C \exp( -c \max \{\d_n(x,z), \d_n(z, K)\}^\alpha  ).
\end{align*}
\item [(b)] For $x_1,x_2,z\in\XX$ and $n\geq 9$,
\begin{align*}
& \sup_{\A\subset\XX, |\A|\leq 1}\Prob(D^2_{(x_1,M_{x_1}),(x_2,M_{x_2})} \xi_n((z,M_z), \X_{n-3-|\A|}\cup\{(z,M_z)\}\cup(\A,M_\A)) \neq 0)\\
& \leq 4C \exp( -c \max \{\d_n(x_1,z), \d_n(x_2, z), \d_n(z, K)\}^\alpha ).
\end{align*}
\end{itemize}
\end{lemm}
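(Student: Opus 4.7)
The plan is to mirror the proof of the Poisson counterpart Lemma \ref{L2.5}, re-routing the stabilization argument through the canonical configuration $\X_{n-8}\cup\{(z,M_z)\}$ that appears in the binomial hypotheses \eqref{eqn:expstabBinomial} and \eqref{eqn:expfastBinomial}. I focus on part (b); part (a) is the same argument with one reference point in place of two.

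The first step is to couple $\X_{n-3-|\A|}$ and $\X_{n-8}$ so that $\X_{n-8}\subseteq \X_{n-3-|\A|}$, writing $\X_{n-3-|\A|}=\X_{n-8}\cup \widehat{\A}_0$ with $|\widehat{\A}_0|=5-|\A|\leq 5$; this preserves the law of $\X_{n-3-|\A|}$ and hence the probability to be bounded. Set $\widehat{\A}^{\star}:=\widehat{\A}_0\cup(\A,M_\A)\cup\{(x_1,M_{x_1}),(x_2,M_{x_2})\}$, whose cardinality is exactly $(5-|\A|)+|\A|+2=7$. Each of the four configurations appearing in the expansion of $D^2_{(x_1,M_{x_1}),(x_2,M_{x_2})}\xi_n((z,M_z),\cdot)$ has the form $\X_{n-8}\cup\{(z,M_z)\}\cup\widetilde{\A}$ with $|\widetilde{\A}|\leq 7$, so the defining property \eqref{eqn:RadiusOfStabilization}, applied with base configuration $\X_{n-8}\cup\{(z,M_z)\}$ and each $\widetilde{\A}$ in turn, shows that the single radius $R:=R_n((z,M_z),\X_{n-8}\cup\{(z,M_z)\})$ is a valid stabilization radius for all four scores simultaneously.

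Two bounds then follow in parallel. First, if $R<\max\{\d(z,x_1),\d(z,x_2)\}$, then one of $x_1,x_2$ lies outside $\widehat{B}(z,R)$; adding or removing that point leaves the intersection of the configuration with $\widehat{B}(z,R)$ unchanged, so the four restricted scores pair up and $D^2=0$. The contrapositive combined with \eqref{eqn:expstabBinomial} gives $\Prob(D^2\neq 0)\leq C\exp(-c\max\{\d_n(z,x_1),\d_n(z,x_2)\}^\alpha)$. Second, $D^2\neq 0$ forces at least one of the four scores to be nonzero; conditioning on the points of $\widehat{\A}_0$ (which are independent of $\X_{n-8}$) puts each such score under the hypothesis of \eqref{eqn:expfastBinomial}, and a union bound yields $\Prob(D^2\neq 0)\leq 4C\exp(-c\d_n(z,K)^\alpha)$. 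Taking the minimum of the two bounds and using $\min\{\exp(-cM_1^\alpha),\exp(-cM_2^\alpha)\}=\exp(-c\max\{M_1,M_2\}^\alpha)$ completes the argument.

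The only genuine work is the cardinality bookkeeping: the constants $n-8$ and $|\widehat{\A}|\leq 7$ in \eqref{eqn:expstabBinomial}--\eqref{eqn:expfastBinomial} and \eqref{eqn:RadiusOfStabilization} are calibrated precisely to accommodate the two reference points, the set $(\A,M_\A)$ of size at most $1$, and the $5-|\A|$ extra points produced by the coupling, with no slack; this is where care is needed, and the rest of the argument is essentially formal.
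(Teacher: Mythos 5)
Your proof is correct and follows essentially the same route as the paper's: the paper likewise reduces the event $\{D^2\xi_n\neq 0\}$ to $\{R_n((z,M_z),\X_{n-8}\cup\{(z,M_z)\})\geq\max\{\d(x_1,z),\d(x_2,z)\}\}$ via the stabilization property and then combines \eqref{eqn:expstabBinomial} with a union bound from \eqref{eqn:expfastBinomial}. Your explicit coupling of $\X_{n-3-|\A|}$ with $\X_{n-8}$ and the count $(5-|\A|)+|\A|+2=7$ simply spell out the bookkeeping that the paper leaves implicit in its appeal to Lemma \ref{lem:DXi} and the proof of Lemma \ref{L2.5}.
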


\begin{proof}
By Lemma \ref{lem:DXi} and \eqref{eqn:expstabBinomial} together with similar arguments as in the proof of Lemma \ref{L2.5}, we obtain
\begin{align*}
& \Prob(D^2_{(x_1,M_{x_1}),(x_2,M_{x_2})} \xi_n((z,M_z), \X_{n-3-|\A|}\cup\{(z,M_z)\}\cup(\A,M_\A)) \neq 0)\\
& \leq \Prob( R_n((z,M_z),\X_{n-8}\cup\{(z,M_z)\})\geq\max\{\d_s(x_1,z),\d_s(x_2,z)\} )\\
& \leq C \exp(-c \max\{\d_s(x_1,z),\d_s(x_2,z)\}^\alpha).
\end{align*}
It follows from \eqref{eqn:expfastBinomial} that
\begin{align*}
& \Prob(D^2_{(x_1,M_{x_1}),(x_2,M_{x_2})} \xi_n((z,M_z), \X_{n-3-|\A|}\cup\{(z,M_z)\}\cup(\A,M_\A)) \neq 0)\\
& \leq 4 C \exp( -c\d_s(z,K)^{\alpha}),
\end{align*}
which completes the proof of part (b). Part (a) is proven similarly.
\end{proof}

\begin{lemm} \label{L2.6}
\begin{itemize}
\item[]
\item [(a)] Let $\beta \in (0, \infty)$ be fixed. Then there is a constant $C_\beta\in(0,\infty)$ such that
$$
s \int_{\XX} \Prob(D_{(x_1,M_{x_1}), (x_2,M_{x_2})}^2 h_s(\P_s) \neq 0)^\beta  \, \Q(\dint x_2) \leq  C_\beta  \exp(-c\beta \d_s(x_1,K)/4^{\alpha+1})
$$
for all $x_1\in\XX$ and $s \in [1, \infty)$.
\item [(b)] Let $\beta \in (0, \infty)$ be fixed. Then there is a constant $C_\beta\in(0,\infty)$ such that
\begin{align*}
& n \int_{\XX} \sup_{\A\subset\XX, |\A|\leq 1} \Prob(D_{(x_1,M_{x_1}), (x_2,M_{x_2})}^2 h_n(\X_{n-2-|\A|}\cup(\A,M_\A)) \neq 0)^\beta  \, \Q(\dint x_2)\\
& \leq  C_\beta  \exp(-c\beta \d_s(x_1,K)/4^{\alpha+1})
\end{align*}
for all $x_1\in\XX$ and $n\geq 9$.
\end{itemize}
\end{lemm}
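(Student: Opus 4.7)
\noindent\emph{Proof plan.} I would prove the two parts in parallel, since the only difference is whether to invoke Lemma \ref{L2.5} (Poisson) or Lemma \ref{lem:PDxineq0Binomial} (binomial) and to apply the Mecke formula or its binomial analogue. Fix $x_1 \in \XX$, $s \geq 1$ (resp.\ $n \geq 9$), and abbreviate $\hat{x}_i = (x_i, M_{x_i})$. The starting point is Lemma \ref{lem:DH}, which decomposes
\[
D^2_{\hat{x}_1,\hat{x}_2} h_s(\P_s) \;=\; D_{\hat{x}_1}\xi_s(\hat{x}_2,\P_s\cup\{\hat{x}_2\}) + D_{\hat{x}_2}\xi_s(\hat{x}_1,\P_s\cup\{\hat{x}_1\}) + \sum_{z\in \P_s} D^2_{\hat{x}_1,\hat{x}_2}\xi_s(z,\P_s),
\]
so by a union bound $\Prob(D^2_{\hat{x}_1,\hat{x}_2} h_s(\P_s) \neq 0) \leq P_1 + P_2 + P_3$, where $P_1, P_2$ are the probabilities that the first two terms are nonzero and $P_3 := \Prob(\exists z \in \P_s: D^2_{\hat{x}_1,\hat{x}_2}\xi_s(z,\P_s) \neq 0)$. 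Since $(a_1+a_2+a_3)^\beta$ is bounded by a $\beta$-dependent constant times $a_1^\beta + a_2^\beta + a_3^\beta$, it suffices to control $s \int_\XX P_i^\beta \, \Q(\dint x_2)$ for $i=1,2,3$ separately.

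For $P_1$ I would apply Lemma \ref{L2.5}(a) to get $P_1 \leq 2C\exp(-c\max\{\d_s(x_1,x_2),\d_s(x_2,K)\}^\alpha)$. The triangle inequality $\d_s(x_1,K) \leq \d_s(x_1,x_2) + \d_s(x_2,K)$ forces $\max\{\d_s(x_1,x_2),\d_s(x_2,K)\} \geq \d_s(x_1,K)/2$, and combining this with the trivial bound $\max \geq \d_s(x_1,x_2)$ via $\max\{a,b\}^\alpha \geq (a^\alpha+b^\alpha)/2$ yields $\max\{\d_s(x_1,x_2),\d_s(x_2,K)\}^\alpha \geq \d_s(x_1,x_2)^\alpha/2 + \d_s(x_1,K)^\alpha/2^{\alpha+1}$. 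Raising to the $\beta$, integrating in $x_2$, and applying Lemma \ref{Lem4.1}(b) to the $\d_s(x_1,x_2)$-factor gives a bound of the required form. The term $P_2$ is handled identically after swapping the roles of $x_1$ and $x_2$.

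The heart of the argument, and the step I expect to be the main obstacle, is the treatment of $P_3$. Markov's inequality and the Mecke formula give
\[
P_3 \leq s \int_\XX \Prob\bigl(D^2_{\hat{x}_1,\hat{x}_2}\xi_s((z,M_z), \P_s\cup\{(z,M_z)\}) \neq 0\bigr)\, \Q(\dint z),
\]
and Lemma \ref{L2.5}(b) bounds the integrand by $4C\exp(-c\max\{\d_s(x_1,z),\d_s(x_2,z),\d_s(z,K)\}^\alpha)$. The delicate point is that to make the \emph{outer} $x_2$-integral converge, we must extract an $x_2$-dependent decay from a quantity whose only $x_2$-dependence sits under the inner $z$-integral. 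I would resolve this by using the three lower bounds on the triple max simultaneously: $\max \geq \d_s(x_2,z)$, $\max \geq \d_s(x_1,K)/2$, and $\max \geq \d_s(x_1,x_2)/2$ (the last via the triangle inequality $\d_s(x_1,x_2)\leq \d_s(x_1,z)+\d_s(x_2,z)$). Taking the geometric mean of the three corresponding exponential bounds gives
\[
\exp(-c\max\{\cdot\}^\alpha) \leq \exp\!\bigl(-c\d_s(x_2,z)^\alpha/3\bigr)\exp\!\bigl(-c\d_s(x_1,K)^\alpha/(3\cdot 2^\alpha)\bigr)\exp\!\bigl(-c\d_s(x_1,x_2)^\alpha/(3\cdot 2^\alpha)\bigr),
\]
so that the first factor can be integrated out in $z$ via Lemma \ref{Lem4.1}(b), producing a bound on $P_3$ that factors as $\exp(-c\d_s(x_1,K)^\alpha/(3\cdot 2^\alpha))$ times $\exp(-c\d_s(x_1,x_2)^\alpha/(3\cdot 2^\alpha))$, times a constant.

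Raising to $\beta$, integrating over $x_2$ and applying Lemma \ref{Lem4.1}(b) a second time then yields a bound of the form $C_\beta \exp(-c\beta\d_s(x_1,K)^\alpha/(3\cdot 2^\alpha))$, which is stronger than the claimed $\exp(-c\beta\d_s(x_1,K)^\alpha/4^{\alpha+1})$ since $4^{\alpha+1}\geq 3\cdot 2^\alpha$ for all $\alpha > 0$. Summing the three contributions finishes part (a). For part (b), the same argument carries over verbatim after replacing Lemma \ref{L2.5} by Lemma \ref{lem:PDxineq0Binomial} and using the binomial analogue of the Mecke formula, noting that the extra set $\A$ of cardinality at most $1$ is absorbed into the suprema appearing in those statements and into the shifted index of the binomial process.
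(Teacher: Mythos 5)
Your proposal is correct and shares the paper's overall architecture (the decomposition from Lemma \ref{lem:DH}, a union bound, the Mecke formula, and Lemmas \ref{L2.5} resp.\ \ref{lem:PDxineq0Binomial}), but the execution of the key estimate differs from the paper's in a genuine way. For the term $T_{x_1,x_2,s}$ coming from the sum over $z\in\P_s$, the paper splits the $z$-integral into $B(x_1,r)$ and its complement with $r=\max\{\d(x_1,K),\d(x_1,x_2)\}/2$, using the ball-volume bound \eqref{eqn:UpperBoundBall} on the first piece and \eqref{eqn:IntegrationFormula} on the second; it then repeats the same ball-splitting for the outer $x_2$-integral. You instead observe that the triple maximum dominates each of $\d_s(x_2,z)$, $\d_s(x_1,K)/2$ and $\d_s(x_1,x_2)/2$, take the arithmetic mean of the three lower bounds in the exponent, and thereby factor the integrand into a $z$-dependent piece (integrated out at once by Lemma \ref{Lem4.1}(b)), an $x_2$-dependent piece (which makes the outer integration immediate, again by Lemma \ref{Lem4.1}(b)), and the desired $\d_s(x_1,K)$-decay. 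This is cleaner: it avoids both ball-splittings, and the constant $3\cdot 2^\alpha$ you obtain in the exponent is indeed at least as good as the stated $4^{\alpha+1}$. You also correctly read the target as $\exp(-c\beta\,\d_s(x_1,K)^{\alpha}/4^{\alpha+1})$ with the exponent $\alpha$, which is what the statement must mean (and what Lemma \ref{L2.7} uses), the printed version having dropped the power.

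One small gap in part (b): when $\A=\{a\}$ is nonempty, the sum $\sum_{x\in\M}D^2_{\hat x_1,\hat x_2}\xi_n(x,\M)$ with $\M=\X_{n-3}\cup\{(a,M_a)\}$ contains the term indexed by the \emph{deterministic} point $(a,M_a)$, which is not produced by the binomial Mecke-type identity for the random points; it contributes an extra summand $\sup_{a}\Prob(D^2_{\hat x_1,\hat x_2}\xi_n((a,M_a),\X_{n-3}\cup\{(a,M_a)\})\neq 0)$, which the paper lists separately. Saying that $\A$ is ``absorbed into the suprema'' does not cover this term. It is harmless — Lemma \ref{lem:PDxineq0Binomial}(b) bounds it by $4C\exp(-c\max\{\d_n(x_1,a),\d_n(x_2,a),\d_n(a,K)\}^\alpha)$, and your convex-combination trick applied to this maximum (with the triangle inequalities $\d_n(x_1,K)\leq 2\max$ and $\d_n(x_1,x_2)\leq 2\max$) yields exactly the decay in $\d_n(x_1,K)$ and $\d_n(x_1,x_2)$ needed for the outer integration — but it should be stated explicitly rather than claimed to carry over verbatim.
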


\begin{proof}
 By Lemma \ref{lem:DH}  we have
\begin{align*}
D^2_{(x_1,M_{x_1}), (x_2,M_{x_2})} h_s(\P_s) & =  D_{(x_1,M_{x_1})} \xi_s((x_2,M_{x_2}), \P_s\cup\{(x_2,M_{x_2})\})\\
& \quad +  D_{(x_2,M_{x_2})} \xi_s((x_1,M_{x_1}), \P_s\cup\{(x_1,M_{x_1})\})\\
& \quad + \sum_{z \in \P_s} D_{(x_1,M_{x_1}), (x_2,M_{x_2})}^2 \xi_s(z, \P_s)
\end{align*}
so that the Slivnyak-Mecke formula leads to
\begin{align*}
& \Prob( D^2_{(x_1,M_{x_1}),(x_2,M_{x_2})}h_s(\P_s) \neq 0)\\ & \leq \Prob( D_{(x_1,M_{x_1})} \xi_s((x_2,M_{x_2}), \P_s\cup\{(x_2,M_{x_2})\}) \neq 0)\\
& \quad + \Prob( D_{(x_2,M_{x_2})} \xi_s((x_1,M_{x_1}), \P_s\cup\{(x_1,M_{x_1})\}) \neq 0)\\
& \quad + \underbrace{ s \int_{\XX} \Prob(D_{(x_1,M_{x_1}), (x_2,M_{x_2})}^2 \xi_s((z,M_z), \P_s\cup\{(z,M_z)\}) \neq 0) \, \Q(\dint z)}_{=:T_{x_1,x_2,s}}.
\end{align*}
Here, we use part (a) of Lemma \ref{L2.5} to bound each of the first two terms on the right-hand side.
We may bound the first term by
$$
\Prob( D_{(x_1,M_{x_1})}\xi_s((x_2,M_{x_2}), \P_s\cup\{(x_2,M_{x_2})\}) \neq 0) \leq 2C \exp\left( -  c \max \{  \d_s(x_1, x_2), \d_s(x_2, K) \}^\alpha \right).
$$
Observing that $\d_s(x_1, K) \leq 2 \max\{ \d_s(x_2, K), \d_s(x_1, x_2)\}$
we obtain
\begin{align*}
& \Prob( D_{(x_1,M_{x_1})}\xi_s((x_2,M_{x_2}), \P_s\cup\{(x_2,M_{x_2})\}) \neq 0)\\
& \leq 2C \exp\left( -  c \max \{  \d_s(x_1, x_2), \d_s(x_1, K), \d_s(x_2, K)\}^\alpha/2^\alpha \right).
\end{align*}
We bound $\Prob( D_{(x_2,M_{x_2})}\xi_s((x_1,M_{x_1}), \P_s\cup\{(x_1,M_{x_1})\}) \neq 0)$ in the same way. It follows from part (b) of Lemma \ref{L2.5} that
$$
T_{x_1,x_2,s} \leq 4C s \int_{\XX} \exp( - c \max \{\d_s(x_1,z), \d_s(x_2, z), \d_s(z, K)\}^\alpha ) \, \Q(\dint z).
$$
Assume that $\d_{s}(x_{1},K)\geq \d_{s}(x_{2},K)$ (the reasoning is similar if $\d_{s}(x_{2},K)\geq \d_{s}(x_{1},K)$) and let $r=\max\{\d (x_{1},K),\d (x_1,x_2)\}/2$. For any $z\in B(x_1,r)$ the triangle inequality leads to $\max\{\d(z,x_{2}), \d(z, K)\}\geq r$. This implies that
\begin{align*}
T_{x_1,x_2,s} &\leq 4C s \int_{B(x_1,r)} \exp( - c\underbrace{\max\{\d_{s}(z,x_{2}), \d_s(z, K)\}^\alpha}_{\geq (s^{1/\gamma}r)^\alpha } ) \, \Q(\dint z)\\
& \quad + 4C s \int_{\XX\setminus B(x_1,r)} \exp( - c \d_s(x_1 ,z)^\alpha) \, \Q(\dint z).
\end{align*}
Recalling from \eqref{eqn:UpperBoundBall} that $\Q(B(x_1,r)) \leq \kappa r^{\gamma} $, we have
$$
4C s \int_{B(x_1,r)}\exp( - c  (s^{1/\gamma }r)^\alpha ) \, \Q(\dint z) \leq 4C s \kappa r^{\gamma} \exp(-c (s^{1/\gamma} r)^\alpha) \leq C_1 \exp(-c (s^{1/\gamma} r)^\alpha/2)
$$
with a constant $C_1\in(0,\infty)$ only depending on $C$, $c$, $\gamma$ and $\alpha$. On the other hand, \eqref{eqn:IntegrationFormula} yields
$$
4C s \int_{\XX\setminus B(x_1,r)} \exp( - c \d_s(x_1,z)^\alpha) \, \Q(\dint z) \leq C_2 \exp(-c (s^{1/\gamma} r)^\alpha/2)
$$
with a constant $C_2\in(0,\infty)$ only depending on $C$, $c$, $\gamma$ and $\alpha$. Hence, we obtain
$$
T_{x_1,x_2,s} \leq (C_1+C_2) \exp(-c\max\{\d_s(x_1,K), \d_s(x_2,K), \d_s(x_1,x_2)\}^\alpha/2^{\alpha+1})
$$
and
$$
\Prob( D^2_{(x_1,M_{x_1}),(x_2,M_{x_2})}h_s(\P_s) \neq 0)  \leq C_3 \exp(-c\max\{\d_s(x_1,K), \d_s(x_2,K), \d_s(x_1,x_2)\}^\alpha/2^{\alpha+1})
$$
with $C_3:=C_1+C_2+4C$. Consequently, we have
\begin{align*}
& s\int_\XX \Prob( D^2_{(x_1,M_{x_1}),(x_2,M_{x_2})}h_s(\P_s) \neq 0)^\beta  \, \Q(\dint x_2)\\
& \leq C_3^\beta s\int_{B(x_1,\d(x_1,K)/2)} \exp(-c\beta \d_s(x_2,K)^\alpha/2^{\alpha+1}) \, \Q(\dint x_2)\\
& \quad  + C_3^\beta s\int_{\XX \setminus B(x_1,\d(x_1,K)/2)} \exp(-c\beta \d_s(x_2,x_1)^\alpha/2^{\alpha+1}) \, \Q(\dint x_2).
\end{align*}
Using the same arguments as above, the right-hand side can be bounded by
$$
C_\beta \exp(-c\beta \d_s(K,x_1)^\alpha/4^{\alpha+1})
$$
with a constant $C_\beta\in(0,\infty)$ only depending on $\beta$, $C_3$, $c$, $\gamma$ and $\alpha$. This completes the proof of (a).

Similar arguments  show for the binomial case that
\begin{align*}
& \sup_{\A\subset\XX, |\A|\leq 1} \Prob(D_{(x_1,M_{x_1}), (x_2,M_{x_2})}^2 h_n(\X_{n-2-|\A|}\cup (\A,M_\A)) \neq 0) \\
& \leq \sup_{\A\subset\XX, |\A|\leq 1}  \Prob( D_{(x_1,M_{x_1})} \xi_n((x_2,M_{x_2}),\X_{n-2-|\A|}\cup\{(x_2,M_{x_2})\}\cup(\A,M_\A)) \neq 0)\\
& \quad + \sup_{\A\subset\XX, |\A|\leq 1} \Prob( D_{(x_2,M_{x_2})} \xi_n((x_1,M_{x_1}),\X_{n-2-|\A|}\cup\{(x_1,M_{x_1})\}\cup(\A,M_\A)) \neq 0)\\
& \quad + n \int_{\XX} \sup_{\A\subset\XX, |\A|\leq 1}  \Prob(D_{(x_1,M_{x_1}), (x_2,M_{x_2})}^2 \xi_n((z,M_z),\X_{n-3-|\A|}\cup\{(z,M_z)\}\cup(\A,M_\A)) \neq 0) \, \Q(\dint z)\\
& \quad + \sup_{z\in\XX} \Prob(D_{(x_1,M_{x_1}), (x_2,M_{x_2})}^2 \xi_n((z,M_z),\X_{n-3}\cup\{(z,M_z)\}) \neq 0).
\end{align*}
Now similar computations as for the Poisson case conclude the proof of part (b).
\end{proof}

For $\alpha,\tau\geq 0$ put
$$
I_{K,s}(\alpha,\tau):= s \int_{\XX} \exp(-\tau \d_s(x,K)^{\alpha}) \, \Q(\dint x), \quad s\geq 1.
$$

\begin{lemm} \label{L2.7} Let $\beta \in (0, \infty)$ be fixed. There is a constant $\tilde{C}_\beta\in(0,\infty)$ such that for all $s \geq 1$ we have
\begin{equation} \label{eqn:BoundIntegral1}
s \int_{\XX} \left( s \int_{\XX} \Prob(D_{(x_1,M_{x_1}), (x_2,M_{x_2})}^2 h_s(\P_s) \neq 0)^\beta \, \Q(\dint x_2) \right)^2  \, \Q(\dint x_1) \leq \tilde{C}_\beta I_{K,s}(\alpha,c\beta/2^{2\alpha+1}),
\end{equation}
\begin{equation} \label{eqn:BoundIntegral2}
s^2 \int_{\XX^2} \Prob(D_{(x_1,M_{x_1}), (x_2,M_{x_2})}^2 h_s(\P_s) \neq 0)^\beta \, \Q^2(\dint (x_1,x_2)) \leq \tilde{C}_\beta I_{K,s}(\alpha,c\beta/4^{\alpha+1})
\end{equation}
and
\begin{equation} \label{eqn:BoundIntegral3}
s \int_{\XX} \Prob(D_{(x,M_x)} h_s(\P_s) \neq 0)^\beta \, \Q(\dint x)\leq \tilde{C}_\beta I_{K,s}(\alpha,c\beta/2^{\alpha+1}).
\end{equation}
\end{lemm}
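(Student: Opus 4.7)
The plan is to derive all three estimates from the pointwise probability bounds already established in (or directly parallel to) the proof of Lemma \ref{L2.6}(a), combined with the ball-volume bound \eqref{eqn:UpperBoundBall} and the tail-integration formula \eqref{eqn:IntegrationFormula}. Bound \eqref{eqn:BoundIntegral1} is the easiest: Lemma \ref{L2.6}(a) already controls the inner integral pointwise by $C_\beta \exp(-c\beta \d_s(x_1,K)^\alpha/4^{\alpha+1})$. Squaring that bound and integrating against $s\,\Q$ reduces to computing $C_\beta^2\, s\int_\XX \exp(-2c\beta \d_s(x_1,K)^\alpha/4^{\alpha+1})\, \Q(\dint x_1)$; since $2/4^{\alpha+1}=1/2^{2\alpha+1}$ this is exactly $C_\beta^2\cdot I_{K,s}(\alpha,c\beta/2^{2\alpha+1})$.

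For \eqref{eqn:BoundIntegral2}, I would revisit the stronger intermediate estimate produced inside the proof of Lemma \ref{L2.6}(a), namely
\[\Prob(D^2_{(x_1,M_{x_1}),(x_2,M_{x_2})} h_s(\P_s) \neq 0) \leq C_3 \exp\bigl(-c\max\{\d_s(x_1,K),\d_s(x_2,K),\d_s(x_1,x_2)\}^\alpha/2^{\alpha+1}\bigr).\]
Raising to the power $\beta$ and using the elementary inequality $\max\{a,c\}^\alpha \geq (a^\alpha+c^\alpha)/2$ (\emph{a fortiori} $\max\{a,b,c\}^\alpha \geq (a^\alpha+c^\alpha)/2$), the exponential factors as $\exp(-c\beta \d_s(x_1,K)^\alpha/2^{\alpha+2})\cdot \exp(-c\beta \d_s(x_1,x_2)^\alpha/2^{\alpha+2})$. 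Fubini together with Lemma \ref{Lem4.1}(b) applied at $r=0$ then shows that the $x_2$-integral is uniformly bounded in $x_1$, leaving an outer $s$-integral of the form $I_{K,s}(\alpha,c\beta/2^{\alpha+2})$, which for $\alpha>0$ is dominated by the claimed $I_{K,s}(\alpha,c\beta/4^{\alpha+1})$.

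For \eqref{eqn:BoundIntegral3}, I would first establish a first-order analog of Lemma \ref{L2.6}(a). Lemma \ref{lem:DH} expands $D_{(x,M_x)} h_s(\P_s)$ as the score at $x$ plus a sum of score-differences over points of $\P_s$; the Slivnyak-Mecke formula combined with Lemma \ref{L2.5}(a) then yields
\[\Prob(D_{(x,M_x)}h_s(\P_s)\neq 0) \leq C\exp(-c \d_s(x,K)^\alpha) + 2Cs\int_\XX \exp(-c\max\{\d_s(x,z),\d_s(z,K)\}^\alpha)\, \Q(\dint z).\]
Splitting the $z$-integral at $B(x,\d(x,K)/2)$ versus its complement---the first part controlled by \eqref{eqn:UpperBoundBall} (absorbing the polynomial factor into the exponential decay) and the second by \eqref{eqn:IntegrationFormula}---gives $\Prob(D_{(x,M_x)}h_s(\P_s)\neq 0)\leq C'\exp(-c\d_s(x,K)^\alpha/2^{\alpha+1})$. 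Raising this to $\beta$ and integrating against $s\,\Q$ then produces $\tilde{C}_\beta\, I_{K,s}(\alpha,c\beta/2^{\alpha+1})$, as required.

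The only genuine obstacle is the bookkeeping in \eqref{eqn:BoundIntegral2}: one must retain enough of the decay from both $\d_s(x_1,K)$ and $\d_s(x_1,x_2)$ to simultaneously control the inner $x_2$-integration and to extract an $I_{K,s}$-factor after integrating over $x_1$. The decomposition $\max\{a,b,c\}^\alpha\geq (a^\alpha+c^\alpha)/2$ cleanly achieves this at the cost of a factor of $2$ in the effective decay rate, which is precisely the slack built into the exponent $c\beta/4^{\alpha+1}$ in the statement.
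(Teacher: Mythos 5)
Your argument is correct and follows essentially the same route as the paper: \eqref{eqn:BoundIntegral1} and \eqref{eqn:BoundIntegral3} are handled exactly as in the paper's proof (square the bound of Lemma \ref{L2.6}(a) and integrate, respectively derive the first-order pointwise bound $\Prob(D_{(x,M_x)}h_s(\P_s)\neq 0)\leq C'\exp(-c\,\d_s(x,K)^\alpha/2^{\alpha+1})$ via Lemma \ref{lem:DH}, the Mecke formula, Lemma \ref{L2.5}(a) together with \eqref{eqn:expfastPoisson}, and the split at $B(x,\d(x,K)/2)$). For \eqref{eqn:BoundIntegral2} the paper simply observes that the inner $x_2$-integral is literally the quantity bounded in Lemma \ref{L2.6}(a), which gives the constant $c\beta/4^{\alpha+1}$ in one line; your re-derivation from the intermediate max-estimate using $\max\{a,b,c\}^\alpha\geq(a^\alpha+c^\alpha)/2$ is also valid but longer, and correctly yields the decay rate $c\beta/2^{\alpha+2}\geq c\beta/4^{\alpha+1}$, hence an integral dominated by $I_{K,s}(\alpha,c\beta/4^{\alpha+1})$.
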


\begin{proof}
By Lemma \ref{L2.6}(a) the integrals in \eqref{eqn:BoundIntegral1} and \eqref{eqn:BoundIntegral2} are bounded by
$$
C_\beta^2 s \int_{\XX} \exp(  - c\beta \d_s(x_1, K)^\alpha/2^{2\alpha+1}) \, \Q(\dint x_1) = C_\beta^2 I_{k,s}(\alpha,c\beta/2^{2\alpha+1})
$$
and
$$
C_\beta s \int_{\XX} \exp( -c\beta\d_s(x_1, K)^\alpha/4^{\alpha+1}) \, \Q(\dint x_1) = C_\beta I_{k,s}(\alpha,c\beta/4^{\alpha+1}),
$$
respectively. In order to derive the bound in \eqref{eqn:BoundIntegral3}, we compute $\Prob(D_{(x,M_x)} h_s(\P_s) \neq 0)$ as follows. By Lemma \ref{lem:DH} and the Slivnyack-Mecke formula we obtain that
\begin{align*}
&\Prob(D_{(x,M_x)} h_s(\P_s) \neq 0)\\
& \leq \Prob( \xi_s((x,M_x), \P_s\cup\{(x,M_x)\}) \neq 0)  + \E \sum_{z\in \P_s} \mathbf{1}_{\{ D_{(x,M_x)} \xi_s(z, \P_s) \neq 0\}} \allowdisplaybreaks\\
& = \Prob( \xi_s((x,M_x), \P_s\cup\{(x,M_x)\}) \neq 0)\\
& \quad + s \int_{\XX} \Prob( D_{(x,M_x)} \xi_s((z,M_z), \P_s\cup\{(z,M_z)\}) \neq 0) \, \Q(\dint z).
\end{align*}
By \eqref{eqn:expfastPoisson} and Lemma \ref{L2.5}(a) we obtain that for all $x \in \XX$ and $s\geq 1$,
\begin{align*}
\Prob(D_{(x,M_x)} h_s(\P_s) \neq 0) & \leq  C \exp( -c\d_s(x, K)^\alpha)\\
& \quad + 2C s \int_{\XX } \exp( -c\max \{ \d_s(x,z), \d_s(z, K) \}^\alpha) \, \Q(\dint z).
\end{align*}
Letting $r := \d(x, K)/2$,  partitioning $\XX$ into the union of $\XX \setminus B(x,r)$  and $B(x,r)$,  and following the
discussion in the proof of Lemma \ref{L2.6}, we obtain
$$
2C s \int_{\XX } \exp( -c\max \{ \d_s(x,z), \d_s(z, K) \}^\alpha) \, \Q(\dint z)
 \leq C_1 \exp( - c \d_s(x, K)^\alpha/2^{\alpha+1} )
$$
with a constant $C_1\in(0,\infty)$. Consequently, for all $x \in \XX$ and $s\geq 1$ we have
$$
\Prob(D_{(x,M_x)} h_s(\P_s) \neq 0) \leq   (C+C_1) \exp( - c \d_s(x, K)^\alpha/2^{\alpha+1} )
$$
and
$$
s \int_{\XX} \Prob(D_{(x,M_x)} h_s(\P_s)  \neq 0)^\beta \, \Q(\dint x) \leq (C+C_1) I_{K,s}(\alpha, c\beta/2^{\alpha+1}),
$$
which was to be shown.
\end{proof}

\begin{lemm} \label{lemma:BoundsIntegralsBinomial}
Let $\beta \in (0, \infty)$ be fixed. There is a constant $\tilde{C}_\beta\in(0,\infty)$ such that for all $n\geq 9$ we have
\begin{equation} \label{eqn:BoundIntegral1Binomial}
\begin{split}
& n\int_{\XX} \left( n \int_{\XX} \sup_{\A\subset\XX, |\A|\leq 1}\Prob(D_{(x_1,M_{x_1}), (x_2,M_{x_2})}^2 h_n(\X_{n-2-|\A|}\cup(\A,M_\A)) \neq 0)^\beta \, \Q(\dint x_2) \right)^2  \, \Q(\dint x_1)\\
& \leq \tilde{C}_\beta I_{K,n}(\alpha,c\beta/2^{2\alpha+1}),
\end{split}
\end{equation}
\begin{equation} \label{eqn:BoundIntegral2Binomial}
\begin{split}
& n^2 \int_{\XX^2}  \sup_{\A\subset \XX, |\A|\leq 1}\Prob(D_{(x_1,M_{x_1}), (x_2,M_{x_2})}^2 h_n( \X_{n-2-|\A|}\cup(\A,M_\A))) \neq 0)^\beta \, \Q^2(\dint(x_1, x_2)\\
& \leq \tilde{C}_\beta I_{K,n}(\alpha, c\beta/4^{\alpha+1})
\end{split}
\end{equation}
and
\begin{equation} \label{eqn:BoundIntegral3Binomial}
 n \int_{\XX} \Prob(D_{(x,M_x)} h_n(\X_{n-1}) \neq 0)^\beta \, \Q(\dint x)\leq \tilde{C}_\beta I_{K,n}(\alpha, c\beta/2^{\alpha+1}).
\end{equation}
\end{lemm}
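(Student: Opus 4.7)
The plan is to mirror the proof of Lemma \ref{L2.7} line by line, substituting Lemma \ref{L2.6}(b) for Lemma \ref{L2.6}(a) and Lemma \ref{lem:PDxineq0Binomial} for Lemma \ref{L2.5}, with some additional bookkeeping forced by the need to realize $\X_{n-1}$ as $\X_{n-8}$ plus seven independent marked points whenever we want to apply the marked moment/stabilization conditions \eqref{eqn:expstabBinomial}--\eqref{eqn:expfastBinomial}.

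For \eqref{eqn:BoundIntegral2Binomial}, Lemma \ref{L2.6}(b) bounds the inner integrand (integrated against $n\,\Q(\dint x_2)$) by $C_\beta \exp(-c\beta \d_n(x_1,K)^\alpha/4^{\alpha+1})$, and a further integration against $n\,\Q(\dint x_1)$ produces $C_\beta I_{K,n}(\alpha, c\beta/4^{\alpha+1})$. For \eqref{eqn:BoundIntegral1Binomial}, Lemma \ref{L2.6}(b) is applied first, and after squaring (which doubles the exponential rate, giving $2/4^{\alpha+1} = 1/2^{2\alpha+1}$) and integrating against $n\,\Q(\dint x_1)$ we obtain $C_\beta^2 I_{K,n}(\alpha, c\beta/2^{2\alpha+1})$.

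For \eqref{eqn:BoundIntegral3Binomial}, I first prove a pointwise bound on $\Prob(D_{(x,M_x)} h_n(\X_{n-1}) \neq 0)$. By Lemma \ref{lem:DH} and a union bound,
\begin{align*}
\Prob(D_{(x,M_x)} h_n(\X_{n-1}) \neq 0)
&\leq \Prob(\xi_n((x,M_x), \X_{n-1}\cup\{(x,M_x)\}) \neq 0) \\
&\quad + (n-1) \int_{\XX} \Prob(D_{(x,M_x)} \xi_n((z,M_z), \X_{n-2}\cup\{(z,M_z)\}) \neq 0) \, \Q(\dint z),
\end{align*}
where the second term uses the binomial Palm formula. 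The first term is controlled by writing $\X_{n-1} \stackrel{d}{=} \X_{n-8} \cup (\A, M_\A)$ with $|\A|=7$ iid marked points, conditioning on $\A$, and applying \eqref{eqn:expfastBinomial}; this gives $C\exp(-c\d_n(x,K)^\alpha)$. The integrand in the second term is bounded by $2C\exp(-c \max\{\d_n(x,z), \d_n(z,K)\}^\alpha)$ via Lemma \ref{lem:PDxineq0Binomial}(a) (with $\A=\emptyset$). Then, exactly as in the proof of Lemma \ref{L2.7}, I split $\XX$ at $r = \d(x,K)/2$ into $B(x,r)$ and its complement, use \eqref{eqn:UpperBoundBall} to control the ball contribution and \eqref{eqn:IntegrationFormula} to control the tail, and obtain $\Prob(D_{(x,M_x)} h_n(\X_{n-1}) \neq 0) \leq C_2 \exp(-c\d_n(x,K)^\alpha/2^{\alpha+1})$ for a constant $C_2$ depending on $C$, $c$, $\gamma$, $\alpha$. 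Raising to the $\beta$th power and integrating against $n\,\Q(\dint x)$ yields \eqref{eqn:BoundIntegral3Binomial}.

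The main obstacle, and the only real departure from the Poisson argument, is the bookkeeping required to fit the binomial process into the form demanded by our standing hypotheses: every invocation of \eqref{eqn:expstabBinomial}, \eqref{eqn:expfastBinomial}, or Lemma \ref{lem:PDxineq0Binomial} requires the configuration to be of the form $\X_{n-8}$ (respectively $\X_{n-2-|\A|}$, $\X_{n-3-|\A|}$) together with some small collection of extra marked points. Once the binomial Palm formula is invoked and the appropriate iid augmentation argument is applied, the remaining geometric decay estimate is identical to the Poisson case.
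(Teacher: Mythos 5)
Your proposal is correct and follows essentially the same route as the paper's proof: \eqref{eqn:BoundIntegral1Binomial} and \eqref{eqn:BoundIntegral2Binomial} are read off from Lemma \ref{L2.6}(b) (with the squaring step producing the $1/2^{2\alpha+1}$ rate), and \eqref{eqn:BoundIntegral3Binomial} is obtained from Lemma \ref{lem:DH}, \eqref{eqn:expfastBinomial}, Lemma \ref{lem:PDxineq0Binomial}(a), and the splitting of $\XX$ at $r=\d(x,K)/2$ exactly as in Lemma \ref{L2.7}. The only difference is that you spell out the bookkeeping (the augmentation of $\X_{n-8}$ by seven i.i.d.\ points) that the paper leaves implicit.
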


\begin{proof}
The bounds in \eqref{eqn:BoundIntegral1Binomial} and \eqref{eqn:BoundIntegral2Binomial} follow immediately from Lemma \ref{L2.6}(b) and the definition of $I_{K,n}(\alpha,\tau)$.  By Lemma \ref{lem:DH} we obtain that, for $x\in\XX$,
\begin{align*}
& \Prob(D_{(x,M_x)} h_n(\X_{n-1}) \neq 0)\\
& \leq \Prob( \xi_n((x,M_x),\X_{n-1}\cup\{(x,M_x)\}) \neq 0) + \E \sum_{z\in \X_{n-1}} \mathbf{1}_{\{D_{(x,M_x)} \xi_n(z,\X_{n-1}\cup\{z\}) \neq 0\}} \allowdisplaybreaks\\
& \leq \Prob( \xi_n((x,M_x),\X_{n-1}\cup\{(x,M_x)\}) \neq 0)\\
& \quad + n \int_{\XX} \Prob( D_{(x,M_x)} \xi_n((z,M_z), \X_{n-2}\cup\{(z,M_z)\}) \neq 0) \, \Q(\dint z).
\end{align*}
Combining the bound from Lemma \ref{lem:PDxineq0Binomial}(a) with the computations from the proof of Lemma \ref{L2.7} and \eqref{eqn:expfastBinomial}, we see that there is a constant $C_1\in(0,\infty)$ such that for all $x \in \XX$ and $s\geq 1$ we have
$$
\Prob(D_{(x,M_x)} h_n(\X_{n-1}) \neq 0) \leq  C_1 \exp(  -c \d_s( x, K)^\alpha/2^{\alpha+1}).
$$
Now \eqref{eqn:BoundIntegral3Binomial} follows from the definition of $I_{K,n}(\alpha,\tau)$.
\end{proof}

\begin{proof}[Proof of Theorem \ref{theoLSY}]
We start with the proof of the Poisson case \eqref{eqn:KolmogorovPoisson}. It follows from Lemma \ref{lem:BoundsDPoisson} that the condition \eqref{eqn:ConditionLPS} with the exponent $4+p/2$ in Theorem \ref{thm:LPS} is satisfied for all $s\geq 1$ with the constant $C_{p/2}$. In the following we use the abbreviation
$$
I_{K,s}=I_{K,s}(\alpha,c p/(36\cdot 4^{\alpha+1})).
$$
Together with Lemma \ref{L2.7} (with $\beta=p/36$) it follows from Theorem \ref{thm:LPS} that there is a constant $\tilde{C}\in(0,\infty)$ depending on $\tilde{C}_{p/36}$,$C_{p/2}$ and $p$ such that
$$
d_K\bigg(\frac{H_s-\E H_s}{\sqrt{\Var H_s}}, N\bigg) \leq \tilde{C} \bigg(\frac{\sqrt{I_{K,s}}}{\Var H_s} +\frac{I_{K,s}}{(\Var H_s)^{3/2}}+\frac{I_{K,s}^{5/4}+I_{K,s}^{3/2}}{(\Var H_s)^2}\bigg),
$$
which completes the proof of the Poisson case.

For the binomial case \eqref{eqn:KolmogorovBinomial} it follows from Lemma \ref{lem:BoundDBinomial} that the condition \eqref{eqn:Conditiongeneral-binomial} in Theorem \ref{thm:general-binomial} is satisfied with the exponent $4+p/2$ for all $n \geq 9$ with the same constant $C_{p/2}\geq 1$. Using the abbreviation
$$
I_{K,n}=I_{K,n}(\alpha,c p/(18\cdot 4^{\alpha+1})),
$$
we obtain from Lemma \ref{lemma:BoundsIntegralsBinomial} (with $\beta=p/18$) and Theorem \ref{thm:general-binomial} that there is a constant $\tilde{C}\in(0,\infty)$ depending on $\tilde{C}_{p/18}$, $C_{p/2}$ and $p$ such that
$$
d_K\bigg(\frac{H'_n-\E H'_n}{\sqrt{\Var H'_n}}, N\bigg) \leq \tilde{C} \bigg(\frac{\sqrt{I_{K,n}}}{\Var H'_n} +\frac{I_{K,n}}{(\Var H'_n)^{3/2}}+\frac{I_{K,n}+(I_{K,n})^{3/2}}{(\Var H_n')^2}\bigg),
$$
which completes the proof.
\end{proof}

\vskip.5cm

Before giving the proof of Theorem \ref{2coroLSY} we require a lemma. We thank Steffen Winter for discussions concerning the proof. For $K\subset\R^d$, recall that $K_r := \{ y \in \R^d: \ \ \d(y, K) \leq r \}$ and that $\overline{\cal M}^{d-1}(K)$ is  defined at \eqref{eqn:UpperMinkowski}.

\begin{lemm}  \label{SWlemma}
If $K$ is either a full-dimensional compact subset of $\R^d$ with
 $\overline{\cal M}^{d-1}( \partial K) < \infty$ or a $(d-1)$-dimensional compact subset of $\R^d$ with $\overline{\cal M}^{d-1}(K) < \infty$,
then  there exists a constant $C$ such that
 \be  \label{Mcon} {\cal H}^{d-1}( \partial K_r) \leq C(1  + r^{d-1}), \ r > 0. \ee
\end{lemm}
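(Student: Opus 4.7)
The plan is to prove the bound first for Case 2 (where $K$ is $(d-1)$-dimensional) and then reduce Case 1 (full-dimensional $K$) to it. Two core tools are used: (i) the Brunn--Minkowski inequality applied to the identity $A_{r+s}=A_r+B(0,s)$, which implies concavity of $r\mapsto V_A(r)^{1/d}$ on $[0,\infty)$ for every compact $A\subset\R^d$ (writing $V_A(r):=\Vol_d(A_r)$); and (ii) a classical theorem of Stach\'o on volume functions of parallel sets, which yields the pointwise inequality $\H^{d-1}(\partial A_r)\leq V_{A,+}'(r)$ for all $r>0$, where $V_{A,+}'$ denotes the right-derivative. Combining (i) with $V_A(0)\geq 0$ gives $(V_A^{1/d})'_+(r)\leq V_A(r)^{1/d}/r$, whence by the chain rule
\[
\H^{d-1}(\partial A_r) \;\leq\; V_{A,+}'(r) \;=\; d\,V_A(r)^{(d-1)/d}\,(V_A^{1/d})'_+(r) \;\leq\; \frac{d\,V_A(r)}{r}.
\]

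For Case 2, $V_K(0)=0$ and the Minkowski content hypothesis provides $r_0,C_1>0$ with $V_K(r)\leq C_1 r$ for $r\leq r_0$. For $r>r_0$, the compactness $K\subset B(x_0,R)$ yields $V_K(r)\leq \kappa_d(R+r)^d$, so $V_K(r)/r$ is dominated by a polynomial of degree $d-1$ in $r$ on $[r_0,\infty)$. Substitution into the master inequality gives $\H^{d-1}(\partial K_r)\leq C(1+r^{d-1})$.

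For Case 1, the key observation is the elementary inclusion
\[
K_{r+\varepsilon}\setminus K_r \;\subset\; (\partial K)_{r+\varepsilon}\setminus (\partial K)_r, \qquad r,\varepsilon>0,
\]
which holds since any $x$ in the left-hand set satisfies $\d(x,K)\in(r,r+\varepsilon]$, hence $x\notin K$, so $\d(x,K)=\d(x,\partial K)$. Taking volumes, dividing by $\varepsilon$, and letting $\varepsilon\to 0^+$ yields $V_{K,+}'(r)\leq V_{\partial K,+}'(r)$; combined with Stach\'o's theorem applied to $K$ and the Brunn--Minkowski step applied to $\partial K$, this gives $\H^{d-1}(\partial K_r)\leq V_{K,+}'(r)\leq V_{\partial K,+}'(r)\leq d\,V_{\partial K}(r)/r$. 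Since $\partial K$ is a compact $(d-1)$-dimensional set with $\overline{\cal M}^{d-1}(\partial K)<\infty$, the Case 2 bound on $V_{\partial K}(r)/r$ yields $\H^{d-1}(\partial K_r)\leq C(1+r^{d-1})$.

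The main subtle step is invoking Stach\'o's theorem to relate $\H^{d-1}(\partial A_r)$ to the right-derivative of the parallel volume for every $r>0$; this is classical but requires careful citation of the pointwise (not merely almost-everywhere) form. The remaining ingredients reduce to Brunn--Minkowski and elementary geometry of parallel sets.
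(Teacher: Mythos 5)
Your overall architecture is sound and runs parallel to the paper's proof, which obtains the small-$r$ bound from Corollary 3.6 of Rataj--Winter and the large-$r$ bound from the scaling ${\cal H}^{d-1}(\partial K_r)\le (r/\tilde r)^{d-1}{\cal H}^{d-1}(\partial K_{\tilde r})$, itself a consequence of Kneser's inequality for parallel volumes. Your reduction of the full-dimensional case to $\partial K$ via the inclusion $K_{r+\varepsilon}\setminus K_r\subset(\partial K)_{r+\varepsilon}\setminus(\partial K)_r$ is correct, as is the small-$r$/large-$r$ split in Case 2.

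There is, however, a genuine gap at your foundational step (i). Brunn--Minkowski applied to $A_{r+s}=A_r+B(0,s)$ only gives the lower bound $V_A(r+s)^{1/d}\ge V_A(r)^{1/d}+\kappa_d^{1/d}s$; it does not give concavity of $V_A^{1/d}$. The standard concavity argument needs the inclusion $A_{(1-t)r_1+tr_2}\supseteq (1-t)A_{r_1}+tA_{r_2}$, which rests on $(1-t)A+tA=A$ and hence on convexity of $A$; for non-convex $A$ one only has $(1-t)A+tA\supseteq A$, so the inclusion goes the wrong way. In fact the claimed concavity is false for general compact sets in dimension $d\ge 2$ (it is the disproved set-theoretic analogue of the Costa--Cover entropy-power concavity conjecture; see Fradelizi and Marsiglietti, \emph{Adv.\ Appl.\ Math.}\ 57 (2014)), and you apply it to $K$ and to $\partial K$, neither of which is convex in general. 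The inequality you actually need, $(V_A)'_+(r)\le d\,(V_A(r)-V_A(0))/r\le d\,V_A(r)/r$, is nevertheless true for every compact $A$, but it must be derived from Kneser's inequality $V_A(\lambda r)-V_A(\lambda s)\le \lambda^d\,(V_A(r)-V_A(s))$ for $\lambda\ge1$ and $0\le s\le r$ (take $s=0$ and differentiate in $\lambda$ at $\lambda=1^+$) --- which is precisely the input the paper imports via Equation (2.1) and Lemma 4.1 of Rataj--Winter. With that substitution, and with the pointwise bound ${\cal H}^{d-1}(\partial A_r)\le (V_A)'_{\pm}(r)$ attributed to Rataj--Winter (Stach\'o's results concern the Minkowski content of $\partial A_r$, not its Hausdorff measure), your argument goes through.
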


\begin{proof}
By Corollary 3.6 in \cite{RW}, the hypotheses yield $r_0, C_1 \in (0, \infty)$ such that
 $$
{\cal H}^{d-1}( \partial K_r) \leq C_1, \ r \in (0, r_0).
 $$
Combining Lemma 4.1 of \cite{RW} along with Corollaries 2.4 and 2.5 and Equation (2.1) of \cite{RW}, we conclude for almost all $\tilde{r}>0$ that
$$
{\cal H}^{d-1}( \partial K_r) \leq (r/\tilde{r})^{d-1} {\cal H}^{d-1}( \partial K_{\tilde{r}})
$$
for all $r>\tilde{r}$. Choosing such a $\tilde{r}>0$ from $(0,r_0)$ we see that
$$
{\cal H}^{d-1}( \partial K_r) \leq C_1 (1/\tilde{r})^{d-1} r^{d-1}, \ r \in [r_0, \infty),
$$
which completes the proof.
\end{proof}

\vskip.3cm

\begin{proof}[Proof of Theorem \ref{2coroLSY}] Note that we have the same situation as described in Example 1 in Section \ref{sec:MainResults}. In the following we evaluate $I_{K,s}$, which allows us to apply Corollary \ref{coroLSY}. It suffices to show that if $K$ is a full $d$-dimensional subset
of $\XX$, then $I_{K,s} = O(s)$, while $I_{K,s}=O(s^{1-1/d})$ for lower dimensional $K$. Indeed, put
$c :=  \min\{c_{stab},c_K\}p/(36 \cdot 4^{\alpha+1})$, so that
\begin{align*}
I_{K,s} & = s \int_{\XX} \exp( -c \d_s(x, K)^{\alpha} ) g(x) \, \dint x \\
& \leq \|g\|_\infty s \int_{K}  \exp( -c s^{\alpha/d} \d(x, K)^{\alpha} ) \, \dint x + \|g\|_\infty s \int_{\XX \setminus K}  \exp( -c s^{\alpha/d} \d(x, K)^{\alpha} ) \, \dint x \allowdisplaybreaks\\
& = \|g\|_\infty \Vol_d(K) s + \|g\|_\infty s\int_0^{\infty} \int_{ \partial K_r } \exp(- c s^{\alpha/d} r^\alpha) \, \H^{d-1} (\dint y) \, \dint r \allowdisplaybreaks\\
& \leq \|g\|_\infty \Vol_d(K) s + C \|g\|_\infty s\int_0^{\infty} \exp(- c s^{\alpha/d} r^\alpha) \, (1  + r^{d-1} ) \, \dint r \\
& \leq \|g\|_\infty \Vol_d(K) s + C \|g\|_\infty s^{(d-1)/d} \int_0^{\infty} \exp(- c u^\alpha) \, (1  +  s^{-(d-1)/d} u^{d-1}) \, \dint u,
\end{align*}
where the second equality follows by the co-area formula and where the second inequality follows by Lemma \ref{SWlemma}. If $K$ is a full $d$-dimensional subset of $\XX$, then the first integral dominates and is $O(s)$. Otherwise, $\Vol_d(K)$ vanishes and the second integral is $O(s^{(d-1)/d})$.
\end{proof}

\section{Applications}\label{sec:Applications}

By appropriately choosing the measure space $(\XX, {\cal F}, \Q)$, the scores $(\xi_s)_{s \geq 1}$ and $(\xi_n)_{n\in\N}$, and the set $K \subset \XX$, we may use  the general results of Theorem \ref{theoLSY}, Corollary \ref{coroLSY} and Theorem \ref{2coroLSY} to deduce presumably optimal  rates of normal
convergence for statistics in geometric probability. For example, in the setting  $\XX = \R^d$, we expect that all of the statistics $H_s$ and $H'_n$ described in \cite{BY05, Pe07, PY1, PY4, PY5} consist of sums of scores  $\xi_s$ and $\xi_n$ satisfying the conditions of Theorem \ref{2coroLSY}, showing that the statistics in these papers enjoy rates of normal convergence (in the Kolmogorov distance) given by the reciprocal of the standard deviation of $H_s$ and $H_n'$, respectively. Previously, the rates in these papers either contained extraneous logarithmic factors, as in the case of Poisson input, or the rates were sometimes non-existent, as in the case of binomial input. In the following we do this in detail for some prominent statistics featuring in the stochastic geometry literature, including the $k$-face and intrinsic volume  functionals of convex hulls of random samples. Our selection of statistics is illustrative rather than exhaustive and is intended to demonstrate the wide applicability of Theorem \ref{theoLSY} and the relative simplicity of Corollary \ref{coroLSY} and Theorem \ref{2coroLSY}. In some instances the rates of convergence are subject to variance lower bounds, a separate problem not addressed here.

We believe that one could use our approach to also deduce presumably optimal rates of normal convergence for statistics of random sequential packing problems as in \cite{SPY}, set approximation via Delaunay triangulations as in \cite{JY}, generalized spacings as in \cite{BPY}, and general proximity graphs as in \cite{GJR}.

\vskip.3cm

\subsection{Nearest neighbors graphs and statistics of high-dimensional data sets} \label{NNGdatasets}

\noindent a.  {\em Total edge length of nearest neighbors graphs.}
Let $(\XX, {\cal F}, \Q)$ be equipped with a semi-metric $\d$ such that \eqref{eqn:SurfaceBall} is satisfied for some
$\gamma$ and $\kappa$. We equip $\XX$ with a fixed linear order, which is
possible by the well-ordering principle.
Given $\X\in\mathbf{N}$, $k \in \N$, and $x \in \X$, let $V_k(x, \X)$ be the set of $k$ nearest neighbors of $x$, i.e., the $k$ closest points of $x$ in $\X\setminus\{x\}$. In case that that these $k$ points are not unique, we  break the tie via the fixed linear order on $\XX$. The (undirected) nearest neighbor graph $NG_1(\X)$ is the graph with vertex set $\X$ obtained by including  an edge $\{x,y\}$  if $y\in V_1(x,\X)$ and/or $x\in V_1(y,\X)$. More generally, the (undirected) $k$-nearest neighbors graph $NG_k(\X)$ is the graph with vertex set $\X$ obtained by including an edge $\{x,y\}$  if $y\in V_k(x,\X)$ and/or $x\in V_k(y,\X)$. For all $q \geq 0$ define
\be \label{defnn}
\xi^{(q)}(x, \X) := \sum_{y \in V_k(x, \X)} \rho^{(q)}(x, y),
\ee
where $\rho^{(q)}(x,y) := \d(x,y)^q/2$ if $x$ and $y$ are mutual $k$-nearest neighbors, i.e., $x\in V_k(y,\X)$ and $y\in V_k(x,\X)$, and
otherwise $\rho^{(q)}(x,y) := \d(x,y)^q$.
The total edge length of the undirected $k$-nearest neighbors graph on $\X$ with $q$th power weighted edges is
$$
L_{NG_k}^{(q)}(\X) = \sum_{x \in \X} \xi^{(q)}(x, \X).
$$
As usual $\P_s$ is a Poisson point process on $\XX$ with intensity measure $s \Q$ and $\X_n$ is a binomial point process of $n$ points in $\XX$ distributed according to $\Q$.
We assume in the following that $(\XX, \cal F, \Q)$ satisfies, beside \eqref{eqn:SurfaceBall},
\begin{equation}\label{eqn:LowerBoundBall}
\inf_{x \in \XX} \Q (B(x,r))  \geq c r^{\gamma}, \quad r\in[0,\diam(\XX)],
\end{equation}
where $\gamma$ is the constant from \eqref{eqn:SurfaceBall}, $\diam(\XX)$ stands for the diameter of $\XX$ and $c>0$.

\begin{theo}  \label{NNG} If $q \geq 0$ and $\Var (L_{NG_k}^{(q)}(\P_s)) = \Omega(s^{1-2q /\gamma})$,
then there is a $\tilde{C}\in(0,\infty)$ such that
\be \label{nnPoisson}
d_K \left( \frac{ L_{NG_k}^{(q)}( \P_s) - \E L_{NG_k}^{(q)}( \P_s) } { \sqrt{ \Var L_{NG_k}^{(q)}( \P_s) } } , N \right)
\leq  \frac{ \tilde{C} } { \sqrt{s} },   \quad s \geq 1,
\ee
whereas if $\Var ( L_{NG_k}^{(q)}(\X_n)) = \Omega(n^{1-2q /\gamma})$, then
\be \label{nnbinomial}
d_K \left( \frac{ L_{NG_k}^{(q)}(\X_n) - \E L_{NG_k}^{(q)}( \X_n) } {\sqrt{ \Var L_{NG_k}^{(q)}( \X_n) } } , N \right)
\leq \frac{ \tilde{C} } { \sqrt{n} },  \quad n\geq 9.
\ee
\end{theo}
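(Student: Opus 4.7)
The plan is to verify the hypotheses of Corollary \ref{coroLSY} with $K := \XX$. No marks are needed, so $(\MM,\mathcal{F}_\MM,\Q_\MM)$ is a singleton. To keep the individual scores of constant order, I rescale by setting $\xi_s(x,\M) := s^{q/\gamma}\xi^{(q)}(x,\M)$ and $\xi_n(x,\M) := n^{q/\gamma}\xi^{(q)}(x,\M)$, so that $H_s = s^{q/\gamma} L_{NG_k}^{(q)}(\P_s)$ and $H'_n = n^{q/\gamma} L_{NG_k}^{(q)}(\X_n)$. The variance hypotheses then translate into $\Var H_s = \Omega(s)$ and $\Var H'_n = \Omega(n)$, and since \eqref{eqn:IXlinear} gives $I_{\XX,s}=s\Q(\XX)$ and $I_{\XX,n}=n\Q(\XX)$, the ratios $I_{\XX,s}/\Var H_s$ and $I_{\XX,n}/\Var H'_n$ are bounded. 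Conditions \eqref{eqn:expfastPoisson} and \eqref{eqn:expfastBinomial} hold trivially for $K=\XX$. Because the standardized versions of $H_s$ and $L_{NG_k}^{(q)}(\P_s)$ coincide, \eqref{nnPoisson} and \eqref{nnbinomial} will follow from \eqref{eqn:KolmogorovPoissoncoro} and \eqref{eqn:KolmogorovBinomialcoro} once exponential stabilization and the moment condition have been checked.

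For stabilization, let $\tilde{d}^{(j)}(x,\M)$ denote the distance from $x$ to its $j$-th nearest point in $\M\setminus\{x\}$ (ties broken by the fixed linear order on $\XX$), and set $R_s(x,\M\cup\{x\}) := 3\tilde{d}^{(k+8)}(x,\M)$. I claim this is a radius of stabilization. Fix $\widehat{\A}\subset\widehat{\XX}$ with $|\widehat{\A}|\leq 7$ and write $\M' := \M\cup\{x\}\cup\widehat{\A}$. Adding points can only decrease the $k$-th nearest neighbor distance, so $V_k(x,\M')\subset B(x,R_s/3)$; and for any $y\in V_k(x,\M')$, the inclusion $B(x,R_s/3)\subset B(y,2R_s/3)$ gives $|\M\cap B(y,2R_s/3)|\geq k+8$, whence at least $k+7$ points of $\M'\setminus\{y\}$ lie in $B(y,2R_s/3)\subset B(x,R_s)$, so that $V_k(y,\M')\subset B(x,R_s)$. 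The score $\xi^{(q)}(x,\M')$, which is determined by $V_k(x,\M')$ together with $V_k(y,\M')$ for $y\in V_k(x,\M')$, therefore depends only on $\M'\cap B(x,R_s)$.

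The tail of $R_s$ under Poisson input reduces to that of a Poisson count: $\{R_s\geq r\}\subset\{|\P_s\cap B(x,r/3)|\leq k+7\}$, which by \eqref{eqn:LowerBoundBall} has mean at least $cs(r/3)^\gamma$. A standard Chernoff bound then yields $\Prob(R_s\geq r)\leq C_{stab}\exp(-c_{stab}(s^{1/\gamma}r)^{\gamma})$, verifying \eqref{eqn:expstabPoisson} with $\alpha_{stab}=\gamma$; \eqref{eqn:expstabBinomial} follows from the analogous Chernoff bound for $|\X_{n-8}\cap B(x,r/3)|$, valid for $n\geq 9$. For the moment condition, note $\xi^{(q)}(x,\M')\leq k\,\tilde{d}^{(k)}(x,\M')^q\leq k\,\tilde{d}^{(k)}(x,\M)^q$, hence
\bean
\E|\xi_s((x,M_x),\P_s\cup\{(x,M_x)\}\cup(\A,M_\A))|^{4+p} \leq k^{4+p}\,\E\bigl(s^{1/\gamma}\tilde{d}^{(k)}(x,\P_s)\bigr)^{q(4+p)},
\eean
and the rescaled distance $s^{1/\gamma}\tilde{d}^{(k)}(x,\P_s)$ has exponentially decaying tails by the same Chernoff estimate, delivering \eqref{eqn:momPoisson} for every $p>0$ uniformly in $s\geq 1$ and $x\in\XX$; \eqref{eqn:momBinomial} is obtained identically. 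Corollary \ref{coroLSY} now yields \eqref{nnPoisson} and \eqref{nnbinomial}.

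The only delicate point is the construction of a radius of stabilization that is robust under the insertion of up to seven foreign points while still admitting the clean Chernoff tail bound; the cone-decomposition trick standard in $\R^d$ is unavailable in the general semi-metric setting and must be replaced by the order-statistic choice $R_s = 3\tilde{d}^{(k+8)}(x,\M)$. The remaining steps -- moment integration, Poisson/binomial Chernoff estimates, and bookkeeping into Corollary \ref{coroLSY} -- are routine.
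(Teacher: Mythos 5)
Your proposal is correct and follows essentially the same route as the paper's proof: reduce to Corollary \ref{coroLSY} with $K=\XX$ and rescaled scores $s^{q/\gamma}\xi^{(q)}$, take a radius of stabilization equal to three times a nearest-neighbor order statistic, verify exponential stabilization via \eqref{eqn:LowerBoundBall} and a Chernoff bound for the Poisson/binomial counts, and bound the moments by $k$ times the $q$-th power of the (exponentially tailed) $k$-th nearest-neighbor distance. The only difference is cosmetic: the paper uses $3\d(x,x_{kNN}(x,\X\cup\{x\}))$ and invokes monotonicity in $\X$ to dispense with the inserted set $\widehat{\A}$, whereas you use the $(k+8)$-th order statistic to handle $\widehat{\A}$ explicitly -- a slightly more conservative but equally valid choice.
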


\noindent{\em Remarks.}
(i) {\em Comparison with previous work.} Research has focused  on central limit theorems for $L_{NG_k}^{(q)}(\P_s), s \to \infty,$ and $L_{NG_k}^{(q)}(\X_n), n \to \infty,$ when
$\XX$ is a full-dimensional subset of $\R^d$ and where $\d$ is the usual Euclidean distance.  This includes the seminal work \cite{BB}, the paper \cite{AB} and the more recent works
\cite{PenroseRosoman,PY1, PY5}.  When $\XX$ is a sub-manifold of $\R^d$ equipped with the Euclidean metric on $\R^d$, the paper
\cite{PY6} develops the limit theory for $L_{NG_k}^{(q)}(\P_s), s \to \infty,$ and $L_{NG_k}^{(q)}(\X_n), n \to \infty.$  When $\XX$ is a compact convex subset of $\R^d$, the paper \cite{LPS} establishes the presumably optimal $O(s^{-1/2})$ rate of normal convergence for $L_{NG_k}^{(q)}(\P_s)$.
However these papers neither provide the presumably optimal $O(n^{-1/2})$ rate of normal convergence for  $L_{NG_k}^{(q)}(\X_n)$ in the $d_K$ distance, nor do they consider
input on  arbitrary metric spaces.  Theorem  \ref{NNG} rectifies this.

\vskip.1cm
\noindent (ii) {\em Binomial input.} The rate   for binomial input \eqref{nnbinomial} improves upon the
rate of convergence in the Wasserstein distance $d_W$ given by
\be \label{DWrate}
d_W \left( \frac{ L_{NG_k}^{(q)}(\X_n) - \E L_{NG_k}^{(q)}( \X_n) } {\sqrt{ \Var L_{NG_k}^{(q)}( \X_n) } } , N \right)
= O\left(  \frac{k^4 \tilde{\gamma}_p^{2/p} } {n^{ (p -8)/2p} } + \frac{k^3 \tilde{\gamma}_p^{3/p} } {n^{ (p -6)/2p} } \right),
\ee
as in  Theorem 3.4 of \cite{Chat}
as well as the same rate in the Kolmogorov distance as in Section 6.3 of  \cite{LRP}. Here
$\tilde{\gamma}_p := \E | n^{q /\gamma} \xi^{(q)}(X_1, \X_n) |^p$ and $p > 8$.  For all $\varepsilon > 0$ we have
$\Prob( n^{q /\gamma} \xi^{(q)}(X_1, \X_n) > \varepsilon) = (1 - C \varepsilon^{\gamma}/n)^n$ and it follows that
$\tilde{\gamma}_p^{1/p} \uparrow \infty$  as
$p \to \infty$.  Thus by letting $p \to \infty$, we do not recover the $O(n^{-1/2})$ rate in \eqref{DWrate}, but only achieve the rate $O(n^{-1/2} (\log n)^\tau)$ with some $\tau>0$.

 \vskip.1cm
\noindent (iii) {\em Variance bounds.} When $\XX$ is a full-dimensional  compact convex subset of $\R^d$,  then $\gamma = d$,  $\Var (L_{NG_k}^{(q)}(\P_s)) = \Theta(s^{1-2q/ \gamma})$, and $\Var (L_{NG_k}( \X_n)) = \Theta(n^{1-2q /\gamma})$, which follows from Theorem 2.1 and Lemma 6.3 of \cite{PY1}
(these results treat the case $q = 1$ but the proofs easily extend to arbitrary $q \in (0, \infty)$).
Thus we obtain the required  variance lower bounds of Theorem \ref{NNG}.
If $\Var (L_{NG_k}^{(q)}(\P_s)) = \Omega(s^{1-2q /\gamma})$  does not hold, then the convergence rate in \eqref{nnPoisson}
is replaced by \eqref{eqn:KolmogorovPoisson} with $I_{K,s}$ set to $s$,
with a similar statement if $\Var (L_{NG_k}( \X_n)) = \Omega(n^{1-2q /\gamma})$ does not hold.

\vskip.1cm

\noindent (iv) {\em Extension of Theorem \ref{NNG}.} The {\em directed} $k$-nearest neighbors graph, denoted $NG'_k(\X)$, is the directed graph with vertex set $\X$ obtained by including
 a directed edge from each point to each of its  $k$ nearest neighbors.
 The total edge length of the directed $k$-nearest neighbors graph on $\X$ with $q$th power-weighted edges is
$$
L_{NG'_k}^{(q)}(\X) = \sum_{x \in \X} \tilde{\xi}^{(q)}(x, \X)
$$
where
$$
\tilde{\xi}^{(q)}(x, \X) := \sum_{y \in V_k(x, \X)} \d(x,y)^q.
$$
The proof of Theorem \ref{NNG} given below shows that the analogs of \eqref{nnPoisson} and \eqref{nnbinomial} hold for $L_{NG'_k}^{(q)}(\P_s)$ and
$L_{NG'_k}^{(q)}(\X_n)$ as well.

\begin{proof}
In the following we prove \eqref{nnPoisson}.  We deduce this from Corollary  \ref{coroLSY} with $\xi_s(x, \P_s)$ set to $s^{q /\gamma} \xi^{(q)}(x, \P_s)$, with $\xi^{(q)}$ as at \eqref{defnn} and with
$K$ set to $\XX$. Recalling the terminology of Corollary  \ref{coroLSY}, we have $H_s := s^{q/\gamma} L_{NG_k}^{(q)}( \P_s) = \sum_{x \in \P_s} \xi_s(x, \P_s)$, with $\Var H_s = \Var( s^{q /\gamma} \sum_{x \in \P_s} \xi^{(q)}(x, \P_s)) = \Omega(s)$, by assumption. Recall from \eqref{eqn:IXlinear} that $I_{K,s} = \Theta(s)$.
We claim that $R_s(x, \X\cup\{x\}):= 3 \d(x, x_{kNN}(x,\X\cup\{x\}))$ is a radius of stabilization for $\xi_s(x, \X\cup\{x\})$, where $x_{kNN}(x,\X\cup\{x\})$ is the point of $V_k(x,\X\cup\{x\})$ with the maximal distance to $x$.  Indeed, if a point $y$ is a $k$-nearest neighbor
of $x$, then all of its $k$-nearest neighbors must belong to $B(y, 2\d(x, x_{kNN}(x,\X\cup\{x\})))$, since this ball contains with $x$ and its $k-1$ nearest neighbors
enough potential $k$-nearest neighbors for $y$.

We now show that $R_s(x, \P_s\cup\{x\})$ satisfies exponential stabilization \eqref{eqn:expstabPoisson}. Since $R_s(x,\X\cup\{x\})$ is decreasing in $\X$, we do not need to add a deterministic point set $\A$. Notice that
$$
\Prob(R_s(x, \P_s\cup\{x\}) > r) = \Prob( \P_s(B(x,r/3) < k )), \quad r \geq 0.
$$
The number of points from $\P_s$ in $B(x,r/3)$ follows a Poisson distribution with parameter $s \Q(B(x,r/3))$.  By \eqref{eqn:LowerBoundBall} this exceeds $cs (r/3)^{\gamma}$ if $r\in[0,3\,\diam(\XX)]$. By a Chernoff bound for the Poisson distribution
(e.g. Lemma 1.2 of \cite{Pbook}), there is another constant $\tilde{c}\in(0,\infty)$ such that
$$
\Prob(R_s(x, \P_s\cup\{x\}) > r) \leq k \exp(-\tilde{c}sr^{\gamma} ), \quad r \in [0, 3\,\diam(\XX)].
$$
This also holds for $r > 3\,\diam(\XX)$, since $\Prob(R_s(x, \P_s\cup\{x\}) \geq r) = 0$ in this case.
This gives \eqref{eqn:expstabPoisson}, with $\alpha_{stab} = \gamma$, $c_{stab} = \tilde{c}$, and $C_{stab} = k$.  We may modify this
argument to obtain exponential stabilization with respect to binomial input as at \eqref{eqn:expstabBinomial}.

For all $q \in [0, \infty)$, the scores $(\xi_s)_{s \geq 1}$ also satisfy the  $(4+p)$th moment
condition \eqref{eqn:momPoisson} for all $p \in [0, \infty)$  since
$$
\xi_s(x, \P_s\cup\{x\}\cup\A) \leq k s^{q/\gamma}\d(x, x_{kNN}(x,\P_s\cup\{x\}))^q
$$
for all $\A\subset\XX$ with $|\A|\leq 7$, and the above computation shows that
$s^{q/\gamma}\d(x, x_{kNN}(x,\P_s\cup\{x\}))^q$ has an exponentially decaying tail. The bound \eqref{nnPoisson} follows by Corollary  \ref{coroLSY}. The proof of \eqref{nnbinomial} is similar.
This completes the proof of Theorem \ref{NNG}.
\end{proof}

\noindent b. {\em Statistics of high-dimensional data sets.}  In the case that $\XX$ is an $m$-dimensional $C^1$-submanifold of $\R^d$, with ${\rm {d}}$ the Euclidean distance
in $\R^d$, the {\em directed} nearest neighbors graph version of Theorem \ref{NNG} (cf. Remark (iii) above) may be refined to give rates of normal convergence for  statistics of high-dimensional non-linear data sets.  This goes as follows.  Recall that high-dimensional non-linear data sets are typically modeled as the realization of $\X_n:= \{X_1,...,X_n \},$ with $X_i, 1 \leq i \leq n$, i.i.d.\ copies of a random variable $X$ having support on an unknown (non-linear) manifold $\XX$ embedded in $\R^d$. Typically the coordinate representation
of $X_i$ is unknown, but the interpoint distances are known.  Given this information, the goal is to establish estimators of global characteristics
of $\XX$, including intrinsic dimension, as well as global properties of the distribution of $X$, such as  R\'enyi entropy.
Recall that if the distribution of the random variable $X$ has a Radon-Nikodym derivative $f_X$ with respect to the uniform measure on $\XX$, then given $\rho \in (0, \infty), \rho \neq 1$, the R\'enyi $\rho$-entropy of $X$ is
$$
H_{\rho}(f_X) := (1 - \rho)^{-1} \log \int_{\XX} f_X(x)^{\rho} \, \dint x.
$$

Let $\XX$ be an $m$-dimensional subset of $\R^d$, $m \leq  d$, equipped with the Euclidean metric ${\rm {d}}$ on $\R^d$. Henceforth, assume $\XX$ is an {\em $m$-dimensional $C^1$-submanifold-with-boundary} (see Section 2.1 of \cite{PY6} for details and precise definitions).
Let $\Q$ be a measure on $\XX$ with a bounded density $f_X$ with respect to the uniform surface measure on $\XX$ such that condition \eqref{eqn:SurfaceBall} is satisfied with $\gamma := m$. Note that Example 2 (Section 2) provides conditions which guarantee that \eqref{eqn:SurfaceBall} holds. Assume $f_X$ is bounded away from zero and infinity, and
$$
\inf_x \Q (B(x,r))  \geq c r^{m}, \ \ r  \in [0, \text{diam}(\XX)],
$$
with some constant $c\in(0,\infty)$. The latter condition is called the `locally conic' condition in \cite{PY6} (cf. (2.3)  in  \cite{PY6}).

Under the above conditions and given Poisson input $\P_s$ with intensity $sf_X$,
the main results of  \cite{PY6} establish rates of normal convergence for estimators of intrinsic dimension,
estimators of  R\'enyi entropy,  and for Vietoris-Rips clique counts
(see Section 2 of \cite{PY6} for precise statements).  However these rates contain extraneous logarithmic factors
and \cite{PY6} also stops short of establishing rates of normal convergence
when Poisson input is replaced by binomial input.
In what follows we rectify this for estimators of R\'enyi entropy.  The methods potentially apply to yield rates of normal convergence for estimators of Shannon entropy and intrinsic dimension, but this lies  beyond the scope of this paper.

When $f_X$ satisfies the assumptions stated above and is also continuous on $\XX$, then $n^{q/m-1} L^{(q)}_{NG'_1} (\X_n)$
is a consistent estimator of a multiple of
$\int_{\XX} f_X(x)^{1 - q/m} \, \dint x$, as shown in Theorem 2.2 of \cite{PY6}.  The following result establishes a rate of normal convergence for
$ L^{(q)}_{NG'_k} (\X_n)$ and, in particular, for the estimator  $n^{q/m-1} L^{(q)}_{NG'_1} (\X_n)$.

\begin{theo}  \label{entropy}
If $k\in\N$ and $q \in (0, \infty)$, then there is a constant $c\in(0,\infty)$ such that
\be \label{entropy1}
d_K \left( \frac{L^{(q)}_{NG'_k} (\X_n) - \E L^{(q)}_{NG'_k} (\X_n) } {\sqrt{ \Var L^{(q)}_{NG'_k} (\X_n) } } , N \right)
\leq \frac{ c } {  \sqrt{n} }, \quad n \geq 9.
\ee
A similar result holds if the binomial input $\X_n$ is replaced by Poisson input.
\end{theo}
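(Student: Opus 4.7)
The plan is to deduce \eqref{entropy1} from Corollary \ref{coroLSY}(b) applied to the rescaled scores $\xi_n(x,\X):=n^{q/m}\tilde\xi^{(q)}(x,\X)$ together with $K:=\XX$, so that $H'_n=n^{q/m}L^{(q)}_{NG'_k}(\X_n)$ and, by \eqref{eqn:IXlinear}, $I_{K,n}=n\Q(\XX)=\Theta(n)$. The standing hypotheses on $\XX$ (a $C^1$-submanifold-with-boundary equipped with a density bounded above and below) place us in the situation of Example 2 of Section \ref{sec:MainResults}, so that \eqref{eqn:SurfaceBall} holds with $\gamma:=m$.

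For the \emph{directed} $k$-nearest neighbors graph the value $\tilde\xi^{(q)}(x,\X\cup\{x\})$ depends only on the $k$ points of $\X\cup\{x\}$ closest to $x$; consequently $R_n(x,\X\cup\{x\}):=\d(x,x_{kNN}(x,\X\cup\{x\}))$ is a radius of stabilization in the sense of \eqref{eqn:RadiusOfStabilization}, because adding any set of at most seven points outside $B(x,R_n)$ cannot alter the list of $k$ nearest neighbors of $x$. Note that this is simpler than the undirected case treated in Theorem \ref{NNG}, where a factor $3$ was required to capture reverse neighbor relations. To verify \eqref{eqn:expstabBinomial}, observe that $R_n$ is monotone non-increasing in the point configuration, so it suffices to bound $\Prob(R_n((x,M_x),\X_{n-8}\cup\{(x,M_x)\})\geq r)$. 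This event coincides with $\{\X_{n-8}(B(x,r))<k\}$, and the locally conic bound $\Q(B(x,r))\geq cr^m$ combined with a Chernoff estimate for the binomial distribution yields exponential decay in $nr^m$, giving \eqref{eqn:expstabBinomial} with $\alpha_{stab}=m$.

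For the moment condition \eqref{eqn:momBinomial}, note that
\[
\xi_n((x,M_x),\X_{n-8}\cup\{(x,M_x)\}\cup(\A,M_\A))\leq k\bigl(n^{1/m}R_n((x,M_x),\X_{n-8}\cup\{(x,M_x)\})\bigr)^q,
\]
and the tail bound of the previous step shows that $n^{1/m}R_n$ has sub-exponential tails uniformly in $x$ and $n$, so all moments of $\xi_n$ are uniformly bounded. Since $I_{K,n}=\Theta(n)$, the ratio $I_{K,n}/\Var H'_n$ is bounded provided $\Var H'_n=\Omega(n)$, i.e.\ $\Var L^{(q)}_{NG'_k}(\X_n)=\Omega(n^{1-2q/m})$. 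Granting this variance lower bound, Corollary \ref{coroLSY}(b) yields $d_K(\cdot,N)\leq\tilde C'/\sqrt{\Var H'_n}\leq c/\sqrt{n}$, which is exactly \eqref{entropy1} since $H'_n$ differs from $L^{(q)}_{NG'_k}(\X_n)$ by a deterministic scalar. The Poisson analogue follows from part (a) of Corollary \ref{coroLSY} by the same route using \eqref{eqn:expstabPoisson} and \eqref{eqn:momPoisson}.

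The main obstacle is verifying the variance lower bound $\Var L^{(q)}_{NG'_k}(\X_n)=\Omega(n^{1-2q/m})$, since neither the submanifold setting nor the directed $k$-nearest neighbors setting is covered verbatim by the full-dimensional results of \cite{PY1}. I would address this by first establishing the corresponding Poisson lower bound $\Var L^{(q)}_{NG'_k}(\P_s)=\Omega(s^{1-2q/m})$ via the standard Poincaré/martingale difference argument adapted to the manifold geometry (using the locally conic condition to produce disjoint geodesic balls on which local perturbations contribute order-one variance after rescaling), and then transferring to binomial input via the de-Poissonization technique. The remaining ingredients are routine verifications of the kind already carried out in the proof of Theorem \ref{NNG}.
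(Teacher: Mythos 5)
Your proposal is correct and follows essentially the same route as the paper: the published proof of Theorem \ref{entropy} simply asserts $\Var L_{NG'_k}^{(q)}(\X_n)=\Theta(n^{1-2q/m})$ and $\Var L_{NG'_k}^{(q)}(\P_s)=\Theta(s^{1-2q/m})$ by appealing to the method of Theorem 2.3 of \cite{PY6} together with the variance lower bounds of Theorem 6.1 of \cite{PY1}, and then refers back to the proof of Theorem \ref{NNG} (cf.\ Remark (iv) there) for exactly the stabilization, moment, and Corollary \ref{coroLSY} steps you spell out, including the simpler radius $\d(x,x_{kNN})$ available in the directed case. The only divergence is that where you sketch a first-principles Poincar\'e/de-Poissonization argument for the variance lower bound, the paper obtains it by citation; your identification of that bound as the one non-routine ingredient is exactly right.
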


\noindent{\em Remarks.}
\noindent{(i)} We have to exclude the case $q=0$  since $L^{(0)}_{NG'_1} (\X_n)=kn$ if $n>k$. For the Poisson case a central limit theorem still holds, but becomes trivial since we have $L^{(0)}_{NG'_1} (\P_s)=k |\P_s|$ if $|\P_s|\geq k+1$.

\noindent{(ii)} In the same vein as  Remark (ii) following Theorem \ref{NNG}, Theorem 3.4 of \cite{Chat} yields
a rate of normal convergence for  $L^{(q)}_{NG'_1} (\X_n)$ in the Wasserstein distance  $d_W$ given by the right-hand side of
\eqref{DWrate}.  However, the bound \eqref{entropy1} is superior and is moreover expressed in the Kolmogorov distance $d_K$. When the input $\X_n$ is replaced by Poisson input $\P_s$, we obtain the rate of normal convergence  $O(s^{-1/2})$, improving upon the rates of \cite{PY5, PY6}.

\begin{proof}
Appealing to the method
of proof in Theorem 2.3 of \cite{PY6} and the variance lower bounds of Theorem
6.1 of \cite{PY1}, we see that $\Var L_{NG'_k}^{(q)}(\X_n)  = \Theta(n^{1-2q/m})$
and $\Var L_{NG'_k}^{(q)}(\P_s)  = \Theta(s^{1-2q/m})$. The  proof follows now  the proof of Theorem \ref{NNG}.
\end{proof}

\subsection{Maximal points} \label{maxsection}

Consider the cone  ${\rm{Co}} =(\R^+)^d$ with apex at
the origin of $\R^d$, $d\geq 2$. Given $\X \in\mathbf{N}$, $x \in
\X$ is called  maximal if $({\rm{Co}} \oplus x) \cap
\X = \{x\}$. In other words, a point $x= (x_1,...,x_d) \in
\X$ is maximal if there is no other point $(z_1,...,z_d) \in \X$
with $z_i \geq x_i$ for all $1 \leq i \leq d$. The  maximal layer
$m_{\rm{Co}}(\X)$ is the collection of maximal points in $\X$.  Let
$M_{\rm{Co}}(\X):= \text{card} (m_{\rm{Co}}(\X))$.
Maximal points are of broad interest in computational geometry and economics; see the
books  \cite{PS},  \cite{Ch}, and the survey \cite{Sh}.

Put
$$
\XX := \{  x \in [0, \infty)^d: \ F(x) \leq 1 \}
$$
where $F: [0, \infty)^d \to \R^+$ is a strictly increasing function of each coordinate variable, satisfies {$F(0)<1$}, is continuously differentiable, and
has continuous partials $F_i$, $1 \leq i \leq d$, bounded away from zero and infinity.
Let $\Q$ be a measure on $\XX$ with Radon-Nikodym derivative $g$ with respect to Lebesgue measure on
$\XX$, with $g$  bounded away from zero and infinity. As usual, $\P_s$ is the Poisson point process with intensity $s \Q$ and $\X_n$ is a binomial point process of $n$ i.i.d. points distributed according to $\Q$.

\begin{theo}  \label{maxpts}
There is a constant $c\in(0,\infty)$ such that
\be \label{max2}
d_K\left(\frac{M_{\rm{Co}}(\P_s) - \E M_{\rm{Co}}(\P_s) } {\sqrt{ \Var M_{\rm{Co}}(\P_s)} } , N \right) \leq c s^{-\frac{1} {2} +  \frac{1} {2d}  }, \quad s \geq 1.
\ee
Assuming $\Var  M_{\rm{Co}}(\X_n) = \Omega(n^{(d-1)/d})$, the binomial counterpart to  \eqref{max2} holds, with $\P_s$ replaced by $\X_n$.
\end{theo}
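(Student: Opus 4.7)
My plan is to apply Theorem \ref{2coroLSY}(b) with $\gamma=d$ and with $K:=\{x\in\XX:F(x)=1\}$, the smooth $(d-1)$-dimensional upper boundary of $\XX$. The scores are the indicators $\xi_s(x,\mathcal{X}):=\mathbf{1}\{x\in m_{\rm Co}(\mathcal{X})\}$, so that $H_s=M_{\rm Co}(\P_s)$ and $H'_n=M_{\rm Co}(\X_n)$; there are no marks. Since $g$ is bounded, condition \eqref{eqn:SurfaceBall} holds with $\gamma=d$, and since the level set $K=F^{-1}(1)$ is a smooth compact hypersurface (its gradient is nonvanishing because each $F_i>0$), the upper Minkowski content $\overline{\cal M}^{d-1}(K)$ is finite. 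The moment conditions \eqref{eqn:momPoisson} and \eqref{eqn:momBinomial} are trivial because $\xi_s,\xi_n\in\{0,1\}$.

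Next I define the radius of stabilization. For a configuration $\mathcal{X}\ni x$, let $R_s(x,\mathcal{X})$ be the smallest $r\geq 0$ such that either (i) some $y\in\mathcal{X}\cap B(x,r)\cap({\rm Co}\oplus x)$ dominates $x$, or (ii) $({\rm Co}\oplus x)\cap\XX\subset B(x,r)$. In case (i), $x$ is non-maximal regardless of any insertions outside $B(x,r)$; in case (ii), no inserted point outside $B(x,r)$ can lie in the dominating cone. Hence \eqref{eqn:RadiusOfStabilization} is satisfied. For the tail bound, $\{R_s>r\}$ forces $\mathcal{X}$ to miss $B(x,r)\cap({\rm Co}\oplus x)$, a set whose $\Q$-measure is bounded below by $c_0 r^d$ (for $r$ up to a constant multiple of $\diam(\XX)$) since ${\rm Co}$ has positive solid angle and $g$ is bounded below. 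Standard Poisson and binomial Chernoff estimates then yield \eqref{eqn:expstabPoisson} and \eqref{eqn:expstabBinomial} with $\alpha_{stab}=d$.

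The heart of the argument is the exponential decay \eqref{eqn:expfastPoisson}--\eqref{eqn:expfastBinomial}. Enlarging $\mathcal{X}$ can only destroy maximality, so it suffices to bound
\[
\Prob(x\in m_{\rm Co}(\P_s\cup\{x\}))=\exp\bigl(-s\,\Q(({\rm Co}\oplus x)\cap\XX)\bigr),
\]
and analogously for binomial input, by showing $\Q(({\rm Co}\oplus x)\cap\XX)\geq c_1\d(x,K)^d$. Setting $r:=\d(x,K)$, the bound $F_i\geq m>0$ applied along the diagonal $e=(1,\ldots,1)/\sqrt d$ gives $F(x+te)-F(x)\geq m\sqrt d\,t$; since the ray $x+te$ must reach $K$ before leaving $\XX$, one obtains $1-F(x)\geq c_2 r$. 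Conversely, for any unit vector $v\in{\rm Co}$, the bound $F_i\leq M$ gives $F(x+tv)-F(x)\leq M\sqrt d\,t$, so $\{x+tv:t\in[0,c_3 r]\}\subset\XX$. Integrating over the fixed solid angle of ${\rm Co}$ produces a sub-cone of length $\gtrsim r$ contained in $({\rm Co}\oplus x)\cap\XX$, whose volume is $\gtrsim r^d$; together with $g\geq \inf g>0$ this yields the claim with $\alpha_K=d$.

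With all hypotheses of Theorem \ref{2coroLSY}(b) verified, the rate $s^{-1/2+1/(2d)}$ in \eqref{max2} follows once the (known) lower bound $\Var M_{\rm Co}(\P_s)=\Omega(s^{(d-1)/d})$ is inserted, and the binomial rate follows identically under the variance hypothesis in the statement. The main subtlety lies in the uniform volume bound $\Q(({\rm Co}\oplus x)\cap\XX)\gtrsim \d(x,K)^d$ for $x$ near $K$: one must use that the gradient of $F$ is simultaneously bounded above and below so that both the escape rate from $\{F<1\}$ along the diagonal direction and the admissible length of arbitrary cone-directions inside $\XX$ are comparable to $\d(x,K)$ uniformly in $x$.
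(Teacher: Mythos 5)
Your overall strategy coincides with the paper's: the same indicator scores, the same choice $K=F^{-1}(1)$ with $\gamma=d$, verification of exponential stabilization and of exponential decay of the scores with $\d(\cdot,K)$ (your truncated-cone argument via the two-sided gradient bounds on $F$ is a fleshed-out version of the paper's observation that $({\rm{Co}}\oplus x)\cap\XX\supset B(x,\d(x,K))\cap({\rm{Co}}\oplus x)$), and then an appeal to Theorem \ref{2coroLSY}(b). However, there is one genuine gap. For Poisson input the theorem is stated \emph{unconditionally}, so the lower bound $\Var M_{\rm{Co}}(\P_s)=\Omega(s^{(d-1)/d})$ must be supplied, and you dispose of it in one clause as ``known.'' It is not off-the-shelf in this generality (general strictly increasing $C^1$ level function $F$ and density $g$ merely bounded away from $0$ and $\infty$), and roughly half of the paper's proof is devoted to establishing it: one places $\Theta(s^{(d-1)/d})$ disjoint regions $S_i=({\rm{Co}}\oplus x_i)\cap\XX$ along $K$, each containing a cube $Q_i$ of side $s^{-1/d}$ subdivided into corner sub-cubes $Q_{ij}$ and a central cube $\tilde Q_i$, declares $S_i$ admissible when each $Q_{ij}$ contains a maximal point and $S_i\setminus\tilde Q_i$ is otherwise empty, checks that on this event the maximality status of all points outside $\tilde Q_i$ is unaffected by the configuration inside $\tilde Q_i$, and then applies the conditional variance formula to harvest a uniform positive conditional variance from each admissible cell. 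Without this (or a precise citation covering your hypotheses), the Poisson half of the statement is not proved.

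A second, smaller point: your tail bound for the stabilization radius asserts that $\Q(B(x,r)\cap({\rm{Co}}\oplus x))\geq c_0 r^d$ for $r$ up to a constant multiple of $\diam(\XX)$. This is false as stated for $x$ near $K$ (for $x\in K$ the set $({\rm{Co}}\oplus x)\cap\XX$ is a single point, so the $\Q$-measure is $0$ for every $r$), because $\Q$ charges only $\XX$. The bound is salvageable: on $\{R_s>r\}$ your case (ii) fails, so $({\rm{Co}}\oplus x)\cap\XX\not\subset B(x,r)$, which forces $r\lesssim\d(x,K)$, and in that regime your own cone-length estimate gives $B(x,c_3 r)\cap({\rm{Co}}\oplus x)\subset\XX$ and hence the required volume lower bound. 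You should make this two-case argument explicit rather than claiming the volume bound for all $r$. With these two repairs -- the explicit near-boundary case split and, above all, a proof (or exact reference) for the Poisson variance lower bound -- your argument matches the paper's.
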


\noindent{\em Remarks.}
(i) {\em Existing results.} The rates of normal convergence given by Theorem \ref{maxpts} improve upon those given in \cite{BX} for Poisson and binomial input for the bounded Wasserstein distance and in \cite{BX1} and \cite{Yu} for Poisson input for the Kolmogorov distance.  While these findings are also proved via the Stein method, the local dependency methods employed  there all incorporate extraneous logarithmic factors. Likewise, when $d = 2$, the paper \cite{BHT} provides rates of normal convergence in the Kolmogorov distance for binomial input, but aside from the special case that $F$ is linear, the rates incorporate extraneous logarithmic factors. The precise approximation bounds of
Theorem \ref{maxpts} remove the logarithmic  factors in \cite{BHT, BX, BX1, Yu}.

\vskip.3cm
\noindent (ii) We have taken   $\rm{Co} = (\R^+)^d$ to simplify the presentation,
 but the results extend to  general cones which are subsets of $(\R^+)^d$ and which have apex at the origin.

\begin{proof}[Proof of Theorem \ref{maxpts}]
We deduce this theorem from Theorem \ref{2coroLSY}(b) and consider score functions
$$
\zeta(x, \X)  := \begin{cases} 1  & \text{ if }   (({\rm{Co}} \oplus x) \cap \XX ) \cap \X = \{x\}
\\
0  & \text{ otherwise}.\end{cases}
$$
Notice that $M_{\rm{Co}}(\X)= \sum_{x \in \P_s} \zeta(x,\X)$. Put $K:= \{ x \in [0, \infty)^d: \ F(x) = 1 \}$. The assumptions on $F$ imply $\overline{\cal M}^{d-1}(K)< \infty$. In the following, we only prove \eqref{max2}  for Poisson input, as the proof for binomial input is similar. Thus we only need to show that the scores $\zeta_s \equiv \zeta$ satisfy the conditions of Theorem \ref{2coroLSY}(b).  First, $\zeta$ is bounded and so satisfies
the $(4 + p)$th moment condition \eqref{eqn:momPoisson}
for all $p \in [0, \infty)$.
As shown in \cite{Yu} (see proof of Theorem 2.5 in Section 6),  $\zeta_s$ also  satisfy
exponential stabilization \eqref{eqn:expstabPoisson} with $\alpha_{stab} = d$ with respect to Poisson input $\P_s$.
Also, we assert that the scores decay  exponentially fast with the distance to $K$ with $\alpha_K = d$.
To see this, let $r(x):= \d(x, K)$ be the distance between $x$ and $K$ and note that $({\rm{Co}} \oplus x) \cap \XX$ contains
the set  $S(x):= B(x, r(x)) \cap  ({\rm{Co}} \oplus x)$.
It follows that
\begin{align*}
\Prob(\zeta(x, \P_s\cup \{x\}) \neq 0)   & =  \Prob( ({\rm{Co}} \oplus x) \cap \XX ) \cap \P_s = \{x\}) = \exp\left( - s \int_{ ({\rm{Co}} \oplus x) \cap \XX} d \Q \right) \\
& \leq \exp( - s \Q(S(x))) \leq \exp( -\bar{c} \d_s(x,K)^d)
\end{align*}
with some constant $\bar{c}:= \bar{c}(\Q) \in(0,\infty)$, and thus \eqref{eqn:expfastPoisson} holds with $\alpha_K = d$.

We now show $\Var  M_{\rm{Co}}(\P_s) = \Theta(s^{(d-1)/d})$.
The hypotheses on $F$ imply that there are $M = \Theta(s^{(d-1)/d})$ disjoint sets
 $S_i:= ({\rm{Co}} \oplus x_i) \cap \XX, i = 1,...,M$, with $x_i \in \XX$,  such that $Q_i:= [0,s^{-1/d}]^d \oplus x_i \subset S_i$ and $x_i+s^{-1/d}e\in K$, where $e=(1,\hdots,1)\in\R^d$. Given $x_i$, for $1 \leq j \leq d$ define $d$ sub-cubes of $Q_i$
 $$
 Q_{ij}:=  \Big( \frac{2}{3} s^{-1/d}, s^{-1/d}\Big]^{j-1}\times \Big[0, \frac{1}{3} s^{-1/d}\Big)\times \Big( \frac{2}{3} s^{-1/d}, s^{-1/d}\Big]^{d-j}  \oplus x_i,
 $$
 as well as the central cube $\tilde{Q}_i := \Pi_{j=1}^d [ \frac{1}{3} s^{-1/d}, \frac{2}{3} s^{-1/d}] \oplus x_i$.  All cubes thus constructed are disjoint.  Say that $S_i, 1 \leq i \leq M,$ is admissible if there are points
 $$
 p_{ij} \in \P_s \cap Q_{ij}, \  1 \leq j \leq d,
 $$
 which are maximal and $S_i \setminus \tilde{Q}_i$ contains no other points in $\P_s$.  Given that $S_i$ is admissible,
 we assert that  the maximality status of points
 in $\P_s \cap \tilde{Q}_i^c$ is unaffected by the (possibly empty) configuration of $\P_s$ inside $\tilde{Q}_i$.  Indeed, if
 $x \in \P_s \cap \tilde{Q}_i^c \cap Q_i$, then $x \in \{p_{ij}\}_{j=1}^d$ and so $({\rm{Co}} \oplus x) \cap \tilde{Q}_i = \emptyset$, showing the assertion in this case.  On the other hand, if  $x \in \P_s \cap \tilde{Q}_i^c\cap Q_i^c$ and if  $({\rm{Co}} \oplus x) \cap \tilde{Q}_i \neq \emptyset$, then
 ${\rm{Co}} \oplus x$  must contain at least one of the cubes $Q_{ij}$,  thus   ${\rm{Co}} \oplus x$ contains at least one  of the points $\{p_{ij}\}_{j=1}^d$ and hence $\zeta(x, \P_s)$ vanishes.  Let $I$ be the indices $i \in \{1,...,M\}$ such that $S_i$ is admissible.

 Let ${\cal F}_s$ be the sigma algebra generated by $I$ and $\P_s \cap (\XX \setminus \cup_{i \in I}\tilde{Q}_i)$, including the maximal points $\{ \{p_{ij} \}_{j=1}^d \}_{i \in I}$.
 Conditional on ${\cal F}_s$, note that $\zeta(x, \P_s)$ is deterministic for $x \in \P_s \cap (\XX \setminus \cup_{i \in I}\tilde{Q}_i)$.
 The conditional variance formula gives
 \begin{align*}
\Var M_{\rm{Co}}(\P_s)   & \geq \E \Var[  \sum_{x \in \P_s \cap \cup_{i \in I} \tilde{Q}_i} \zeta(x, \P_s)  + \sum_{x \in \P_s \cap (\XX \setminus \cup_{i \in I} \tilde{Q}_i)) } \zeta(x, \P_s) | \ {\cal F}_s ]
\\
& = \E \Var[ \sum_{i \in I}  \sum_{x \in \P_s \cap  \tilde{Q}_i} \zeta(x, \P_s) | \ {\cal F}_s ] 
 = \E \sum_{i \in I} \Var[  \sum_{x \in \P_s \cap \tilde{Q}_i} \zeta(x, \P_s) | \ {\cal F}_s ]
\end{align*}
where the last equality follows by independence of  $\sum_{x \in \P_s \cap  \tilde{Q}_i} \zeta(x, \P_s), i \in I$. For $i\in I$ the number of maximal points in $\tilde{Q}_i$ only depends on the restriction of $\P_s$ to $\tilde{Q}_i$ and, thus, exhibits non-zero variability. Together with the bounds on $g$, we obtain that $\Var[  \sum_{x \in \P_s \cap \tilde{Q}_i} \zeta(x, \P_s) | \ {\cal F}_s ] \geq c_1 > 0$ uniformly in $i\in I$. Since $\E {\rm{card}}(I)\geq c_2 s^{(d-1)/d}$ with $c_2\in(0,\infty)$ the asserted variance lower bound follows. Theorem \ref{2coroLSY}(b) gives \eqref{max2}.

The proof method in \cite{Yu} is for Poisson input $\P_s$, but it may be easily extended to show that the $(\zeta_n)_{n \geq 1}$ are exponentially stabilizing with respect to binomial input and that $(\zeta_n)_{n \geq 1}$ decay exponentially fast with the distance to $K$.
Thus the conditions of Theorem \ref{2coroLSY}(b) are satisfied,
and so \eqref{max2} follows from \eqref{eqn:KolmogorovPoissoncoro},
concluding the proof of Theorem \ref{maxpts}.
\end{proof}

\subsection{Set approximation via Voronoi tessellations} \label{Voronoisection}

Throughout this subsection let $\XX :=[-1/2,1/2]^d, d \geq 2,$ and let $A\subset {\text{int}}(\XX)$ be a full-dimensional subset of $\R^d$. Let $\Q$ be the uniform measure on $\XX$. For $\X\in\mathbf{N}$ and $x\in\X$ the Voronoi cell $C(x, \X)$ is the set of all $z\in \XX$ such that the distance between $z$ and $x$ is at most equal to the distance between $z$ and any other point of $\X$. The collection of all $C(x,\X)$ with $x\in\X$  is called the Voronoi tessellation  of $\XX$. The Voronoi approximation of $A$ with respect to $\X$ is the union of all Voronoi cells $C(x,\X), x\in\X,$ with $x\in A$, i.e.,
$$
A(\X):=\bigcup_{x\in \X \cap A}C(x,\X).
$$
In the following we let $\X$ be either a Poisson point process  $\P_s$, $s\geq1$, with intensity measure $s\Q$ or a binomial point process  $\X_n$ of $n\in\N$ points distributed according to $\Q$. We are now interested in the behavior of the random approximations
$$
A_s:=A(\P_s), \quad s\geq 1, \quad \text{and} \quad A'_{n}:=A(\X_n), \quad n\in\N,
$$
of $A$. Note that $A_s$ is also called the Poisson-Voronoi approximation.

 Typically $A$ is an unknown set having unknown  geometric characteristics such as volume and surface area. Notice that $A_s$ and $A'_n$ are random polyhedral approximations of $A$, with volumes closely approximating that of $A$ as $s$ and $n$ become large.  There is a large literature devoted to quantifying this approximation and we refer to \cite{LRP, Yu} for further discussion and references. One might also expect that  $\H^{d-1}(\partial A_s)$ closely approximates
 a scalar multiple of $\H^{d-1}(\partial A)$, provided the latter quantity exists and is finite.  This has been shown in \cite{Yu}.
Using Theorem \ref{2coroLSY}(b) we deduce rates of normal convergence for the volume and surface area statistics of $A_s$ and $A'_n$ as well as
$\Vol(A_s \Delta A)$ and $\Vol(A'_n \Delta A)$. Here and elsewhere in this section we abbreviate $\Vol_d$ by $\Vol$. The symmetric difference $U\Delta V$ of two sets $U,V\subset\R^d$ is given by $U\Delta V:= (U\setminus V)\cup (V\setminus U)$.

\begin{theo}  \label{Vor}
\begin{itemize} \item []
\item [(a)] Let $A\subset (-1/2,1/2)^d$ be closed and such that $\partial A$ satisfies $\overline{\cal M}^{d-1}(\partial A)< \infty$ and contains a $(d-1)$-dimensional $C^2$-submanifold and let $F\in\{\Vol,\Vol(\cdot\Delta A),\H^{d-1}(\partial \cdot) \}$. Then there is a constant $\tilde{C}\in(0,\infty)$ such that
\be \label{VorboundGeneral}
d_K \left( \frac{F(A_s) -  \E F(A_s)}{ \sqrt{ \Var F(A_s)} },  N \right) \leq  \tilde{C} s^{ -\frac{(d - 1)} {2d} },  \quad s \geq 1,
 \ee
and
\be \label{VorboundbinomGeneral}
d_K \left( \frac{ F(A'_{n}) -  \E F(A'_{n})}{ \sqrt{ \Var F(A'_{n})} },  N \right) \leq  \tilde{C} n^{ -\frac{(d - 1)} {2d} },  \quad n \geq 9,
\ee
as well as
\be \label{Vorbound}
d_K \left( \frac{ \Vol(A_s) -  \Vol(A)}{ \sqrt{ \Var \Vol(A_s)} },  N \right) \leq  \tilde{C} s^{ -\frac{(d - 1)} {2d} },  \quad s \geq 1,
 \ee
and
\be \label{Vorboundbinom}
d_K \left( \frac{ \Vol(A'_{n}) -  \Vol(A)}{ \sqrt{ \Var \Vol(A'_{n})} },  N \right) \leq  \tilde{C} n^{ -\frac{(d - 1)} {2d} },  \quad n \geq 9.
\ee
\item [(b)] If $ F = \Vol$ and $A\subset(-1/2,1/2)^d$ is compact and convex, then all of the above inequalities are in force.
\end{itemize}
\end{theo}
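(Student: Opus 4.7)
The plan is to apply Theorem \ref{2coroLSY}(b) with $\XX = [-1/2,1/2]^d$, $\Q$ the uniform measure, and $K := \partial A$, noting that $K$ is a compact $(d-1)$-dimensional subset of $\XX$ and that $\overline{\cal M}^{d-1}(K) < \infty$ holds by hypothesis. For each of the three choices of $F$ I identify scores $(\xi_s)_{s \geq 1}$ and $(\xi_n)_{n \in \N}$ such that the sums $H_s := \sum_{x \in \P_s} \xi_s(x, \P_s)$ and $H_n' := \sum_{x \in \X_n} \xi_n(x, \X_n)$ encode $F(A_s)$, $F(A_n')$ up to a deterministic constant and up to an overall scaling chosen so that $\Var H_s = \Omega(s^{(d-1)/d})$ and $\Var H_n' = \Omega(n^{(d-1)/d})$. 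Since the Kolmogorov distance is invariant under affine rescaling of $H_s$, this normalization is just bookkeeping and does not affect the conclusions \eqref{VorboundGeneral}--\eqref{Vorboundbinom}.

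For $F = \Vol$, the unbiasedness $\E \Vol(A_s) = \Vol(A)$ allows me to write
\[
s(\Vol(A_s) - \Vol(A)) = \sum_{x \in \P_s} \xi_s^{\rm v}(x, \P_s), \quad \xi_s^{\rm v}(x,\P_s) := s\bigl[\mathbf{1}_{x \in A}\Vol(C(x,\P_s)\setminus A) - \mathbf{1}_{x \notin A}\Vol(C(x,\P_s)\cap A)\bigr],
\]
where $C(x, \P_s)$ denotes the Voronoi cell of $x$. For $F = \Vol(\cdot \Delta A)$, replace the minus sign by a plus, so that the resulting scores are non-negative and their sum equals $s\Vol(A_s \Delta A)$. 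For $F = \H^{d-1}(\partial\cdot)$, let $\xi_s^{\partial}(x,\P_s)$ be $s^{(d-1)/d}$ times one-half of the $(d-1)$-dimensional Hausdorff measure of those facets of $C(x,\P_s)$ that are shared with cells whose centers lie on the opposite side of $\partial A$. In all three cases, the crucial structural observation is that $\xi_s(x,\cdot)$ vanishes unless the cell $C(x,\P_s)$ meets $\partial A$, so $\xi_s(x,\P_s) \neq 0$ forces $\d(x,\partial A) \leq \diam C(x,\P_s)$.

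A standard cone-covering argument shows that $\Prob(\diam C(x,\P_s) > r) \leq C\exp(-c s r^d)$ for Poisson input (and the same bound with eight extra points removed yields \eqref{eqn:expstabBinomial} for binomial input). Choosing the radius of stabilization as twice the cell diameter then verifies \eqref{eqn:expstabPoisson}--\eqref{eqn:expstabBinomial} with $\alpha_{stab} = d = \gamma$. Each of the three scores is bounded by a scalar multiple of $s\Vol(C(x,\P_s))$ or $s^{(d-1)/d}\H^{d-1}(\partial C(x,\P_s))$; both quantities have uniformly bounded moments of all orders thanks to the exponential tail on the cell diameter, which gives the $(4+p)$-th moment bounds \eqref{eqn:momPoisson}--\eqref{eqn:momBinomial}. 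Combining the structural observation with the cell-diameter tail delivers
\[
\Prob\bigl(\xi_s(x, \P_s \cup\{(x,M_x)\}\cup(\A,M_\A))\neq 0\bigr) \leq C\exp\bigl(-c\,\d_s(x,\partial A)^d\bigr),
\]
which is \eqref{eqn:expfastPoisson} with $K = \partial A$ and $\alpha_K = d$; the analogous estimate \eqref{eqn:expfastBinomial} follows in the same way.

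The main obstacle is the variance lower bound $\Var H_s = \Omega(s^{(d-1)/d})$ (and its binomial counterpart). For part (a), I would use the $C^2$-submanifold piece $\Sigma \subset \partial A$ to fit $M = \Theta(s^{(d-1)/d})$ disjoint boxes of side length $\Theta(s^{-1/d})$ straddling $\Sigma$, each containing a central sub-box at distance $\Theta(s^{-1/d})$ from the box boundary. At this scale, $C^2$-regularity makes $\partial A$ nearly flat inside each box, so perturbing the configuration of $\P_s$ inside a single central sub-box produces a change of order $\Theta(1)$ in the corresponding contribution to $H_s$ (in the chosen normalization) with positive probability. The conditional variance formula, applied after conditioning on $\P_s$ outside the central sub-boxes, then gives independent contributions across boxes and the desired lower bound, mimicking the argument used in the proof of Theorem \ref{maxpts}; the binomial case is handled by standard de-Poissonization. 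For part (b), when $A$ is compact and convex, $\partial A$ is $(d-1)$-rectifiable so $\overline{\cal M}^{d-1}(\partial A) < \infty$, while the sharp variance bound $\Var\Vol(A_s) = \Theta(s^{-(d+1)/d})$ (equivalently $\Var H_s = \Theta(s^{(d-1)/d})$) is classical for Poisson--Voronoi approximation of convex bodies and available in the literature cited in \cite{Yu, LRP}; this substitutes for the $C^2$-submanifold hypothesis and Theorem \ref{2coroLSY}(b) then yields \eqref{Vorbound}--\eqref{Vorboundbinom} in the convex case.
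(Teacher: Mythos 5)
Your overall route is the same as the paper's: the scores $\nu^{\pm}$ for the two volume functionals and a facet-based score for the surface area, $K=\partial A$, exponential stabilization and exponential decay to $\partial A$ via the tail of the cell diameter, and an application of Theorem \ref{2coroLSY}(b) together with the known variance lower bounds. There is, however, one genuine gap: the claim that $\E \Vol(A_s)=\Vol(A)$. Exact unbiasedness of the Voronoi approximation holds for a \emph{stationary} Poisson process on all of $\R^d$ (where $\int_{\R^d} s e^{-s\kappa_d\|u\|^d}\,\dint u=1$), but here the input is confined to $\XX=[-1/2,1/2]^d$ and may even be empty, so the bias is nonzero. The score decomposition you write down is unaffected — it is a deterministic identity for $\Vol(A_s\setminus A)-\Vol(A\setminus A_s)$ and needs no unbiasedness — but the passage from \eqref{VorboundGeneral} to \eqref{Vorbound} and \eqref{Vorboundbinom} does require a bias estimate: one must show $|\E\Vol(A_s)-\Vol(A)|/\sqrt{\Var\Vol(A_s)}=O(s^{-(d-1)/(2d)})$. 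The paper supplies this by quoting $|\E\Vol(A'_n)-\Vol(A)|\le c^n$ with $c\in(0,1)$ from \cite{LRVega} and transferring it to Poisson input by conditioning on $|\P_s|$; since $\Var\Vol(A_s)$ decays only polynomially, the exponentially small bias is negligible. Without some such argument your proof of \eqref{Vorbound} and \eqref{Vorboundbinom} is incomplete, and as stated it rests on a false identity.

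A secondary caution: the variance lower bounds $\Var F(A_s)=\Omega(s^{-(d+1)/d})$ (equivalently $\Var H_s=\Omega(s^{(d-1)/d})$ in your normalization) and their binomial counterparts are the genuinely hard inputs here — they are the main results of \cite{Schulte}, \cite{TY} and Theorems 2.2--2.3 of \cite{Yu} — and your box-perturbation sketch is thinnest exactly there. For $F=\H^{d-1}(\partial\cdot)$ in particular, a local perturbation can both create and destroy surface area, so showing a $\Theta(1)$ conditional variance per box requires ruling out cancellations and is not routine; the paper simply cites these references rather than reproving them, and you should do the same or supply the full argument.
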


\noindent{\em Remarks.}
\noindent (i) The bound \eqref{VorboundGeneral}  provides a rate of convergence for the main result of \cite{Schulte} (see Theorem 1.1 there), which establishes asymptotic normality for
 $\Vol(A_s)$, $A$ convex. The bound \eqref{VorboundGeneral} also improves  upon Corollary 2.1 of \cite{Yu} which shows
 $$
 d_K \left( \frac{ \Vol(A_s) - \E \Vol(A_s)}{\sqrt{ \Var
 \Vol(A_s)} } ,  N \right)  = O \left(  (\log s)^{3d + 1} s^{-\frac{(d - 1)} {2d} } \right).
 $$
Recall that the normal convergence of $\H^{d-1}(\partial A_s)$ is given in Remark (i) after Theorem 2.4 of  \cite{Yu} and the bound \eqref{VorboundGeneral} for $F=\mathcal{H}^{d-1}(\partial \cdot) $ provides a rate for this normal convergence.

 \noindent (ii)  The bound  \eqref{Vorboundbinom}  improves upon the bound of Theorem 6.1 of \cite{LRP},  which contains extra logarithmic factors, and, thus, addresses an open problem raised in Remark 6.9  of \cite{LRP}.

\noindent (iii) We may likewise deduce identical rates of normal convergence for other geometric statistics of $A_s$, including
the total number of $k$-dimensional faces of $A_s$, $k \in \{0,1,...,d-1\}$, as well
as the $k$-dimensional Hausdorff measure of the union  of the $k$-dimensional faces of $A_s$ (thus when $k = d-1$, this gives $\H^{d-1}(\partial A_s)$).
Second order asymptotics, including the requisite variance lower bounds for these statistics, are established in \cite{TY}. In the case of geometric statistics of $A'_{n}$, we expect similar variance lower bounds and central limit theorems.

\noindent (iv) Lower bounds for $\Var F(A_s)$ and $\Var F(A'_{n})$ are essential to
showing \eqref{VorboundGeneral}-\eqref{Vorboundbinom}.  We expect the order of these
bounds to be unchanged if  $\Q$ has a density bounded away from zero and
infinity. We thus expect Theorem \ref{convexhull} to remain valid in this context because all other arguments in our proof hold for such $\Q$.

\begin{proof}
We first prove \eqref{VorboundGeneral} for $F=\Vol$ and $F=\Vol(\cdot\Delta A)$. The proof method extends easily to the case when Poisson input is replaced by binomial input and we sketch the details as needed. To deduce \eqref{VorboundGeneral} from Theorem \ref{2coroLSY}(b), we need to (i) express $sF(A_s)$
as a sum of stabilizing score functions and (ii) define $K \subset \XX$ and show that the scores decay exponentially fast
with respect to $K$.

\noindent(i) {\em Definition of scores}. As in \cite{Yu}, for $\X\in\mathbf{N}$, $x \in \X$, and a fixed subset $A$ of $\XX$,  define the scores
\begin{equation}\label{defp}
\nu^{ \pm  }(x, \X)  :=\begin{cases}
\Vol (C(x, \X) \cap A^c)  & \text{ if } x \in A
\\
\pm \Vol (C(x, \X) \cap A)  & \text{ if } x \in A^c.
\end{cases}
\end{equation}

\noindent Define  $\nu^{\pm}_s(x, \X):= s \nu^{\pm}(x, \X)$.  By the
definition of $\nu^{\pm}$ at \eqref{defp} we have
$$
s\Vol(A_s) = \sum_{x \in \P_s} \nu^-_{s}(x,\P_s)+s\Vol(A) \quad \text{ and } \quad s\Vol(A \Delta  A_s) = \sum_{x \in \P_s} \nu^+_{s}(x, \P_s).
$$
The arguments of Section 5.1 of \cite{Pe} show that the scores $\nu^{ \pm  }_s$  have a radius of stabilization $R_s(x, \P_s\cup\{x\})$
with respect to $\P_s$ which satisfies \eqref{eqn:expstabPoisson} with $\gamma= d$ and $\alpha_{stab} = d$. The scores $\nu^{\pm}_s$ also satisfy the $(4 + p)$th moment condition \eqref{eqn:momBinomial} for all $p \in [0, \infty)$.

As remarked in \cite{Yu} and as shown in Lemma 5.1 of  \cite{Pe}, the scores $\nu^{ \pm  }_n$
have a radius of stabilization $R_n(x, \X_{n - 8}\cup\{x\})$ with respect to binomial input $\X_n$
which satisfies  \eqref{eqn:expstabBinomial}  with $\gamma= d$ and $\alpha_{stab} = d$.

\noindent (ii) {\em Definition of $K$}.  We set $K$ to be $\partial A$.
As noted in the proof of Theorem 2.1 of \cite{Yu},  we assert that the scores $\nu^{ \pm  }_s$ decay exponentially fast with
their distance to $\partial A$, i.e. they satisfy \eqref{eqn:expfastPoisson} and \eqref{eqn:expfastBinomial} when $K$ is set to $\partial A$
and with $\alpha_{K} = d$. To see this for Poisson input, note that
$$
\Prob(\nu^{\pm}_s(x, \P_s\cup\{x\}) \neq 0) \leq \Prob( \text{diam} ( C(x, \P_s\cup\{x\})) \geq \d(x, K))
$$
for $x\in[-1/2,1/2]^d$. Since $\text{diam} ( C(x, \P_s\cup\{x\})) \leq 2 R_s(x, \P_s\cup\{x\})$ and since $R_s(x, \P_s\cup\{x\})$ has exponentially decaying tails, the assertion follows.

We deduce \eqref{VorboundGeneral} from the
bound \eqref{aratessurface} of Theorem \ref{2coroLSY}(b) as follows.
If either $\partial A$ contains a $(d-1)$-dimensional $C^2$-submanifold or $A$ is
compact and convex, then  $s^2 \Var \Vol( A_s) = \Omega(s^{(d-1)/d})$; see Theorem 1.2 of \cite{Schulte}, Theorem 1.1 of \cite{TY} and Theorem 2.2 of \cite{Yu}.
All conditions of Theorem \ref{2coroLSY} are satisfied and so \eqref{VorboundGeneral} follows for $F=\Vol$. Replacing $\Vol(A_s)$ with
$\Vol( A \Delta A_s)$, \eqref{VorboundGeneral} holds if  $\partial A$ contains a $(d-1)$-dimensional $C^2$-submanifold. This assertion follows since  the stated conditions imply $s^2\Var \Vol( A \Delta A_s) = \Omega(s^{(d-1)/d})$, as shown in Theorem 2.2
of \cite{Yu}. We may similarly deduce \eqref{VorboundbinomGeneral} from the
bound \eqref{aratessurface} of Theorem \ref{2coroLSY}(b).  If either $\partial A$ contains  a $(d-1)$-dimensional $C^2$-submanifold or $A$ is
compact and convex, then $n^2 \Var \Vol( A'_{n}) = \Omega(n^{(d-1)/d})$ as shown in
Theorem 2.3 of \cite{Yu}. Thus \eqref{VorboundbinomGeneral} follows for $F=\Vol$. Considering now $F=\Vol(\cdot\Delta A)$,
and appealing to the variance lower bounds of Theorem 2.3 of \cite{Yu}, we see that when
$\partial A$ contains a $(d-1)$-dimensional $C^2$-submanifold,
all conditions of Theorem \ref{2coroLSY}(b) are satisfied in the context of  binomial input,
and so the bound \eqref{VorboundbinomGeneral} follows  for $F=\Vol(\cdot\Delta A)$.

To deduce \eqref{Vorbound} from \eqref{VorboundGeneral}, we need to replace $\E \Vol( A_{s})$ with $\Vol(A)$. As shown in \cite[Theorem 2]{LRVega},  if the random input consists of $n$ i.i.d. uniformly distributed random variables then $\left|
\E \Vol(A'_{n})-\Vol(A)
\right|\leq c^{n}$
for some $c\in (0,1).$ A similar statement holds for Poisson input $\P_s$: If $|\P_s|$ is the cardinality of $\P_s$, then
\begin{align*}
\left|
\E\Vol(A_{s})-\Vol(A)
\right|= \sum_{n\in \mathbb{N}} \Prob(|\P_s|=n)\left|
\E\Vol(A'_{n})-\Vol(A)
\right|\leq \exp(s(c-1)).
\end{align*}
This exponential bias allows one to replace $\E\Vol(A_{s})$ by $\Vol(A)$ in \eqref{VorboundGeneral} and similarly for  $\E\Vol(A'_{n})$.
This gives \eqref{Vorbound} and \eqref{Vorboundbinom}.

We now show \eqref{VorboundGeneral} for $F=\mathcal{H}^{d-1}(\partial \cdot) $ and that it also holds when Poisson input is replaced by binomial input.
Given $\X\in\mathbf{N}$, define for $x \in \X \cap A$
the score $\alpha(x, \X)$ to be the $\H^{d-1}$
measure of the $(d-1)$-dimensional faces of $C(x,\X)$ belonging to the
boundary of $\bigcup_{w \in \X \cap A} C(w, \X)$;  if there are no such faces or
if $x \notin \X \cap A$,
then set $\alpha(x, \X)$ to be zero.

Put $\alpha_s(x, \X):= s^{(d-1)/d} \alpha(x, \X)$. Recalling the notation in \eqref{Poissonstat} and \eqref{Binomialstat}, the
 surface area of $A_s$ and $A'_n$ is then given by
$$
s^{(d-1)/d} \H^{d-1}(\partial A_s)= h_s(\P_s) = \sum_{x \in \P_s} \alpha_s(x, \P_s)
$$
and
$$
n^{(d-1)/d}\H^{d-1}( \partial A'_{n})= h_n(\X_n) = \sum_{x \in \X_n} \alpha_n(x, \X_n),
$$
respectively.
We want to deduce \eqref{VorboundGeneral} and  \eqref{VorboundbinomGeneral} for $F=\mathcal{H}^{d-1}(\partial\cdot)$ from Theorem \ref{2coroLSY}(b) with $K$ set to $\partial A$. As shown in the proof of Theorem 2.5 of \cite{Yu}, the scores $\alpha_s$  are exponentially stabilizing with respect to Poisson and
binomial input. In other words they satisfy \eqref{eqn:expstabPoisson}  and \eqref{eqn:expstabBinomial}   with $\gamma= d$ and $\alpha_{stab} = d$.
They also satisfy the $(4 + p)$th moment conditions \eqref{eqn:momPoisson} and \eqref{eqn:momBinomial}  for all $p \in [0, \infty)$.
As noted in the proof of Theorem 2.5 of \cite{Yu},  the scores $\alpha_s$ decay exponentially fast with
their distance to $\partial A$, i.e. they satisfy \eqref{eqn:expfastPoisson} and \eqref{eqn:expfastBinomial} when $K$ is set to $\partial A$.
 We note that
\be \label{feb26}
 \Var \H^{d-1}( \partial A_s) =
\Theta(s^{-(d-1)/d}),
\ee
as shown in  Theorem 1.1 of \cite{TY}. We assert that
$$
\Var \H^{d-1}( \partial A'_{n}) = \Theta(n^{-(d-1)/d}).
$$
This may be proved by mimicking the methods to prove  \eqref{feb26} or, alternatively,
with $Z(n)$ denoting an independent Poisson random variable with mean $n$, we could use Lemma 6.1 of \cite{Yu}
to show $|\Var h_n(\X_{Z(n)}) - \Var h_n(\X_n)  | = o(n^{(d-1)/d})$. Hence, all conditions of Theorem \ref{2coroLSY}(b) are satisfied for Poisson and for binomial input. This gives \eqref{VorboundGeneral} and \eqref{VorboundbinomGeneral} for $F=\mathcal{H}^{d-1}(\partial\cdot)$, as desired.
\end{proof}

 \subsection{Statistics of convex hulls of random point samples}  \label{conhull}

In the following let $A$ be a compact convex subset of $\R^d$ with non-empty interior, $C^2$-boundary and positive Gaussian curvature. By $\Q$ we denote the uniform measure on $A$. Let $\P_s$, $s\geq 1$, be a Poisson point process with intensity measure $s \Q$ and let $\X_n$, $n\in\N$, be a binomial point process of $n$ independent points distributed according to $\Q$. From now on $\operatorname{Conv}(\X)$ stands for the convex hull of a set $\X \subset \R^d$. The aim of this subsection is to establish rates of normal convergence for statistics of the random polytopes $\operatorname{Conv}(\P_s)$ and $\operatorname{Conv}(\X_n)$. We denote the number of $k$-faces of a polytope $P$ by $f_k(P)$, $k\in\{0,\hdots,d-1\}$, and its intrinsic volumes by $V_i(P)$, $i\in\{1,\hdots,d\}$.

\begin{theo}  \label{convexhull}\
For any $h\in \{f_0,\hdots,f_{d-1},V_1,\hdots,V_d\}$, there is a constant $C_h\in(0,\infty)$ also depending on $A$ such that
\be \label{CLTPo}
d_K\bigg( \frac{h(\operatorname{Conv}(\P_s))-\E h(\operatorname{Conv}(\P_s))}{\sqrt{\Var h(\operatorname{Conv}(\P_s))}},N\bigg)\leq C_h s^{- \frac{d-1} {2(d+1)}}, \quad s\geq 1,
\ee
and
\be \label{CLTBi}
d_K\bigg( \frac{h(\operatorname{Conv}(\X_n))-\E h(\operatorname{Conv}(\X_n))}{\sqrt{\Var h(\operatorname{Conv}(\X_n))}},N\bigg)\leq C_h n^{- \frac{d-1} {2(d+1)}}, \quad n \geq \max\{9,d+2\}.
\ee
\end{theo}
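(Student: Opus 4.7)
The plan is to deduce both \eqref{CLTPo} and \eqref{CLTBi} from Corollary \ref{coroLSY} by making a careful choice of semi-metric $\d$ on $\XX := A$ and of score functions. The standard Euclidean metric on $A$ is not suitable, because the scores relevant to $f_k$ and $V_i$ are not exponentially stabilizing at the Euclidean scale, and because $\Var h(\operatorname{Conv}(\P_s))$ has order $s^{(d-1)/(d+1)}$, which is neither volume nor surface-area order, so Theorem \ref{2coroLSY} is not directly applicable. The first step is therefore to introduce a parabolic-type semi-metric reflecting the cap geometry near $\partial A$: for $x \in A$, denote by $u_x \in \partial A$ a nearest boundary point and by $h_x$ the Euclidean distance from $x$ to $\partial A$, and set
$$
\d(x, y) := \max\bigl\{\|u_x - u_y\|,\; \sqrt{|h_x - h_y|}\bigr\}.
$$
Since $\partial A$ is a $C^2$ hypersurface with positive Gaussian curvature, a direct calculation yields that a $\d$-ball of radius $r$ has Lebesgue measure of order $r^{d+1}$ uniformly in the centre, so condition \eqref{eqn:SurfaceBall} is satisfied with $\gamma = d+1$ for the normalized uniform measure $\Q$ on $A$.

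The scores are the natural ones, obtained by splitting each face equally among its vertices. For $h = f_k$ set
$$
\xi_s(x, \X) := (k+1)^{-1} \#\bigl\{k\text{-faces of } \operatorname{Conv}(\X) \text{ incident to } x\bigr\},
$$
while for $h = V_i$ one decomposes $V_i$ as a sum over faces of $\operatorname{Conv}(\X)$ (via external angles and lower-dimensional intrinsic volumes) and distributes each face's contribution equally over its vertices. In either case $h(\operatorname{Conv}(\X)) = \sum_{x \in \X} \xi_s(x, \X)$. For $h = f_k$ the scores are deterministically bounded by a constant depending only on $d$, so the moment conditions \eqref{eqn:momPoisson} and \eqref{eqn:momBinomial} hold trivially for every $p>0$; for $h = V_i$ the scores are dominated by a power of the $\d$-diameter of the union of faces of $\operatorname{Conv}(\X)$ incident to $x$, which is in turn controlled by the stabilization radius whose exponential tails, established below, provide all moments.

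The step I expect to be the main obstacle is the verification of exponential stabilization \eqref{eqn:expstabPoisson} (and its binomial analogue) in the $\d$-metric. I would take $R_s(x, \X)$ to be a constant multiple of the $\d$-diameter of the Voronoi flower of $x$ in $\operatorname{Conv}(\X)$, that is, the union of circumscribing balls of facets of $\operatorname{Conv}(\X)$ incident to $x$. A cap-covering argument in the spirit of Calka--Schreiber--Yukich shows that $\{R_s(x, \P_s \cup \{x\}) \geq r\}$ forces a $\d$-ball of radius of order $r$ around $x$ to be empty of Poisson points; since such a ball has $\Q$-measure of order $r^{d+1}$, a Poisson void estimate gives
$$
\Prob\bigl(R_s(x, \P_s \cup \{x\}) \geq r\bigr) \leq C \exp\bigl(-c\,(s^{1/(d+1)} r)^{d+1}\bigr),
$$
which is \eqref{eqn:expstabPoisson} with $\alpha_{stab} = d+1$; the binomial version follows via a standard de-Poissonization. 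Choosing $K := \partial A$ so that $\d(x, K) = \sqrt{h_x}$, the classical floating-body estimate that a point at Euclidean depth $h_x$ lies on the convex-hull boundary with probability at most $\exp(-c\, s\, h_x^{(d+1)/2})$ gives the decay conditions \eqref{eqn:expfastPoisson} and \eqref{eqn:expfastBinomial} with $\alpha_K = d+1$.

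Finally, the change of variables $t = s^{1/(d+1)} \sqrt{h_x}$ in the coarea decomposition of $I_{K,s}$ along $\partial A$ produces
$$
I_{K,s} = s \int_A \exp\bigl(-c'\, \d_s(x,K)^{d+1}\bigr) \dint x = O\bigl(s^{(d-1)/(d+1)}\bigr),
$$
and the analogous bound $I_{K,n} = O(n^{(d-1)/(d+1)})$ for binomial input. Combined with the classical variance lower bounds $\Var h(\operatorname{Conv}(\P_s)) = \Omega(s^{(d-1)/(d+1)})$ and $\Var h(\operatorname{Conv}(\X_n)) = \Omega(n^{(d-1)/(d+1)})$ due to Reitzner and B\'ar\'any--Reitzner, this yields $\sup_s I_{K,s}/\Var h(\operatorname{Conv}(\P_s)) < \infty$ and its binomial analogue, and Corollary \ref{coroLSY} delivers \eqref{CLTPo} and \eqref{CLTBi}.
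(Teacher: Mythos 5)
Your overall strategy coincides with the paper's: the parabolic semi-metric $\d_{max}(x,y)=\max\{\|x-y\|,\sqrt{|\d(x,A^c)-\d(y,A^c)|}\}$ with $\gamma=d+1$, the choice $K=\partial A$, exponential stabilization with $\alpha_{stab}=d+1$ via cap geometry, the estimate $I_{\partial A,s}=O(s^{(d-1)/(d+1)})$, and the variance lower bounds from the literature. For the face functionals $f_k$ your argument is essentially the paper's, modulo two technicalities the paper handles explicitly: the scores must be truncated to a boundary layer $A_{-\rho_0}$ and an auxiliary point (the origin) must be adjoined before Corollary \ref{coroLSY} applies (condition \eqref{eqn:SurfaceBall} is only verified near $\partial A$, and the quadrant-based stabilization argument needs a guaranteed interior point); one then transfers the CLT back to the untruncated functional via an exponential-error comparison of Kolmogorov distances.

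The genuine gap is in your treatment of the intrinsic volumes. Decomposing $V_j(\operatorname{Conv}(\X))$ by external angles, $V_j(P)=\sum_{F}\gamma(F,P)V_j(F)$, and splitting over vertices produces scores of the wrong order: a typical $j$-face has $V_j(F)\asymp s^{-j/(d+1)}$ and external angle $\asymp s^{-(d-1-j)/(d+1)}$, so the unnormalized score is $\asymp s^{-(d-1)/(d+1)}$, while $\Var V_j(\operatorname{Conv}(\P_s))=\Theta(s^{-(d+3)/(d+1)})$ (your stated lower bound $\Omega(s^{(d-1)/(d+1)})$ is off by a factor $s^2$ for the unrescaled functional). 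If you rescale the scores by $s^\beta$, the moment condition \eqref{eqn:momPoisson} forces $\beta\leq (d-1)/(d+1)$, whereas $\sup_s I_{\partial A,s}/\Var H_s<\infty$ forces $s^{2\beta-(d+3)/(d+1)}=\Omega(s^{(d-1)/(d+1)})$, i.e.\ $\beta\geq 1$; no $\beta$ satisfies both, because the external-angle terms exhibit massive cancellation and the variance of their sum is far smaller than the sum of their variances. The paper's essential device here is Lemma \ref{lm:intrinsic-volumes-representation}: a Blaschke--Petkantschin/projection-avoidance representation of the \emph{difference} $V_j(A)-V_j(\operatorname{Conv}(\X))$ as $\kappa_{d,j}\int_{A\setminus \operatorname{Conv}(\X)}\theta_j^{A,\operatorname{Conv}(\X)}(x)\|x\|^{-(d-j)}\,\dint x$, localized to the cones over facets. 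These scores are of cap-volume order $s^{-1}$, so after multiplying by $s$ they have uniformly bounded $(4+p)$th moments while $\Var(s(V_j(A)-V_j(\operatorname{Conv}(\P_s))))=\Theta(s^{(d-1)/(d+1)})$ matches $I_{\partial A,s}$. Without this (or an equivalent) representation, Corollary \ref{coroLSY} cannot be applied to $V_1,\dots,V_d$.
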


\noindent{\em Remarks.}  (i) {\em Previous work}. The asymptotic study of the statistics $h(\operatorname{Conv}(\P_s))$ and $h(\operatorname{Conv}(\X_n))$, $h\in \{f_0,\hdots,f_{d-1},V_1,\hdots,V_d\}$, has a long and rich history, starting with the seminal works \cite{RS,RS2}. The  breakthrough paper \cite{Re}, which relies on dependency graph methods and Voronoi cells, establishes rates of normal convergence for
Poisson input and  $h\in \{f_0,\hdots,f_{d-1}, V_d\}$ of the order
$s^{-\frac{d-1}{2(d+1)}}$  times some power of  $\ln(s)$ (see Theorems 1
and 2). Still in the setting  $h\in \{f_0,\hdots,f_{d-1}, V_d\}$, but with binomial input Theorem 1.2 and Theorem 1.3 of \cite{Vu}  provide the rates of convergence $n^{-1/(d + 1) + o(1) }$ for $d\geq 3$  and $n^{-1/6+o(1)}$ for $d=2$, which improved previous bounds in \cite{Re} for the binomial case, but is still weaker than  \eqref{CLTBi}. When $h\in \{f_0,\hdots,f_{d-1},V_1,\hdots,V_d\}$ and $A$ is the unit ball, Theorem 7.1 of \cite{CSY} gives a central limit theorem for $h(\operatorname{Conv}(\P_s))$, with  convergence rates involving extra logarithmic factors. We are unaware of central limit theorem results for intrinsic volume functionals over binomial input.

\noindent (ii) {\em Extensions. } Lower bounds for $\Var h(\operatorname{Conv}(\P_s))$ and $\Var h(\operatorname{Conv}(\X_n))$ are essential to
showing \eqref{CLTPo} and \eqref{CLTBi}.  We expect the order of these
bounds to be unchanged if  $\Q$ has a density bounded away from zero and
infinity. Consequently we anticipate that  Theorem \ref{convexhull}
remains valid in this context because all other arguments in our proof below also work for such a density.

\vskip.3cm

In the following we may assume without loss of generality that $\0$ is in the interior of $A$. The proof of Theorem \ref{convexhull} is divided into several lemmas and we prepare it by recalling some geometric facts and introducing some notation.

For a boundary point $z\in\partial A$ we denote by $T_z$ the tangent space parametrized by $\R^{d-1}$
in such a way  that $z$ is the origin. The boundary of $A$ in a neighborhood of $z$ may be identified with the graph of a function $f_z: T_z\to\R$. It may be deduced from \cite[Section 5]{Re} that there are constants $\underline{c}\in(0,1)$, $\overline{c}\in(1,\infty)$ and $r_0\in(0,\infty)$ such that uniformly for all $z\in \partial A$,
\begin{equation}\label{eqn:SandwichBoundary}
\underline{c}^2\|v\|^2 \leq f_z(v) \leq \overline{c}^2 \|v\|^2, \quad v\in T_z\cap B^{d-1}(\0,r_0),
\end{equation}
where we denote by $B^m(x,r)$ the closed ball with center $x\in\R^m$ and radius $r>0$ in $\R^m$, $m\in\N$.

For $u>0$ we define
$$
A_{-u}:=\{y\in A: \d(y,A^c)\leq u\},
$$
where $A^c:= \R^d \setminus A$. It follows from \eqref{eqn:SandwichBoundary} that there is a $\varrho>0$ such that all points $x\in A_{-3\varrho}$ have a unique projection $\Pi_{\partial A}(x)$ to $\partial A$. For $3\varrho\geq \overline{r}\geq r \geq \underline{r}\geq 0$ it also holds that
\begin{equation}\label{eqn:InclusionInnerBoundary}
\partial A_{-\overline{r}} \subset (\partial A_{-r} \oplus (\overline{r}-r) B^d(0,1)) \quad \text{ and } \quad \partial A_{-\underline{r}} \subset (\partial A_{-r}\oplus (r-\underline{r}) B^d(0,1)).
\end{equation}

We denote by $\d_{max}$ the metric
$$
\d_{max}(x,y):=\max\{\|x-y\|,\sqrt{|\d(x,A^c)-\d(y,A^c)|}\}, \quad x,y\in A,
$$
and define for $x\in A$ and $r>0$,
$$
B_{\d_{max}}(x,r):=\{y\in A: \d_{max}(x,y)\leq r\}.
$$
 The following lemma ensures that the space $(A,\mathcal{B}(A),\Q)$ and the metric $\d_{max}$ satisfy condition \eqref{eqn:SurfaceBall} for $x\in A_{- \varrho}$,  with $\gamma = d + 1$.

\begin{lemm}\label{lem:ConditionSpaceRandomPolytopes}
There is a constant $\kappa>0$ such that for all $x\in A_{-\varrho}$ and $r>0$
\begin{equation}\label{eqn:ConditionSpaceRandomPolytopes}
\limsup_{\varepsilon\to\infty} \frac{\Q(B_{\d_{max}}(x,r+\varepsilon))-\Q(B_{\d_{max}}(x,r))}{\varepsilon} \leq \kappa (d+1) r^d.
\end{equation}
\end{lemm}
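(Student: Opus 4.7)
The plan is to verify the claim by carefully estimating the $\Q$-mass added when $r$ grows by $\varepsilon$. Set $t_0 := \d(x,A^c)$, noting $t_0\leq \varrho$. Then
$$
B_{\d_{max}}(x,r) = B^d(x,r) \cap A \cap \{y : |\d(y,A^c) - t_0| \leq r^2\},
$$
i.e.\ a Euclidean ball intersected with a slab of thickness $2r^2$ about the level surface $\partial A_{-t_0}$. Since $\d(\cdot,A^c)$ is bounded on $A$, once $r$ exceeds a constant $r_1 = r_1(A)$ the slab condition becomes vacuous and $\Q(B_{\d_{max}}(x,r)) = \Q(B^d(x,r)\cap A)$; in that regime the upper right derivative in $r$ is at most $d\kappa_d r^{d-1}$, which is dominated by $\kappa(d+1)r^d$ for large enough $\kappa$. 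So the real work is the regime $r \in (0,r_1]$.

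For such $r$ I would decompose the increment $B_{\d_{max}}(x,r+\varepsilon) \setminus B_{\d_{max}}(x,r)$ into two sets: $A_1 := [B^d(x,r+\varepsilon) \setminus B^d(x,r)]$ intersected with the slab $\{|\d(\cdot,A^c)-t_0| \leq (r+\varepsilon)^2\}$, and $A_2 := B^d(x,r) \cap \{r^2 < |\d(\cdot,A^c)-t_0| \leq (r+\varepsilon)^2\}$. For $A_2$, I would apply the co-area formula to $\d(\cdot,A^c)$ (which is $1$-Lipschitz with $|\nabla \d(\cdot,A^c)|=1$ a.e.) to get
$$
\Q(A_2) = \int_{\{r^2 < |t-t_0| \leq (r+\varepsilon)^2\}} \H^{d-1}(B^d(x,r)\cap \partial A_{-t})\,\d t.
$$
Using \eqref{eqn:SandwichBoundary} and \eqref{eqn:InclusionInnerBoundary}, $\partial A_{-t}$ is locally a $C^1$-graph over the tangent plane of $\partial A$ at $\Pi_{\partial A}(x)$, with slope and area density bounded by constants depending only on $\underline{c},\overline{c}$; this yields $\H^{d-1}(B^d(x,r)\cap \partial A_{-t}) \leq C r^{d-1}$ uniformly in admissible $t$. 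Since the $t$-range has total length $2(2r\varepsilon+\varepsilon^2)$, this gives $\Q(A_2) \leq 4C r^d\varepsilon + O(\varepsilon^2)$.

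For $A_1$ I would switch to polar coordinates $y = x+\rho\omega$ with $\rho\in(r,r+\varepsilon]$ and $\omega\in S^{d-1}$. Writing $n(x)$ for the unit outward normal at $\Pi_{\partial A}(x)$, the sandwich \eqref{eqn:SandwichBoundary} gives $|\d(x+\rho\omega,A^c) - t_0 + \rho(\omega\cdot n(x))| \leq \overline{c}^2\rho^2$, so the slab constraint forces $\omega$ into a band of width $O(r+\varepsilon)$ about the equator $\{\omega\cdot n(x)=0\}$, a band whose $(d-1)$-measure on $S^{d-1}$ is $O(r+\varepsilon)$. Integrating the Jacobian $\rho^{d-1}$ over $\rho\in(r,r+\varepsilon]$ then gives $\Q(A_1) \leq C r^d\varepsilon + O(\varepsilon^2)$. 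Adding both contributions, dividing by $\varepsilon$ and sending $\varepsilon\to 0^+$ produces the bound $\kappa(d+1)r^d$ after absorbing constants into $\kappa$. The main obstacle will be ensuring these geometric estimates are genuinely uniform in $x\in A_{-\varrho}$ and over the full relevant ranges of $t$ and $r$, particularly handling the case where $[t_0 - r^2, t_0 + r^2]$ meets $(-\infty,0]$ so that the lower half of the slab is truncated by $\partial A$ itself, and splicing together local charts coming from \eqref{eqn:SandwichBoundary} (which is only stated on $B^{d-1}(\0,r_0)$) to cover $B^d(x,r)\cap \partial A_{-t}$ when $r$ approaches $r_1$.
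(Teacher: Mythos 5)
Your decomposition of the increment into the radial part $A_1$ and the slab part $A_2$, with the band-on-the-sphere estimate of measure $O(r)$ for $A_1$ and the $O(r^{d-1})$ bound on $\H^{d-1}(B^d(x,r)\cap\partial A_{-t})$ for $A_2$, is essentially the paper's proof, which bounds the upper derivative by $2\H^{d-1}(U_{x,r})+4r\H^{d-1}(V_{x,r})$ using exactly these two geometric facts. The obstacles you flag at the end are dispatched in the paper by two observations worth adopting: the increment rate is globally bounded in $x$ and $r$ (since $\Q(A_{-(u+v)}\setminus A_{-u})\leq C_A v$ by comparing outer parallel sets of the convex body $A\setminus A_{-u}$ with those of $A$, and likewise for Euclidean annuli), so only small $r$ requires the delicate local analysis and no splicing of charts is needed; and the uniform surface-area bound for the level sets follows from monotonicity of surface area of nested convex bodies rather than from the graph representation.
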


\begin{proof}
Recall that, for $w>0$ and $D\subset \R^d$, the outer $w$-parallel set of $D$ is $\{x\in D^c: \d(x,D)\leq w\}$. For $u,v\in [0,\operatorname{diam}(A)/2]$, $A_{-(u+v)}\setminus A_{-u}=(A\setminus A_{-u})_{-v}$ and $A\setminus A_{-u}$ is convex. Consequently, $\Q((A\setminus A_{-u})_{-v})$ can be bounded by the volume of the outer $v$-parallel set of $A\setminus A_{-u}$, which can be bounded by the volume of the outer $v$-parallel set of $A$ so that
$$
\Q(A_{-(u+v)}\setminus A_{-u})\leq C_A v
$$
with some universal constant $C_A$ only depending on $A$. Since a similar inequality holds for $\Q(B^d(x,u+v)\setminus B^d(x,u))$, we see that the $\limsup$ in \eqref{eqn:ConditionSpaceRandomPolytopes} is bounded. For this reason it is sufficient to establish \eqref{eqn:ConditionSpaceRandomPolytopes} for small $r$.

We define for  $x\in A_{- \varrho}$ and $r \in (0,\varrho)$,
$$
U_{x,r}:=B_{\d_{max}}(x,r)\cap \{y\in A: \|x-y\|=r\}
$$
and
$$
V_{x,r}:=B_{\d_{max}}(x,r)\cap \{y\in A: |\d(x,A^{c})-\d(y,A^{c})|=r^2\}.
$$
It follows from \eqref{eqn:InclusionInnerBoundary} that
\begin{align*}
& \limsup_{\varepsilon\to\infty} \frac{\Q(B_{\d_{max}}(x,r+\varepsilon))-\Q(B_{\d_{max}}(x,r))}{\varepsilon} \\
& \leq \limsup_{\varepsilon\to\infty} \frac{\Q(U_{x,r} \oplus |\varepsilon| B^d(0,1))+\Q(V_{x,r} \oplus (2r|\varepsilon|+\varepsilon^2) B^d(0,1))}{|\varepsilon|}\\
& \leq 2 \mathcal{H}^{d-1}(U_{x,r})+4r\mathcal{H}^{d-1}(V_{x,r}).
\end{align*}
For $r$ sufficiently small, we obtain sub- and supersets for $A_{-(\d(x,A^c)-r^2)}\cap B^d(x,r)$ and $A_{-(\d(x,A^c)+r^2)}\cap B^d(x,r)$ by taking the inner parallel sets with respect to the paraboloids given in \eqref{eqn:SandwichBoundary}. Consequently, $U_{x,r}$ is contained in a strip whose Euclidean thickness is of the order $r^2$. This implies that $\mathcal{H}^{d-1}(U_{x,r})\leq c_A r^d$ for all $r>0$ with some constant $c_A \in(0,\infty)$ only depending on $A$. Since $V_{x,r}$ is the union of the intersection of the boundaries of the convex sets $A_{-(\d(x,A^c)+r^2)}$ and $A_{-(\d(x,A^c)-r^2)}$ with $B^d(x,r)$, we have that $\mathcal{H}^{d-1}(V_{x,r})\leq 2 d\kappa_d r^{d-1}$, which completes the proof.
\end{proof}

We let $u_x:=(\Pi_{\partial A}(x)-x)/\|\Pi_{\partial A}(x)-x\|$ for $x\in \operatorname{int}(A)$, whereas for $x\in \partial A$ we let $u_x$ be the outer unit normal at $x$.
For $x\in A$ and $r>0$ we define the hyperplanes
$$
H_x:=\{y\in \R^d: \langle u_x, y\rangle =\langle u_x, x\rangle \}
$$
and the parametrized family of sets
\begin{align*}
 A_{x,r} :=  \begin{cases}
\operatorname{Conv}((H_x\cap B^d(x,r/\overline{c}))\cup\{x+ r^2 u_x\}) & \text{ if } r\leq \sqrt{\d(x,A^c)}\\
A\setminus \operatorname{Conv}(( A \setminus B^d(x,r/\underline{c}))\cup\{x\}) & \text{ if } r>\sqrt{\d(x,A^c)}.
 \end{cases}
\end{align*}
When $x\in A_{-\varrho}$ and $r > \sqrt{\d(x,A^c)}$ is sufficiently small, we note that $x$ is an extreme point of $A\setminus A_{x,r}$. The sets $A_{x,r}$ have the following important properties.

\begin{lemm}\label{lem:PropertiesAxr}
\begin{itemize}
\item [(a)] There are constants $C_\Q,c_\Q\in (0,\infty)$ such that
$$
C_\Q r^{d+1} \geq \Q(A_{x,r})\geq c_\Q r^{d+1}, \quad x\in A_{-\varrho}, r\in[0,1].
$$
\item [(b)] There is a constant $c_{max}\in(0,\infty)$ such that $A_{x,r}\subset B_{\d_{max}}(x,c_{max}r)$ for any $r>0$ and $x\in A_{-\tilde{\varrho}}$ with $\tilde{\varrho}:=\min\{1/(4\overline{c}^2),\varrho\}$.
\end{itemize}
\end{lemm}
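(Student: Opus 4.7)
The proof analyzes $A_{x,r}$ separately in the two cases $r\le\sqrt{\d(x,A^c)}$ and $r>\sqrt{\d(x,A^c)}$, working throughout in local coordinates at $z:=\Pi_{\partial A}(x)$ in which $u_x=e_d$, $x=(0,-\d(x,A^c))$, and $\partial A$ near $z$ is graphed by $f_z$ satisfying $\underline{c}^2\|v\|^2\le f_z(v)\le \overline{c}^2\|v\|^2$ via \eqref{eqn:SandwichBoundary}; in these coordinates a point $(v,-t)$ lies in $A$ near $z$ iff $t\ge f_z(v)$.

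I will establish (b) first, since its conclusion feeds the upper bound in (a) through Lemma \ref{lem:ConditionSpaceRandomPolytopes}. For $x\in A_{-\tilde\varrho}$ the inequality $\d(x,A^c)\le 1/(4\overline{c}^2)$ forces $r\le 1/(2\overline{c})$ in Case 1, so $r^2\le r/\overline{c}$. Any $y\in A_{x,r}$ then writes as $y=x+\lambda r^2 u_x+(1-\lambda)v$ with $\lambda\in[0,1]$, $v\perp u_x$ and $\|v\|\le r/\overline{c}$; a direct computation yields $\|y-x\|\le r\sqrt{1+1/\overline{c}^2}$, and substituting $y=(w,-t)$ with $w=(1-\lambda)v$, $t=\d(x,A^c)-\lambda r^2$ into the paraboloidal sandwich gives $|\d(x,A^c)-\d(y,A^c)|\le 2r^2$. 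In Case 2, $A_{x,r}\subset A\cap B^d(x,r/\underline{c})$ yields $\|y-x\|\le r/\underline{c}$ for free. Parametrizing $y=x+s\theta$ and noting that $y\in A_{x,r}$ forces the exit time $s^*(\theta)$ of the ray from $A$ to satisfy $s^*(\theta)\le r/\underline{c}$, I split on the sign of $\theta_d$: for $\theta_d\ge 0$ the local depth $t=\d(x,A^c)-s\theta_d\le\d(x,A^c)<r^2$ gives $\d(y,A^c)\le t-f_z(w)\le r^2$ immediately; for $\theta_d<0$ the exit equation $\d(x,A^c)-s^*\theta_d=f_z(s^*(\theta_1,\ldots,\theta_{d-1}))$ combined with the lower paraboloid bound forces $|\theta_d|\le C r$ whenever $s^*(\theta)\le r/\underline{c}$, whence $s|\theta_d|\le Cr^2$ and again $\d(y,A^c)\le C'r^2$. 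This yields a universal $c_{max}$ depending only on $\underline{c}$ and $\overline{c}$ for $r$ below some threshold; for larger $r$ the inclusion is trivial from $\diam(A)<\infty$.

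Given (b), the upper bound of (a) follows from $\Q(A_{x,r})\le \Q(B_{\d_{max}}(x,c_{max}r))\le \kappa c_{max}^{d+1}r^{d+1}$, the second step coming from integrating \eqref{eqn:ConditionSpaceRandomPolytopes}. Points $x\in A_{-\varrho}\setminus A_{-\tilde\varrho}$, where (b) does not directly apply, are handled separately: in Case 1 the cone volume computation below covers both bounds verbatim, whereas in Case 2 we have $r>\sqrt{\tilde\varrho}$ bounded below by a constant so that the trivial bound $\Q(A_{x,r})\le 1$ is absorbed into $r^{d+1}$. For the lower bound, Case 1 is direct: $A_{x,r}$ is the cone with $(d-1)$-dimensional base $H_x\cap B^d(x,r/\overline{c})$ and apex $x+r^2u_x$; the sandwich verifies $A_{x,r}\subset A$ (the apex lies on the segment $[x,\Pi_{\partial A}(x)]$ because $r^2\le\d(x,A^c)$, and the base disk is in $A$ since $f_z(v)\le\overline{c}^2\|v\|^2\le r^2\le\d(x,A^c)$), and hence $\Q(A_{x,r})=\kappa_{d-1}r^{d+1}/(d\,\overline{c}^{d-1}\Vol(A))$. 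In Case 2 I construct an explicit subset of $A_{x,r}$ of Lebesgue volume $\Omega(r^{d+1})$ by using the upper paraboloid bound to obtain a lower estimate $s^*_-(\theta)$ on the exit time, simultaneously using the upper paraboloid bound to identify a band of directions for which $s^*_+(\theta)\le r/\underline{c}$ is automatic, and then integrating $s^*_-(\theta)^d/d$ in spherical coordinates over this band.

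The main obstacle will be the lower bound in Case 2 of (a): one must simultaneously control the solid angle of directions whose rays exit $A$ within $B^d(x,r/\underline{c})$ and the length of the segment in $A_{x,r}$ along each such direction, and because the quadratic equation governing $s^*(\theta)$ degenerates as $\d(x,A^c)\to 0$ (so that $s^*\to 0$ for directions pointing strictly outward), the parametrization must be balanced carefully to recover an $r^{d+1}$ volume rather than merely $\d(x,A^c)^{(d+1)/2}$; isolating a band of directions with $|\theta_d|$ of order $r$, rather than simply $\theta_d\ge 0$, will be the delicate step.
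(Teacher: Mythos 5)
Your handling of part (b), of Case 1 of part (a), and of the upper bound in part (a) is essentially correct and runs parallel to the paper's argument; indeed, deriving the upper bound from part (b) together with Lemma \ref{lem:ConditionSpaceRandomPolytopes} and Lemma \ref{Lem4.1}(a) is a cleaner route than the paper's ``can be proven similarly''. The genuine gap is the lower bound $\Q(A_{x,r})\geq c_\Q r^{d+1}$ in the regime $r>\sqrt{\d(x,A^c)}$, which is the substantive content of the lemma and which you explicitly defer as ``the main obstacle''. Beyond being unfinished, the strategy you sketch has a concrete flaw: integrating $s^*_-(\theta)^d/d$ over a band of directions computes the volume of the star-shaped set $\{x+s\theta:\,0\le s\le s^*_-(\theta)\}$, but this set is not contained in $A_{x,r}$. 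A point $y=x+s\theta$ lies in $\operatorname{Conv}((A\setminus B^d(x,r/\underline{c}))\cup\{x\})$ exactly when the ray $\{x+s'\theta:\,s'\ge s\}$ meets $\operatorname{Conv}(A\setminus B^d(x,r/\underline{c}))$; in particular, for any direction whose ray reaches distance $r/\underline{c}$ from $x$ while still in the interior of $A$, the \emph{entire} segment is excluded from $A_{x,r}$, and for the remaining directions only the terminal piece of the segment, beyond the last intersection with $\operatorname{Conv}(A\setminus B^d(x,r/\underline{c}))$, contributes. Your band of inward-tilted directions with $|\theta_d|\asymp r$ does not escape this: a short computation with the lower paraboloid $\underline{c}^2\|v\|^2$ shows that for $\d(x,A^c)$ close to $r^2$ the exit time can exceed $r/\underline{c}$, and in all cases you would still need a uniform lower bound on the length of the admissible terminal piece of each ray as $\d(x,A^c)$ ranges over $[0,r^2)$. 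This is precisely the degeneracy you name but do not resolve.

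The paper avoids the radial parametrization entirely. It first passes to the boundary point $z=\Pi_{\partial A}(x)$ via the inclusion $\operatorname{Conv}((A\setminus B^d(x,r/\underline{c}))\cup\{x\})\subset\operatorname{Conv}((A\setminus B^d(z,r/\underline{c}))\cup\{z\})$, so that $A_{x,r}\supset A_{z,r}$, and then exhibits the explicit subset $\overline{P}_z\setminus\operatorname{Conv}(\{z\}\cup(\underline{P}_z\setminus B^d(z,r/\underline{c})))$ of $A_{z,r}$, where $\underline{P}_z$ and $\overline{P}_z$ are the epigraphs of the two paraboloids in \eqref{eqn:SandwichBoundary}. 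The volume of that deterministic set is a paraboloid-cap computation (horizontal extent of order $r$, height of order $r^2$) and is bounded below by a constant multiple of $r^{d+1}$ uniformly in $z$. Replacing your direction-band integration by such an explicitly sandwiched subset is the missing ingredient needed to complete your proof.
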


\begin{proof}  We denote the epigraphs of $v\mapsto\underline{c}^2 \|v\|^2$ and $v\mapsto\overline{c}^2 \|v\|^2$ by $\underline{P}_z$ and $\overline{P}_z$.
For $r\leq\sqrt{\d(x,A^c)}$ we have $\Q(A_{x,r})=\kappa_{d-1} r^{d+1}/(d\overline{c}^{d-1})$.  For $x\in A$ and $r>\sqrt{\d(x,A^c)}$ let $z:=\Pi _{\partial A}(x)$. Since
$$
\operatorname{Conv}( (A  \setminus B^d(x,r/\underline{c})) \cup \{x\} ) \subset \operatorname{Conv}(  (A  \setminus B^d(z,r/\underline{c})) \cup \{z\} ),
$$
it follows that $A_{x,r} \supset A_{z,r}$.  Additionally
$$
A_{z,r}\supset \overline{P}_z \setminus\operatorname{Conv}(\{z\}\cup (\underline{P}_{z}\setminus B^d(z,r/\underline{c}))).
$$
A longer computation shows that the volume of the set on the right-hand side can be bounded  below by a non-negative scalar multiple of  $r^{d+1}$. The upper bound in part (a) can be proven similarly.

To prove part (b) it suffices  to consider only the situation $r\in[0,1]$. It follows immediately from the definition of $A_{x,r}$ that $A_{x,r}\subset B^d(x,r/\underline{c})$ for $r\in[0,1]$. For $x\in A_{-\varrho}$ with $\d(x,A^c)\leq 1/(4\overline{c}^2)$, $r\leq \sqrt{\d(x,A^c)}$ and $y\in A_{x,r}$, we obtain by a direct but longer computation that
$$
\d(x,A^c) \geq \d(y,A^c) \geq \d(x,A^c)-4\overline{c}^2 r^2.
$$
On the other hand, for $r>\sqrt{\d(x,A^c)}$ and $y\in A_{x,r}$, we have with $z=\Pi_{\partial A}(x)$ that
\begin{align*}
\d(y,A^c) & \leq \sup_{v\in\partial A\cap B^d(x,r/\underline{c})} \d(v,H_z) \leq \sup_{v\in\overline{P}_z\cap B^d(z,r/\underline{c}+\d(x,A^c))} \d(v,H_z) \\
& \leq \sup_{v\in\overline{P}_z\cap B^d(z,(1/\underline{c}+1)r)} \d(v,H_z) \leq \overline{c}^2 (1/\underline{c}+1)^2 r^2.
\end{align*}
This implies that $A_{x,r}\subset \{y\in B^d(0,1): \sqrt{|\d(x,A^c)-\d(y,A^c) |}\leq \overline{c} (1/\underline{c}+2)r\}$, which completes the proof of part (b).
\end{proof}

For $k\in\{0,\hdots,d-1\}$ and $\X\in\mathbf{N}$ let $\mathcal{F}_k(\operatorname{Conv}(\X))$ be the set of $k$-dimensional faces of $\operatorname{Conv}(\X)$. To cast $f_k(\operatorname{Conv}(\X))$ in the form of \eqref{Poissonstat} and \eqref{Binomialstat}, we define
$$
\xi_k(x,\X):= \frac{1}{k+1} \sum_{F\in\mathcal{F}_k(\operatorname{Conv}(\X))} \mathbf{1}_{\{x\in F\}}, \quad x\in\X.
$$
Note that $f_k(\operatorname{Conv}(\X)) = \sum_{x \in \X} \xi_k(x, \X)$.

To cast the intrinsic volumes $V_j(\operatorname{Conv}(\X))$, $j \in\{1,\hdots,d-1\}$, in the form of \eqref{Poissonstat} and \eqref{Binomialstat}, we need some more  notation. Given the convex set $A$ and a linear subspace  $E$, denote by $A | E$ the orthogonal projection of $A$ onto $E$. For $x\in \mathbb{R}^{d}\setminus \{0\}$, let $L(x)$ the line spanned by $x$. Given a line $N\subset \mathbb{R}^{d}$ through the origin, and for $1\leq j\leq d,$ let $G(N,j)$ be the set of $j$-dimensional linear subspaces of $\mathbb{R}^{d}$ containing $N$. Let then $\nu_j^N(\cdot )$ be the Haar probability measure on $G(N,j)$.
Let $M\subset A$ be convex.  For  $j\in \{0,\dots ,d-1\}$,  $x\in \mathbb{R}^{d}
\setminus \{\0\}$, and $L\in G(L(x),j)$ define
\begin{align*}
f^{L}(x):=\mathbf{1}_{\{x\in (A | L)\setminus (M | L)\}}
\end{align*}
and, as in \cite{CSY}, define the \emph{projection avoidance function}  $\theta^{A,M}_{j}: \ \mathbb{R}^{d}\setminus \{\0\} \mapsto [0,1]$ by
\begin{align*}
\theta^{A,M}_{j}(x):= \int_{G(L(x),j)} f^{L}(x) \, \nu^{L(x)}_{j}(\dint L).
\end{align*}
The following result generalizes \cite[(2.7)]{CSY} to non-spherical compact sets, with arguments similar to Lemma A1 from \cite{GT}.  The proof is in the appendix.

\begin{lemm} \label{lm:intrinsic-volumes-representation}
Let $M\subset A$ be a convex subset of $\R^d$.  For all $j\in \{0,\dots ,d-1\}$
there is a constant $\kappa _{d,j}$ depending on $d,j$ such that
\begin{align}  \label{eq:bound-intrinsic}
V_{j}(A)-V_{j}(M)=\kappa _{d,j}\int_{A\setminus M}\theta _{j}^{A,M}(x)\|x\|^{-(d-j)} \, \dint x .
\end{align}
 \end{lemm}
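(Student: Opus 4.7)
The plan is to combine the Cauchy-Kubota projection formula with a Blaschke-Petkantschin-type disintegration on the Grassmannian.

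First, by Kubota's formula there is a constant $c_{d,j}$ such that
\begin{align*}
V_j(A) - V_j(M) = c_{d,j}\int_{G(d,j)}[V_j(A|L) - V_j(M|L)]\,\nu_j(\dint L) = c_{d,j}\int_{G(d,j)}\int_L f^L(y)\,\dint y\,\nu_j(\dint L),
\end{align*}
where the second equality uses that $M|L \subset A|L$ (because $M \subset A$ are convex), so that $V_j(A|L) - V_j(M|L)$ is the $j$-dimensional Lebesgue measure of $(A|L)\setminus(M|L)$ and equals $\int_L f^L(y)\,\dint y$ by definition of $f^L$.

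Second, I would parametrise each subspace $L$ in polar coordinates $y = r\omega$ with $\omega \in L \cap S^{d-1}$ and $r > 0$, so that $\dint y = r^{j-1}\,\dint r\,\sigma_L(\dint\omega)$, where $\sigma_L$ denotes the uniform measure on $L \cap S^{d-1}$. By uniqueness of the $O(d)$-invariant measure on the flag manifold $\{(L,\omega): L\in G(d,j),\, \omega \in L \cap S^{d-1}\}$, one has a Blaschke-Petkantschin-type identity
\begin{align*}
\int_{G(d,j)}\int_{L \cap S^{d-1}} F(L,\omega)\,\sigma_L(\dint\omega)\,\nu_j(\dint L) = \alpha_{d,j}\int_{S^{d-1}}\int_{G(L(\omega),j)} F(L,\omega)\,\nu^{L(\omega)}_j(\dint L)\,\sigma(\dint\omega)
\end{align*}
for an explicit constant $\alpha_{d,j}$ depending only on $d$ and $j$. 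Applying this with $F(L,\omega) := f^L(r\omega)$ and converting back to Cartesian coordinates via $x = r\omega$, using $r^{j-1}\,\dint r\,\sigma(\dint\omega) = \|x\|^{-(d-j)}\,\dint x$, collapses the inner Grassmannian integral into $\theta_j^{A,M}(x)$ by definition, producing
\begin{align*}
V_j(A) - V_j(M) = c_{d,j}\,\alpha_{d,j}\int_{\R^d}\theta_j^{A,M}(x)\,\|x\|^{-(d-j)}\,\dint x,
\end{align*}
after which one sets $\kappa_{d,j} := c_{d,j}\,\alpha_{d,j}$.

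Finally, I would reduce the domain of integration from $\R^d$ to $A \setminus M$: for $x \in M$ and any $L \ni L(x)$ one has $x \in M|L \subset A|L$, so $f^L(x) = 0$ and hence $\theta_j^{A,M}(x) = 0$, so the integral over $M$ vanishes automatically; the contribution from $\R^d \setminus A$ is handled via a convex-geometric argument exploiting the convexity of $A$ together with the separating-hyperplane property, along the lines of Lemma A1 of Götze-Thäle. The main obstacle in this plan is the careful bookkeeping of the normalising constants in the Cauchy-Kubota and Blaschke-Petkantschin identities, together with the justification of this final restriction of the integration domain.
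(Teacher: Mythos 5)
Your main computation is the same as the paper's: Cauchy--Kubota to reduce $V_j(A)-V_j(M)$ to Lebesgue measures of $(A|L)\setminus(M|L)$, then a Blaschke--Petkantschin disintegration combined with the flag-swap identity (the paper invokes Theorems 7.2.1 and 7.1.1 of Schneider--Weil, applied once inside each $L$ and once globally to return to Cartesian coordinates), which is exactly what your polar-coordinate/invariant-measure-on-the-flag-manifold argument repackages. Your Jacobian bookkeeping $r^{j-1}\,\dint r\,\sigma(\dint \omega)=\|x\|^{-(d-j)}\dint x$ is right, and up to the identity $V_j(A)-V_j(M)=\kappa_{d,j}\int_{\R^d}\theta_j^{A,M}(x)\|x\|^{-(d-j)}\dint x$ your proof matches the paper's.

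The gap is the step you defer at the end, and it is not a harmless one. The integral over $M$ does vanish, as you say, since $x\in M$ and $L\supset L(x)$ give $x=x|L\in M|L$. But the contribution from $\R^d\setminus A$ does \emph{not} vanish for a general convex $A$ containing the origin: $\theta_j^{A,M}(x)>0$ only requires $x\in (A|L)\setminus(M|L)$ for a positive measure of $L\in G(L(x),j)$, and $x\in A|L$ does not force $x\in A$ unless $A\cap L=A|L$. Concretely, take $d=2$, $j=1$, $A=B^2((1,0),1.05)$ (which contains $\0$ in its interior), $M=B^2(\0,0.1)$, and $x=(0,-1)$: then $x\notin A$ since $\|x-(1,0)\|=\sqrt 2>1.05$, yet $A|L(x)=\{(0,t):|t|\le 1.05\}\ni x$ and $x\notin M|L(x)$, so $\theta_1^{A,M}(x)=1$ on an open neighbourhood of $x$ outside $A$. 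Hence no separating-hyperplane or convexity argument can kill the $\R^d\setminus A$ contribution; the reduction from $\int_{\R^d}$ to $\int_{A\setminus M}$ requires $A|L=A\cap L$ for the relevant subspaces, which holds when $A$ is a ball centred at the origin (the setting of \cite{CSY}) but fails for general $A$. Be aware that the paper's own appendix proof also stops at the $\int_{\R^d}$ identity and never performs this reduction, so your instinct that the step needs an argument is correct -- but the argument you sketch cannot be completed as stated.
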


For $\X\in\mathbf{N}$ and $F\in \mathcal{F}_{d-1}(\operatorname{Conv}(\X))$ put ${\text{cone}}(F):= \{ry: \ y \in F, r > 0 \}$. Define for $j \in \{1,...,d-1 \}$
\begin{align*}
\xi _{j,s}(x,\X)= \frac{s\kappa_{d,j}}{d}\sum_{F\in \mathcal{F}_{d-1}(\operatorname{Conv}(\X))} \mathbf{1}_{\{x\in F\}}\int_{\text{\rm{Cone}}(F )\cap (A\setminus  \operatorname{Conv}(\X))}\|x\|^{-(d-j)}\theta _{j}^{A,\operatorname{Conv}(\X)}(x) \, \dint x
\end{align*}
for $x\in\X$, $s\geq 1$. Lemma \ref{lm:intrinsic-volumes-representation} yields
\be \label{IVscaling}
s (V_{j}(A)-V_j(\operatorname{Conv}(\X))) =  \sum_{x \in \X} \xi_{j,s}(x, \X)
\ee
if $\0$ is in the interior of $\operatorname{Conv}(\X)$ and if all points of $\X$ are in general position.
For $x\in\X$ and $s\geq 1$ define
$$
\xi_{d,s}(x,\X):= \frac{s}{d}\sum_{F\in \mathcal{F}_{d-1}(\operatorname{Conv}(\X))} \mathbf{1}_{\{x\in F\}}\int_{ \text{\rm{Cone}}(F)\cap (A\setminus  \operatorname{Conv}(\X) ) } \dint x.
$$
If $\0$ is in the interior of $\operatorname{Conv}(\X)$ and all points of $\X$ are in general position, we have as well
$$
s V_d(A \setminus \operatorname{Conv}(\X)) =  \sum_{x \in \X} \xi_{d,s}(x, \X).
$$
The definitions of the scores and $\|\theta_j^{A,\operatorname{Conv}(\X)}\|_\infty\leq 1$ show that for $\X\in\mathbf{N}$, $x\in\X$, $s\geq 1$ and $j \in \{0,..., d-1\}$
\begin{align}
\label{eq:score-inequality}
\xi_{j,s}(x,\X)\leq \kappa_{d,j} r(\operatorname{Conv}(\X))^{-(d-j)}\xi_{d,s}(x,\X),
\end{align}
where $r(\operatorname{Conv}(\X))$ is the radius of the largest ball centered at $\0$ and contained in $\operatorname{Conv}(\X)$.

Since $\0\in \text{\rm{int}}(A)$, we can choose $\rho _{0}\in(0,\tilde{\varrho})$ such that $B(\0,2\rho _{0})\subset A$. For a score $\xi$ we denote by $\tilde{\xi}$ the modified score
$$
\tilde{\xi}(x,\X):=\mathbf{1}_{\{x\in A_{-\rho_0}\}} \xi(x,(\X\cap A_{-\rho_0})\cup\{\0\})
$$
for $\X\in\mathbf{N}$ and $x\in\X$. Our strategy of proof for Theorem \ref{convexhull} is to apply in a first step Corollary \ref{coroLSY} in connection with Remark (v) after Theorem \ref{2coroLSY} to these modified scores, putting $\XX:=A$ and $\tilde{\XX} := A_{-\rho_0}$ and $K$ set to $\partial A$. Thereafter we show that the result remains true without truncating and without adding the origin as an additional point.

For a score $\xi$ and $\X\in\mathbf{N}$ we define
$$
S_{\xi}(\X):=\sum_{x\in\X} \xi(x,\X).
$$

\begin{lemm}\label{lem:ApproximationConvexHull}
For any $\xi_s\in\{\xi_0,\hdots,\xi_{d-1},\xi_{1,s},\hdots,\xi_{d,s}\}$ there are constants $C_0,c_0\in(0,\infty)$ such that
 \begin{align*}
& \max\{ \Prob(S_{\xi_s}(\P_s) \neq S_{\tilde{\xi}_s}(\P_s)),\Prob(B^{d}(\0,\rho _{0})\not\subset \operatorname{Conv}(\P _{s})), \\
&\hspace{4cm}|\E S_{\xi_s}(\P_s) - \E S_{\tilde{\xi}_s}(\P_s)|, |\Var S_{\xi_s}(\P_s) - \Var S_{\tilde{\xi}_s}(\P_s)|\} \\
& \leq C_0 \exp(-c_0 s)
\end{align*}
for $s\geq 1$ and
\begin{align*}
 & \max\{ \Prob(S_{\xi_n}(\X_n) \neq S_{\tilde{\xi}_n}(\X_n)),\Prob (B^{d}(\0,\rho _{0})\not\subset \operatorname{Conv}(\X _{n})),\\
 &\hspace{4cm} |\E S_{\xi_n}(\X_n) - \E S_{\tilde{\xi}_n}(\X_n)|, |\Var S_{\xi_n}(\X_n) - \Var S_{\tilde{\xi}_n}(\X_n)|\}\\
 & \leq C_0 \exp(-c_0 n)
\end{align*}
for $n\geq 1$.
\end{lemm}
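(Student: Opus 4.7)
The plan is to introduce a high-probability event $E_{s}$ on which the three convex hulls $\operatorname{Conv}(\mathcal{P}_{s})$, $\operatorname{Conv}(\mathcal{P}_{s}\cap A_{-\rho_{0}})$ and $\operatorname{Conv}((\mathcal{P}_{s}\cap A_{-\rho_{0}})\cup\{\0\})$ coincide; on $E_{s}$ the ball containment holds automatically and $S_{\xi_{s}}(\mathcal{P}_{s})=S_{\tilde{\xi}_{s}}(\mathcal{P}_{s})$ identically, so the expectation and variance statements reduce to a routine Cauchy--Schwarz argument against polynomial moment bounds.

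First I would construct finitely many disjoint Euclidean balls $B_{1},\dots,B_{N}\subset A_{-\rho_{0}}$ with $\Q(B_{i})>0$ having the property that any configuration $\mathcal{C}\subset A_{-\rho_{0}}$ meeting every $B_{i}$ satisfies $A\setminus A_{-\rho_{0}}\subset \operatorname{Conv}(\mathcal{C})$. Such balls exist because $A\setminus A_{-\rho_{0}}$ is a compact convex subset of the interior of $A$: pick a convex polytope $P$ with vertices $p_{i}\in A_{-\rho_{0}}$ satisfying $A\setminus A_{-\rho_{0}}\subset \operatorname{int}P$, then take $B_{i}$ to be sufficiently small balls around the $p_{i}$, using continuity of the convex hull under perturbations of its vertices. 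Let $E_{s}:=\{\mathcal{P}_{s}\cap B_{i}\neq\emptyset,\;i=1,\dots,N\}$; the Poisson void formula and the binomial analogue respectively give
\begin{equation*}
\Prob(E_{s}^{c})\leq \sum_{i=1}^{N}\exp(-s\Q(B_{i}))\leq C_{1}\exp(-c_{1}s),\qquad \Prob(E_{n}^{c})\leq \sum_{i=1}^{N}(1-\Q(B_{i}))^{n}\leq C_{1}\exp(-c_{1}n).
\end{equation*}

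On $E_{s}$ the chain $B^{d}(\0,\rho_{0})\subset A\setminus A_{-\rho_{0}}\subset \operatorname{Conv}(\mathcal{P}_{s}\cap A_{-\rho_{0}})\subset \operatorname{Conv}(\mathcal{P}_{s})$ dispatches the ball-containment statement. Moreover, every $x\in\mathcal{P}_{s}\setminus A_{-\rho_{0}}$ sits in $A\setminus A_{-\rho_{0}}\subset \operatorname{Conv}(\mathcal{P}_{s}\cap A_{-\rho_{0}})$, and $\0$ lies in the interior of the same hull; hence the three convex hulls above coincide on $E_{s}$, their common vertex set is contained in $\mathcal{P}_{s}\cap A_{-\rho_{0}}$, and $\0$ is not a vertex. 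Since each score $\xi_{s}\in\{\xi_{0},\dots,\xi_{d-1},\xi_{1,s},\dots,\xi_{d,s}\}$ depends on the configuration only through this convex hull (through the face lattice for $\xi_{k}$, together with the projection-avoidance function $\theta_{j}^{A,\operatorname{Conv}(\cdot)}$ for $\xi_{j,s}$) and vanishes on non-vertices, one concludes $S_{\xi_{s}}(\mathcal{P}_{s})=S_{\tilde{\xi}_{s}}(\mathcal{P}_{s})$ on $E_{s}$.

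Setting $Z:=S_{\xi_{s}}(\mathcal{P}_{s})-S_{\tilde{\xi}_{s}}(\mathcal{P}_{s})$, which vanishes on $E_{s}$, Cauchy--Schwarz yields $|\E Z|\leq \sqrt{\E Z^{2}}\,\sqrt{\Prob(E_{s}^{c})}$, and the variance difference is treated analogously via the identity $\Var U-\Var V=\E[(U-V)(U+V)]-\E(U-V)\cdot \E(U+V)$. The required polynomial-in-$s$ bounds for $\E Z^{2}$ are immediate from deterministic estimates: $S_{\xi_{k}}\leq \binom{|\mathcal{P}_{s}|+1}{k+1}$, $S_{\xi_{j,s}}\leq s\kappa_{d,j}\int_{A}\|y\|^{-(d-j)}\,\dint y<\infty$ (using $\theta\leq 1$ and $j\geq 1$, with the trivial bound $s\,V_{d}(A)$ for $j=d$), and the analogous estimates for $\tilde{\xi}_{s}$, together with polynomial moments of $|\mathcal{P}_{s}|$. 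The exponential smallness of $\Prob(E_{s}^{c})$ then absorbs this polynomial growth at the cost of a slightly smaller constant $c_{0}$; the binomial case is identical, with $|\mathcal{X}_{n}|=n$ making moment control trivial. The main obstacle is the initial geometric step: selecting an inscribed polytope whose vertices lie in the thin shell $A_{-\rho_{0}}$ while its interior still covers the full inner region $A\setminus A_{-\rho_{0}}$, which is precisely where the $C^{2}$-smoothness and positive curvature of $\partial A$ enter; once this construction is in hand, the rest of the argument is bookkeeping.
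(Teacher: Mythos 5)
Your proposal is correct and follows essentially the same route as the paper: cover the inner region $A\setminus A_{-\rho_0}$ by the convex hull of finitely many small sets in the shell, bound the void probabilities exponentially, and absorb the crude polynomial bounds on the scores via Cauchy--Schwarz/H\"older. The only difference is cosmetic — you spell out the construction of the sets (balls around the vertices of an inscribed polytope) where the paper merely asserts their existence.
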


\begin{proof}
One can choose sets $A_1,\hdots,A_m\subset \{x\in A: \d(x,A^c)\leq \rho _{0}\}$ with non-empty interior such that, for any $\X\in\mathbf{N}$ with $A_i\cap\X\neq\emptyset$, $i\in\{1,\hdots,m\}$,
$$
\operatorname{conv}(\X)\supset\{x\in A: \d(x,A^c)> \rho _{0}\}.
$$
Using $B(\0,2\rho _{0})\subset A$, this inclusion yields $B(\0,\rho _{0})\subset \operatorname{Conv}(\X )$.
The event $S_{\tilde{\xi}_s}(\X)\neq S_{\xi_s}(\X)$ is also a subset of the event $A_i\cap\X=\emptyset$ for some $i\in\{1,\hdots,m\}$. These observations prove the probability bounds.

The generous upper bounds
$$
\max_{ k \in \{0,...,d-1 \}}  f_k(\conv(\X)) \leq |\X|^{d-1} \quad \text{and} \quad \max_{i\in\{1,\hdots,d\}}V_i(\conv(\X))\leq \max_{i\in\{1,\hdots,d\}} V_i(A)
$$
lead to $|S_{\tilde{\xi}_s}(\X)-S_{\xi_s}(\X)|\leq C_{d} s |\X|^{d}$ for some universal constant $C_{d}\in(0,\infty)$. Together with H\"older's inequality and the above probability bounds this yields the asserted expectation and variance bounds.
\end{proof}

The results of \cite{Re} show that for $\xi_s\in\{\xi_0,\hdots,\xi_{d-1},\xi_{d,s}\}$ one has
\begin{equation}\label{eqn:VarianceLowerBoundConvexHull}
\Var S_{\xi_s}(\P_s) = \Theta(s^{\frac{d-1}{d+1}}) \quad \text{and} \quad \Var S_{\xi_n}(\X_n) = \Theta(n^{\frac{d-1}{d+1}}).
\end{equation}
For $\xi_s\in\{\xi_{1,s},\hdots,\xi_{d-1,s}\}$ and taking into account scaling \eqref{IVscaling}, we know from Corollary 7.1 of \cite{CSY} and from Theorem 2 of \cite{BFV} that
\begin{equation}\label{eqn:VarianceLowerBoundConvexHulla}
\Var S_{\xi_s}(\P_s) = \Theta(s^{ \frac{d -1 }{d+1}}) \quad \text{and} \quad \Var S_{\xi_n}(\X_n) = \Theta(n^{\frac{d -1}{d+1}}).
\end{equation}
Hence, Lemma \ref{lem:ApproximationConvexHull} implies that for $\xi_s\in\{\xi_0,\hdots,\xi_{d-1},\xi_{1,s},\hdots,\xi_{d,s}\}$
\begin{equation}\label{eqn:VarianceLowerBoundConvexHullTilde}
\Var S_{\tilde{\xi}_s}(\P_s) =  \Theta(s^{\frac{d-1}{d+1}}) \quad \text{and} \quad \Var S_{\tilde{\xi}_n}(\X_n) = \Theta(n^{\frac{d-1}{d+1}}).
\end{equation}

For a point $x\in A$ let $\tilde{H}_{x,1},\hdots,\tilde{H}_{x,2^{d-1}}$ be a decomposition of $H_x$ into solid orthants having $x$ in common and let $H_{x,i}:=\tilde{H}_{x,i}+\operatorname{Span}(x)$ for $i\in\{1,\hdots,2^{d-1}\}$.

\begin{lemm}\label{lem:Axr}
Let  $\tilde{\xi}_s\in\{\tilde{\xi}_0,\hdots,\tilde{\xi}_{d-1},\tilde{\xi}_{1,s},\hdots,\tilde{\xi}_{d,s}\}$ and let $x\in A$, $r>0$ and $\X\in\mathbf{N}$ be such that $x\in\X$ and $\X\cap A_{-\rho_0}\cap A_{x,r}\cap H_{x,i}\neq \emptyset$ for $i\in\{1,\hdots,2^{d-1}\}$. Then, $\tilde{\xi}_s(x,\X)$ is completely determined by $\mathcal{X}\cap A_{-\rho_0}\cap A_{x,r}$, i.e., thus by $\X\cap A_{-\rho_0}\cap B_{\d_{max}}(x,c_{max}r)$ with $c_{max}$ as in Lemma \ref{lem:PropertiesAxr}.
\end{lemm}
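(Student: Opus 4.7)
The plan rests on a geometric shielding argument. The $2^{d-1}$ regions $H_{x,i}$ partition $\R^d$ (modulo lower-dimensional boundaries) into cylinders based on the orthants of the tangent-type hyperplane $H_x$ through $x$. The hypothesis supplies one point $y_i\in \X\cap A_{-\rho_0}\cap H_{x,i}$ inside $A_{x,r}$ for each orthant $i\in\{1,\ldots,2^{d-1}\}$. Together with the interior point $\0$ (which lies in $\operatorname{int}(A)$ since $B(\0,2\rho_0)\subset A$), these $2^{d-1}+1$ points form a polytopal barrier around $x$ whose supporting directions cover every orthant of $H_x$. I would then localize the faces of $\operatorname{Conv}((\X\cap A_{-\rho_0})\cup\{\0\})$ that contain $x$ to $A_{x,r}$ and deduce that all the relevant scores are determined by $\X\cap A_{-\rho_0}\cap A_{x,r}$.

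The main shielding claim I would establish is: every face $F$ of $\operatorname{Conv}((\X\cap A_{-\rho_0})\cup\{\0\})$ containing $x$ has all its vertices inside $A_{x,r}$. A vertex $y\in F$ with $y\ne x$ sits on some hyperplane through $x$ whose outward normal points into a specific orthant $\tilde H_{x,i}$; if $y\notin A_{x,r}$, I would show that $y$ is ``behind'' $y_i$ with respect to this hyperplane (using the interior point $\0$ on the opposite side), contradicting $F$ being a face. When $r\leq \sqrt{\d(x,A^c)}$, $A_{x,r}$ is the explicit small cap with base $H_x\cap B^d(x,r/\overline c)$ and this is essentially immediate from the paraboloid sandwich \eqref{eqn:SandwichBoundary}. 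When $r>\sqrt{\d(x,A^c)}$, the explicit formula $A_{x,r}=A\setminus\operatorname{Conv}((A\setminus B^d(x,r/\underline c))\cup\{x\})$ combined with the extreme-point status of $x$ in $A\setminus A_{x,r}$ gives the same conclusion.

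Granting this shielding, the scores $\xi_0,\ldots,\xi_{d-1}$, which only count faces through $x$, depend exclusively on the vertices of those faces, and hence on $\X\cap A_{-\rho_0}\cap A_{x,r}$. For the intrinsic-volume scores $\xi_{j,s}$, $j\in\{1,\ldots,d\}$, the defining sum runs over $(d-1)$-faces $F\ni x$; the vertices of any such $F$ lie in $A_{x,r}$ by shielding, so $F\subset A_{x,r}$, and a short computation with \eqref{eqn:SandwichBoundary} shows $\operatorname{Cone}(F)\cap(A\setminus \operatorname{Conv})\subset A_{x,r}$. The integrand $\|z\|^{-(d-j)}\theta_j^{A,\operatorname{Conv}(\cdot)}(z)$ for $z$ in this region depends on $\operatorname{Conv}(\cdot)$ only through the faces visible from $z$, which by shielding are the same faces through points of $\X\cap A_{-\rho_0}\cap A_{x,r}$. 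The concluding inclusion $A_{x,r}\subset B_{\d_{max}}(x,c_{max}r)$ is precisely Lemma \ref{lem:PropertiesAxr}(b).

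The main obstacle will be formalizing the shielding argument in the regime $r>\sqrt{\d(x,A^c)}$, where $A_{x,r}$ is no longer a convex cap but a carved-out region near $\partial A$, so the ``dominated by $y_i$'' reasoning requires careful use of the explicit description of $A_{x,r}$ and of the fact that $\0$ and each $y_i$ lie in $A_{-\rho_0}$. A secondary technicality will be verifying that the projection-avoidance function $\theta_j^{A,\operatorname{Conv}(\cdot)}$ is unaffected by points of $\X\setminus A_{x,r}$; this reduces once more to the shielding, since any $j$-dimensional projection that witnesses $z\in A|L\setminus \operatorname{Conv}(\cdot)|L$ must cross a face through $x$ whose vertices are already in $A_{x,r}$.
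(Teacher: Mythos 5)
Your shielding argument is exactly the mechanism of the paper's own proof: there one forms the cone with apex $x$ generated by $\0,y_1,\ldots,y_{2^{d-1}}$, observes that $\operatorname{Conv}((A\setminus B^d(x,r/\underline{c}))\cup\{x\})$ is contained in this cone, and concludes that no point of $A\setminus A_{x,r}$ can lie on a face through $x$ (treating separately the degenerate case where the cone is all of $\R^d$, in which $x$ is not extreme and the score vanishes). Your face-localization claim is the dual formulation of the same fact, so the proposal is correct and takes essentially the same route as the paper.
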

\begin{proof}
Let $\d(x,A^c)\leq \rho_0 \leq 1/(8\overline{c}^2)$ since, otherwise, the assertion is trivial. By assumption there are $y_1,\hdots,y_{2^{d-1}}$ such that $y_i\in \X\cap A_{-\rho_0}\cap A_{x,r}\cap H_{x,i}$ for $i\in\{1,\hdots,2^{d-1}\}$. Let $C_{x,y_1,\hdots,y_{2^{d-1}}}$ be the cone with apex $x$ generated by the points $\0,y_1,\hdots,y_{2^{d-1}}$. If $C_{x,y_1,\hdots,y_{2^{d-1}}}=\R^d$, we have $x\in\operatorname{Conv}(\{\0,y_1,\hdots,y_{2^{d-1}}\})$, whence $\tilde{\xi}_s(x,\X)=0$. If $C_{x,y_1,\hdots,y_{2^{d-1}}}\neq\R^d$ (this implies that $r>\sqrt{\d(x,A^c)}$), no point in the interior of $C_{x,y_1,\hdots,y_{2^{d-1}}}$ can be connected with $x$ by an edge. Since $\operatorname{Conv}((A\setminus B^d(x,r/\underline{c}))\cup\{x\})\subset C_{x,y_1,\hdots,y_{2^{d-1}}}$, all points in $A\setminus A_{x,r}$ are irrelevant for the facial structure at $x$. Consequently  the scores
$\tilde{\xi}_s$ are completely determined by $\X\cap A_{-\rho_0}\cap A_{x,r}$. In view of  Lemma \ref{lem:PropertiesAxr}(b) we have $A_{x,r}\subset B_{\d_{max}}(x,c_{max}r)$ so the same is true for $\X\cap A_{-\rho_0}\cap B_{\d_{max}}(x,c_{max}r)$.
\end{proof}

We define the map $R: A \times \mathbf{N} \to \R$ which sends $(x, \X)$ to
\begin{align*}
& R(x,\mathcal{X}\cup\{x\}) :=\\
& \begin{cases} c_{max} \inf\{r\geq 0: \X\cap A_{-\rho_0}\cap A_{x,r}\cap H_{x,i}\neq \emptyset \text{ for } i\in\{1,\hdots,2^{d-1}\} \} & \text{ if } x\in A_{-\rho_0} \\
0 & \text{ if } x\notin A_{-\rho_0}. \end{cases}
\end{align*}
The next lemma shows that all $\tilde{\xi}_s \in \{\tilde{\xi}_0,\hdots,\tilde{\xi}_{d-1},\tilde{\xi}_{1,s},\hdots,\tilde{\xi}_{d,s}\}$ satisfy \eqref{eqn:expstabPoisson} and \eqref{eqn:expstabBinomial} with $\alpha_{stab} = d + 1$.

\begin{lemm}\label{lem:ExstabRandomPolytopes}
$R$ is a radius of stabilization for any $\tilde{\xi}_s\in\{\tilde{\xi}_0,\hdots,\tilde{\xi}_{d-1},\tilde{\xi}_{1,s},\hdots,\tilde{\xi}_{d,s}\}$  and there are constants $C,c\in (0,\infty)$ such that for $r\geq 0$, $x\in A$
$$
\Prob(R(x,\P_s\cup\{x\})\geq r) \leq C \exp(-c s r^{d+1}), \quad s\geq 1,
$$
whereas
$$
\Prob(R(x,\X_{n-8}\cup\{x\})\geq r) \leq C \exp(-c n r^{d+1}), \quad n \geq 9.
$$
\end{lemm}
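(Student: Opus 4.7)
The plan is to verify Lemma~\ref{lem:ExstabRandomPolytopes} in two steps: first read off the stabilization property from Lemma~\ref{lem:Axr}, then reduce the tail bound to an orthant-wise void-probability estimate whose only nontrivial input is a per-wedge volume lower bound drawn from Lemma~\ref{lem:PropertiesAxr}.

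For the stabilization property, when $x\notin A_{-\rho_0}$ the score vanishes and there is nothing to show. For $x\in A_{-\rho_0}$, write $R(x,\X\cup\{x\})=c_{\max}r^{\ast}$ where $r^{\ast}$ is the infimum in the definition, so that for every $\varepsilon>0$ each of the $2^{d-1}$ wedges $H_{x,i}$ already meets $\X\cap A_{-\rho_0}\cap A_{x,r^{\ast}+\varepsilon}$. Since appending the at most seven extra points $\widehat{\A}$ allowed in \eqref{eqn:RadiusOfStabilization} can only populate further wedges, Lemma~\ref{lem:Axr} forces $\tilde\xi_s(x,\X\cup\widehat{\A})$ to depend only on $(\X\cup\widehat{\A})\cap A_{-\rho_0}\cap B_{\d_{max}}(x,c_{\max}(r^{\ast}+\varepsilon))$; letting $\varepsilon\downarrow 0$ yields \eqref{eqn:RadiusOfStabilization}.

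For the tail bound, a monotonicity/limit argument gives the containment
\begin{equation*}
\{R(x,\X\cup\{x\})\geq r\}\ \subseteq\ \bigcup_{i=1}^{2^{d-1}}\bigl\{\X\cap A_{-\rho_0}\cap A_{x,r/(2c_{\max})}\cap H_{x,i}=\emptyset\bigr\}.
\end{equation*}
Combining this with a union bound and the void probabilities $\Prob(\P_s\cap B=\emptyset)=\exp(-s\Q(B))$ and $\Prob(\X_{n-8}\cap B=\emptyset)\leq\exp(-(n-8)\Q(B))$ reduces both parts of the lemma to the uniform per-wedge estimate $\Q(A_{-\rho_0}\cap A_{x,r'}\cap H_{x,i})\geq c(r')^{d+1}$, valid for all $x\in A_{-\rho_0}$, $i\in\{1,\dots,2^{d-1}\}$, and $r'$ in a fixed small interval $[0,r_1]$; tails for larger $r$ follow automatically after enlarging the constants, since $R$ is deterministically bounded by a multiple of $\diam(A)$.

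The main obstacle is this per-wedge volume lower bound, which I would prove separately in the two regimes of the definition of $A_{x,r'}$. In the regime $r'\leq\sqrt{\d(x,A^{c})}$ the set $A_{x,r'}$ is a concrete cone over the $(d-1)$-dimensional disk $H_x\cap B^d(x,r'/\overline c)$ with apex $x+r'^{2}u_x$; the orthants $\tilde H_{x,i}$ partition this base into $2^{d-1}$ congruent sectors, and a direct inspection shows that the corresponding slice of the cone sits inside $H_{x,i}$ (for small $r'$ the discrepancy between $\operatorname{Span}(x)$ and $\operatorname{span}(u_x)$ only induces a harmless $O(r'^{2})$ tilt), so $\Q(A_{x,r'}\cap H_{x,i})\geq c(r')^{d+1}$ via Lemma~\ref{lem:PropertiesAxr}(a); moreover Lemma~\ref{lem:PropertiesAxr}(b) combined with $c_{\max}r'\leq\rho_0$ gives $A_{x,r'}\subset A_{-\rho_0}$, so the extra intersection with $A_{-\rho_0}$ costs nothing. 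In the regime $\sqrt{\d(x,A^{c})}<r'\leq r_1$ I would work at the boundary projection $z=\Pi_{\partial A}(x)$ and re-use the explicit subset of $A_{z,r'}$ built in the proof of Lemma~\ref{lem:PropertiesAxr}(a) from the paraboloid sandwich \eqref{eqn:SandwichBoundary}; this subset is rotationally symmetric about the normal line at $z$ and therefore distributes its mass evenly across the wedges, yielding the same $\Theta((r')^{d+1})$ per-wedge bound. Assembling these estimates and absorbing the $n-8$ into $n$ (valid for $n\geq 9$) delivers the announced Poisson and binomial tails with common exponent $r^{d+1}$.
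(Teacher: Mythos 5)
Your proposal follows the paper's own argument essentially step for step: stabilization is read off from Lemma \ref{lem:Axr}, the event $\{R(x,\P_s\cup\{x\})\geq r\}$ is included in a union of $2^{d-1}$ per-wedge void events, and everything reduces to the per-wedge volume estimate $\Q(A_{-\rho_0}\cap A_{x,r'}\cap H_{x,i})\geq c (r')^{d+1}$, which the paper likewise obtains ``by the same arguments as in the proof of Lemma \ref{lem:PropertiesAxr}(a)''; you in fact supply more detail than the paper on the handling of the added points $\widehat{\A}$, the $\varepsilon$-limit at the infimum, and the two-regime wedge-volume bound. Two small corrections are needed. First, your claim that $R$ is deterministically bounded by a multiple of $\diam(A)$ is false: if some wedge $H_{x,i}$ contains no point of $\X\cap A_{-\rho_0}$ at all (an event of positive probability, e.g.\ on $\{\P_s=\emptyset\}$), the infimum is taken over the empty set and $R=+\infty$. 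The standard repair is to cap $R$ at the $\d_{max}$-diameter of $A$; the capped map is still a radius of stabilization because \eqref{eqn:RadiusOfStabilization} holds trivially once $B_{\d_{max}}(x,R)\supseteq A$, and then the large-$r$ regime is handled exactly as you describe. (The paper's ``it suffices to consider $r\in[0,r_0]$'' silently relies on the same capping.) Second, Lemma \ref{lem:PropertiesAxr}(b) only gives $|\d(x,A^c)-\d(y,A^c)|\leq c_{max}^2 (r')^2$ for $y\in A_{x,r'}$, hence $\d(y,A^c)\leq \rho_0+c_{max}^2(r')^2$, which does not by itself yield $A_{x,r'}\subset A_{-\rho_0}$; you need the one-sided estimates from the \emph{proof} of that lemma ($\d(y,A^c)\leq\d(x,A^c)$ in the regime $r'\leq\sqrt{\d(x,A^c)}$, and $\d(y,A^c)=O((r')^2)$ in the other regime). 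Both points are gaps the paper itself glosses over, and neither affects the validity of your overall argument.
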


\begin{proof}
It follows from Lemma \ref{lem:Axr} that $R$ is a radius of stabilization. It is sufficient to establish the desired inequalities for $x\in A_{-\rho_0}$ and $r\in [0,r_0]$ for some $r_0>0$. We see that
$$
\Prob(R(x,\P_s\cup\{x\})\geq r) \leq \Prob(\exists i\in\{1,\hdots,2^{d-1}\}: \P_s\cap A_{-\rho_0}\cap A_{x,r/c_{max}}\cap H_{x,i} = \emptyset).
$$
Choosing $r_0$ small enough so that $A_{x,r/c_{max}}\cap A_{-\rho_0}=A_{x,r/c_{max}}$ and noting that by the same arguments as in the proof of Lemma \ref{lem:PropertiesAxr}(a) $\Q(A_{x,r/c_{max}}\cap H_i)\geq c_{\Q} r^{d+1}/(2^{d-1}c_{max}^{d+1})$, we obtain that
$$
\Prob(R(x,\P_s\cup\{x\})\geq r) \leq 2^{d-1} \exp(-s c_{\Q} r^{d+1}/(2^{d-1}c_{max}^{d+1})).
$$
The proof for the binomial case goes similarly.
\end{proof}

The next lemma shows that all $\tilde{\xi}_s \in \{\tilde{\xi}_0,\hdots,\tilde{\xi}_{d-1},\tilde{\xi}_{1,s},\hdots,\tilde{\xi}_{d,s}\}$ satisfy
\eqref{eqn:expfastPoisson} and \eqref{eqn:expfastBinomial} with $\alpha_{\partial A} =d+1$.

\begin{lemm}\label{lem:BoundaryRandomPolytopes}
For any $\tilde{\xi}_s\in\{\tilde{\xi}_0,\hdots,\tilde{\xi}_{d-1},\tilde{\xi}_{1,s},\hdots,\tilde{\xi}_{d,s}\}$ there are constants $C_{b},c_b\in(0,\infty)$ such that
for $x\in A$, $\A\subset A$ with $|\A|\leq 7$
$$
\Prob(\tilde{\xi}_s(x,\P_s\cup\{x\}\cup \A)\neq 0) \leq C_b \exp(-c_b s \d_{max}(x,A^c)^{d+1}), \quad s \geq 1,
$$
whereas
$$
\Prob(\tilde{\xi}_n(x,\X_{n-8}\cup\{x\}\cup A)\neq 0) \leq  C_b \exp(-c_b n \d_{max}(x,A^c)^{d+1} ), \quad n \geq 9.
$$
\end{lemm}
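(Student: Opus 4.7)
The plan is to reduce every score in the list to the event ``$x$ is a vertex of the convex hull $P$'' and then bound the latter by a void-probability estimate on the $2^{d-1}$ sectors $D_x^{(i)} := A_{x,\sqrt{h}} \cap H_{x,i}$ that already feature in the proof of Lemma \ref{lem:Axr}.

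First I carry out the reductions: (i) $\tilde{\xi}_{s}(x,\cdot) \equiv 0$ unless $x \in A_{-\rho_0}$, so assume $x \in A_{-\rho_0}$ with $h := \d(x, A^c) \leq \rho_0$. (ii) Every score in the list vanishes unless $x$ is a vertex of
\[
P := \conv\bigl((\X \cap A_{-\rho_0}) \cup \{x, \0\} \cup (\A \cap A_{-\rho_0})\bigr),
\]
since the face-count scores carry $\mathbf{1}_{\{x \in F\}}$ (forcing $x$ to lie on a face, hence to be a vertex) and the intrinsic-volume scores sum only over $(d-1)$-faces through $x$. (iii) Augmenting the configuration by $\A$ can only shrink the vertex set, so we may drop $\A$ at the cost of an overestimate. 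It suffices therefore to bound $\Prob(x\text{ is a vertex of }\conv((\P_s \cap A_{-\rho_0}) \cup \{x, \0\}))$ and its binomial counterpart.

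Set $r := \sqrt{h}$ and recall that $A_{x,r} = \conv((H_x \cap B^d(x, r/\overline{c})) \cup \{x + r^2 u_x\})$ is the cone with apex $x + r^2 u_x = z$ (since $r^2 = h$) and $(d-1)$-dimensional base in the tangent hyperplane $H_x$ through $x$. The upper paraboloid bound in \eqref{eqn:SandwichBoundary} yields $H_x \cap B^d(x, \sqrt{h}/\overline{c}) \subset A$, so $A_{x,r} \subset A$ by convexity, and Lemma \ref{lem:PropertiesAxr}(a) gives $\Q(A_{x,r}) \geq c_1 r^{d+1} = c_1 h^{(d+1)/2}$. I now partition $A_{x,r}$ into the $2^{d-1}$ sectors $D_x^{(i)} := A_{x,r} \cap H_{x,i}$ via the wedges $H_{x,i}$ introduced just before Lemma \ref{lem:Axr}. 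By the approximate symmetry of the wedge partition about $x$ (valid for $\rho_0$ small, since then $\operatorname{Span}(x)$ sits close to the cone's axis $[x, z]$), each $D_x^{(i)}$ carries $\Q$-mass at least $c_2 h^{(d+1)/2} = c_2 \d_{max}(x, A^c)^{d+1}$ uniformly in $x \in A_{-\rho_0}$, using $\d_{max}(x, A^c) = \sqrt{h}$ for $h < 1$.

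The crucial geometric input is the Case-1 analysis in the proof of Lemma \ref{lem:Axr}: its contrapositive says that whenever $r \leq \sqrt{h}$, if each $D_x^{(i)}$ contains a point $y_i$ of the configuration then the cone at $x$ generated by $\{\0, y_1, \ldots, y_{2^{d-1}}\}$ is forced to equal $\R^d$, so $x \in \conv(\{\0, y_1, \ldots, y_{2^{d-1}}\})$ and $x$ is not a vertex of $P$. The Poisson void formula then yields
\[
\Prob(x\text{ is a vertex of }P) \leq \sum_{i=1}^{2^{d-1}} \Prob(\P_s \cap D_x^{(i)} = \emptyset) \leq 2^{d-1}\, e^{-c_2 s\, \d_{max}(x, A^c)^{d+1}},
\]
and the binomial version follows from $(1 - \Q(D_x^{(i)}))^{n-8} \leq e^{-c_2 (n-8)\,\d_{max}(x, A^c)^{d+1}}$ combined with $(n-8)/n \geq 1/9$ for $n \geq 9$. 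The principal obstacle is the uniform volume lower bound $\Q(D_x^{(i)}) \geq c_2 h^{(d+1)/2}$: the containment $A_{x,r} \subset A$ and the volume formula for the cone both rely on the paraboloid sandwich \eqref{eqn:SandwichBoundary} provided by the $C^2$-smoothness and positive Gaussian curvature of $\partial A$, while the uniform sharing of mass among the $2^{d-1}$ wedges follows from tracking how the wedge-boundary hyperplanes (which all contain $\operatorname{Span}(x)$) cut the cone $A_{x,r}$ for $h$ small, so that the wedge cuts pass close to the apex $z$.
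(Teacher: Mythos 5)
Your argument is correct and is essentially the paper's proof unpacked: the paper simply observes that $\tilde{\xi}_s(x,\cdot)$ vanishes once $R(x,\X\cup\{x\})\leq\sqrt{\d(x,A^c)}=\d_{max}(x,A^c)$ (the $r\leq\sqrt{\d(x,A^c)}$ case of Lemma \ref{lem:Axr}, which is your ``all sectors occupied $\Rightarrow$ $x$ absorbed into $\operatorname{Conv}(\{\0,y_1,\dots,y_{2^{d-1}}\})$'' step) and then invokes the tail bound of Lemma \ref{lem:ExstabRandomPolytopes}, whose proof is exactly your sector-by-sector void-probability estimate. The only cosmetic imprecision is that you phrase the intermediate conclusion as ``$x$ is not a vertex,'' whereas the scores require $x$ to lie on no face at all; but since the cone being all of $\R^d$ places $x$ in the interior of the hull, your chain of implications goes through unchanged.
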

\begin{proof}
For $x\in A_{-\rho_0}$, $\X\in\mathbf{N}$ and $\A\subset A$ with $|\A|\leq 7$ we have that $\tilde{\xi}_{s}(x,\X\cup\{x\}\cup\A)=0$ if $R(x,\X\cup\{x\})\leq \sqrt{\d(x,A^c)}=\d_{max}(x,A^c)$. Thus, the assertions follow from Lemma \ref{lem:ExstabRandomPolytopes}.
\end{proof}

\begin{lemm}\label{lem:MomentsRandomPolytopes}
For any $q\geq 1$ and $\tilde{\xi}_s\in\{\tilde{\xi}_0,\hdots,\tilde{\xi}_{d-1},\tilde{\xi}_{1,s},\hdots,\tilde{\xi}_{d,s}\}$ there is a constant $C_q\in(0,\infty)$ such that for all $\A\subset A$ with $|\A|\leq 7$,
$$
\sup_{s\geq 1}\sup_{x\in A} \E |\tilde{\xi}_s(x,\P_s\cup\{x\}\cup\A)|^q \leq C_q
\quad
\text{and}
\quad
\sup_{n\in\N, n\geq 9}\sup_{x\in A} \E |\tilde{\xi}_n(x,\X_{n-8}\cup\{x\}\cup\A)|^q \leq C_q.
$$
\end{lemm}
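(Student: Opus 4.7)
The plan is to bound the $q$-th moments of the three score families separately, relying on Lemma~\ref{lem:Axr} (localization), Lemma~\ref{lem:PropertiesAxr} (cap volume and diameter estimates) and the exponential tail of the stabilization radius $R=R(x,\X'\cup\{x\})$ from Lemma~\ref{lem:ExstabRandomPolytopes}; here $\X':=(\P_s\cap A_{-\rho_0})\cup\{\0\}\cup\A$ in the Poisson case and analogously with $\X_{n-8}$ in the binomial case. The bound is trivial on $\{x\notin A_{-\rho_0}\}$, so I restrict to $x\in A_{-\rho_0}$ and note that $B(\0,2\rho_0)\subset A$ forces $\|x\|\geq \rho_0$ there. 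I only spell out the Poisson case; the binomial case is identical, using the binomial tail from Lemma~\ref{lem:ExstabRandomPolytopes}.

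First I would treat the face-counting scores $\tilde\xi_k$, $k\in\{0,\ldots,d-1\}$. Any $k$-face of $\operatorname{Conv}(\X')$ incident to $x$ is spanned by $x$ together with $k$ other vertices, which by Lemma~\ref{lem:Axr} all lie in $\X'\cap A_{x,R/c_{max}}$; hence $\tilde\xi_k(x,\X)\leq N_R^k$ with $N_R:=|\X'\cap A_{x,R/c_{max}}|$. I would then establish $\Prob(N_R\geq m)\leq C e^{-cm}$ uniformly in $s\geq 1$ by the standard peeling
\[
\Prob(N_R\geq m) \leq \Prob(R\geq r_m)+\Prob\bigl(|\P_s\cap B_{\d_{max}}(x,c_{max}r_m)|\geq m-8\bigr),
\]
choosing $r_m:=c_\ast(m/s)^{1/(d+1)}$ with $c_\ast$ small enough that, via $\Q(B_{\d_{max}}(x,r))\leq \kappa r^{d+1}$ (consequence of Lemmas~\ref{lem:ConditionSpaceRandomPolytopes} and~\ref{Lem4.1}), the Poisson mean in the second term is at most $(m-8)/(4e)$, while Lemma~\ref{lem:ExstabRandomPolytopes} controls the first term by $C\exp(-cm)$. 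This yields $\E N_R^{qk}\leq C_{q,k}$ and therefore $\E\tilde\xi_k^q\leq C_q$.

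Second I would handle $\tilde\xi_{d,s}$ by establishing the geometric inclusion $W_x:=\bigcup_{F:\,x\in F}\operatorname{Cone}(F)\cap(A\setminus\operatorname{Conv}(\X'))\subset A_{x,R/c_{max}}$: any $y\in A\setminus A_{x,R/c_{max}}$ belongs, by the inclusion $\operatorname{Conv}((A\setminus B^d(x,R/(c_{max}\underline c)))\cup\{x\})\subset C_{x,y_1,\ldots,y_{2^{d-1}}}$ established in the proof of Lemma~\ref{lem:Axr}, to the solid cone on the $\0$-side of the facets through $x$, so is separated from $W_x$, which sits on the opposite $\partial A$-side. Lemma~\ref{lem:PropertiesAxr}(a) then gives
\[
\tilde\xi_{d,s}(x,\X)=(s/d)\operatorname{Vol}(W_x)\leq (s/d)\min\{C_\Q (R/c_{max})^{d+1},\operatorname{Vol}(A)\},
\]
and feeding this into the layer-cake formula together with $\Prob(R\geq r)\leq Ce^{-csr^{d+1}}$ produces $\Prob(\tilde\xi_{d,s}\geq t)\leq Ce^{-c't}$ uniformly in $s\geq 1$, whence $\E\tilde\xi_{d,s}^q\leq C_q$.

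Finally, for $\tilde\xi_{j,s}$ with $j\in\{1,\ldots,d-1\}$ I would split on $\{R\leq \rho_0/2\}$ and its complement. By Lemma~\ref{lem:PropertiesAxr}(b), every $y\in A_{x,R/c_{max}}\subset B_{\d_{max}}(x,R)$ obeys $\|x-y\|\leq R$, so on $\{R\leq\rho_0/2\}$ one has $\|y\|\geq \|x\|-R\geq \rho_0/2$ on $W_x$; since $\theta_j^{A,\operatorname{Conv}(\X')}\leq 1$, this produces $\tilde\xi_{j,s}\leq \kappa_{d,j}(2/\rho_0)^{d-j}\tilde\xi_{d,s}$, whose $q$-th moment is controlled by the previous step. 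On $\{R>\rho_0/2\}$ the disjointness of the cones $\operatorname{Cone}(F)$ over facets through $x$ combined with the elementary polar-coordinate estimate $\int_A\|y\|^{-(d-j)}\,\dint y<\infty$ (valid since $\0\in\operatorname{int}(A)$ and $j\geq 1$) gives $\tilde\xi_{j,s}\leq Cs$, while Lemma~\ref{lem:ExstabRandomPolytopes} yields $\Prob(R>\rho_0/2)\leq Ce^{-c's}$, so the contribution of this event to the $q$-th moment is at most $C^q s^q e^{-c's}$, uniformly bounded on $s\geq 1$. The main technical obstacle is rigorously verifying the geometric inclusion $W_x\subset A_{x,R/c_{max}}$ invoked in the second and third steps, which requires a careful re-reading of the proof of Lemma~\ref{lem:Axr} together with the observation that $\0$-containing facets contribute a measure-zero set to the integral; the remaining estimates are routine.
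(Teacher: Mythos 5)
Your proposal is correct and follows essentially the same strategy as the paper: bound the face-count scores by a power of the number of points in the stabilization region (the paper delegates this to Lemma 7.1 of \cite{CSY}, which you reconstruct via the Poisson/binomial peeling bound), bound $\tilde{\xi}_{d,s}$ by $s$ times the measure of the cap $A_{x,R/c_{max}}$ and invoke Lemmas \ref{lem:PropertiesAxr} and \ref{lem:ExstabRandomPolytopes}, and reduce $\tilde{\xi}_{j,s}$ to $\tilde{\xi}_{d,s}$ up to an exponentially negligible bad event on which the crude bound $O(s)$ suffices. The one genuine divergence is in the last step: the paper uses the inequality \eqref{eq:score-inequality}, controlling $\|y\|^{-(d-j)}$ through the inradius $r(\operatorname{Conv}(\X))$ and splitting on the event $B(\0,\rho_0)\subset\operatorname{Conv}(\X)$ (handled by Lemma \ref{lem:ApproximationConvexHull}), whereas you split on $\{R\le \rho_0/2\}$ and use that the integration region $W_x$ is then localized near $x$ with $\|x\|\ge\rho_0$, so $\|y\|^{-(d-j)}\le(2/\rho_0)^{d-j}$ there. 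Both reductions are valid; yours avoids \eqref{eq:score-inequality} and Lemma \ref{lem:ApproximationConvexHull} at the cost of having to justify the inclusion $W_x\subset A_{x,R/c_{max}}$ (which you correctly flag, and which the paper also needs, under the guise of ``similar considerations as in the proof of Lemma \ref{lem:Axr}'', for the $\tilde{\xi}_{d,s}$ bound anyway).
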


\begin{proof}
The assertion for $\tilde{\xi}_0,\hdots,\tilde{\xi}_{d-1}$ can be shown similarly as in Lemma 7.1 of \cite{CSY}. Similar considerations as in the proof of Lemma \ref{lem:Axr} show that
$$
\tilde{\xi}_{d,s}(x,\P_s\cup\{x\}\cup\A) \leq s \Q(A_{x,R(x,\P_s\cup\{x\})}).
$$
Combining this with Lemma \ref{lem:PropertiesAxr} and Lemma \ref{lem:ExstabRandomPolytopes} leads to the inequality for $\tilde{\xi}_{d,s}$ in the Poisson case, which can be proven similarly in the binomial case. For the intrinsic volumes $\tilde{\xi}_{j,s}, j \in \{0,...,d - 1\},$ the bound  \eqref{eq:score-inequality} shows that the $q$-th moment of $\tilde \xi _{j,s}$ is bounded by  a constant multiple of the $q$-th moment of $\tilde \xi _{d,s}$ plus  $s^{q} \Prob(B(\0,\rho _{0})\not\subset \operatorname{Conv}(\X_{s}))$, which by  Lemma \ref{lem:ApproximationConvexHull} is bounded by $s^{q} C_{0}\exp(-c_{0}s)$. This completes the proof.
\end{proof}

\begin{lemm}\label{lem:CLTApproximationConvexHull}
For any $\tilde{\xi}_s\in\{\tilde{\xi}_0,\hdots,\tilde{\xi}_{d-1},\tilde{\xi}_{1,s},\hdots,\tilde{\xi}_{d,s}\}$ there is a constant $\tilde{C}\in(0,\infty)$ such that
$$
d_K\bigg(\frac{S_{\tilde{\xi}_s}(\P_s)-\E S_{\tilde{\xi}_s}(\P_s)}{\sqrt{\Var S_{\tilde{\xi}_s}(\P_s)}},N\bigg) \leq \tilde{C} s^{-\frac{d-1}{2(d+1)}}, \quad s\geq 1,
$$
and
$$
d_K\bigg(\frac{S_{\tilde{\xi}_n}(\X_n)-\E S_{\tilde{\xi}_n}(\X_n)}{\sqrt{\Var S_{\tilde{\xi}_n}(\X_n)}},N\bigg) \leq \tilde{C} n^{-\frac{d-1}{2(d+1)}}, \quad n\geq 9.
$$
\end{lemm}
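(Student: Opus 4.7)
The plan is to apply Corollary \ref{coroLSY} on the semi-metric space $(A, \mathcal{B}(A), \Q, \d_{max})$, with the subset-without-influence $\tilde{\XX} := A_{-\rho_0}$ (Remark (v) after Theorem \ref{2coroLSY}) and the boundary set $K := \partial A$. By construction each $\tilde{\xi}_s$ vanishes off $A_{-\rho_0}$ and only sees input in $A_{-\rho_0} \subset A_{-\varrho}$, so it is enough to verify \eqref{eqn:SurfaceBall}--\eqref{eqn:expfastBinomial} for $x \in A_{-\varrho}$. The surface-growth condition with $\gamma = d+1$ is supplied by Lemma \ref{lem:ConditionSpaceRandomPolytopes}; exponential stabilization with $\alpha_{stab} = d+1$ comes from Lemma \ref{lem:ExstabRandomPolytopes}; exponential decay with distance to $\partial A$ with $\alpha_K = d+1$ comes from Lemma \ref{lem:BoundaryRandomPolytopes}; and the $(4+p)$th moment bound for every $p > 0$ comes from Lemma \ref{lem:MomentsRandomPolytopes}.

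The key remaining step is to estimate the quantity $I_{K,s}$ from \eqref{eqn:DefinitionIKn}; since $\gamma = \alpha_{stab} = \alpha_K = d+1$, we need $I_{K,s} = O(s^{(d-1)/(d+1)})$. For $x \in A$, the projection $y = \Pi_{\partial A}(x)$ satisfies $\|x-y\| = \d(x, A^c) =: r_x$ and $\sqrt{|\d(x,A^c) - \d(y,A^c)|} = \sqrt{r_x}$, so $\d_{max}(x, \partial A) = \sqrt{r_x}$ for $r_x \leq 1$. The coarea formula together with $\H^{d-1}(\{x \in A : \d(x, \partial A) = r\}) = O(1)$, which holds because $\partial A$ is $C^2$, yields for suitable $c > 0$
\begin{equation*}
I_{K,s} \leq C s \int_0^{\diam(A)} \exp(-c s r^{(d+1)/2}) \, dr = O(s^{(d-1)/(d+1)})
\end{equation*}
via the substitution $u = (cs)^{2/(d+1)} r$. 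Combining this with $\Var S_{\tilde{\xi}_s}(\P_s) = \Theta(s^{(d-1)/(d+1)})$ from \eqref{eqn:VarianceLowerBoundConvexHullTilde} gives $\sup_{s\geq 1} I_{K,s}/\Var S_{\tilde{\xi}_s}(\P_s) < \infty$, and Corollary \ref{coroLSY}(a) then produces the Poisson bound of order $(\Var S_{\tilde{\xi}_s}(\P_s))^{-1/2} = O(s^{-(d-1)/(2(d+1))})$. The binomial bound follows identically from Corollary \ref{coroLSY}(b) and the binomial half of \eqref{eqn:VarianceLowerBoundConvexHullTilde}.

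The main obstacle is precisely the computation of $I_{K,s}$ above, because it is there that the exotic choice of semi-metric $\d_{max}$ pays off: the square-root squashing of Euclidean distance to $\partial A$, balanced against the $(d+1)$-power volume growth $\Q(B_{\d_{max}}(x,r)) = \Theta(r^{d+1})$ established in Lemma \ref{lem:ConditionSpaceRandomPolytopes}, is exactly what promotes the nominal volume rate $s$ to the sought surface-order rate $s^{(d-1)/(d+1)}$. Everything else is a straightforward assembly of ingredients already supplied by the preceding lemmas.
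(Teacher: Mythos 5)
Your proposal is correct and follows exactly the paper's route: the paper's proof likewise consists of invoking Lemmas \ref{lem:ConditionSpaceRandomPolytopes}, \ref{lem:ExstabRandomPolytopes}, \ref{lem:BoundaryRandomPolytopes} and \ref{lem:MomentsRandomPolytopes} to verify the hypotheses of Corollary \ref{coroLSY} together with Remark (v) after Theorem \ref{2coroLSY}, with $\XX:=A$, $\tilde{\XX}:=A_{-\rho_0}$, $K:=\partial A$, and then noting $I_{\partial A,s}=O(s^{(d-1)/(d+1)})$. The only difference is that you actually carry out the coarea/substitution computation showing $I_{\partial A,s}=O(s^{(d-1)/(d+1)})$, which the paper merely asserts; your computation is correct.
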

\begin{proof}
By Lemmas \ref{lem:ConditionSpaceRandomPolytopes},  \ref{lem:ExstabRandomPolytopes},  \ref{lem:BoundaryRandomPolytopes}, and \ref{lem:MomentsRandomPolytopes} all conditions of Corollary \ref{coroLSY} in connection with Remark (v) after Theorem \ref{2coroLSY} are satisfied with $\XX:=A$, $\tilde{\XX}:=A_{-\rho_0}$ and $K:= \partial A$.  Note that
 $I_{\partial A, s} = O(s^{(d-1)/(d + 1)})$, which completes the proof.
\end{proof}

\begin{proof}[Proof of Theorem \ref{convexhull}]
For any pair $(X, \tilde{X})$ of square integrable random variables satisfying $\Var X,\Var \tilde{X}>0$, a straightforward computation shows that
\begin{align*}
& d_K\bigg(\frac{X-\E X}{\sqrt{\Var X}},N\bigg) \\
& \leq d_K\bigg(\frac{\tilde{X}-\E X }{\sqrt{\Var X}},N\bigg) + \Prob(X\neq \tilde{X}) \allowdisplaybreaks\\
& = d_K\bigg(\frac{\tilde{X}-\E\tilde{X}}{\sqrt{\Var \tilde{X}}},N\bigg(  \frac{\E X - \E \tilde{X} }{\sqrt{\Var \tilde{X} }}, \frac{{\Var X }}  {{\Var \tilde{X} }} \bigg) \bigg)   + \Prob(X\neq \tilde{X}) \allowdisplaybreaks\\
& \leq d_K\bigg(\frac{\tilde{X}-\E\tilde{X}}{\sqrt{\Var \tilde{X}}},N\bigg) + d_K\bigg( N, N\bigg(  \frac{\E X - \E \tilde{X} }{\sqrt{\Var \tilde{X} }}, \frac{{\Var X }}  {{\Var \tilde{X} }} \bigg) \bigg) +
 \Prob(X\neq \tilde{X}) \allowdisplaybreaks\\
& \leq d_K\bigg(\frac{\tilde{X}-\E\tilde{X}}{\sqrt{\Var \tilde{X}}},N\bigg) +  \frac{|\E X - \E\tilde{X}|}{\sqrt{\Var \tilde{X}}} +  C {\bigg| \frac{{\Var X}}{{\Var \tilde{X}}}-1\bigg|}  +
 \Prob(X\neq \tilde{X}),
\end{align*}
where $N(\mu,\sigma^2)$ stands for a Gaussian random variable with mean $\mu$ and variance $\sigma^2$ and $C\in(0,\infty)$ is some universal constant.
Applying this to the pairs $(X, \tilde{X}) := (S_{\xi_s}(\P_s), S_{\tilde{\xi}_s}(\P_s))$ and $(X, \tilde{X}) := (S_{\xi_n}(\X_n), S_{\tilde{\xi}_n}(\X_n))$, respectively, together with Lemma \ref{lem:ApproximationConvexHull}, Lemma \ref{lem:CLTApproximationConvexHull}, \eqref{eqn:VarianceLowerBoundConvexHull},
\eqref{eqn:VarianceLowerBoundConvexHulla}, and \eqref{eqn:VarianceLowerBoundConvexHullTilde} completes the proof.
\end{proof}

\subsection{Clique counts in generalized random geometric graphs} \label{NNGsection}

Let $(\XX, {\cal F}, \Q)$ be equipped with a semi-metric $\d$ such that \eqref{eqn:SurfaceBall} is satisfied for some
$\gamma$ and $\kappa$. Moreover, let $\MM = [0, \infty)$ be equipped with the Borel sigma algebra $\mathcal{F}_{\MM}:= {\cal B}([0,\infty))$ and a probability measure $\Q_{\MM}$ on $([0, \infty), {\cal B}([0,\infty)))$. By $\widehat{\Q}$ we denote the the product measure of $\Q$ and $\Q_{\MM}$. In the following let $\P_s$ be a marked Poisson point process with intensity measure ${s\widehat{\Q}}$, $s\geq 1$, and let $\X_n$ be a marked binomial point process of $n\in\N$ points distributed according to $\widehat{\Q}$.

Given $\X \in\mathbf{N}$, recall that $\mathbf{N}$ is the set of point configurations in $\widehat{\XX}$, and a scale parameter $\beta \in (0, \infty)$, consider the graph $G(\X, \beta)$ on $\X$ with $(x_1, m_{x_1})\in\X$ and $(x_2, m_{x_2})\in\X$ joined with an edge iff $\d(x_1, x_2) \leq \beta \min( m_{x_1}, m_{x_2} )$.  When $m_x = 1$ for all $x \in \X$, we obtain the familiar geometric graph with parameter $\beta$. Alternatively, we could use the connection rule that $(x_1, m_{x_1})$ and $(x_2, m_{x_2})$ are joined with an edge iff $\d(x_1, x_2) \leq \beta \max( m_{x_1}, m_{x_2} )$. A scale-free random graph based on this connection rule with an underlying marked Poisson point process is studied in \cite{Hi}. The number of cliques of order $k + 1$ in  $G(\X, \beta)$, here denoted ${\cal C}_k(\X, \beta),$
is a well-studied statistic in random geometric graphs. Recall that $k+1$ vertices of a graph form a clique of order $k+1$ if each pair of them is connected by an edge.

The clique count ${\cal C}_k(\X, \beta)$ is also a central statistic  in topological data analysis.
Consider the simplicial complex ${\cal R}^\beta(\X)$  whose $k$-simplices correspond to unordered $(k + 1)$-tuples of points of $\X$
such that any constituent pair  of points $(x_1, m_{x_1})$ and $(x_2, m_{x_2})$ satisfies  $\d(x_1, x_2) \leq \beta \min( m_{x_1}, m_{x_2} )$.
When $m_x = 1$ for all $x \in \X$ then  ${\cal R}^\beta(\X)$
coincides with the  Vietoris-Rips complex with scale parameter $\beta$ and ${\cal C}_k(\X, \beta)$ counts the
number of $k$-simplices in ${\cal R}^\beta(\X)$.

When $\Q$ is the uniform measure on a compact set $\XX \subset \R^d$ with $\Vol(\XX)>0$ and $\gamma = d$, the ungainly quantity ${\cal C}_k(\P_s, \beta s^{-1/\gamma})$ studied below is equivalent to the more natural clique count ${\cal C}_k(\widetilde{\P}_1\cap s^{1/d} \XX , \beta)$, where $\widetilde{\P}_1$ is a rate one stationary Poisson point process in $\R^d$ and $\widetilde{\P}_1\cap s^{1/d} \XX$ is its restriction to $s^{1/d} \XX$.

\begin{theo}  \label{cliquecount} Let $k \in \N$ and $\beta \in (0, \infty)$ and assume there are
constants $c_1 \in (0, \infty)$ and $c_2 \in (0, \infty)$ such that
\be
\label{exptail}
\Prob(M_x \geq r) \leq c_1 \exp( - \frac{r^{c_2}} {c_1} ),  \quad x \in \XX, \ r \in (0, \infty).
\ee
If $\inf_{s\geq 1} \Var {\cal C}_k( \P_s, \beta s^{-1/\gamma} )/s > 0$, then there is a constant $\tilde{C}\in(0,\infty)$ such that
\be \label{ccPoisson}
d_K \left( \frac{ {\cal C}_k( \P_s, \beta s^{-1/\gamma} ) - \E {\cal C}_k( \P_s, \beta s^{-1/\gamma} ) } {\sqrt{ \Var {\cal C}_k( \P_s, \beta s^{-1/\gamma} ) } } , N \right)
 \leq   \frac{ \tilde{C} } { \sqrt{s} },  \quad s \geq 1.
\ee
Likewise if $\inf_{n\geq 9}\Var {\cal C}_k( \X_n, \beta n^{-1/\gamma} )/n> 0$, then there is a constant $\tilde{C}\in(0,\infty)$ such that
\be \label{ccBinomial}
d_K \left( \frac{ {\cal C}_k( \X_n, \beta n^{-1/\gamma} ) - \E {\cal C}_k( \X_n, \beta n^{-1/\gamma} ) } {\sqrt{ \Var{\cal C}_k( \X_n, \beta n^{-1/\gamma} ) } } , N \right)
\leq   \frac{ \tilde{C} } { \sqrt{n} },  \quad n \geq 9.
\ee
\end{theo}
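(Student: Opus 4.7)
The plan is to apply Corollary \ref{coroLSY} with $K := \XX$, so that by \eqref{eqn:IXlinear} we have $I_{\XX,s} = s\Q(\XX)$ and $I_{\XX,n} = n\Q(\XX)$, and the exponential-decay conditions \eqref{eqn:expfastPoisson} and \eqref{eqn:expfastBinomial} are trivially satisfied. It then suffices to exhibit the clique counts as sums of exponentially stabilizing scores satisfying a uniform $(4+p)$th moment bound for some $p > 0$. I would define the scores
$$\xi_s((x, m_x), \X) := \frac{1}{k+1}\,\#\bigl\{\text{cliques of order } k+1 \text{ in } G(\X, \beta s^{-1/\gamma}) \text{ containing } (x, m_x)\bigr\},$$
and analogously $\xi_n$ with $s$ replaced by $n$, so that $\sum_{\hat{x} \in \P_s} \xi_s(\hat{x}, \P_s) = {\cal C}_k(\P_s, \beta s^{-1/\gamma})$ and $\sum_{\hat{x} \in \X_n} \xi_n(\hat{x}, \X_n) = {\cal C}_k(\X_n, \beta n^{-1/\gamma})$.

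The key observation is that, since the connection rule requires $\d(x_1, x_2) \leq \beta s^{-1/\gamma} m_{x_1}$ in particular, every neighbor of $(x, m_x)$ in $G(\X, \beta s^{-1/\gamma})$ must lie in $\widehat{B}(x, \beta s^{-1/\gamma} m_x)$, and any clique containing $(x, m_x)$ consists solely of such neighbors. Hence $R_s((x, m_x), \X) := \beta s^{-1/\gamma} m_x$ (depending only on the mark at $x$) is a radius of stabilization, and the tail bound \eqref{exptail} immediately yields \eqref{eqn:expstabPoisson} and \eqref{eqn:expstabBinomial} with $\alpha_{stab} = c_2$, $c_{stab} = \beta^{-c_2}/c_1$, $C_{stab} = c_1$. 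For the moment condition, note that for any $\A \subset \XX$ with $|\A| \leq 7$ we have the crude bound
$$\xi_s((x, m_x), \P_s \cup \{(x, m_x)\} \cup (\A, M_\A)) \leq \binom{|\P_s \cap \widehat{B}(x, \beta s^{-1/\gamma} m_x)| + 8}{k},$$
a polynomial of degree $k$ in $|\P_s \cap \widehat{B}(x, \beta s^{-1/\gamma} M_x)|$. Conditional on $M_x$, the latter is Poisson with mean at most $\kappa\beta^\gamma M_x^\gamma$ by Lemma \ref{Lem4.1}(a), so its $(4+p)$th moment is dominated by a polynomial in $M_x^\gamma$ of degree $k(4+p)$; the exponential mark tail \eqref{exptail} makes the resulting expectation uniformly bounded in $s$ and $x$, yielding \eqref{eqn:momPoisson} for any $p > 0$. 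The verification of \eqref{eqn:momBinomial} is identical, with the Poisson count replaced by a binomial count with the same mean bound.

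With the four hypotheses of Corollary \ref{coroLSY}(a) thus verified and the variance assumption ensuring that $I_{\XX,s}/\Var {\cal C}_k(\P_s, \beta s^{-1/\gamma}) = \Q(\XX)/(\Var {\cal C}_k(\P_s, \beta s^{-1/\gamma})/s)$ is uniformly bounded in $s$, the bound \eqref{ccPoisson} follows; \eqref{ccBinomial} follows analogously from Corollary \ref{coroLSY}(b). There is no serious technical obstacle; the only idea in the proof is the observation that a point's clique-structure interacts only with points inside a ball whose radius is $\beta s^{-1/\gamma}$ times its own mark, so that once $R_s$ is chosen as this mark-dependent quantity, both stabilization and the $(4+p)$th moment bound collapse to elementary moment estimates for Poisson or binomial counts in balls whose radius has exponentially light tails.
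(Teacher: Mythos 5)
Your proposal is correct and follows essentially the same route as the paper: both decompose the clique count into local scores, take $R_s((x,m_x),\cdot)=\beta s^{-1/\gamma}m_x$ as the (configuration-independent) radius of stabilization so that \eqref{exptail} gives \eqref{eqn:expstabPoisson}--\eqref{eqn:expstabBinomial}, bound the $(4+p)$th moments by moments of the point count in $\widehat{B}(x,\beta s^{-1/\gamma}M_x)$, and conclude via Corollary \ref{coroLSY} with $K=\XX$. The only (immaterial) difference is the allocation of each clique among its vertices: you give weight $1/(k+1)$ to each vertex, whereas the paper assigns each clique entirely to its vertex of largest mark; both choices stabilize with the same radius and satisfy the same moment bounds.
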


\noindent{\em Remarks.}
(i) When $\XX$ is a full-dimensional subset of $\R^d$  and when $M_x \equiv 1$ for all $x \in \XX$, i.e., $\Q_\MM$ is the Dirac measure concentrated at one, a central limit theorem for the Poisson case is shown in \cite[Theorem 3.10]{Pbook}. Although the result in \cite{Pbook} is non-quantitative, the method of proof should yield a rate of convergence for the Kolmogorov distance.
Rates of normal convergence with respect to the Wasserstein distance $d_W$ are given in \cite{DFR}.

\noindent (ii) The contributions of this theorem are three-fold. First,
$\XX$ may be an arbitrary metric space, not necessarily a subset of $\R^d$.  Second, the graphs $G(\P_s, \beta s^{-1/\gamma})$ and $G(\X_n, \beta n^{-1/\gamma})$ are more general than the standard random geometric graph, as they consist of edges having arbitrary (exponentially decaying) lengths. Third, by applying our general findings we obtain presumably optimal rates of convergence for the Poisson and the binomial case at the same time.

\noindent (iii) The random variable ${\cal C}_k( \P_s, \beta s^{-1/\gamma} )$ is a so-called Poisson U-statistic. Bounds for the normal approximation of such random variables were deduced, for example, in \cite{ReSc} and \cite{LRP} for the Wasserstein distance and in \cite{SchulteKolmogorov} and \cite{EicTha} for the Kolmogorov distance. These results should also yield bounds similar to those in \eqref{ccPoisson}.

\noindent (iv) The assumption $\inf_{s\geq 1} \Var {\cal C}_k( \P_s, \beta s^{-1/\gamma} )/s > 0$ is satisfied if $\XX \subset \R^d$
is a full $d$-dimensional set and $g$ is a bounded probability density, as noted in the proof of Theorem 2.5 in Section 6 of \cite{PY6}.
If this assumption is not satisfied then we would have instead
\begin{align*}
& d_K \left( \frac{ {\cal C}_k( \P_s, \beta s^{-1/\gamma} ) - \E {\cal C}_k( \P_s, \beta s^{-1/\gamma} ) } {\sqrt{ \Var {\cal C}_k( \P_s, \beta s^{-1/\gamma} ) } } , N \right)\\
& \leq  \tilde{C}  \bigg(\frac{\sqrt{ s }}{\Var {\cal C}_k( \P_s, \beta s^{-1/\gamma} )}+\frac{ s }{(\Var {\cal C}_k( \P_s, \beta s^{-1/\gamma} ))^{3/2}}+\frac{s^{3/2}}{(\Var {\cal C}_k( \P_s, \beta s^{-1/\gamma} ))^{2}}\bigg), \quad s \geq 1.
\end{align*}
A similar comment applies for an underlying binomial point process in the situation where $\inf_{n\geq 9}\Var {\cal C}_k( \X_n, \beta n^{-1/\gamma} )/n> 0$ does not hold.

\begin{proof} To deduce Theorem \ref{cliquecount} from Corollary \ref{coroLSY}, we express ${\cal C}_k( \X, \beta s^{-1/\gamma} )$ as a sum of stabilizing score functions, which goes as follows.
Fix $\gamma, s, \beta \in (0, \infty)$. For $\X\in\mathbf{N}$ and $x\in\X$ let $\phi_{k,s}^{(\beta)}(x,\X)$ be the number of $(k+1)$-cliques containing $x$ in $G(\X, \beta s^{-1/\gamma})$ and such that $x$ is the point with the largest mark.
This gives the desired identification
$$
{\cal C}_k( \X, \beta s^{-1/\gamma} ) = \sum_{x \in \X} \phi_{k,s}^{(\beta)}(x,\X).
$$
Now we are ready to deduce \eqref{ccPoisson} and \eqref{ccBinomial} from Corollary \ref{coroLSY} with
the scores $\xi_s$ and $\xi_n$ set to $\phi_{k,s}^{(\beta)}$ and $\phi_{k,n}^{(\beta)}$, respectively, and with $K$ set to $\XX.$
Notice that $I_{K,s} = \Theta(s)$, as noted in \eqref{eqn:IXlinear}.
It is enough to show that $\phi_{k,s}^{(\beta)}$ and $\phi_{k,n}^{(\beta)}$ satisfy
all conditions of Corollary  \ref{coroLSY}.  Stabilization \eqref{eqn:expstabPoisson} is satisfied with
$\alpha_{stab} = a$, with  the radius of stabilization
$$
R_s((x,M_x), \P_s \cup \{(x,M_x)\}) = \beta s^{-1/\gamma} M_x,
$$
because $M_x$ has exponentially decaying tails as in
\eqref{exptail}.
For any $p>0$ we have
\begin{align*}
& \E  | \phi_{k,s}^{(\beta)}((x,M_x),\P_s\cup\{(x,M_x)\cup(\A,M_\A)\})|^{4+p}\\
 & \leq \E | \text{card} \{\P_s \cap B(x, \beta s^{-1/\gamma} M_x) \} +7|^{(4+p)k} \leq C(\beta, p, \gamma) < \infty
\end{align*}
for all $x\in\XX$, $s\geq 1$ and $\A\subset\XX$ with $|\A|\leq 7$
and so the $(4 + p)$th moment condition \eqref{eqn:momPoisson} holds for $p \in (0, \infty)$. The conclusion \eqref{ccPoisson} follows from \eqref{eqn:KolmogorovPoissoncoro}. The proof of \eqref{ccBinomial} is similar.
\end{proof}

\section{Appendix} Here we provide the proof of Lemma \ref{lm:intrinsic-volumes-representation}.

\begin{proof}
We need some additional notation. Throughout the proof, $\kappa $ is a constant depending on $d,j$, whose value may change from line to line. For $L$ some linear space, let $\ell^L$ the Lebesgue measure on $L$, $G(L,q),q<\text{\rm{dim}}(L)$ its space of $q$-dimensional subspaces, and $\nu_{q}^{L}$ the Haar probability measure on $G(L,q)$. Note $G(\mathbb{R}^{d},j)=G(d,j)$ and
 $\nu _{j}=\nu _{j}^{\mathbb{R}^{d}}$.
 Theorem  6.2.2 from \cite{SW} yields
\begin{align*}
V_{j}(A)-V_{j}(M)&=\kappa \int_{G(d,j)}(V_{j}(A | L )-V_{j}(M | L)) \, \nu _{j}(\dint L)\\
&=\kappa \int_{G(d,j)}\int_{  L}f^{L}(x) \, \ell^{L}(\dint x) \, \nu _{j}(\dint L).
\end{align*}
The Blaschke-Petkantschin formula (Theorem 7.2.1 in \cite{SW}) over the $\ell^{L}$ integral shows that the right-hand side equals
\begin{align*}
\kappa \int_{G(d,j)}\int_{G(L,1)}\int_{N}f^{L}(x)\|x\|^{j-1} \, \ell^{N}(\dint x) \, \nu _{1}^{L}(\dint N) \, \nu _{j}(\dint L).
\end{align*}
Fubini's theorem and Theorem 7.1.1 in \cite{SW} yield that the last expression is
\begin{align}
\notag&\kappa \int_{G(d,1)}\int_{G(N,j)}\int_{N}f^{L}(x)\|x\|^{j-1} \, \ell^{N}(\dint x) \, \nu_j^N(\dint L) \, \nu_{1}(\dint N)\\
\notag=&\kappa \int_{G(d,1)}\int_{N}\int_{G(N,j)}f^{L}(x)\|x\|^{j-1} \, \nu_j^N(\dint L) \, \ell^{N}(\dint x) \, \nu_{1}(\dint N)\\
\notag=&\kappa \int_{G(d,1)}\int_{N}\|x\|^{j-1}\int_{G(L(x),j)}f^{L}(x) \, \nu_j^{L(x)}(\dint L) \, \ell^{N}(\dint x) \, \nu_{1}(\dint N)\\
\label{eq:proof-BP}=&\kappa \int_{G(d,1)}\int_{N}f(x) \, \ell^{N}(\dint x) \, \nu_{1}(\dint N)\end{align}
with $f(x)=\|x\|^{j-1}\int_{G(L(x),j)}f^{L}(x) \, \nu_j^{L(x)}(\dint L)
$
because $N=L(x)$ in the second line. An independent application of the Blaschke-Petkantschin formula with $g(x)=f(x)\|x\|^{-(d-1)}  $ for each $L$ yields
\begin{align*}
\int_{\mathbb{R}^{d}}g(x) \, \ell^{d}(\dint x) & =\int_{G(d,1)}\int_{N} g(x)\|x\|^{d-1} \, \ell^{N}(\dint x) \, \nu_{1}(\dint N)\\
&=\int_{G(d,1)}\int_{N} f(x) \, \ell^{N}(\dint x) \, \nu_{1}(\dint N)
\end{align*}
whence \eqref{eq:proof-BP}  is equal to
$\int_{\mathbb{R}^{d}}\int_{G(L(x),j)}f^{L}(x)\|x\|^{(j-1)-(d-1)} \, \nu_j^{L(x)}(\dint L) \, \ell^{d}(\dint x),$
which completes the proof.
 \end{proof}

\end{document}